\documentclass[a4paper]{amsart}
\usepackage[english]{babel}
\usepackage[T1]{fontenc}
\usepackage[latin1]{inputenc}
\usepackage{amssymb}
\usepackage{amsmath}
\usepackage{amsthm}
\usepackage{amscd}
\usepackage{mathrsfs}
\usepackage{bm}
\usepackage{tikz-cd}
\usepackage{pgf,tikz}
\usepackage{mathrsfs}
\usetikzlibrary{arrows}
\usepackage{pgfplots}
\pgfplotsset{compat=1.15}
\usepackage{float}
\usepackage{verbatim}
\usepackage[all]{xy}
\usepackage[cal=boondoxo]{mathalfa}
\usepackage[numbers]{natbib}         
\usepackage[colorlinks]{hyperref}    
\usepackage{color}                   
\usepackage{hyperref}
\hypersetup{
    colorlinks=true,          
    linktoc=all,              
    linkcolor=blue,           
}

\hyphenation{co-ho-mo-lo-gy}
\hyphenation{ho-mo-lo-gy}
\hyphenation{pro-duct}
\hyphenation{co-pro-duct}
\hyphenation{ge-ne-ra-tors}
\hyphenation{ge-ne-ra-tor}
\hyphenation{geo-me-tri-cal-ly}

\DeclareMathOperator{\rk}{rk}

\DeclareMathOperator{\sgn}{sgn}

\DeclareMathOperator{\gr}{gr}

\DeclareMathOperator{\Prim}{Prim}

\DeclareMathOperator{\Fl}{Fl}
\DeclareMathOperator{\Bgroup}{B}

\DeclareMathOperator{\characteristic}{char}
\DeclareMathOperator{\res}{res}

\DeclareMathOperator{\Ind}{Ind}

\newtheorem{theorem}{Theorem}[section]
\newtheorem{proposition}[theorem]{Proposition}
\newtheorem{corollary}[theorem]{Corollary}
\newtheorem{lemma}[theorem]{Lemma}
\newtheorem*{theorem*}{Theorem}

\theoremstyle{definition}
\newtheorem{definition}[theorem]{Definition}

\theoremstyle{remark}
\newtheorem{remark}[theorem]{Remark}

\newtheorem{example}[theorem]{Example}

\newcommand{\Ftwo}{\ensuremath{\mathbb{F}_2}}
\newcommand{\Fp}{\ensuremath{\mathbb{F}_p}}
\newcommand{\Bgrouppos}[1]{\ensuremath \Bgroup_{#1}^{+}}

\newcommand{\N}{\mathbb{N}}
\newcommand{\Z}{\mathbb{Z}}

\newcommand{\R}{\mathbb{R}}
\newcommand{\F}{\mathbb{F}}
\newcommand{\C}{\mathbb{C}}

\newcommand{\UFl}{\overline{\mathrm{Fl}}}
\newcommand{\AB}{A'_{\Bgrouppos{}}}

\begin{document}
	
\author[L. Guerra]{Lorenzo Guerra }
\address{Universit\`a di Roma Tor Vergata}
\email{guerra@mat.uniroma2.it}
\author[S. Jana]{Santanil Jana}
\address{Mathematics Department, 
	University of British Columbia}
\email{santanil@math.ubc.ca}

\begin{abstract}
We consider quotients of complete flag manifolds in $ \mathbb{C}^n $ and $ \mathbb{R}^n $ by an action of the symmetric group on $ n $ objects. We compute their cohomology with field coefficients of any characteristic. Specifically, we show that these topological spaces exhibit homological stability and we provide a closed-form description of their stable cohomology rings. We also describe a simple algorithmic procedure to determine their unstable cohomology additively.
\end{abstract}
\thanks{The first author acknowledges the 
	MIUR Excellence Department Project awarded to the Department of Mathematics, University of Rome Tor Vergata, CUP E83C18000100006. 
} 

\subjclass[2020]{Primary 14M15; Secondary 55N10, 55N91, 55P47, 55R20, 55R45, 55S12, 55T10, 20J06.}
\title{Cohomology of complete unordered flag manifolds}
\maketitle


\section{Introduction}

A \textit{complete flag} over a field $\F$ is a nested sequence of  $\F$-linear subspaces of $\F^n$: \begin{equation} \label{eq:flag}
    \{0\} = V_0 \subset V_1 \subset \cdots \subset V_n = \F^n 
    \end{equation} 
such that $\mathrm{dim}_{\F} (V_j) = j$, for all $j=1,2,\dots,n$. The space of all complete flags of order $n$ over $\F$ forms an algebraic variety called the \textit{full flag variety} of order $n$ over $\F$, denoted as $\Fl_n (\F)$. In this paper, our focus is exclusively on cases where $\F$ corresponds to the fields $\R$ and $\C$. Over these fields, $ \Fl_n(\F) $ endowed with the analytic topology is a topological manifold, that we call \textit{real and complex flag manifold}, respectively. The unitary group $U(n)$ (or the orthogonal group $O(n)$) acts transitively on $\Fl_n (\C)$ (or $\Fl_n (\R)$). In other words, given any flag $\{ V_j\}_{j=0}^n$ as in (\ref{eq:flag}), there exists a $g\in U(n)$ (or $O(n)$) which maps the flag (\ref{eq:flag}) to the standard flag of $\C^n$ (or $\R^n$): \[  \{0\} \subset \C \{e_1\}  \subset \cdots \subset \C \{e_1, \dots, e_n\} = \C^n,  \]
where $\{ e_1, \dots, e_n\}$ is the standard basis of $\C^n$. Therefore, complex (or real) flags can be identified with the elements of $U(n)$ (or $O(n)$), modulo the subgroup that leaves the standard flag fixed. Therefore, the flag manifold $\Fl_n (\C)$ (or $\Fl_n (\R)$) can be identified with $U(n)/T(n)$ (or $O(n)/T(n)$), where $T(n)$ is the maximal torus in $U(n)$ (or $O(n)$). 

Given a flag $\{ V_j\}_{j=0}^n$ as in (\ref{eq:flag}), there is an orthonormal basis $\{ v_1, \dots,v_n \}$ of $\C^n$ (or $\R^n$) such that \[ V_j = \C \{ v_1,\dots,v_j\} \text{ }(\text{or } V_j = \R \{ v_1,\dots,v_j\} ) . \] An ordered orthonormal basis of $\C^n$ (or $\R^n$) describes a complete flag. Any element in $\Fl_n (\C)$ (or $\Fl_n (\R)$) can be described as a set of $n$ ordered mutually orthogonal lines in $\C^n$ (or $\R^n$). The symmetric group $\Sigma_n$ acts freely on $\Fl_n (\C)$ (or $\Fl_n (\R)$) by permuting the ordered basis elements. Note that the action of $\Sigma_n$ on $\Fl_n (\C)$ (or $\Fl_n (\R)$) does not permute the subspaces $V_j$, rather it permutes the basis elements $v_j$.

\begin{definition}
    The complex (or real) \textit{unordered flag manifold} of order $n$ is defined as the quotient $\Fl_n (\C)/\Sigma_n$ (or $\Fl_n (\R)/\Sigma_n$) and denoted as $\UFl_n (\C)$ (or $\UFl_n (\R)$). 
\end{definition}

Let $N(n) := N_{U(n)} (T(n))$ be the normalizer of the maximal torus $T(n)$ in $U(n)$. The Weyl group of the maximal torus is $N(n)/T(n) \cong \Sigma_n$. The unordered flag manifold of order $n$ can also be described as \begin{equation} \label{eq:flagidentity}
    \UFl_n (\C)= \Fl_n (\C)/\Sigma_n \cong U(n)/N(n) . 
\end{equation}
Similarly $\Bgroup_n := N_{O(n)} (T(n))$ be the normalizer of the maximal torus in $O(n)$. Then we can identify $O(n) / \Bgroup_n$ with $\Fl_n (\R)$. In \S \ref{section:4.2} we use the alternating subgroup $\Bgrouppos{n} := N_{SO(n)} (T(n) \cap SO(n))$ (so that $ \UFl_n (\mathbb{R}) \cong SO(n)/\Bgrouppos{n}$) for our computations.

In this paper, we delve into the study of the stable and unstable cohomology of complex unordered flag manifolds. While complete flag manifolds have been extensively studied in the fields of algebraic topology and geometry due to their significance in Lie theory, unordered complete flag manifolds have received less attention from algebraic topologists. Understanding the topology of these unordered flag manifolds is not only intriguing in its own right, but also carries important implications for emerging problems in algebraic topology and convex geometry.

The cohomology of the unordered flag manifolds bears relevance to the computation of the cohomology of the classifying spaces for commutativity. The classifying space for commutativity $B_{com} G$ and the total space of the associated principal $G$-bundle, $E_{com} G$, was described as a homotopy colimit over a poset by Adem--G{\'o}mez \cite{Adem-Gomez}. This homotopy colimit diagram involves the unordered flag manifold $\UFl_n (\C)$ (or $\UFl_n (\R)$) when $G$ is $U(n)$ (or $O(n)$). The cohomology of these spaces, particularly the $p$-torsion structure, has implications for the study of spaces of homomorphism. The study of the cohomology of unordered flag manifolds is also connected to the resolution of a conjecture proposed by Atiyah \cite{Atiyah2002}. However, providing a comprehensive discussion of this conjecture is beyond the scope of the paper. For further information and in-depth analysis, we refer readers to \cite[\S 4]{Atiyah2002}. 

The cohomology of unordered flag manifolds also has implications in estimating the number of Auerbach bases of finite dimensional Banach spaces. Let $\mathbb{X}$ be a $n$ dimensional complex (or real) Banach space and $S_{\mathbb{X}}$ denote its unit sphere. A basis $\mathcal{B}=\{v_1,\dots,v_n\}$ of $\mathbb{X}$ is called an \textit{Auerbach basis} if $v_i \in S_{\mathbb{X}}$ and there is a basis ${v^1,\dots,v^n}$ of the dual space $\mathbb{X}^{*}$ satisfying
\begin{equation*}
 v^i(v_j) = \delta_{ij},\quad \text{and} \quad v^i \in S_{\mathbb{X}^{*}} \text{ for} \ i,j = 1,2,\dots,n.
\end{equation*}
Weber--Wojciechowski \cite{Weber2016} provided an estimate of the number of Auerbach bases of a finite-dimensional Banach space using topological methods. Here, one identifies bases that differ only by permutation or multiplication by scalars of absolute value one. In other words, two bases are said to be equivalent if they lie in the same orbit of the action of $N (n)$ (or $\Bgroup_n$) on $U(n)$ (or $O(n)$). In \cite{Weber2016}, the estimates of the Lusternick--Schnirelmann category \cite{Fox1939} $\mathrm{cat} ({\UFl}_n (\mathbb{R}))$ and $\mathrm{rank} \big ( H^*({\UFl}_n (\mathbb{R})) \big )$ along with their complex counterparts were obtained using known results about the cohomology of ordered flag manifolds. The estimates for $\mathrm{rank} \big ( H^*({\UFl}_n (\mathbb{R})) \big )$ and $\mathrm{cat}({\UFl}_n (\mathbb{C}))$ has been much-improved by the computational results about the cohomology of the unordered flag varieties for lower orders in \cite{G-J-M}. 

The paper is organized as follows: \begin{itemize}
    \item In section 2, we present an overview of the recent developments on the mod $p$ cohomology of extended powers following the work of Guerra--Salvatore--Sinha \cite{Guerra-Salvatore-Sinha}. The cohomology of all extended powers of a space together has the structure of a component Hopf ring with divided powers. This structure allows for a more efficient computation of cup products compared to doing so individually. This is an extension of previous works \cite{Sinha:12, Guerra:17} on the cohomology of symmetric groups, which are the cohomology of extended powers of a point. 
    \item In Section 3, we provide an overview of the cohomology of the alternating subgroups $\Bgrouppos{n}$ of the Hyperoctahedral groups. We review the mod $2$ cohomology of $\Bgrouppos{n}$ previously discussed in \cite{Guerra-Santanil}. Describing the cohomology of $\Bgrouppos{n}$ at odd primes is relatively straightforward. We show in Theorem~\ref{thm:cohomology alternating group mod p} that it is isomorphic to the cohomology of the symmetric groups as Hopf rings.  
    \item In Section 4, we focus on studying the stable behavior of cohomology of the unordered flag manifolds, namely $\{ \UFl_n (\C) \}_n$ and $\{\UFl_n (\R) \}_n$. Proposition~\ref{prop:hom-stab-UN} establishes that unordered flag manifolds exhibit homological stability, which enables us to discuss stable cohomology. We examine the complex and real cases separately. In the complex case, we prove a pullback formula in Theorem~\ref{pullbackformula} for the Chern classes. This formula serves as the foundation for our main result concerning the stable cohomology of complex unordered manifolds in Corollary~\ref{cor:stable-cohomology}. In the real case, we deal with the mod $2$ and mod $p$ cohomologies separately. To determine the stable cohomology $H^* (\UFl_{\infty} (\R); \F_2)$, it is essential to determine the stable cohomology of $\Bgrouppos{n}$. We describe this in Corollary~\ref{cor:homological stability alternating subgroups}, which is a consequence of the main results in Section 3. In both the mod $2$ and the mod $p$ cases, the pullback formulas for the Stiefel-Whitney classes in Theorem~\ref{pullbackformula real} and Pontrjagin classes in Theorem~\ref{pullback pontrjagin}, respectively, are critical for computing the cohomology of real unordered flag manifolds. We also describe the Poincar\'e series of the stable cohomology of unordered flag manifolds.
    \item In Section 5, we lay the groundwork for the spectral sequence argument to compute the unstable cohomology of the unordered flag manifolds. Specifically, we study the Serre spectral sequence that computes $H^* (\UFl_n (\C); \F)$ and $H^* (\UFl_n (\R); \F)$, and describe the differentials in terms of the fullback formulas that were derived in Section 4. 
    \item Finally, in Section 6, we present the mod $2$ unstable cohomology computations of the unordered flag manifolds of low ($n =3,4,5$) orders. Additionally, we also compute the mod $p$ cohomology of $\UFl_p (\C)$ and $\UFl_p (\R)$ for all prime $p>2$. 
\end{itemize}

\textbf{Convention.} In the rest of this paper, $\F$ will denote a field and $p$ will denote a prime. The cohomological degree of a cohomology class $\gamma$ will be denoted by $|\gamma|$.\\ 

\textbf{Acknowledgements.} The first author would like to thank Prof. Paolo Salvatore for the helpful conversation and for suggesting the use of regular sequences, and Prof. Ben Knudsen for an email exchange about representation stability. He also acknowledges the MIUR Excellence Department Project awarded to the Department of Mathematics, University of Rome Tor Vergata, CUP E83C18000100006.

The second author expresses their gratitude to Prof. Alejandro Adem for engaging discussions that significantly contributed to the research presented in this paper.

\tableofcontents

\section{Recollection of the cohomology of extended powers}
\label{sec:cohomology extended powers}

Let $ X $ be a Hausdorff compactly generated topological space. Define its $ n $-fold extended (symmetric) power $ D_n(X_+) $ as the homotopy quotient of $ X^n $ with respect to the action of the symmetric group $ \Sigma_n $ that permutes the cartesian factors.
In particular, if $ \{*\} $ is a space with one point, $ D_n(\{*\}_+) = B \Sigma_n $, the classifying space of the symmetric group itself. 

We are mostly interested in the cases where $ \F = \mathbb{F}_p $ for some prime number $ p $ or $ \F = \mathbb{Q} $. There will be some results that are valid only for these particular choices; we will stress explicitly this fact where it happens.
Let $ A_X = \bigoplus_{n,d} A_X^{n,d} $ be the bigraded $ \F $-vector space defined by $ A_X^{n,d}= H^d(D_n(X_+)) $. We refer to the indices $ n $ and $ d $ as the component and (cohomological) dimension, respectively.
On $ A_X $ there are some structural morphisms, all natural in $ X $, that provide it with a rich algebraic structure:
\begin{itemize}
	\item the usual cup product, component by component, $ \cdot \colon A_X \otimes A_X \to A_X $.
	\item a coproduct $ \Delta \colon A_X \to A_X \otimes A_X $ induced by the maps $ p_{n,m} \colon D_n(X_+) \times D_m(X_+) \to D_{n+m}(X_+) $ defined by passing to quotients the following map:
	\begin{align*}
		&(p,(x_1,\dots,x_n)) \times (q,(x_{n+1},\dots,x_{n+m})) \in (E \Sigma_n \times X^n) \times (E \Sigma_m \times X^m) \\
		&\longmapsto (i_{n,m}(p,q),(x_1,\dots,x_n,x_{n+1},\dots,x_{n+m})) \in E \Sigma_{n+m} \times X^{n+m},
	\end{align*}
	where $ i_{n,m} \colon E \Sigma_n \times E \Sigma_m \to E \Sigma_{n+m} $ is the $ \Sigma_n \times \Sigma_m $-equivariant map induced by the standard inclusion $ \Sigma_n \times \Sigma_m \hookrightarrow \Sigma_{n+m} $.
	\item since $ p_{n,m} $ is homotopy equivalent to a finite covering, there is a product $ \odot \colon A_X \otimes A_X \to A_X $ corresponding to the cohomological transfer maps of $ p_{n,m} $.
\end{itemize}

\begin{definition} \label{def:Hopf ring}
A commutative component bigraded Hopf ring is a bigraded vector space $ A = \bigoplus_{n,d} A^{n,d} $ with a coproduct $ \Delta \colon A \to A \otimes A $, two products $ \odot, \cdot \colon A \otimes A \to A $, a unit $ \eta \colon \F \to A $, a counit $ \varepsilon \colon A \to \F $, and an antipode $ S \colon A \to A $ such that
\begin{itemize}
\item $ \Delta, \odot, \eta, \varepsilon $ are bigraded, $ \cdot $ is graded with respect to $ d $, and the products and the coproduct are graded commutative with respect to $ d $,
\item $ (A,\Delta,\odot,\eta,\varepsilon) $ is a Hopf algebra,
\item $ (A, \cdot,\Delta) $ is a bialgebra,
\item $ \forall x \in A^{n,d}, x' \in A^{n',d'}\colon x \cdot x' = 0 $ if $ n \not= n' $,
\item and the following Hopf ring distributivity axiom holds for all $ x,y,z \in A $, where we use Sweedler's notation $ \Delta(x) = \sum x_{(1)} \otimes x_{(2)} $:
\[
x \cdot (y \odot z) = \sum (-1)^{d(y)d(x_{(2)})} (x_{(1)} \cdot y) \odot (x_{(2)} \cdot z).
\] 
\end{itemize}
\end{definition}

\begin{theorem}[{\cite[Corollary 2.31]{Guerra-Salvatore-Sinha}}] \label{thm:Hopf ring structure}
$ A_X $, with the morphisms defined above, the unit and counit that identify $ A_X^{(0,0)} $ with $ \F $, and an antipode given by multiplication by $ \pm 1 $ depending on the component, is a commutative bigraded component Hopf ring.
\end{theorem}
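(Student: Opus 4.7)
The plan is to verify each of the axioms of Definition \ref{def:Hopf ring} in turn. Since the statement is cited from \cite{Guerra-Salvatore-Sinha}, my goal is to sketch a coherent strategy rather than carry out every diagram chase; most axioms are direct, and only the Hopf ring distributivity demands real effort.

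I would first dispatch the straightforward axioms. The bigrading is built into the definition $A_X^{n,d} = H^d(D_n(X_+))$, and the vanishing of cup products across distinct components is immediate since $D_n(X_+)$ and $D_m(X_+)$ are disjoint summands of $\bigsqcup_k D_k(X_+)$ for $n \neq m$. The coalgebra axioms for $(A_X, \Delta, \eta, \varepsilon)$ follow from the K\"unneth isomorphism, which holds over a field, applied to $p_{n,m}^*$; coassociativity reduces to the associativity of the inclusions $\Sigma_n \times \Sigma_m \times \Sigma_k \hookrightarrow \Sigma_{n+m+k}$, which gives a commutative diagram at the level of spaces. The transfer product $\odot$ is well-defined because $p_{n,m}$ is, up to homotopy, a finite covering of degree $\binom{n+m}{n}$; associativity of $\odot$ and the Hopf algebra structure $(A_X, \Delta, \odot, \eta, \varepsilon, S)$ then follow from the standard transfer formalism combined with the Mackey double-coset decomposition of $\Sigma_{n+m+k}$ into $(\Sigma_n \times \Sigma_m) \times \Sigma_k$ orbits. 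The antipode on each positive component is then uniquely determined by the usual connectedness argument, and ends up being multiplication by $\pm 1$ with signs dictated by the graded-commutativity conventions.

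The bialgebra property for $(A_X, \cdot, \Delta)$ is essentially automatic: since $\Delta$ is induced by the continuous maps $p_{n,m}$, it is a ring homomorphism for cup product by naturality of the cup product under pullback.

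The main obstacle, as expected, is the Hopf ring distributivity axiom. Geometrically, this identity compares the diagonal of $D_{n+m}(X_+)$ with a transfer of a product of diagonals on the $D_k(X_+) \times D_l(X_+)$. My strategy would be to analyse the pullback square obtained by restricting $\Delta \colon D_{n+m}(X_+) \to D_{n+m}(X_+)^2$ along $p_{n,m} \times p_{n,m}$, and identify its total space as a disjoint union of extended powers indexed by pairs of decompositions $(n_1 + n_2, m_1 + m_2)$ with $n_1 + n_2 = n$ and $m_1 + m_2 = m$, in bijection with the $(\Sigma_n \times \Sigma_m)$-double cosets in $\Sigma_{n+m}$. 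Combining this pullback description with the projection formula for cohomological transfers, together with a careful bookkeeping of Koszul signs coming from the commutation of classes of degree $d(x_{(2)})$ past classes of degree $d(y)$, delivers the distributivity identity on $A_X$. This is the technical heart of the verification, carried out in \cite[Corollary~2.31]{Guerra-Salvatore-Sinha}, and in my plan this is the step I would delegate to that reference rather than reproduce in full.
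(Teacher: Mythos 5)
The paper does not actually prove this statement: it is recalled verbatim from Guerra--Salvatore--Sinha, cited as their Corollary~2.31, and no argument is reproduced. So there is no internal proof to compare against, and I can only assess your sketch on its own terms.

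Your overall plan is sound and you correctly single out the projection formula and the Mackey double-coset decomposition as the two substantive inputs, but in your discussion of Hopf ring distributivity you have wired them to the wrong places. The axiom $x\cdot(y\odot z)=\sum\pm(x_{(1)}\cdot y)\odot(x_{(2)}\cdot z)$ is a \emph{single} application of Frobenius reciprocity for the finite covering $p_{n,m}$: by definition (after K\"unneth) $\Delta(x)$ restricted to the $(n,m)$ summand is $p_{n,m}^*(x)$, and $y\odot z=(p_{n,m})_!(y\times z)$, so the projection formula $x\cdot(p_{n,m})_!(y\times z)=(p_{n,m})_!\bigl(p_{n,m}^*(x)\cdot(y\times z)\bigr)$ is exactly the distributivity identity once you expand $p_{n,m}^*(x)\cdot(y\times z)$ via K\"unneth and track the Koszul sign. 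The associated base-change square is the pullback of the diagonal of $D_{n+m}(X_+)$ along $\mathrm{id}\times p_{n,m}$, whose total space is a single copy of $D_n(X_+)\times D_m(X_+)$, not a disjoint union. The disjoint union over refinements indexed by double cosets that you describe is instead the total space of the pullback of $p_{n,m}$ along another $p_{n',m'}$, which is what the Mackey formula decomposes; this is what one needs to verify that $(\odot,\Delta)$ form a bialgebra, i.e.\ $\Delta(a\odot b)=\Delta(a)\odot\Delta(b)$, an axiom you correctly attribute to that decomposition earlier in the sketch. So the tools are right but, as written, your distributivity paragraph conflates the two pullback squares. A minor further point: the sign in the antipode is a function of the component $n$ and is forced by connectedness of $(A_X,\odot,\Delta)$ in the component grading (since $A_X^{0,*}\cong\F$); it is not a graded-commutativity phenomenon in the cohomological degree $d$.
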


In the case where $ \F = \mathbb{F}_p $ or $ \F = \mathbb{Q} $, there are distinguished classes in $ A_X $:
\begin{itemize}
	\item The first distinguished set of classes appears only if $ \F = \mathbb{F}_p $ for some prime $ p $. The unique map $ \pi \colon X \to \{*\} $ induces an injective homomorphism $ \pi^* \colon \bigoplus_{n \geq 0} H^*(\Sigma_n; \mathbb{F}_p) = A_{\{*\}} \to A_X $. By a slight abuse of notation, we identify the classes $ \gamma_{k,l} $ of Giusti--Salvatore--Sinha \cite{Sinha:12} (if $ p = 2 $) or the classes $ \gamma_{k,l} $, $ \alpha_{i,k} $ and $ \beta_{i,j,l} $ of Guerra \cite{Guerra:17} (if $ p > 2 $) with their image $ \pi^*(\gamma_{k,l}) $ under $ \pi^* $, thus realizing them as elements of $ A_X $.
	\item The second distinguished set of classes appears for all choices of $ \F $. For every even-dimensional cohomology class $ \alpha \in H^*(X) $ and for all $ n > 0 $, the twisted cross product $ \alpha_{[n]} = 1 \times_{\Sigma_n} \alpha^{\times n} \in H^*(E \Sigma_n \times_{\Sigma_n} X^n) = H^*(D_n(X_+)) $ is well-defined. explicitly, the complex of singular chains of $ E \Sigma_n \times_{\Sigma_n} X^n $ is quasi-isomorphic to $ W_* \otimes_{\Sigma_n} C_*(X)^{\otimes n} $, where $ W_* $ is a free resolution of $ \F $ as a $ \F[\Sigma_n] $-module. Let $ \varepsilon \colon W_0 \to \F $ be the augmentation (representing the unit in $ H^0(\Sigma_n) $) and $ a $ be a cocycle representative of $ \alpha $. Then $ \alpha_{[n]} $ is represented by the $ \Sigma_n $-invariant cocycle $ \varepsilon \otimes a^{\otimes n} \colon W_* \otimes C_*(X)^{\otimes n} \to \F $, which does not depend on the chosen representative.
	We also define, by convention, $ \alpha_{[0]} = 1_0 $, the unit of the $ 0$-th component of $ A_X $.
	If $ \F = \mathbb{F}_2 $, then $ \varepsilon \otimes a^{\otimes n} $ is $ \Sigma_n $-invariant even if $ a $ is an odd-dimensional cochain; consequently, $ \alpha_{[n]} $ is defined for all $ \alpha \in H^*(X; \mathbb{F}_2) $, odd- and even-dimensional.
\end{itemize}

Guerra--Salvatore--Sinha proved that these distinguished classes suffice to fully determine $ A_X $ as a Hopf ring. We recall their results below.

\begin{theorem}[{\cite[Theorem 2.37]{Guerra-Salvatore-Sinha}}] \label{thm:cohomology DX mod 2}
	Assume that $ \F = \Ftwo $. As a commutative bigraded component Hopf ring, $ A_X $ is generated by the classes $ \gamma_{k,l} $ (for $ k,l \geq 1 $) and $ x_{n} $ (for $ x \in H^*(X) $ and $ n \geq 1 $), with the following relations:
	\begin{itemize}
		\item the relations of \cite[Theorem 1.2]{Sinha:12},
            \item $ x_{[n]}\cdot {x'}_{[n]} = (x \cdot x')_{[n]}$ for $x,x' \in H^*(X)$,
		\item $ \Delta(x_{[n]}) = \sum_{i=0}^n x_{[i]} \otimes x_{[n-i]}$ for $x \in H^*(X)$,
            \item $x_{[m]} \odot x_{[n]} = \binom{m+n}{n} x_{[m+n]}$ for $x\in H^* (X)$,
            \item $(\lambda x)_{[n]} = \lambda^n x_{[n]}$ and $(x+y)_{[n]} = \sum_{k=0}^n x_{[k]} \odot y_{[n-k]}$ for $x,y \in H^* (X)$ and $\lambda \in \F$.
	\end{itemize}
\end{theorem}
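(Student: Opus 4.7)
The plan is to split the proof into two halves: verify each listed relation at the cocycle level, then show that the generators and relations together present $A_X$ as a Hopf ring by matching an additive basis. For the relations, those of \cite[Theorem 1.2]{Sinha:12} transport to $A_X$ through the injective Hopf ring map $\pi^* \colon A_{\{*\}} \to A_X$ induced by $X \to \{*\}$, so nothing new is needed there. Multiplicativity $x_{[n]} \cdot x'_{[n]} = (x \cdot x')_{[n]}$ follows because both sides restrict to $(xx')^{\times n}$ on the fiber $X^n$ of $D_n(X_+) \to B\Sigma_n$ and, by their chain-level description $\varepsilon \otimes a^{\otimes n}$, are supported in filtration zero of the Serre filtration, so the cup product is computed fiberwise. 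The coproduct formula $\Delta(x_{[n]}) = \sum_{i=0}^n x_{[i]} \otimes x_{[n-i]}$ comes from unwinding $\Delta$ through $p_{i,n-i}$: the pullback of $\varepsilon_n \otimes a^{\otimes n}$ along the block-diagonal map $i_{i,n-i}$ factors as $(\varepsilon_i \otimes a^{\otimes i}) \otimes (\varepsilon_{n-i} \otimes a^{\otimes(n-i)})$. The transfer-product identity $x_{[m]} \odot x_{[n]} = \binom{m+n}{n} x_{[m+n]}$ uses that the transfer of $1$ along a $k$-fold cover is multiplication by $k$, here $k = [\Sigma_{m+n} : \Sigma_m \times \Sigma_n] = \binom{m+n}{n}$, combined with the naturality $x_{[m]} \times x_{[n]} = x_{[m+n]}$ of cross products before transfer. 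The scalar and additivity identities are instances of the multilinearity of external cross products; in characteristic two no signs obstruct the extension to odd-dimensional $x$.

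For completeness, I would decompose $H^*(D_n(X_+))$ additively via the Serre spectral sequence of the Borel fibration $X^n \to D_n(X_+) \to B\Sigma_n$. Picking a homogeneous basis of $H^*(X)$ and expanding $H^*(X)^{\otimes n}$ into monomials, the $\Sigma_n$-action permutes these monomials with stabilizer $\Sigma_\lambda = \prod_i \Sigma_{\lambda_i}$ for the associated partition $\lambda$ of $n$. Shapiro's lemma then gives
\[
E_2^{*,*} \cong \bigoplus_{\omega} H^*(\Sigma_{\lambda(\omega)}; \Ftwo),
\]
summed over $\Sigma_n$-orbits $\omega$ of monomials. On the Hopf ring side each summand is hit by a transfer-product monomial $y_1 \odot \cdots \odot y_k$, where $y_i$ is the cup product of a polynomial in the $\gamma_{k,l}$'s (which generate $H^*(\Sigma_{\lambda_i})$ by Sinha's theorem) with the corresponding twisted cross product $x_{i,[\lambda_i]}$. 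The Hopf ring distributivity axiom of Definition \ref{def:Hopf ring}, together with the listed relations, allows us to rewrite any word in the generators in this \emph{normal form} indexed by a partition plus an orbit representative.

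The main obstacle is this last completeness step: proving that the listed relations are \emph{sufficient} to bring every Hopf ring word into the above normal form, and that the resulting count matches $\dim A_X^{n,d}$. This is a purely algebraic statement about the presented Hopf ring, and the delicate point is controlling how the Hopf ring distributivity axiom interacts between the Sinha relations and the new twisted cross product relations, in particular when a cup product of an $x_{[n]}$ with a transfer product has to be expanded. The practical check is a Poincaré series comparison using the Shapiro decomposition above on the topological side and the Nakaoka-type decomposition of $H^*(\Sigma_n; \Ftwo)$ on the algebraic side; the degeneration of the Serre spectral sequence in characteristic two (which holds because the twisted cross products provide permanent cocycle lifts of $E_2^{0,*}$) ensures that no extension problems spoil the match.
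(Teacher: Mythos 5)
You are reconstructing Theorem 2.37 of Guerra--Salvatore--Sinha, which this paper states and cites but does not prove; there is no in-paper proof to compare against, so the assessment below is of your reconstruction on its own terms.

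Your outline is organized along the right lines (verify the relations at the cocycle level, then match an additive basis against a Shapiro decomposition), but two steps are genuinely incomplete. First, for $x_{[n]} \cdot x'_{[n]} = (x \cdot x')_{[n]}$ you mix two arguments: a Serre-filtration argument and a chain-level one. The filtration argument does not close the gap, because agreement of both sides on the fiber $X^n$ only places $x_{[n]} \cdot x'_{[n]} - (x\cdot x')_{[n]}$ in $F^1$, and filtration-zero elements of $H^*(D_n(X_+))$ do not restrict injectively to the fiber, so one cannot conclude the difference vanishes. The chain-level argument would work, but it requires exhibiting a $\Sigma_n$-equivariant diagonal approximation on $W_* \otimes C_*(X)^{\otimes n}$ under which the cup product of $\varepsilon \otimes a^{\otimes n}$ and $\varepsilon \otimes (a')^{\otimes n}$ is literally $\varepsilon \otimes (a\cdot a')^{\otimes n}$; you do not supply that.

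Second, you explicitly flag the completeness step as ``the main obstacle'' and propose only a Poincar\'e-series ``practical check.'' That check is not sufficient on its own: matching graded dimensions of the presented Hopf ring with $A_X$ proves the evident surjection from the presented object is an isomorphism only if you already know that surjection is onto \emph{and} that the proposed normal forms span the presented object. The second of these is precisely the combinatorial reduction you have not carried out, namely that the Hopf ring distributivity axiom together with the five listed relations suffice to rewrite any word into a normal form. Relatedly, your asserted degeneration of the Borel spectral sequence because ``twisted cross products lift $E_2^{0,*}$'' does not follow as stated: the $x_{[n]}$ only hit the diagonal invariants $x^{\otimes n}$, and even if every class of $(H^*(X)^{\otimes n})^{\Sigma_n}$ is lifted by some Hopf monomial, the multiplicative structure alone does not force $d_r = 0$ unless $E_2$ decomposes as a tensor product of its two edges, which the group cohomology $H^*(\Sigma_n; H^*(X)^{\otimes n})$ does not. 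The degeneration is true but requires the explicit computation of $H_*(D_n(X_+); \Ftwo)$ in terms of Dyer--Lashof operations (Nakaoka, May--Cohen) as external input, not the ad-hoc argument you give.
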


\begin{theorem}[{\cite[Theorem 2.38]{Guerra-Salvatore-Sinha}}, particular case] \label{thm:cohomology DX mod p}
	Assume that $ \F = \Fp $ with $ p $ odd prime and that $ H^*(X) $ is $ 0 $ in odd degrees. Then, as a commutative bigraded component Hopf ring, $ A_X $ is generated by the classes $ \gamma_{k,l} $ (for $ k,l \geq 1 $), $ \alpha_{i,k} $ (for $ 1 \leq i \leq k $), $ \beta_{i,j,k} $ (for $ 1 \leq i < j \leq k $) and $ x_{[n]} $ (for $ x \in H^*(X) $ and $ n \geq 1 $), with the following relations:\begin{itemize}
	\item the relations of \cite[Theorem 2.7]{Guerra:17},
	\item $ x_{[n]}\cdot {x'}_{[n]} = (x \cdot x')_{[n]}$ for $x,x' \in H^*(X)$,
	\item $ \Delta(x_{[n]}) = \sum_{i=0}^n x_{[i]} \otimes x_{[n-i]}$ for $x \in H^*(X)$,
        \item $x_{[m]} \odot x_{[n]} = \binom{m+n}{n} x_{[m+n]}$ for $x\in H^* (X)$,
        \item $(\lambda x)_{[n]} = \lambda^n x_{[n]}$ and $(x+y)_{[n]} = \sum_{k=0}^n x_{[k]} \odot y_{[n-k]}$ for $x,y \in H^* (X)$ and $\lambda \in \F$.
	\end{itemize}
\end{theorem}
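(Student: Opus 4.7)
The plan is to combine the known presentation of $A_{\{*\}} = \bigoplus_{n \geq 0} H^*(\Sigma_n;\Fp)$ as a commutative bigraded component Hopf ring (cf.\ \cite{Guerra:17}) with a Serre spectral sequence argument tracking the contributions of $H^*(X)$ to $A_X$. First I would verify that all the listed relations hold in $A_X$. The relations among $\gamma_{k,l}$, $\alpha_{i,k}$ and $\beta_{i,j,k}$ are pulled back from $A_{\{*\}}$ via $\pi^*$ and are hence automatic. The cup product relation $x_{[n]}\cdot x'_{[n]} = (xx')_{[n]}$ and the coproduct relation $\Delta(x_{[n]}) = \sum_{i=0}^{n} x_{[i]} \otimes x_{[n-i]}$ follow directly from the explicit cocycle representative $\varepsilon \otimes a^{\otimes n}$ together with the product decomposition $X^n = X^i \times X^{n-i}$; crucially, the even-degree hypothesis ensures that no Koszul signs intrude and that $\Sigma_n$-invariance of $a^{\otimes n}$ is automatic. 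The transfer identity $x_{[m]} \odot x_{[n]} = \binom{m+n}{n}\, x_{[m+n]}$ is a direct computation of the cohomological transfer along $\Sigma_m \times \Sigma_n \hookrightarrow \Sigma_{m+n}$, and the exponentiation and binomial formulas are formal properties of the twisted cross product.

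Next, I would establish generation and completeness via the Serre spectral sequence of the Borel fibration $X^n \to D_n(X_+) \to B\Sigma_n$:
\[
E_2^{p,q} = H^p(\Sigma_n; H^q(X^n)) \Longrightarrow H^{p+q}(D_n(X_+)).
\]
Under the even-degree hypothesis, the coefficient module $H^*(X)^{\otimes n}$ carries no sign twists and is a permutation-type $\Sigma_n$-module. I would decompose it by orbit type: each $\Sigma_n$-orbit of basis tensor monomials has stabilizer a Young subgroup $\Sigma_{n_1} \times \cdots \times \Sigma_{n_r}$ with $n_1 + \cdots + n_r = n$, and by Shapiro's lemma the corresponding summand of $E_2$ equals $H^*(\Sigma_{n_1};\Fp) \otimes \cdots \otimes H^*(\Sigma_{n_r};\Fp)$, twisted by the chosen monomial. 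On the abutment, each such summand is realized by an explicit $\odot$-product $(\mu_1 \cdot y^{(1)}_{[n_1]}) \odot \cdots \odot (\mu_r \cdot y^{(r)}_{[n_r]})$ in $A_X$, where each $\mu_i \in A_{\{*\}}^{n_i,*}$ is a monomial in the $\gamma$, $\alpha$, $\beta$ generators and each $y^{(i)} \in H^*(X)$. Since all these classes lie in even total degree, the spectral sequence collapses at $E_2$ for dimensional reasons and no hidden multiplicative extensions are possible.

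It remains to check that the abstract Hopf ring presented in the statement has additive Poincar\'e series matching the one just computed. Using the Hopf ring distributivity axiom, every element of the presented ring reduces to a sum of $\odot$-products of transfer-indecomposable pieces; each such piece lies in a single component and has the form $\mu \cdot y^{(1)}_{[n]} \cdots y^{(s)}_{[n]}$, which collapses via $x_{[n]} \cdot x'_{[n]} = (xx')_{[n]}$ to $\mu \cdot y_{[n]}$ with a single $y \in H^*(X)$. The additive basis for $A_{\{*\}}$ from \cite{Guerra:17} then gives the matching count. The main obstacle is precisely this last normal-form analysis: the Hopf ring distributivity axiom is combinatorially delicate, and one must rule out hidden relations arising from the interaction of $\odot$ and $\cdot$ when the nontrivial $\alpha$-$\beta$ relations already present in $A_{\{*\}}$ are multiplied across components with the $x_{[n]}$ classes.
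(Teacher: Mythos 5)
This theorem is not proved in the paper at all: it is cited verbatim (as a ``particular case'') from Theorem 2.38 of Guerra--Salvatore--Sinha, so there is no ``paper's own proof'' to compare against. With that caveat, your sketch has one genuine error and one acknowledged but unfilled hole.

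The error is the collapse argument. You write that the Serre spectral sequence $E_2^{p,q} = H^p(\Sigma_n; H^q(X^n)) \Rightarrow H^{p+q}(D_n(X_+))$ ``collapses at $E_2$ for dimensional reasons'' because ``all these classes lie in even total degree.'' That is false: for odd $p$, the mod $p$ cohomology of $\Sigma_n$ contains odd-degree classes (the classes $\alpha_{i,k}$ and $\beta_{i,j,k}$ generating the presentation you are trying to establish are odd-dimensional). The even-degree hypothesis on $H^*(X)$ controls the coefficient module only; it does not force $E_2$ to be concentrated in even total degree, so a parity argument cannot rule out differentials. The collapse of this spectral sequence is a real theorem (going back to Steenrod/Nakaoka and treated in Cohen--Lada--May), but its standard proof works by producing a filtered chain equivalence $C_*(E\Sigma_n \times_{\Sigma_n} X^n) \simeq W_* \otimes_{\Sigma_n} C_*(X)^{\otimes n}$ with $W_*$ a free resolution, and then identifying $E_1$ directly with the abutment, not by degree parity. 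Your use of Shapiro's lemma and the orbit-type decomposition of $H^*(X)^{\otimes n}$ is a good way to organize $E_2$ once collapse is known, but it does not by itself establish collapse.

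The second issue you flag yourself: the normal-form analysis showing that Hopf ring distributivity plus the listed relations produce \emph{exactly} the decorated Hopf monomial basis (no hidden relations from the interaction of $\odot$ and $\cdot$ with the $\alpha$--$\beta$ relations of $A_{\{*\}}$) is precisely where the real work lies, and you leave it as an acknowledged obstacle rather than carry it out. In Guerra--Salvatore--Sinha this is handled via a careful rewriting argument to a canonical (``skyline'') form; without something of that nature the completeness of the relation set is not established, and matching Poincar\'e series alone does not close the gap unless you have already proved that the presented object surjects onto $A_X$ and has dimension bounded above by the right count in each bidegree.
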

From this presentation, one can extract an additive basis for the cohomology of $ D_n(X_+) $ under the hypotheses of Theorems \ref{thm:cohomology DX mod 2} and \ref{thm:cohomology DX mod p}.
\begin{definition}
	Let $ \mathcal{B} $ be a graded basis of $ H^*(X) $ as a $ \F $-vector space. A decorated gathered block in $ A_X $ is a couple $ (b,x) $, where $ b $ is a gathered block in $ \bigoplus_{n \geq 0} H^*(\Sigma_n) $, in the sense of Giusti--Salvatore--Sinha \cite[Definition 6.3]{Sinha:12} (if $ \F = \Ftwo $) or Guerra \cite[page 964]{Guerra:17} (if $ \F = \Fp $ with $ p > 2 $), and $ x \in \mathcal{B} $ is a basis element. We call $ x $ the decoration of the gathered block.
	
	A decorated Hopf monomial is a formal expression of the form $ b_1 \odot \dots \odot b_r $, where $ b_1,\dots,b_r $ are decorated gathered blocks, such that any two distinct elements among these $ r $ gathered blocks have not the same profile (in the sense of \cite[Definition 6.3]{Sinha:12} (if $ \F = \Ftwo $) or \cite[Definition 3.1]{Guerra:17} and the same decoration concurrently.
\end{definition}

\begin{corollary}[{\cite[Proposition 4.5]{Guerra-Salvatore-Sinha}}] \label{cor: basis DX char+}
    Assume that $ \F = \Fp $ with $ p \geq 2 $ and $H^d (X; \F_p) = 0$ for $d$ odd. Let $ \mathcal{B} $ be as in the previous definition. Realize a decorated gathered block $ (b,x) $, as an element of $ A_X $ by taking the cup product $ \pi^*(b) \cdot x_{[n]} $, where $n$ is the component of $b$. Realize a decorated Hopf monomial $ x = b_1 \odot \dots \odot b_r $ as an element of $ A_X $ by realizing the constituent decorated gathered blocks as elements of $ A_X $ and taking their transfer product.
    Then the set $ \mathcal{M} $ of decorated Hopf monomial is a bigraded basis for $ A_X $ as a $ \F $-vector space.
\end{corollary}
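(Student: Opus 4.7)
The strategy is two-pronged: use the Hopf ring presentations from Theorems \ref{thm:cohomology DX mod 2} and \ref{thm:cohomology DX mod p} to prove that $\mathcal{M}$ spans $A_X$, and then use the Serre spectral sequence of the Borel fibration to establish linear independence via a dimension count.

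For spanning, recall that those presentations express every element of $A_X$ as a polynomial in the distinguished generators under the two products $\cdot$ and $\odot$. Applying Hopf ring distributivity iteratively pushes the cup products inside the transfer products, reducing any monomial to a transfer product of single-component cup-product monomials. Within one component, the relation $x_{[n]}\cdot x'_{[n]} = (x\cdot x')_{[n]}$ collapses all $x_{[n]}$-factors into a single decoration, while the remaining $\pi^*$-factors (built from $\gamma_{k,l}$, and also $\alpha_{i,k}$, $\beta_{i,j,k}$ when $p>2$) form a Hopf monomial of gathered blocks in $A_{\{*\}}$ by the basis theorems of \cite{Sinha:12,Guerra:17}. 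Finally, any two transfer factors sharing both profile and decoration can be combined via the relations among blocks of the same profile in $A_{\{*\}}$ together with $x_{[m]}\odot x_{[n]} = \binom{m+n}{n}x_{[m+n]}$, producing the normal form of a decorated Hopf monomial.

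For linear independence, consider the Serre spectral sequence of $X^n\to D_n(X_+)\to B\Sigma_n$ with $\F$-coefficients. The hypothesis that $H^*(X;\F)$ vanishes in odd degrees (automatic when $p=2$) forces $E_2^{p,q}=0$ for $q$ odd, so a parity argument shows the spectral sequence collapses at $E_2$. Decomposing $H^*(X)^{\otimes n}$ as an $\F[\Sigma_n]$-module by the orbit type of monomials in a chosen basis $\mathcal{B}$ yields
\[
H^*(X)^{\otimes n} \;\cong\; \bigoplus_{\underline{m}} \Ind_{\Sigma_{\underline{m}}}^{\Sigma_n}\F,
\]
where $\underline{m}=(m_x)_{x\in\mathcal{B}}$ ranges over multiplicity functions with $\sum_x m_x = n$ and $\Sigma_{\underline{m}}=\prod_x \Sigma_{m_x}$. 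Shapiro's lemma and Künneth identify $E_2$ in component $n$ with $\bigoplus_{\underline{m}} \bigotimes_x H^*(\Sigma_{m_x};\F)$. Applying the basis theorems for $A_{\{*\}}$ to each tensor factor produces a basis of $E_\infty$ parametrised exactly by decorated Hopf monomials: the choice of $\underline{m}$ records the decorations, and the Hopf monomial in each $H^*(\Sigma_{m_x})$ provides the gathered blocks of that decoration, with the distinct-profile constraint matching the definition of $\mathcal{M}$.

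The main obstacle is to verify that the realization map $(b,x)\mapsto \pi^*(b)\cdot x_{[n]}$ and its transfer products actually send $\mathcal{M}$ to a set of representatives of the $E_\infty$-basis just described, and not merely to a set of the correct cardinality. This requires tracing the behaviour of each generator through the Serre filtration: $\pi^*(b)$ lives in the base filtration while $x_{[n]}$ represents the diagonal class $x^{\otimes n}$ in the fiber, so their cup product has the correct leading term in the $\underline{m}=(n\delta_{x,-})$ summand; for a general decorated Hopf monomial, one invokes the compatibility of $\odot$ with the Young-subgroup decomposition, which realises each decorated Hopf monomial as an element whose associated graded representative matches the prescribed basis element of $E_\infty$.
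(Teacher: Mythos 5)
Your outline (spanning via the Hopf ring presentation, then linear independence via a dimension count in the Serre spectral sequence) is a sensible strategy, but your collapse argument contains a genuine error. You claim that parity forces the spectral sequence of $X^n \to D_n(X_+) \to B\Sigma_n$ to collapse at $E_2$ because the fiber cohomology is concentrated in even degrees. Parity only kills the even-page differentials $d_{2r}$, which shift the fiber degree $q$ by the odd amount $1-2r$; the odd-page differentials $d_{2r+1}$ for $r \geq 1$ shift $q$ by $-2r$, which preserves parity, so $d_3, d_5, \dots$ are not excluded by this reasoning and the collapse does not follow. (The parenthetical assertion that odd-degree vanishing is ``automatic when $p=2$'' is also false — $S^1$ and $\RP^\infty$ have nontrivial odd mod $2$ cohomology.)

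The strategy can be repaired without establishing collapse as an independent input. Spanning gives $\dim A_X^{n,d} \leq |\mathcal{M}^{n,d}|$ in each bidegree. Independently, the Serre spectral sequence always provides $\dim A_X^{n,d} = \sum_{i+j=d} \dim E_\infty^{i,j} \leq \sum_{i+j=d} \dim E_2^{i,j}$, and your Shapiro/K\"unneth identification of $E_2$ together with the Giusti--Salvatore--Sinha (resp.\ Guerra) bases of each tensor factor $H^*(\Sigma_{m_x};\F_p)$ shows the right-hand side equals $|\mathcal{M}^{n,d}|$. Sandwiching forces equality throughout, so $\mathcal{M}$ is a basis and, as a by-product rather than an input, the spectral sequence collapses at $E_2$. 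If you arrange the logic this way, your last paragraph about matching realized elements to filtration-level representatives becomes unnecessary: a spanning set of the correct cardinality in every bidegree is automatically a basis, regardless of how its members sit with respect to the Serre filtration.
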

Hopf ring distributivity and the relations of Theorems \ref{thm:cohomology DX mod 2} and \ref{thm:cohomology DX mod p} are enough to explicitly compute the products and the coproduct on decorated Hopf monomials in $ A_X $. We exemplify below the general procedure in a specific case. The reader might also find the graphical algorithm described in \cite[\S 4]{Guerra-Salvatore-Sinha} in terms of skyline diagrams more accessible.

\begin{example}
    Let $\mathcal{B} = \{1, c, c^2, \dots \}$ be the graded basis of $H^* (BU(1); \F_2)$ as a $\F_2$-vector space as before. Consider the following decorated Hopf monomials $$x = (\gamma_{1,2}^2, c) \odot (\gamma_{1,1},c) \quad \text{and} \quad y = (\gamma_{2,1}, 1) \odot (\gamma_{1,1}, c).$$
    We will compute the cup product $x\cdot y$. First, we compute the coproduct of $x$. Using the fact that the cup product and coproduct from a bialgebra, we have 
    \begin{align*}
        &\Delta ((\gamma_{1,2}^2, c)) = \Delta (\gamma_{1,2}^2 \cdot c_{[4]}) = \Delta (\gamma_{1,2})^2 \cdot \Delta (c_{[4]}) \\
        & \hspace{1cm} = (\gamma_{1,2}^2 \otimes 1_0 + \gamma_{1,1}^2 \otimes \gamma_{1,1}^2 + 1_0 \otimes \gamma_{1,2}^2) \cdot (c_{[4]} \otimes 1_0 + c_{[3]} \otimes c_{[1]} + \cdots + 1_0 \otimes c_{[4]}) \\
        & \hspace{1cm} = \gamma_{1,2}^2 \cdot c_{[4]} \otimes 1_0 + \gamma_{1,1}^2 \cdot c_{[2]} \otimes \gamma_{1,1}^2 \cdot c_{[2]} + 1_0 \otimes \gamma_{1,2}^2 \cdot c_{[4]}
    \end{align*}
    and \begin{align*}
        \Delta ((\gamma_{1,1},c)) &= \Delta (\gamma_{1,1}) \cdot \Delta (c_{[2]}) \\ &= (\gamma_{1,1} \otimes 1_0 + 1_0 \otimes \gamma_{1,1}) \cdot (c_{[2]} \otimes 1_0 + c_{[1]} \otimes c_{[1]} + 1_0 \otimes c_{[2]}) \\
        &= \gamma_{1,1} \cdot c_{[2]} \otimes 1_0 + 1_0 \otimes \gamma_{1,1} \cdot c_{[2]}. 
    \end{align*}
    Using the fact that the transfer product and the coproduct form a bialgebra, we have
    \begin{align*}
        \Delta (x) &= \Delta ((\gamma_{1,2}^2, c)) \odot \Delta ((\gamma_{1,1},c)) \\
        &= x \otimes 1_0 + (\gamma_{1,2}^2 \cdot c_{[4]}) \otimes (\gamma_{1,1} \cdot c_{[2]})  +((\gamma_{1,1}^2 \cdot c_{[2]}) \odot (\gamma_{1,1} \cdot c_{[2]})) \otimes (\gamma_{1,1}^2 \cdot c_{[2]}) \\
        &+ (\gamma_{1,1}^2 \cdot c_{[2]}) \otimes ((\gamma_{1,1}^2 \cdot c_{[2]}) \odot (\gamma_{1,1} \cdot c_{[2]})) + (\gamma_{1,1} \cdot c_{[2]}) \otimes (\gamma_{1,2}^2 \cdot c_{[4]}) + 1_0 \otimes x.
    \end{align*}
    Using the Hopf ring distributivity axiom, we have
    \begin{align*}
        x \cdot y &= x\cdot ((\gamma_{2,1}, 1) \odot (\gamma_{1,1}, c)) \\
        &= \sum_{\Delta x = \sum x' \otimes x''} (x' \cdot \gamma_{2,1} ) \odot (x'' \cdot \gamma_{1,1} \cdot c_{[2]}).
    \end{align*}
    We observe that only addends $x' \otimes x''$ in $\Delta (x)$ that have the right components such that the cup products are non-zero are $(\gamma_{1,2}^2 \cdot c_{[4]}) \otimes (\gamma_{1,1} \cdot c_{[2]})$ and $((\gamma_{1,1}^2 \cdot c_{[2]}) \odot (\gamma_{1,1} \cdot c_{[2]})) \otimes (\gamma_{1,1}^2 \cdot c_{[2]})$. Note that by Hopf ring distributivity \[ \gamma_{2,1} \cdot ((\gamma_{1,1}^2 \cdot c_{[2]}) \odot (\gamma_{1,1} \cdot c_{[2]})) = 0 . \] Therefore, \begin{align*}
        x\cdot y &= (\gamma_{2,1} \cdot \gamma_{1,2}^2 \cdot c_{[4]}) \odot (\gamma_{1,1}^2 \cdot c_{[2]}^2)  
    \end{align*} 
    The corresponding skyline diagram is described in Figure~\ref{skylinexy}.
\end{example}
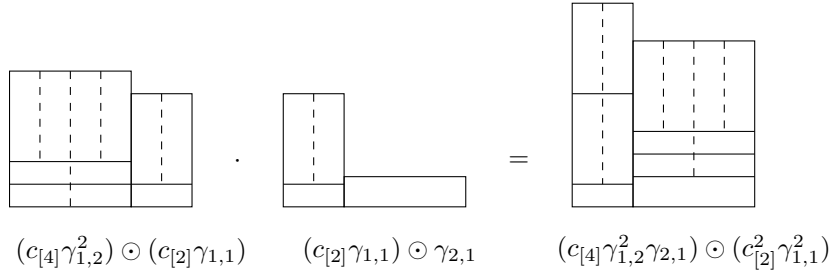
\begin{figure}[t]
    \centering
    \begin{tikzpicture}
        \draw[]{(-6.0,1.8) rectangle (-4.4,0.0)};
        \draw[dashed]{(-5.6,1.8) -- (-5.6,0.6)}; 
        \draw[dashed]{(-5.2,1.8) -- (-5.2,0.0)};
        \draw[dashed]{(-4.8,1.8) -- (-4.8,0.6)};
        \draw[]{(-6.0, 0.6) -- (-4.4, 0.6)};
        \draw[]{(-6.0, 0.3) -- (-4.4 , 0.3)};
        \draw[]{(-4.4, 1.5) rectangle (-3.6, 0.0)};
        \draw[dashed]{(-4.0, 1.5) -- (-4.0, 0.3)};
        \draw[]{(-4.4, 0.3) -- (-3.6, 0.3)};
        \node[color=black] (A) at (-3.0,0.6) {$\cdot$};
        \draw[]{(-2.4, 1.5) rectangle (-1.6, 0.0)};
        \draw[dashed]{(-2.0, 1.5) -- (-2.0, 0.3)};
        \draw[]{(-2.4, 0.3) -- (-1.6, 0.3)};
        \draw[]{(-1.6, 0.4) rectangle (0.0, 0.0)};
        \node[color=black] (B) at (0.7, 0.6) {$=$};
        \draw[]{(1.4, 2.7) rectangle (2.2, 0.0)};
        \draw[dashed]{(1.8, 2.7) -- (1.8, 0.3)};
        \draw[]{(1.4, 0.3) -- (2.2, 0.3)};
        \draw[]{(1.4, 1.5) -- (2.2, 1.5)};
        \draw[]{(2.2,2.2) rectangle (3.8,0.0)};
        \draw[dashed]{(2.6,2.2) -- (2.6,1.0)}; 
        \draw[dashed]{(3.0,2.2) -- (3.0,0.4)};
        \draw[dashed]{(3.4,2.2) -- (3.4,1.0)};
        \draw[]{(2.2, 0.4) -- (3.8, 0.4)};
        \draw[]{(2.2, 1.0) -- (3.8 , 1.0)};
        \draw[]{(2.2, 0.7) -- (3.8, 0.7)};
        \node[color=black] (D) at (-4.4, -0.6) {$(c_{[4]} \gamma_{1,2}^2) \odot (c_{[2]} \gamma_{1,1}) $};
        \node[color=black] (E) at (-1.0, -0.6) {$(c_{[2]} \gamma_{1,1}) \odot \gamma_{2,1}$};
        \node[color=black] (F) at (3.0, -0.6) {$(c_{[4]} \gamma_{1,2}^2 \gamma_{2,1} ) \odot (c_{[2]}^2 \gamma_{1,1}^2 )$};
    \end{tikzpicture}
    \caption{Skyline diagram for the dot product of Hopf monomials $x$ and $y$ using Hopf ring distributivity}
    \label{skylinexy}
\end{figure}

The rational cohomology of $ D(X_+) $ is well-known to experts. We slightly enhance its usual description by incorporating the Hopf ring structure.
\begin{proposition} \label{prop: basis DX char0}
	Let $ \F = \mathbb{Q} $. Let $ \mathcal{B} $ be a graded basis of the cohomology of $ X $.
	If $ H^*(X; \mathbb{Q}) $ is concentrated in even degrees, then $ A_X $ is the commutative bigraded component Hopf ring generated by $ x_{[n]} $ for $ x \in \mathcal{B} $ with the following relations:
	\begin{itemize}
		\item $ x_{[n]}\cdot {x'}_{[n]} = (x \cdot x')_{[n]}$ for $x,x' \in H^*(X)$,
		\item $ \Delta(x_{[n]}) = \sum_{i=0}^n x_{[i]} \otimes x_{[n-i]}$ for $x \in H^*(X)$,
            \item $x_{[m]} \odot x_{[n]} = \binom{m+n}{n} x_{[m+n]}$ for $x\in H^* (X)$,
            \item $(\lambda x)_{[n]} = \lambda^n x_{[n]}$ and $(x+y)_{[n]} = \sum_{k=0}^n x_{[k]} \odot y_{[n-k]}$ for $x,y \in H^* (X)$ and $\lambda \in \F$.
	\end{itemize}
	
	Moreover, the set $ \mathcal{M} $ consisting of elements $ (x_1)_{[n_1]} \odot \dots \odot (x_k)_{[n_k]} $ up to permutation of $ \odot $-factors, where $ x_i \not= x_j $ for $ i\not=j $ and $ x_i \in \mathcal{B} $, form a bigraded basis for $ A_X $.
\end{proposition}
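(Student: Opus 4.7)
The plan is to exploit rationality: since $|\Sigma_n|$ is invertible in $\mathbb{Q}$, a standard transfer/averaging argument gives $A_X^n = H^*(D_n(X_+); \mathbb{Q}) \cong H^*(X^n; \mathbb{Q})^{\Sigma_n}$. Because $H^*(X; \mathbb{Q})$ sits in even degrees only, Künneth yields $H^*(X^n; \mathbb{Q}) \cong H^*(X; \mathbb{Q})^{\otimes n}$, with $\Sigma_n$ acting by permutation of tensor factors without signs; so $A_X^n \cong \mathrm{Sym}^n(H^*(X; \mathbb{Q}))$ as a graded vector space. Under this identification I would send $x_{[n]}$ to $x^{\otimes n}$. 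The Hopf ring operations then have transparent descriptions: the cup product in a fixed component is the tensorwise cup, the coproduct $\Delta$ is the Künneth splitting induced by pullback along the gluing maps $p_{i,j}$, and the transfer product $a \odot b$ on invariants is the symmetrization $\sum_\sigma \sigma\cdot(a \otimes b)$ over representatives $\sigma \in \Sigma_{m+n}/(\Sigma_m \times \Sigma_n)$.

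Direct computation from these descriptions verifies all five stated relations. For instance, $x_{[m]} \odot x_{[n]} = \binom{m+n}{n} x_{[m+n]}$ follows by counting cosets; the identity $(x+y)_{[n]} = \sum_k x_{[k]} \odot y_{[n-k]}$ expresses $(x+y)^{\otimes n}$ as the sum over subsets $S \subset \{1,\dots,n\}$ of the tensors having $x$ in positions $S$ and $y$ elsewhere, grouped by $|S|$. To establish the basis claim, observe that a Hopf monomial $(x_1)_{[n_1]} \odot \cdots \odot (x_k)_{[n_k]}$ with distinct $x_i \in \mathcal{B}$ corresponds exactly to the standard symmetric-tensor basis element of $\mathrm{Sym}^{n_1+\cdots+n_k}(H^*(X; \mathbb{Q}))$ indexed by the multiset $\{x_1^{n_1},\dots,x_k^{n_k}\}$. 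Since such monomials, taken up to permutation of $\odot$-factors, biject with such multisets, $\mathcal{M}$ is a basis of $A_X$.

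Finally, for the Hopf ring presentation I would introduce the abstract Hopf ring $R$ defined by the stated generators and relations, together with the canonical Hopf ring homomorphism $\phi \colon R \to A_X$ sending each generator to itself. Using Hopf ring distributivity combined with the coproduct formula for $\Delta(x_{[n]})$, one reduces every element of $R$ to a $\mathbb{Q}$-linear combination of $\odot$-monomials; the relation $x_{[m]} \odot x_{[n]} = \binom{m+n}{n} x_{[m+n]}$ (whose coefficient is invertible in $\mathbb{Q}$) is then used to coalesce any repeated decorations. Thus $R$ is spanned by the preimages of elements of $\mathcal{M}$, and since $\phi$ sends this spanning set to a basis of $A_X$, it must be an isomorphism. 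The main obstacle is this last reduction inside $R$: one has to systematically push cup products inside $\odot$-monomials via distributivity, which mirrors the reduction arguments for Theorems~\ref{thm:cohomology DX mod 2} and~\ref{thm:cohomology DX mod p} but is cleaner here thanks to the absence of Steenrod-type generators.
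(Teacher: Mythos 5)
Your proof is correct and takes essentially the same route as the paper: both identify $H^*(D_n(X_+);\mathbb{Q})$ with the $\Sigma_n$-invariants of $H^*(X;\mathbb{Q})^{\otimes n}$ and match the decorated Hopf monomials with symmetrized pure tensors to establish the basis claim. The paper simply cites \cite{Guerra-Salvatore-Sinha} for the reduction argument inside the abstractly presented Hopf ring, which is what you sketch in your final paragraph.
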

\begin{proof}
	The proof that the relations hold and that $ \mathcal{M} $ is a basis for the commutative bigraded component Hopf ring with that presentation is the same as in \cite{Guerra-Salvatore-Sinha}.
	
	The rational cohomology of $ D_n(X_+) $ is isomorphic to the subspace of invariants of $ {H^*(X)}^{\otimes n} $ under the action of $ \Sigma_n $.
	Under this isomorphism, $ (x_1)_{[n_1]} \odot \dots \odot (x_k)_{[n_k]} $ corresponds to the symmetrization by means of $ (n_1,\dots,n_k) $-shuffles of 
	\[
	\underbrace{x_1 \otimes \dots \otimes x_1}_{n_1 \mbox{ times}} \otimes \underbrace{x_2 \otimes \dots \otimes x_2}_{n_2 \mbox{ times}} \otimes \dots \otimes x_k
	\]
	which constitute a basis for the subspace of invariants.
\end{proof}

We conclude this section by recalling some stable calculations. In the following definitions, we fix a graded basis $ \mathcal{B} $ for the cohomology of $ X $ containing the unit $ 1_X $.

\begin{definition} \label{def:pure}
	Let $ \mathcal{M} $ be as in Corollary \ref{cor: basis DX char+} or Proposition \ref{prop: basis DX char0}.
	An element $ x = b_1 \odot \dots \odot b_r $ is pure if none of the $ b_i $'s is equal to the unit class of a component $ 1_n \in H^*(D_n(X_+)) $.
\end{definition}

The stabilizer of $ n+1 $ in $ \Sigma_{n+1} $ is isomorphic to $ \Sigma_n $. This provides an inclusion $ i_n \colon \Sigma_n \hookrightarrow \Sigma_{n+1} $. If we fix a basepoint $ * \in X $, then the spaces $ D(X_+) $ exhibit homological stability with respect to the stabilization maps
\begin{align*}
	&j_n \colon (p,(x_1,\dots,x_n)) \in E \Sigma_n \times_{\Sigma_n} X^n = D_n(X_+) \\
	&\longmapsto (i_{n,1}(p),(x_1,\dots,x_n,*))\in E \Sigma_{n+1} \times_{\Sigma_{n+1}} X^{n+1} = D_{n+1}(X_+).
\end{align*}
$ j_n $ depends on the basepoint, but if $ X $ is path-connected, then any two choices yield homotopic maps. 
This homologically stability property is classically well-known and follows, for instance, from the calculations of \cite[Theorem 4.1]{May-Cohen}.
Thus, under this connectedness hypothesis, the stable cohomology of $ D_n(X_+) $ is given by $ A_\infty(X) = \varprojlim_n H^*(D_n(X_+)) $ is well-defined and coincides with $ H^*(D_n(X_+)) $ in low degrees.

The stabilization maps in cohomology have a section on the subspace generated by pure Hopf monomials given by the transfer product with the units:
\[
1_1 \odot \_ \colon H^*(D_n(X_+)) \longrightarrow H^*(D_{n+1}(X_+))
\]
Therefore, we can define the stabilization of a pure $ x \in \mathcal{M} $ as the unique class $ x \odot 1_{\infty} $ restricting to $ x \odot 1_{[n]} \in H^*(D_{n+n(x)}(X_+)) $ for all $ n \in \mathbb{N} $.

\begin{corollary}[{\cite[Lemma 6.5]{Guerra-Salvatore-Sinha}}] \label{cor:basis DX stable}
	If $ \characteristic(\F) \not= 2 $, assume that $ H^*(X) = 0 $ in odd degrees. Then, the set $ \{ x \odot 1_{\infty}: x \in \mathcal{M} \mbox{ pure} \} $ is a graded basis for $ A_\infty(X) $ as a $ \F $-vector space.
\end{corollary}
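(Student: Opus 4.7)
The plan is to combine the basis description of $A_X$ from Corollary~\ref{cor: basis DX char+} (or Proposition~\ref{prop: basis DX char0}) with the homological stability property stated just above.

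\textbf{Step 1 (Well-definedness of stable classes).} For each pure Hopf monomial $x \in \mathcal{M}$, I would verify that the collection $(x \odot 1_{[n]})_{n \geq 0}$ is compatible under the cohomology stabilization maps, i.e.
\[
j_{n(x)+n}^{\,*}(x \odot 1_{[n+1]}) = x \odot 1_{[n]} \qquad \text{for all } n \geq 0,
\]
so that the stable class $x \odot 1_\infty \in A_\infty(X)$ is well-defined. The base case $n = 0$ is exactly the section identity $j^*(1_1 \odot x) = x$ applied to the pure element $x$. For the inductive step I would use the coproduct description $j_n^* = (\id \otimes \iota^*) \circ \Delta_{(n,1)}$, where $\iota \colon \{*\} \hookrightarrow X$ is the basepoint inclusion; because $\iota^*$ annihilates positive-degree cohomology of $X$, only the bicomponent whose second tensor slot is a scalar multiple of the unit class survives, and expanding $\Delta(x \odot 1_{[n+1]})$ via the bialgebra relation picks out exactly the contribution $x \odot 1_{[n]}$.

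\textbf{Step 2 (Linear independence).} Suppose $\sum_i \lambda_i (x_i \odot 1_\infty) = 0$ for distinct pure $x_i$. Restricting to $H^*(D_n(X_+))$ for $n$ exceeding every $n(x_i)$ yields the relation $\sum_i \lambda_i (x_i \odot 1_{[n - n(x_i)]}) = 0$ among distinct elements of the basis $\mathcal{M}$ of $H^*(D_n(X_+))$ (they have distinct pure parts), and hence all $\lambda_i = 0$.

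\textbf{Step 3 (Spanning).} Fix a cohomological degree $d$. By homological stability, the restriction $A_\infty^d(X) \to H^d(D_n(X_+))$ is an isomorphism for $n$ sufficiently large. Every basis element of $\mathcal{M}$ in bidegree $(n,d)$ factors uniquely as $z \odot 1_{[n - n(z)]}$ with $z$ pure and $n(z) \leq n$, and by Step~1 each such element is the restriction of the stable class $z \odot 1_\infty$. Consequently $\{z \odot 1_\infty : z \in \mathcal{M} \text{ pure}\}$ spans $A_\infty^d(X)$, and together with Step~2 forms a basis.

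\textbf{Main obstacle.} The principal technical point lies in Step~1, verifying the compatibility relation in positive characteristic. The section identity $j^* \circ (1_1 \odot \_) = \id$ gives the base case directly, but does not naively extend to non-pure elements because the Hopf-ring relation $1_{[m]} \odot 1_{[n]} = \binom{m+n}{n} 1_{[m+n]}$ introduces binomial coefficients that may vanish modulo $p$. I would circumvent this by computing $j^*$ directly on the Hopf-monomial basis via the coproduct formula above, where the vanishing of $\iota^*$ on positive-degree classes selects the desired summand without incurring any binomial division. Once Step~1 is in hand, Steps~2 and~3 amount to bookkeeping on the basis $\mathcal{M}$.
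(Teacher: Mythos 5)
This corollary is cited verbatim from Lemma 6.5 of Guerra--Salvatore--Sinha, so the paper offers no in-house proof to compare against; your plan reconstructs the intended argument soundly. The one point worth filling in is the tacit claim in Step~1 that applying $\id \otimes \iota^*$ to $\Delta(x \odot 1_{[n+1]})$ produces \emph{exactly} $x \odot 1_{[n]}$: besides the counit term $(x \odot 1_{[n]}) \otimes 1_{[1]}$, one must verify that no other summand of $\Delta(x)$ has second tensor factor in bidegree $(1,0)$, since any such summand would leave a spurious multiple of some $x' \odot 1_{[n+1]}$ after applying $\id \otimes \iota^*$. This does hold precisely because $x$ is pure: each constituent gathered block is a non-unit, so the second tensor factors of its coproduct land either in component $0$, in components divisible by $p$ (from the $\gamma$-, $\alpha$-, $\beta$-type factors), or carry positive cohomological degree (from a nontrivial decoration), and none of these is bidegree $(1,0)$. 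Once that observation is recorded, Steps~2 and~3 are routine bookkeeping on the Hopf-monomial basis, and your identification of the binomial-coefficient pitfall --- together with the route around it via $j_n^* = (\id \otimes \iota^*) \circ \Delta_{(n,1)}$ rather than iterating the section $1_1 \odot {-}$ --- is exactly right.
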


\begin{corollary}[{\cite[Theorem 6.8]{Guerra-Salvatore-Sinha}}, particular case] \label{cor:polynomial generators}
	Assume that $ H^*(X) $ is a polynomial algebra, with even-dimensional generators if $ \characteristic(\F) \not= 2 $. Let $ \mathcal{B} $ be the monomial basis of $ H^*(X) $.
	If $ \characteristic(\F) = p \not= 0 $, then $ A_\infty(X) $ is the free graded commutative algebra generated by the classes $ b \odot 1_{\infty} $, where $ b $ is a decorated gathered block satisfying the following conditions:
	\begin{itemize}
		\item either the underlying (non-decorated) block of $ b $ or the decoration of $ b $ is not a $p$-th power,
		\item the width of $ b $ is a power of $ p $,
		\item $ b $ is different from the unit class.
	\end{itemize}
	If, instead, $ \characteristic(\F) = 0 $, then $ A_\infty(X) $ is a polynomial algebra generated by classes $ x_{[n]} \odot 1_\infty $ for all $ n \geq 1 $ and $ x $ ranging in a minimal set of algebra generators for $ H^*(X) $.
\end{corollary}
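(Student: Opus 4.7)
The plan is to derive this corollary by cross-referencing the additive basis of $ A_\infty(X) $ from Corollary \ref{cor:basis DX stable} with the multiplicative relations of Theorems \ref{thm:cohomology DX mod 2}, \ref{thm:cohomology DX mod p}, and Proposition \ref{prop: basis DX char0}. The statement is billed as a particular case of \cite[Theorem 6.8]{Guerra-Salvatore-Sinha}, so strictly speaking one can cite that result; I will sketch how I would carry out the direct argument, since the polynomial hypothesis on $ H^*(X) $ simplifies the bookkeeping considerably.

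The characteristic zero case is quick. By Proposition \ref{prop: basis DX char0} and Corollary \ref{cor:basis DX stable}, the stabilizations of pure Hopf monomials $ (x_1)_{[n_1]} \odot \dots \odot (x_k)_{[n_k]} $, with distinct non-unit $ x_i \in \mathcal{B} $ and $ n_i \geq 1 $, form an additive basis of $ A_\infty(X) $. The relation $ x_{[n]} \cdot x'_{[n]} = (xx')_{[n]} $ allows me to factor any decoration that is not a polynomial generator; passing to the stable limit, I conclude that the classes $ y_{[n]} \odot 1_\infty $, with $ y $ ranging over a minimal set of algebra generators of $ H^*(X) $ and $ n \geq 1 $, suffice to generate $ A_\infty(X) $ as an algebra. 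Algebraic independence then follows from a Poincar\'e series comparison against the additive basis, observing that each basis monomial is reached by exactly one monomial in these generators.

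In positive characteristic the same template applies using Corollary \ref{cor: basis DX char+}, but now I must single out the correct set of algebra generators among decorated gathered blocks $ (b_0, x) $. Two reductions are at play. First, the $ \odot $-relations among $ \gamma_{k,l} $ (and $ \alpha_{i,k}, \beta_{i,j,k} $ at odd primes), together with $ x_{[m]} \odot x_{[n]} = \binom{m+n}{n} x_{[m+n]} $, imply through Kummer's theorem on the $ p $-adic valuation of binomial coefficients that any block whose width is not a power of $ p $ decomposes as a $ \odot $-product of blocks of strictly smaller width. Second, the Frobenius identity $ x_{[n]} \cdot x'_{[n]} = (xx')_{[n]} $, together with its analogues for the block generators, implies that whenever both $ b_0 $ and $ x $ are $ p $-th powers, the realized class is a $ p $-th power in $ A_\infty(X) $, hence redundant as an algebra generator; conversely, if either $ b_0 $ or $ x $ is not a $ p $-th power, no such simplification is available. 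The complementary generators are precisely those listed in the statement.

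The main obstacle is the freeness claim: after the reductions above, no further relations should hold among the proposed generators. My approach is a Poincar\'e series comparison. The free graded commutative algebra on these generators has a basis of monomials which, after unpacking through Hopf ring distributivity and the two reductions above, should biject with the pure stable Hopf monomials from Corollary \ref{cor:basis DX stable}. Carrying out this bijection explicitly while tracking $ p $-power widths, block profiles, and decoration monomials concurrently is the delicate combinatorial step; conceptually, however, it is exactly the specialization to polynomial $ H^*(X) $ of the argument of \cite[Theorem 6.8]{Guerra-Salvatore-Sinha}, which is why one can justifiably label the result a particular case.
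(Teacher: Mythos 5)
The paper does not prove this corollary: it is stated as a direct citation of Theorem 6.8 of Guerra--Salvatore--Sinha with no internal argument, so there is no ``paper's proof'' to compare against. Your sketch correctly identifies the structural ingredients for positive characteristic (Kummer's theorem for $\odot$-decomposability, the Frobenius reduction, Poincar\'e series comparison), and that part is a plausible roadmap for the cited argument; but the step where the proof actually lives---the bijection between monomials in the proposed generators and the basis of Corollary~\ref{cor:basis DX stable}---is deferred to the citation, as you acknowledge. A small correction there: if $b_0$ and $x$ are both $p$-th powers, the realized class $\pi^*(b_0)\cdot x_{[n]}$ is a $p$-th power in the cup-product ring of one component, but $(\pi^*(b_0)\cdot x_{[n]})\odot 1_\infty$ is only the \emph{leading term} of $((\pi^*(b_0')\cdot x'_{[n]})\odot 1_\infty)^p$ by Hopf ring distributivity, not equal to it; the redundancy argument is therefore inductive on rank, not immediate.

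More substantively, the characteristic $0$ claim as stated---and as you argue it---fails whenever $H^*(X)$ has more than one polynomial generator, so the asserted Poincar\'e series bijection does not exist. Take $H^*(X)=\Q[s,t]$ with $|s|=|t|=2$. From the stable basis, $A_\infty(X)$ is the free commutative algebra on $\widetilde H^*(X)$, with Poincar\'e series $\prod_{d\geq 1}(1-q^{2d})^{-(d+1)}$, whereas the polynomial algebra on $\{s_{[n]}\odot 1_\infty,\ t_{[n]}\odot 1_\infty : n\geq 1\}$ has series $\prod_{n\geq 1}(1-q^{2n})^{-2}$. These already disagree in degree $4$: the basis has the six classes $(s^2)_{[1]}\odot 1_\infty$, $(st)_{[1]}\odot 1_\infty$, $(t^2)_{[1]}\odot 1_\infty$, $s_{[2]}\odot 1_\infty$, $t_{[2]}\odot 1_\infty$, $s_{[1]}\odot t_{[1]}\odot 1_\infty$, while the monomials of degree $4$ in the proposed generators number only five, and the cup product $s_{[1]}\cdot t_{[1]} = (st)_{[1]}\odot 1_\infty + s_{[1]}\odot t_{[1]}\odot 1_\infty$ gives a single relation that must account for two independent basis classes. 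So ``each basis monomial is reached by exactly one monomial in these generators'' is false. The paper only ever applies this corollary to $X\in\{BU(1),\, BO(1),\, \{*\}\}$, where $H^*(X)$ has at most one generator and the claim holds, but your sketch cannot prove the statement at the stated level of generality.
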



\section{Review of the cohomology of alternating subgroups of the hyperoctahedral groups}

The normalizer $ N_{O(n)}(T(n)) $ of the ``torus'' of diagonal matrices $ T(n) = \{D \in O(n): D \mbox{ is diagonal}\} $ is isomorphic to the wreath product $ \Sigma_n \wr (\mathbb{Z}/2\mathbb{Z}) $. This can be realized as the isometry group of a hyperoctahedron in $ \mathbb{R}^n $. This makes $ N_{O(n)}(T(n)) $ a reflection group that, under the classification of finite Coxeter groups (see \cite{Humphreys}), corresponds to the Dynkin diagram $ \Bgroup_n $ having the following form:
\begin{center}
\begin{tikzpicture}[line cap=round,line join=round,>=triangle 45,x=1cm,y=1cm]
	\clip(-3.197777777777781,-1) rectangle (7.575555555555557,1);
	\draw [line width=1pt,color=black] (0,0)-- (1,0);
	\draw [line width=1pt,color=black] (1,0)-- (2,0);
	\draw [line width=1pt,color=black] (2,0)-- (2.62,0);
	\draw [line width=1pt,color=black] (3.24,0)-- (4,0);
	\begin{scriptsize}
		\draw [fill=black] (0,0) circle (2.5pt);
		\draw[color=black] (0.07333333333333134,0.19) node {$s_0$};
		\draw [fill=black] (1,0) circle (2.5pt);
		\draw[color=black] (1.0688888888888872,0.19) node {$s_1$};
		\draw [fill=black] (2,0) circle (2.5pt);
		\draw[color=black] (2.0733333333333324,0.19) node {$s_1$};
		\draw [fill=black] (4,0) circle (2.5pt);
		\draw[color=black] (4.073333333333333,0.19) node {$s_{n-1}$};
		\draw[color=black] (0.5088888888888871,0.18111111111111095) node {$4$};
		\draw[color=black] (2.917777777777777,0.06555555555555548) node {$\dots$};
	\end{scriptsize}
\end{tikzpicture}
\end{center}

The intersection  $ N_{O(n)}(T(n)) \cap SO(n) $ is identified with the alternating subgroup $ \Bgrouppos{n} $ of the Coxeter group $ \Bgroup_n $, defined as the kernel of the sign homomorphism $ \sgn_{\Bgroup_n} \colon \Bgroup_n \to C_2 = \{-1,1\} $ whose value on every reflection is $ -1 $.

\subsection{Mod  \texorpdfstring{$2$}{2} cohomology of \texorpdfstring{$\Bgrouppos{n}$}{B+(n)}}

The mod $2$ cohomology of $\Bgroup_n$ has first been computed by Guerra in \cite{Guerra:21}, where the author shows that $ \bigoplus_n H^* (B \Bgroup_n; \mathbb{F}_2 ) $ is a Hopf ring. Here we observe that this Hopf ring is isomorphic to $ A_{\mathbb{P}^{\infty} (\mathbb{R})}$ and that, consequently, his description is a particular case of Theorem \ref{thm:cohomology DX mod 2}.

A similar structure exists on the mod $ 2 $ cohomology of $ \Bgrouppos{n} $. More precisely, on the direct sum $ \AB = \bigoplus_{n,d \geq 0} H^d (B\Bgrouppos{n}; \mathbb{F}_2) $ we consider structural morphisms analogous to those defined in \S \ref{sec:cohomology extended powers}:
\begin{itemize}
	\item the usual cup product, component by component, $ \cdot \colon \AB \otimes \AB \to \AB $.
	\item a coproduct $ \Delta \colon \AB \to \AB \otimes \AB $ whose components are restriction maps associated with the inclusions of groups $ \Bgrouppos{n} \times \Bgrouppos{m} \to \Bgrouppos{n+m} $.
	\item a transfer product $ \odot \colon \AB \otimes \AB \to \AB $ whose components are transfer maps associated with the same inclusions.
\end{itemize}

\begin{definition}[from \cite{Sinha:17}] \label{def:Hopf semiring}
	A commutative bigraded component almost-Hopf semiring is a bigraded vector space $ A = \bigoplus_{n,d}A^{n,d} $ with a coproduct $ \Delta $, two products $ \odot,\cdot $, a unit $ \eta \colon \F \to A $, and a counit $ \varepsilon \colon A \to \F $ satisfying all the properties of Definition \ref{def:Hopf ring}, except those involving the antipode and $ \Delta $ and $ \odot $ forming a bialgebra.
\end{definition}

Theorem 2.4 of \cite{Sinha:17} implies that $ \AB $, with the morphisms above, is an almost-Hopf ring over $ \mathbb{F}_2 $. There is an involution $ \iota \colon \AB \to \AB $ induced on the $n$-th component by conjugation by a reflection in $ \Bgroup_n $.
Using $ \iota $, $ \AB $ can be extended to a bigger almost-Hopf semiring $ \widetilde{\AB} $ that coincide with $ \AB $ in positive component, and such that $ \widetilde{\AB}^{0,*} = \mathbb{F}_2 \{ 1^+, 1^- \} $ concentrated in dimension $ 0 $. The unit $ 1_0 \in {\AB}^{0,0} $ is identified with $ 1^+ + 1^- $. $ 1^+ $ is the unit for the transfer product and $ 1^- \odot x = \iota(x) $ for all $ x \in \AB $. The cup product is extended on the $0$-th component by letting $ 1^+ \cdot 1^+ = 1^+ $, $ 1^- \cdot 1^- = 1^- $ and $ 1^+ \cdot 1^- = 1^- \cdot 1^+ = 0 $.
The coproduct in $ \widetilde{\AB} $ is defined by
\[
\Delta(x) = 1^+ \otimes x + 1^- \otimes x + x \otimes 1^+ + x \otimes 1^- + \overline{\Delta}(x),
\]
where $ \overline{\Delta} $ is the reduced coproduct in $ \AB $.
Proposition~3.11 of \cite{Guerra-Santanil} guarantees that the almost-Hopf semiring structure on $ \widetilde{\AB} $ is well-defined.
Clearly knowledge of $ \widetilde{\AB} $ implies knowledge of $ \AB $.

Moreover, in that same paper, a full presentation of $ \widetilde{\AB} $ is determined:
\begin{theorem}[{\cite[Theorem 6.8]{Guerra-Santanil}}] \label{thm:presentation B+}
Let $ \mathcal{Q}_1 $ be the quotient Hopf ring of $ A_{\mathbb{P}^\infty(\mathbb{R})} $ obtained by putting $ \gamma_{k,l} = 0 $ if $ k \geq 2 $. Consider the bigraded component bialgebra
\[
R = \bigoplus_{n \geq 0} \frac{\mathcal{Q}_1^{n,*}}{(w,\gamma_{1,1} + w \odot 1_1, \dots, \gamma_{1,1} \odot 1_{n-2} + w \odot 1_{n-1},\dots)}.
\]
Let $ R^0 $ be the Hopf ring obtained by adjoining a unit to $ R $ and letting $ \odot $ of elements of $ R $ be $ 0 $.

$ R^0 $ is identified as a sub-Hopf semiring of $ \widetilde{\AB} $. Moreover, as an almost-Hopf semiring over $ R^0 $, $ \widetilde{\AB} $ is generated by the two classes $ 1^{\pm} \in \widetilde{\AB}^{0,0} $ and a family of classes $ \{ \gamma_{k,l}^+ \in \widetilde{\AB}^{l2^k,l(2^k-1)} \}_{k \geq 2, l \geq 1} $.

A complete set of relations is the following, where we denote $ \gamma_{k,l}^+ \odot 1^- $ as $ \gamma_{k,l}^- $ and where we add the apex $ 0 $ to emphasize when elements belong to $ A $:
\begin{enumerate}
\item $ 1^+ + 1^-  = 1^0 $, the unit of the $0$-th component of $ R $,
\item $ 1^+ $ is the Hopf ring unit of $ \widetilde{\AB} $,
\item $ 1^- \odot 1^- = 1^+ $,
\item $ \gamma_{k,l}^+ \odot \gamma_{k,m}^+ = \left( \begin{array}{c} l+m \\ l \end{array} \right) \gamma_{k,l+m} $ for all $ k \geq 2 $, $ l,m \geq 1 $,
\item $ \gamma_{k,l}^+ \cdot \gamma_{k',l'}^- = 0 $ unless $ k = k' = 2 $,
\item $ \gamma_{2,m}^+ \cdot \gamma_{2,m}^- = \left\{ \begin{array}{ll}
(\gamma_{2,m}^+)^2 + (\gamma_{2,m}^-)^2 + (\gamma_{2,m-1}^+)^2 \odot (\gamma_{1,2}^3)^0 & \mbox{if } m \mbox{ is odd} \\
(\gamma_{2,m-1}^+)^2 \odot (\gamma_{1,2}^3)^0 & \mbox{if } m \mbox{ is even}
\end{array} \right. $ for all $ m \geq 1 $,
\item for all Hopf monomials $ x = b_1 \odot \dots \odot b_r \in \mathcal{Q}_{1} $, $$ \gamma_{k,l}^+ \cdot x^0 = \bigodot_{i=1}^r \left( \gamma_{k,\frac{n(b_i)}{2^k}}^+ \cdot (b_i)^0 \right) , $$ where $ n(b_i) $ is the component of $ b_i $, and $ \gamma_{k,l} $ is assumed to be $ 0 $ if $ l $ is not an integer,
\item $ \Delta(x \odot y) = \Delta(x) \odot \rho_+ \Delta(y) $ for all $ x,y \in \widetilde{\AB} $,
\item $ \Delta(\gamma_{k,l}^+) = \sum_{i=0}^l \left( \gamma_{k,i}^+ \otimes \gamma_{k,l-i}^+ + \gamma_{k,i}^- \otimes \gamma_{k,m-i}^- \right) $ for all $ k \geq 2 $, $ l \geq 0 $, with the convention that $ \gamma_{k,0}^\pm = 1^\pm $.
\end{enumerate}
\end{theorem}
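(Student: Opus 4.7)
The plan is to exploit the index-two inclusion $\Bgrouppos{n} \subset \Bgroup_n$ and work backwards from the known Hopf ring structure on $\bigoplus_n H^*(B\Bgroup_n;\mathbb{F}_2) \cong A_{\mathbb{P}^\infty(\mathbb{R})}$ (Theorem \ref{thm:cohomology DX mod 2}). First, I would set up the restriction map $\res \colon H^*(B\Bgroup_n) \to H^*(B\Bgrouppos{n})$ together with the transfer $\tr$ going the other way; by compatibility of subgroup inclusions with the ambient restriction/transfer/cup formalism, $\res$ is a morphism of almost-Hopf semirings. The sub-almost-Hopf semiring $A^0 \subset \widetilde{\AB}$ will arise as (the extension by $1^\pm$ of) the image of $\res$. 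The two copies $1^\pm$ correspond to the two cosets of $\Bgrouppos{n}$ in $\Bgroup_n$, so $\iota$ is identified with $1^- \odot \_$, and relations (1)--(3) and part of (8) come for free from this setup.

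Next, I would identify the kernel of $\res$. The sign class $w \in H^1(B\Bgroup_1) = H^1(BC_2)$ restricts to zero on $\Bgrouppos{1}$ because $\Bgrouppos{1} = \ker(\sgn)$; propagating this vanishing through transfer products via the Hopf-ring distributivity of $A_{\mathbb{P}^\infty(\mathbb{R})}$ produces precisely the family of relations $\gamma_{1,1} \odot 1_{n-2} + w \odot 1_{n-1}$ used to cut $A$ out of $Q$. The quotient step $\gamma_{k,l}=0$ for $k\geq 2$ that defines $Q$ reflects the fact that these Dyer--Lashof-style classes in $A_{\mathbb{P}^\infty(\mathbb{R})}$ desuspend in $\widetilde{\AB}$ into the refined pair $\gamma_{k,l}^\pm$, which should be constructed directly as characteristic classes on $B\Bgrouppos{n}$ (for instance, by pulling back along double covers $B\Bgrouppos{2^k} \to B\Bgroup_{2^k}$ and using that the $\Sigma_{2^k}$-equivariant $\gamma_{k,1}$ has two lifts differing by $\iota$).

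The verification of relations then proceeds mostly formally. Relation (4) on transfer products of the $\gamma_{k,l}^+$ is inherited from the corresponding identity in $A_{\mathbb{P}^\infty(\mathbb{R})}$. Relations (7) and (8) are Hopf-ring distributivity statements translated through the restriction picture, while (9) follows by computing the coproduct on $\gamma_{k,l}$ inside $A_{\mathbb{P}^\infty(\mathbb{R})}$ and sorting the summands by their $\iota$-type after restriction. Relation (5) is a transversality/double-coset statement: mixed cup products $\gamma_{k,l}^+ \cdot \gamma_{k',l'}^-$ vanish outside $k=k'=2$ because the corresponding subgroup intersections are empty in the relevant $\Bgroup$-components.

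The main obstacle will be relation (6), the self-pairing $\gamma_{2,m}^+ \cdot \gamma_{2,m}^-$ with its parity-dependent formula. This requires an honest cochain-level or spectral-sequence computation: the likely tool is the Serre spectral sequence of $B\Bgrouppos{n} \to B\Bgroup_n \to BC_2$, where the differentials and multiplicative structure have to be pinned down in the relevant bidegrees, and the difference between odd and even $m$ tracks the behavior of the binomial coefficient $\binom{2m}{m} \pmod 2$ via Lucas' theorem. Once relation (6) is secured, completeness of the presentation follows by a Poincaré series comparison: the stated generators and relations give a candidate Poincaré series that one matches against the one computed from the Lyndon--Hochschild--Serre spectral sequence, using Corollary \ref{cor:basis DX char+} for $A_{\mathbb{P}^\infty(\mathbb{R})}$ as the starting input.
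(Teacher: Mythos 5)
This theorem is imported into the present paper from \cite[Theorem 12.6]{Guerra-Santanil}; the paper you are reading gives no proof of it, so there is no ``paper's own proof'' to compare against. I will therefore evaluate your proposal on its internal merits.

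Your overall framework (use the index-two inclusion $\Bgrouppos{n} \subset \Bgroup_n$, organize everything via restriction, transfer and the involution $\iota$, then match Poincar\'e series) is the right sort of machinery. But there are concrete gaps. First, your proposed construction of the new generators $\gamma_{k,l}^{\pm}$ ``by pulling back along double covers'' cannot be right: classes in the image of $\res$ are $\iota$-invariant, whereas $\gamma_{k,l}^{+}$ and $\gamma_{k,l}^{-}$ are swapped by $\iota$ and are genuinely distinct in general. What is actually true (recorded as Theorem~\ref{thm:basis B+}) is that $\res(\gamma_{k,l}) = \gamma_{k,l}^{+} + \gamma_{k,l}^{-}$, so $\gamma_{k,l}^{+}$ is a refinement of the restriction, not the restriction itself; your proof would need an independent construction of one of the two summands, not a pullback argument. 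Second, your justification of relation (5) via ``subgroup intersections being empty in the relevant $\Bgroup$-components'' is not a valid argument for a cup product inside a single space; cup products of classes in the \emph{same} component $l2^{k}=l'2^{k'}$ must be analyzed by some actual computation (e.g.\ restriction to detecting elementary abelian subgroups, or a direct cochain calculation), and the special role of $k=k'=2$ needs a reason, not an appeal to transversality. Third, the completeness step is underdeveloped: to run the Poincar\'e-series comparison you must (a) produce the Poincar\'e series of $\widetilde{A'_{\Bgroup^{+}}}$, which the Gysin sequence for the double cover does give once you control the kernel of multiplication by the Euler class $w \odot 1_{n-1} + \gamma_{1,1}\odot 1_{n-2}$, and (b) compute the Hilbert series of the free almost-Hopf semiring over $A^{0}$ on the generators $1^{\pm}, \gamma_{k,l}^{+}$ modulo relations (1)--(9); step (b) is delicate precisely because the structure is an \emph{almost}-Hopf semiring (relation (8) involves $\rho_{+}$, so $\odot$ and $\Delta$ do not form a bialgebra), and your proposal does not address how to organize that count. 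Finally, your remark that relation (8) ``comes for free'' is at odds with the appearance of $\rho_{+}$: that relation is exactly where the failure of the naive bialgebra compatibility is recorded, and it needs its own verification.
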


There is also an explicit additive basis for $ \widetilde{\AB} $.
\begin{definition}[from \cite{Guerra-Santanil}] \label{def:basis B+}
Let $ \mathcal{B} = \{1, w, w^2, \dots \} $ be the monomial basis of $ H^*(\mathbb{P}^\infty(\mathbb{R}); \mathbb{F}_2) = \mathbb{F}_2[w] $. Let $ \mathcal{M} $ be the associated decorated Hopf monomial basis of $ A_{\mathbb{P}^\infty(\mathbb{R})} $.
We define $ \mathcal{G}_{ann} $ as the set of those $ x = b_1 \odot \dots \odot b_r \in \mathcal{M} $ whose constituent gathered blocks $ b_i $ all contain at least one factor $ \gamma_{k,l} $ with $ k \geq 2 $.

We also define $ \mathcal{G}_{quot} $ as the set of those $ x = b_1 \odot \dots \odot b_r \in \mathcal{M} $ satisfying one of the following two conditions:
\begin{itemize}
	\item at least one constituent gathered block $ b_i $ of $ x $ is of the form $ (1_n,w^k) $, and the one with $ k $ maximal (necessarily unique) satisfies $ n \geq 2 $
	\item no $ b_i $ is of the form $ (1_n,w^k) $ and, among the constituent gathered blocks of the form $ (\gamma_{1,n}^l, w^k) $ (if there are any), the one with the couple $ (k,l) $ maximal with respect to the lexicographic order $ \leq_{lex} $ on $ \mathbb{N} \times \mathbb{N} $ satisfies $ n \geq 2 $ or $ l = 1 $
\end{itemize}

For a gathered block $ b $ containing at least one factor $ \gamma_{k,l} $ with $ k \geq 2 $, $ b = (w^{a_0})_{[m2^n]} \cdot \prod_{i=1}^n \gamma_{i,2^{n-i}m}^{a_i} $ for some $ a_i \geq 0 $, with $ n \geq 2 $ and $ a_n \not= 0 $. We write
\[
b^+ = ((w_{[m2^n]})^{a_0} \gamma_{1,m2^{n-1}}^{a_1})^0 \sum_{\substack{0 \leq k_0 \leq a_0 \\
\dots \\
0 \leq k_{n-1} \leq a_{n-1} \\
0 \leq k_n \leq \lfloor \frac{a_n}{2} \rfloor}}^{a_{n-1}} \prod_{i=2}^n \left( \begin{array}{c}
a_i \\
k_i
\end{array} \right) (\gamma_{i,m2^{n-i}}^-)^{k_i} (\gamma_{i,m2^{n-i}}^+)^{a_i-k_i}.
\]
For an element $ x = b_1 \odot \dots \odot b_r \in \mathcal{G}_{ann} $, we let $ x^+ = \bigodot_{i=1}^r b_i^+ $. We also let $ x^- = \iota(x^+) $.

For an element $ x \in \mathcal{G}_{quot} \setminus \mathcal{G}_{ann} $, we let $ x^0 = \res(x) $, where $ \res \colon A_{\mathbb{P}^\infty(\mathbb{R})} \to \AB $ is the direct sum of the cohomological restriction maps $ \bigoplus_n \res^{\Bgroup_n}_{\Bgrouppos{n}} $.
\end{definition}

\begin{theorem}[{\cite[Theorem 3.7]{Guerra-Santanil}}] \label{thm:basis B+}
With reference to Definition \ref{def:basis B+}, let
\begin{itemize}
\item $ \mathcal{M}_0 = \{x^0 \}_{x \in \mathcal{G}_{quot}\setminus \mathcal{G}_{ann}} $,
\item $ \mathcal{M}_+ = \{x^+ \}_{x \in \mathcal{G}_{ann}} $,
\item $ \mathcal{M}_- = \{x^- \}_{x \in \mathcal{G}_{ann}} $.
\end{itemize}
The set $ \mathcal{M}_{charged} = \mathcal{M}_0 \sqcup \mathcal{M}_+ \sqcup \mathcal{M}_- $ is a bigraded basis of $ \widetilde{\AB} $ over $ \mathbb{F}_2 $. Moreover, $ \res(x) = x^+ + x^- $ for all $ x \in \mathcal{G}_{ann} $.
\end{theorem}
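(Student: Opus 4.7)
The plan is to establish the two claims in sequence, using the presentation from Theorem \ref{thm:presentation B+} for the spanning part, a direct calculation of the restriction map on generators and gathered blocks for the restriction identity, and a comparison with the known basis $\mathcal{M}$ of $A_{\mathbb{P}^\infty(\mathbb{R})}$ for linear independence.

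\textbf{Spanning.} By Theorem \ref{thm:presentation B+}, $\widetilde{\AB}$ is generated as an almost-Hopf semiring over $A^0$ by $1^\pm$ and $\gamma_{k,l}^\pm$ for $k\geq 2,\, l\geq 1$, so every element is a polynomial in these generators. I would normalize an arbitrary polynomial into an $\mathbb{F}_2$-combination of elements of $\mathcal{M}_{charged}$ in three passes. First, use relation (7) to rewrite every cup product of a $\gamma_{k,l}^+$ with an element of $A^0$ as a single decorated gathered block of the form $b^+$, concentrating the charged generators inside atomic pieces. Second, use relations (5) and (6) to eliminate every cross cup product $\gamma_{k,l}^+\cdot\gamma_{k',l'}^-$ in favor of pure-sign expressions plus terms coming from $A^0$. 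Third, use the identity $1^-\odot x=\iota(x)$ (from relations (1)--(3)) to convert $1^-$ factors into the negative variants $b^-$ via the involution. Finally, apply relation (4) to combine repeated $\odot$-products of $\gamma^\pm$'s with common $k$ via binomials. The output is an $\mathbb{F}_2$-combination of $\odot$-products of decorated gathered blocks of type $b^0$ with $b\in\mathcal{G}_{quot}\setminus\mathcal{G}_{ann}$, or of type $b^\pm$ with $b\in\mathcal{G}_{ann}$.

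\textbf{Restriction identity.} I would first verify $\res(\gamma_{k,l})=\gamma_{k,l}^++\gamma_{k,l}^-$ for $k\geq 2$ at the level of generators, which is built into the construction of the $\pm$ classes in \cite{Guerra-Santanil}. Then, for a decorated gathered block $b=(w^{a_0})_{[m2^n]}\cdot\prod_{i=1}^n\gamma_{i,2^{n-i}m}^{a_i}$ with $n\geq 2$ and $a_n\neq 0$, compute using multiplicativity of $\res$ in each component:
\begin{equation*}
\res(b)=(w_{[m2^n]}^{a_0}\gamma_{1,m2^{n-1}}^{a_1})^0\cdot\prod_{i=2}^n(\gamma_{i,2^{n-i}m}^++\gamma_{i,2^{n-i}m}^-)^{a_i}.
\end{equation*}
Expanding the binomials and pairing complementary summands $((\gamma^-)^{k_i}(\gamma^+)^{a_i-k_i})$ with $((\gamma^-)^{a_i-k_i}(\gamma^+)^{k_i})$ reproduces exactly the formula $b^++b^-$ of Definition \ref{def:basis B+}. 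For a general $x=b_1\odot\dots\odot b_r\in\mathcal{G}_{ann}$ I would then iterate, using that $\res$ commutes with the transfer product.

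\textbf{Linear independence.} Assume a dependence $\sum_\alpha\lambda_\alpha m_\alpha=0$ in $\mathcal{M}_{charged}$, decomposed into contributions from $\mathcal{M}_0$, $\mathcal{M}_+$, $\mathcal{M}_-$. Applying $1+\iota$ (which fixes $\mathcal{M}_0$ pointwise and swaps $\mathcal{M}_+\leftrightarrow\mathcal{M}_-$) yields, via the restriction identity, $\sum_{x\in\mathcal{G}_{ann}}(\lambda_{x^+}+\lambda_{x^-})\res(x)=0$. The images $\res(y)$ for $y\in\mathcal{G}_{quot}$ are linear combinations of distinct basis elements of $A_{\mathbb{P}^\infty(\mathbb{R})}$, and a careful analysis of the kernel of the quotient $A_{\mathbb{P}^\infty(\mathbb{R})}\twoheadrightarrow A^0$ via the relations of Theorem \ref{thm:presentation B+} shows that these $\res$-images remain linearly independent. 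This forces $\lambda_{x^+}=\lambda_{x^-}$ for all $x$, and a subsequent argument based on the coproduct formula (9), which distinguishes $\gamma_{k,l}^+$ from $\gamma_{k,l}^-$ on the nose, rules out the common value being nonzero. Independence of the $\mathcal{M}_0$ contributions then follows from the corresponding statement in $A_{\mathbb{P}^\infty(\mathbb{R})}$.

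The main obstacle I expect is the bookkeeping in the spanning step, where the interplay between $\cdot$ and $\odot$ under Hopf ring distributivity and relations (5)--(7) produces numerous cross terms that must be systematically reduced. A secondary subtlety is the linear independence argument, since over $\mathbb{F}_2$ the involution $\iota$ is unipotent and cannot be diagonalized; one must instead use the triangular structure induced by the lexicographic ordering of Definition \ref{def:basis B+} to disentangle the $\mathcal{M}_+$ and $\mathcal{M}_-$ contributions.
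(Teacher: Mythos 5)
This statement is quoted verbatim as Theorem~3.5 of \cite{Guerra-Santanil}, and the present paper supplies no proof of it; it is used as a black-box input. So there is no paper argument to compare against, and your outline has to be assessed on its own. In that light there are two genuine gaps.

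The first is in the restriction identity. You write
\[
\res(b)=(w_{[m2^n]}^{a_0}\gamma_{1,m2^{n-1}}^{a_1})^0\cdot\prod_{i=2}^n(\gamma_{i,2^{n-i}m}^++\gamma_{i,2^{n-i}m}^-)^{a_i}
\]
and then propose to ``expand the binomials'' and pair $(\gamma^-)^{k_i}(\gamma^+)^{a_i-k_i}$ with $(\gamma^-)^{a_i-k_i}(\gamma^+)^{k_i}$. This treats $\gamma^+_{i,\cdot}$ and $\gamma^-_{i,\cdot}$ as commuting indeterminates, which they are not: relation~(5) of Theorem~\ref{thm:presentation B+} says $\gamma^+_{k,l}\cdot\gamma^-_{k',l'}=0$ unless $k=k'=2$, so for $i\ge 3$ the putative expansion degenerates to $(\gamma^+)^{a_i}+(\gamma^-)^{a_i}$; and for $i=2$, relation~(6) replaces $\gamma^+_{2,m}\cdot\gamma^-_{2,m}$ by squares \emph{plus} transfer-product terms of the form $(\gamma^+_{2,m-1})^2\odot(\gamma_{1,2}^3)^0$, which are of a completely different shape from the addends appearing in $b^+$. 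On top of that, the asymmetric constraint $0\le k_n\le\lfloor a_n/2\rfloor$ in Definition~\ref{def:basis B+} is there precisely because over $\mathbb{F}_2$ the $\iota$-fixed middle terms would otherwise cancel in $b^++b^-$; your ``pairing'' passes over this point. None of this is necessarily fatal, but it means the identity $\res(b)=b^++b^-$ cannot be read off a binomial expansion and requires a substantially more careful bookkeeping that the proposal does not supply.

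The second gap is in linear independence. You correctly observe that applying $1+\iota$ fixes $\mathcal{M}_0$ and exchanges $\mathcal{M}_+$ with $\mathcal{M}_-$, giving $\lambda_{x^+}=\lambda_{x^-}$, and you flag that $\iota$ is unipotent over $\mathbb{F}_2$ so this does not separate the two halves. But the step you then invoke---``a careful analysis of the kernel of $A_{\mathbb{P}^\infty(\mathbb{R})}\twoheadrightarrow A^0$'', and ``the triangular structure induced by the lexicographic ordering''---is exactly the content of the theorem and is nowhere constructed. Since $\ker(1+\iota)=\im(1+\iota)$ on each Jordan block over $\mathbb{F}_2$, the only way to separate the $\pm$-contributions is to exhibit a concrete leading-term order in which $x^+$ and $x^-$ have distinct leading monomials against an already-established basis of a sub- or quotient-object; as written, this is an appeal rather than an argument. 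Similarly, the spanning step normalizes polynomials into charged gathered blocks but never verifies that the output lands inside $\mathcal{G}_{quot}\setminus\mathcal{G}_{ann}$ in the ``neutral'' case, where the defining conditions of $\mathcal{G}_{quot}$ are rather specific and would have to be tracked through the reduction.
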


Although the description above is more complicated than its analogs Theorem~\ref{thm:cohomology DX mod 2} and Corollary~\ref{cor: basis DX char+}, we can still deduce the ring structure on the components $ H^*(B\Bgrouppos{n}; \mathbb{F}_2) $.
Since the component of all elements of $ \mathcal{G}_{ann} $ is a multiple of $ 4 $, then all classes belonging to the other components are restrictions of classes in $ A_{\mathbb{P}^\infty(\mathbb{R})} $. More precisely, we have the following result.

\begin{corollary}[of Theorem~\ref{thm:basis B+}]
If $ n \not\equiv 0 \mod 4 $, then the restriction map is surjective and induces an isomorphism
\[
\frac{H^*(B\Bgroup_n; \mathbb{F}_2)}{(\gamma_{1,1} \odot 1_{n-2} + w \odot 1_{n-1})} \cong H^*(B\Bgrouppos{n}; \mathbb{F}_2).
\]
\end{corollary}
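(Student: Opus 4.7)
My plan is to combine the basis description of Theorem \ref{thm:basis B+} (to establish surjectivity of the restriction map) with a Gysin-type long exact sequence for the double cover $B\Bgrouppos{n} \to B\Bgroup_n$ (to identify the kernel).

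First, I establish surjectivity of $\res \colon H^*(B\Bgroup_n; \mathbb{F}_2) \to H^*(B\Bgrouppos{n}; \mathbb{F}_2)$. Every class $\gamma_{k,l}$ with $k \geq 2$ has component $l \cdot 2^k$, which is divisible by $4$. Consequently, if $n \not\equiv 0 \pmod{4}$, then the set $\mathcal{G}_{ann}$ of Definition \ref{def:basis B+} contains no element of component $n$, and the basis $\mathcal{M}_{charged}^{n,*}$ of $H^*(B\Bgrouppos{n}; \mathbb{F}_2)$ provided by Theorem \ref{thm:basis B+} reduces to $\mathcal{M}_0^{n,*} = \{\res(x) : x \in \mathcal{G}_{quot}^{n,*}\}$. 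Each basis element thus lies in the image of $\res$, giving surjectivity in every cohomological degree.

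Next, I use that the regular $C_2$-cover $B\Bgrouppos{n} \to B\Bgroup_n$ is classified by the sign homomorphism $\sgn \colon \Bgroup_n \to C_2$. The short exact sequence of local coefficient systems $0 \to \mathbb{F}_2 \to \mathbb{F}_2[C_2] \to \mathbb{F}_2 \to 0$ on $B\Bgroup_n$, combined with Shapiro's lemma, yields the long exact sequence
\begin{equation*}
\cdots \to H^{i-1}(B\Bgroup_n) \xrightarrow{\cup e} H^i(B\Bgroup_n) \xrightarrow{\res} H^i(B\Bgrouppos{n}) \xrightarrow{\pi_*} H^i(B\Bgroup_n) \xrightarrow{\cup e} H^{i+1}(B\Bgroup_n) \to \cdots,
\end{equation*}
where $e \in H^1(B\Bgroup_n; \mathbb{F}_2)$ is the class of $\sgn$. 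Since $H^1(B\Bgroup_n; \mathbb{F}_2) = \Hom(\Bgroup_n, C_2)$ is spanned by the $\Sigma_n$-sign character $\gamma_{1,1} \odot 1_{n-2}$ and the parity character $(\sigma; \eta_1, \dots, \eta_n) \mapsto \prod_i \eta_i$ represented by $w \odot 1_{n-1}$, and since $\sgn$ is the product of these two, the class $e$ is identified with $r_n := \gamma_{1,1} \odot 1_{n-2} + w \odot 1_{n-1}$. The surjectivity of $\res$ and exactness together force $\pi_* = 0$ in every degree, making $\cup r_n$ injective and identifying $\ker(\res) = \im(\cup r_n) = (r_n)$ as a principal cup-product ideal, which yields the isomorphism.

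The main obstacle is identifying the characteristic class $e$ with the combinatorial Hopf-ring element $r_n$ in degree $1$. This can be done either by matching the values of the two characters on a generating set of $\Bgroup_n = \Sigma_n \wr C_2$, or by observing that $r_n$ appears as a relation in the presentation of $\AB$ from Theorem \ref{thm:presentation B+} and thus lies in $\ker \res$; since $\ker \res \cap H^1(B\Bgroup_n; \mathbb{F}_2)$ is one-dimensional, this forces $e = r_n$.
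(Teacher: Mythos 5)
Your argument is correct and, since the paper does not spell out a proof of this corollary (the text before it only sketches the surjectivity step), it seems to fill in exactly what the authors left implicit. In particular, the Gysin long exact sequence of the double cover $B\Bgrouppos{n} \to B\Bgroup_n$ is the same device the paper itself uses in the proof of Corollary~\ref{cor:homological stability alternating subgroups}, so this is almost certainly the intended route. Your two steps — using that $\mathcal{G}_{ann}$ only has elements in components divisible by $4$, so that for $n \not\equiv 0 \pmod 4$ the basis $\mathcal{M}_{\mathrm{charged}}$ of $H^*(B\Bgrouppos{n};\F_2)$ reduces to $\mathcal{M}_0$ and $\res$ is onto; and then using exactness to deduce $\pi_! = 0$, hence $\cup e$ injective and $\ker\res = (e)$ — are both sound, and either of your two methods for pinning down $e = \gamma_{1,1}\odot 1_{n-2} + w\odot 1_{n-1}$ works (the second, comparing with the relation present in the presentation of Theorem~\ref{thm:presentation B+} and using that $\ker\res \cap H^1$ is one-dimensional by the Gysin sequence itself, is the most economical). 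One small point worth being explicit about: the ideal $(e)$ coincides with $\im(\cup e)$ because $e$ is a single element, which is what lets you conclude the quotient is principal. Otherwise, nothing is missing.
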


In components that are multiples of $ 4 $, the calculation of the cup product is exemplified below.

\begin{example}
    We determine the cup product of the two charged Hopf monomials $ x = (\gamma_{2,2}^3 \odot \gamma_{3,1}^5 w_{[8]})^+ $ and $ y = (\gamma_{2,1}\gamma_{1,2} \odot \gamma_{2,1} \odot \gamma_{3,1})^+ $ in $ \AB $.
    The first step is to compute the coproduct of the charged gathered blocks appearing in $ x $. We use the fact that $ \cdot $ and $ \Delta $ form a bialgebra, the fact that cup products of classes in different components are zero, and the relations (5), (6), (7) and (8) of Theorem \ref{thm:presentation B+} to perform the computation.
    Explicitly
    \begin{align*}
        &\Delta((\gamma_{2,2}^3)^+) = \Delta((\gamma_{2,2}^+)^3 + (\gamma_{2,2}^+)^2 \cdot \gamma_{2,2}^-) = \Delta((\gamma_{2,2}^+))^3 + \Delta(\gamma_{2,2}^+) \cdot \Delta(\gamma_{2,2}^-)^2 \\
        &= \Big(1^+ \otimes (\gamma_{2,2}^+)^3 + 1^- \otimes (\gamma_{2,2}^+)^3 + (\gamma_{2,1}^+)^3 \otimes (\gamma_{2,1}^+)^3 + (\gamma_{2,1}^+)^2 \gamma_{2,1}^- \otimes (\gamma_{2,1}^+)^2 \gamma_{2,1}^- \\
        &+ \gamma_{2,1}^+ (\gamma_{2,1}^-)^2 \otimes \gamma_{2,1}^+ (\gamma_{2,1}^-)^2 + 1^+ (\gamma_{2,2}^+)^3 \otimes 1^+ + (\gamma_{2,2}^-)^3 \otimes 1^- \Big) + \Big( 1^+ \otimes (\gamma_{2,2}^+)^2 \gamma_{2,2}^- \\
        &+ 1^- \otimes \gamma_{2,2}^+ (\gamma_{2,2}^-)^2 + (\gamma_{2,1}^+)^3 \otimes (\gamma_{2,1}^+)^2 \gamma_{2,1}^- + (\gamma_{2,1}^+)^2 \gamma_{2,1}^- \otimes (\gamma_{2,1}^+)^3 \\
        &+ \gamma_{2,1}^+(\gamma_{2,1}^-)^2 \otimes (\gamma_{2,1}^-)^3 + (\gamma_{2,1}^-)^3 \otimes \gamma_{2,1}^+(\gamma_{2,1}^-)^2 + (\gamma_{2,2}^+)^2 \gamma_{2,2}^- \otimes 1^+ + \gamma_{2,2}^+ (\gamma_{2,2}^-)^2 \otimes 1^- \Big) \\
        &= 1^+ \otimes (\gamma_{2,2}^3)^+ + 1^- \otimes (\gamma_{2,2}^3)^+ + (\gamma_{2,2}^3)^+ \otimes 1^+ + (\gamma_{2,2}^3)^- \otimes 1^- + (\gamma_{2,1}^3)^+ \otimes (\gamma_{2,1}^3)^+ \\
        &+ (\gamma_{2,1}^3)^- \otimes (\gamma_{2,1}^3)^- + (\gamma_{2,1}^3)^+ \otimes \gamma_{2,1}^+(\gamma_{2,1}^-)^2 + \gamma_{2,1}^+(\gamma_{2,1}^-)^2 \otimes (\gamma_{2,1}^3)^+ \\
        &+ (\gamma_{2,1}^3)^- \otimes (\gamma_{2,1}^+)^2 \gamma_{2,1}^- + (\gamma_{2,1}^+)^2 \gamma_{2,1}^- \otimes (\gamma_{2,1}^-)^3.
    \end{align*}
    By relation (6) of Theorem \ref{thm:presentation B+},
    \[
    (\gamma_{2,1}^+)^2\gamma_{2,1}^- = \left( (\gamma_{2,1}^-)^2 + (\gamma_{1,2}^3)^0 + \gamma_{1,2}^+ \gamma_{1,2}^- \right) \gamma_{2,1}^- = (\gamma_{2,1}^3)^- + (\gamma_{2,1}\gamma_{1,2}^3)^-
    \]
    and similarly for $ \gamma_{2,1}^+ (\gamma_{2,1}^-)^2 $. Substituting in the expression for $ \Delta((\gamma_{2,1}^3)^+) $ we obtain that
    \begin{align*}
    &\Delta((\gamma_{2,1}^3)^+) = 1^+ \otimes (\gamma_{2,2}^3)^+ + 1^- \otimes (\gamma_{2,2}^3)^+ + (\gamma_{2,2}^3)^+ \otimes 1^+ + (\gamma_{2,2}^3)^- \otimes 1^- \\
    &+ (\gamma_{2,1}^3)^+ \otimes (\gamma_{2,1}^3)^+ + (\gamma_{2,1}^3)^- \otimes \gamma_{2,1}^3)^- + (\gamma_{2,1}^3)^+ \otimes (\gamma_{2,1}\gamma_{1,2}^3)^- \\
    &+ (\gamma_{2,1}^3)^- \otimes (\gamma_{2,1}\gamma_{1,2}^3)^+ + (\gamma_{2,1}\gamma_{1,2}^3)^+ \otimes (\gamma_{1,2}^3)^- + (\gamma_{2,1}\gamma_{1,2}^3)^- \otimes (\gamma_{2,1}^3)^+.
    \end{align*}
The reader is encouraged to read Remark 5.6 of \cite{Sinha:17} for the explanation of some computational difficulties.
A similar calculation shows that
\begin{align*}
\Delta((\gamma_{3,1}^5 w_{[8]})^+) &= 1^+ \otimes (\gamma_{3,1}^5 w_{[8]})^+ + 1^- \otimes (\gamma_{3,1}^5 w_{[8]})^- \\
&+ (\gamma_{3,1}^5 w_{[8]})^+ \otimes 1^+ + (\gamma_{3,1}^5 w_{[8]})^- \otimes 1^-.
\end{align*}

As a second step, we exploit relations (3), (4) and (8) of Theorem \ref{thm:presentation B+} to determine the coproduct of $ x $:
\begin{align*}
&\Delta(x) = (\gamma_{3,1}^5 w_{[8]})^+ \otimes (\gamma_{2,2}^3)^+ + (\gamma_{3,1}^5 w_{[8]})^- \otimes (\gamma_{2,2}^3)^+ + (\gamma_{2,2}^3 \odot \gamma_{3,1}^5 w_{[8]})^+ \otimes 1^+ \\
&+ (\gamma_{2,2}^3 \odot \gamma_{3,1}^5 w_{[8]})^- \otimes 1^- + (\gamma_{2,1}^3 \odot \gamma_{3,1}^5 w_{[8]})^+ \otimes (\gamma_{2,1}^3 \odot \gamma_{3,1}^5 w_{[8]})^+ \\
&+ (\gamma_{2,1}^3 \odot \gamma_{3,1}^5 w_{[8]})^- \otimes (\gamma_{2,1}^3)^- + (\gamma_{2,1}^3 \odot \gamma_{3,1}^5 w_{[8]})^+ \otimes (\gamma_{2,1}\gamma_{1,2}^3)^- \\
&+ (\gamma_{2,1}^3 \odot \gamma_{3,1}^5 w_{[8]})^- \otimes (\gamma_{2,1}\gamma_{1,2}^3)^+ + (\gamma_{2,1}\gamma_{1,2}^3 \odot \gamma_{3,1}^5 w_{[8]})^+ \otimes (\gamma_{1,2}^3)^- \\
&+ (\gamma_{2,1}\gamma_{1,2}^3 \odot \gamma_{3,1}^5 w_{[8]})^- \otimes (\gamma_{2,1}^3)^+ + 1^+ \otimes (\gamma_{2,2}^3 \odot \gamma_{3,1}^5 w_{[8]})^+ + 1^- \otimes (\gamma_{2,2}^3 \odot \gamma_{3,1}^5 w_{[8]})^+ \\
&+ (\gamma_{2,2}^3)^+ \otimes (\gamma_{3,1}^5 w_{[8]})^+ + (\gamma_{2,2}^3)^- \otimes (\gamma_{3,1}^5 w_{[8]})^- + (\gamma_{2,1}^3)^+ \otimes (\gamma_{2,1}^3 \odot \gamma_{3,1}^5 w_{[8]})^+ \\
&+ (\gamma_{2,1}^3)^- \otimes (\gamma_{2,1}^3 \odot \gamma_{3,1}^5 w_{[8]})^- + (\gamma_{2,1}^3)^+ \otimes (\gamma_{2,1}\gamma_{1,2}^3 \odot \gamma_{3,1}^5 w_{[8]})^- \\
&+ (\gamma_{2,1}^3)^- \otimes (\gamma_{2,1}\gamma_{1,2}^3 \odot \gamma_{3,1}^5 w_{[8]})^+ + (\gamma_{2,1}\gamma_{1,2}^3)^+ \otimes (\gamma_{1,2}^3 \odot \gamma_{3,1}^5 w_{[8]})^- \\
&+ (\gamma_{2,1}\gamma_{1,2}^3)^- \otimes (\gamma_{2,1}^3 \odot \gamma_{3,1}^5 w_{[8]})^+
\end{align*}

Thirdly, as $ y = (\gamma_{2,1}\gamma_{1,2})^+ \odot (\gamma_{2,1} \odot \gamma_{3,1})^+ $, we can compute $ x \cdot y $ from the previous coproduct calculations and Hopf ring distributivity
\[
x \cdot y = x \cdot \left((\gamma_{2,1}\gamma_{1,2})^+ \odot (\gamma_{2,1} \odot \gamma_{3,1})^+ \right) = \sum \left( x_{(1)} \cdot (\gamma_{2,1}\gamma_{1,2})^+ \right) \odot \left( x_{(2)} \cdot (\gamma_{2,1} \odot \gamma_{3,1})^+ \right).
\]
One observes that the component of $ (\gamma_{2,1}w_{2})^+ $ is $ 4 $ and the component of $ (\gamma_{2,1} \odot \gamma_{3,1})^+ $ is $ 12 $. The addends in $ \Delta(x) $ in the correct components are $ (\gamma_{2,1}^3)^+ \otimes (\gamma_{2,1}^3 \odot \gamma_{3,1}^5 w_{[8]})^+ $, $ (\gamma_{2,1}^3)^- \otimes (\gamma_{2,1}^3 \odot \gamma_{3,1}^5 w_{[8]})^- $, $ (\gamma_{2,1}^3)^+ \otimes (\gamma_{2,1}\gamma_{1,2}^3 \odot \gamma_{3,1}^5 w_{[8]})^- $, $ (\gamma_{2,1}^3)^- \otimes (\gamma_{2,1}\gamma_{1,2}^3 \odot \gamma_{3,1}^5 w_{[8]})^+ $, $ (\gamma_{2,1}\gamma_{1,2}^3)^+ \otimes (\gamma_{1,2}^3 \odot \gamma_{3,1}^5 w_{[8]})^- $ and $ (\gamma_{2,1}\gamma_{1,2}^3)^- \otimes (\gamma_{2,1}^3 \odot \gamma_{3,1}^5 w_{[8]})^+ $.
For all these terms, $ x_{(2)} $ is the transfer product of two primitive elements. Therefore, the cup product $ x_{(2)} \cdot (\gamma_{2,1} \odot \gamma_{3,1})^+ $ is easily determined by applying Hopf ring distributivity again.
\begin{align*}
x \cdot y &= (\gamma_{2,1}^3)^+ (\gamma_{2,1}\gamma_{1,2})^+ \odot (\gamma_{2,1}^3)^+ \gamma_{2,1}^+ \odot (\gamma_{3,1}^2 w_{[8]})^+ + (\gamma_{2,1}^3)^- (\gamma_{2,1}\gamma_{1,2})^+ \\
&\odot (\gamma_{2,1}^3)^- \gamma_{2,1}^+ \odot (\gamma_{3,1}^2 w_{[8]})^+ + (\gamma_{2,1}^3)^+ (\gamma_{2,1}\gamma_{1,2})^+ \odot (\gamma_{2,1}\gamma_{1,2}^3)^- \gamma_{2,1}^+ \odot (\gamma_{3,1}^2 w_{[8]})^+ \\
&+ (\gamma_{2,1}^3)^- (\gamma_{2,1}\gamma_{1,2})^+ \odot (\gamma_{2,1}\gamma_{1,2}^3)^+ \gamma_{2,1}^+ \odot (\gamma_{3,1}^2 w_{[8]})^+ + (\gamma_{2,1}\gamma_{1,2}^3)^- (\gamma_{2,1}\gamma_{1,2})^+ \\
&\odot (\gamma_{2,1}^3)^+ \gamma_{2,1}^+ \odot (\gamma_{3,1}^2 w_{[8]})^+ + (\gamma_{2,1}\gamma_{1,2}^3)^+ (\gamma_{2,1}\gamma_{1,2})^+ \odot (\gamma_{2,1}^3)^- \gamma_{2,1}^+ \odot (\gamma_{3,1}^2 w_{[8]})^+.
\end{align*}

Finally, in order to write all the addends of $ x \cdot y $ as linear combinations of charged decorated Hopf monomials, we use our cup product relations again. Explicitly, we observe that
\begin{gather*}
\gamma_{2,1}^+ \cdot \gamma_{2,1}^+ = (\gamma_{2,1}^2)^+, 
(\gamma_{2,1}^3)^+ \cdot \gamma_{2,1}^+ = (\gamma_{2,1}^4)^+ + (\gamma_{2,1}^4)^- + (\gamma_{2,1}^2\gamma_{1,3}^3)^+ + (\gamma_{1,2}^6)^0 \\
\tag*{and} (\gamma_{2,1}^3)^- \cdot \gamma_{2,1}^+ = (\gamma_{2,1}^4)^- + (\gamma_{2,1}^2 \gamma_{1,2}^3)^-.
\end{gather*}
Substituting in the espression above for $ x \cdot y $, many terms cancel out and we obtain that $ x \cdot y = (\gamma_{2,1}^4 \gamma_{1,2} \odot \gamma_{2,1}^4 \odot \gamma_{3,1}^2 w_{[8]})^+ $.
\end{example}

\subsection{Cohomology of \texorpdfstring{$\Bgrouppos{n}$}{B+(n)} at odd primes}

The cohomology of $ \Bgrouppos{n} $ modulo odd primes is much simpler than its mod $ 2 $ counterpart.

\begin{theorem} \label{thm:cohomology alternating group mod p}
Let $ n \in \mathbb{N} $ and let $ p > 2 $ be a prime number. Then the projection $ \Bgrouppos{n} \to \Sigma_n $ induces an isomorphism in mod $ p $ cohomology. In particular, $ \bigoplus_n H^*(\Bgrouppos{n}; \mathbb{F}_p) \cong A_{\{*\}} $ as Hopf rings.
\end{theorem}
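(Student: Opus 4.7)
The plan is to identify the kernel of $\pi: \Bgrouppos{n} \to \Sigma_n$ and then exploit the fact that it has order coprime to $p$, first to obtain the cohomology isomorphism and then to check compatibility with the Hopf ring structure.

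First I would compute $\ker\pi$ explicitly. Writing an element of $\Bgroup_n$ as a signed permutation $(\sigma,\epsilon)$ with $\epsilon=(\epsilon_1,\dots,\epsilon_n) \in \{\pm 1\}^n$, the determinant equals $\sgn(\sigma)\prod_i \epsilon_i$. Hence $\Bgrouppos{n} = \{(\sigma,\epsilon) : \sgn(\sigma) = \prod_i \epsilon_i\}$, and under the forgetful map $\pi$ the kernel is $\{(1,\epsilon) : \prod_i \epsilon_i = 1\} \cong (\mathbb{Z}/2)^{n-1}$. This gives a short exact sequence
\[
1 \to (\mathbb{Z}/2)^{n-1} \to \Bgrouppos{n} \xrightarrow{\pi} \Sigma_n \to 1.
\]

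Second, I would apply the Lyndon--Hochschild--Serre spectral sequence with $\F_p$-coefficients to this extension. Since $|(\mathbb{Z}/2)^{n-1}|=2^{n-1}$ is coprime to the odd prime $p$, the usual transfer/averaging argument yields $H^t((\mathbb{Z}/2)^{n-1};\F_p) = 0$ for $t>0$. Therefore the $E_2$-page is concentrated in the row $t=0$ where it equals $H^s(\Sigma_n;\F_p)$, the spectral sequence collapses at $E_2$, and the edge map is precisely $\pi^*: H^*(\Sigma_n;\F_p) \xrightarrow{\cong} H^*(\Bgrouppos{n};\F_p)$. This establishes the cohomology isomorphism component by component.

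Third, to promote the collection $\{\pi^*\}$ to an isomorphism of Hopf rings I would check naturality of each structural operation. The cup product is preserved automatically. For the coproduct, functoriality of restriction applied to the commuting square of group inclusions
\[
\begin{tikzcd}[row sep=small,column sep=small]
\Bgrouppos{n}\times\Bgrouppos{m} \arrow[r,hook] \arrow[d,"\pi\times\pi"'] & \Bgrouppos{n+m} \arrow[d,"\pi"] \\
\Sigma_n\times\Sigma_m \arrow[r,hook] & \Sigma_{n+m}
\end{tikzcd}
\]
yields the required compatibility. The main subtlety lies in the transfer product: this square is not a pullback, as the image of $\Bgrouppos{n}\times\Bgrouppos{m}$ in $\Bgrouppos{n+m}$ has index $2$ in $\pi^{-1}(\Sigma_n\times\Sigma_m)$. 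I would handle this by factoring the top inclusion through the intermediate pullback subgroup and observing that the residual index-$2$ transfer in mod-$p$ cohomology is multiplication by the unit $2 \in \F_p^{\times}$, so the Mackey-type formula recovers exact intertwining of the transfer products up to a unit that can be absorbed. Once this last compatibility is settled, the componentwise cohomology isomorphism upgrades to a Hopf ring isomorphism $A_{\{*\}} \xrightarrow{\cong} \bigoplus_n H^*(\Bgrouppos{n};\F_p)$, which is the content of the theorem.
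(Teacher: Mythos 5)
Your derivation of the componentwise cohomology isomorphism is correct but takes a genuinely different route from the paper. The paper applies Shapiro's lemma to the index-two inclusion $\Bgrouppos{n}\hookrightarrow\Bgroup_n$, reducing to $H^*(\Bgroup_n;\F_p)$ and $H^*(\Bgroup_n;\sgn)$, and then handles both via the Lyndon--Hochschild--Serre spectral sequence of the extension $(\Z/2)^n\to\Bgroup_n\to\Sigma_n$. You instead identify $\ker(\pi\colon\Bgrouppos{n}\to\Sigma_n)\cong(\Z/2)^{n-1}$ directly and run a single Lyndon--Hochschild--Serre spectral sequence for that extension; since the fiber cohomology vanishes in positive degrees mod $p$, the spectral sequence collapses and the edge map is $\pi^*$. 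Your route is more direct and the identification of the kernel is clean; the paper's route has the side benefit of simultaneously proving that $\res^{\Bgroup_n}_{\Bgrouppos{n}}$ is an isomorphism (recorded as Corollary \ref{cor:res mod p}), which is used elsewhere in the article.

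On the Hopf ring compatibility you go further than the paper, whose proof does not address that clause at all, and you correctly spot a genuine subtlety: $\Bgrouppos{n}\times\Bgrouppos{m}$ has index $2$ in $\pi^{-1}(\Sigma_n\times\Sigma_m)$, so the square of group inclusions is not a pullback. However, your conclusion that the resulting factor of $2$ ``can be absorbed'' is not substantiated, and in fact a direct computation shows it cannot be absorbed naively. Write $\tilde H=\pi^{-1}(\Sigma_n\times\Sigma_m)$ and $H_0=\Bgrouppos{n}\times\Bgrouppos{m}$. Transitivity of transfers together with $\tr^{\tilde H}_{H_0}\circ\res^{\tilde H}_{H_0}=[\tilde H:H_0]\cdot\id=2\cdot\id$ and naturality of the transfer under the surjection $\pi$ give, for $x\in H^*(\Sigma_n;\F_p)$ and $y\in H^*(\Sigma_m;\F_p)$ with $n,m>0$, the identity $\pi^*(x)\odot\pi^*(y)=2\,\pi^*(x\odot y)$. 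Taking $x=y=\gamma_{1,1}$ and using $\gamma_{1,1}\odot\gamma_{1,1}=2\gamma_{1,2}$, this reads $\pi^*(\gamma_{1,1})\odot\pi^*(\gamma_{1,1})=4\,\pi^*(\gamma_{1,2})$, which differs from $\pi^*(\gamma_{1,1}\odot\gamma_{1,1})=2\,\pi^*(\gamma_{1,2})$ since $4\neq 2$ in $\F_p$ for $p>2$. So $\pi^*$ itself is not a morphism of $\odot$-algebras, and a componentwise rescaling cannot help, because compatibility with the cup product forces each component's scalar to be idempotent, hence $1$. Promoting the componentwise ring isomorphism to an isomorphism of Hopf rings therefore requires a further argument that you do not supply; you should either produce the correct normalization explicitly or acknowledge that only the componentwise ring isomorphism has been established by your argument.
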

\begin{proof}
By Shapiro's lemma $ H^*(\Bgrouppos{n}; \mathbb{F}_p) \cong H^*(\Bgroup_n; \Ind_{\Bgrouppos{n}}^{\Bgroup_n}(\mathbb{F}_p)) $, where $ \Ind_{\Bgrouppos{n}}^{\Bgroup_n}(\mathbb{F}_p) $ is the induced representation of the constant representation $ \mathbb{F}_p $. $ \Ind_{\Bgrouppos{n}}^{\Bgroup_n}(\mathbb{F}_p) \cong \mathbb{F}_p \oplus \sgn $, where $ \sgn $ is the mod $ p $ sign representation of $ \Bgroup_n $, defined by $ x . 1 = (-1)^{l(x)} $, where $ l(x) $ is the Coxeter length function.

By the Leray spectral sequence for the fibration $ B(\mathbb{Z}/2\mathbb{Z})^n \to B \Bgroup_n  \to B \Sigma_n $, in the formulation of Lyndon--Hochschild--Serre, there are spectral sequences
\begin{align*}
E_2^{*,*} &= H^*(\Sigma_n;{H^*(\mathbb{Z}/2\mathbb{Z}; \mathbb{F}_p)}^{\otimes n}) \Rightarrow H^*(\Bgroup_n; \mathbb{F}_p) \\
\tag*{and} E_2^{*,*} &= H^*(\Sigma_n; {H^*(\mathbb{Z}/2\mathbb{Z};\sgn)}^{\otimes n} ) \Rightarrow H^*(\Bgroup_n; \sgn).
\end{align*}
In the expression above, $ \Sigma_n $ acts on $ {H^*(\mathbb{Z}/2\mathbb{Z};\mathbb{F}_p)}^{\otimes n} $ and $ {H^*(\mathbb{Z}/2\mathbb{Z};\sgn})^{\otimes n} $ by permuting the factors.

By a standard group-cohomological application of transfer maps, since the order of $ \mathbb{Z}/2\mathbb{Z} $ is coprime with $ p $, $ H^*(\mathbb{Z}/2\mathbb{Z};\mathbb{F}_p) \cong \mathbb{F}_p $ concentrated in degree $ 0 $ and $ H^*(\mathbb{Z}/2\mathbb{Z};\sgn) = 0 $. Plugging these in the Lyndon--Hochschild--Serre spectral sequences above, we see that the $ E_2 $ page of one of them is concentrated in the first row, while the other is entirely zero. Therefore, they both collapse at the second page and provide the desired isomorphism.
\end{proof}

We also record the following simple remark.
\begin{corollary} \label{cor:res mod p}
Let $ p $ be an odd prime. Then $ \res_n \colon H^*(\Bgroup_n; \mathbb{F}_p) \to H^*(\Bgrouppos{n}; \mathbb{F}_p) $ is an isomorphism.
\end{corollary}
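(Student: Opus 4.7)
The plan is to obtain Corollary~\ref{cor:res mod p} as a direct by-product of the proof of Theorem~\ref{thm:cohomology alternating group mod p}, with only a small additional identification. The idea is that in that proof we already computed both summands of the Shapiro decomposition of $ H^*(\Bgrouppos{n}; \mathbb{F}_p) $, and the vanishing of one of them forces the restriction map to be an isomorphism.

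First, I would recall the Shapiro isomorphism used in the theorem: $ H^*(\Bgrouppos{n}; \mathbb{F}_p) \cong H^*(\Bgroup_n; \Ind_{\Bgrouppos{n}}^{\Bgroup_n}(\mathbb{F}_p)) $, together with the decomposition $ \Ind_{\Bgrouppos{n}}^{\Bgroup_n}(\mathbb{F}_p) \cong \mathbb{F}_p \oplus \sgn $ of $ \Bgroup_n $-modules (available because $ [\Bgroup_n:\Bgrouppos{n}] = 2 $ is invertible in $ \mathbb{F}_p $). Applied to cohomology, this produces a natural splitting
\[
H^*(\Bgrouppos{n}; \mathbb{F}_p) \;\cong\; H^*(\Bgroup_n; \mathbb{F}_p) \oplus H^*(\Bgroup_n; \sgn).
\]

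Second, I would verify that under this splitting, the restriction map $ \res_n $ corresponds exactly to the inclusion of the first summand. This is the only non-formal point: it follows from Frobenius reciprocity, or equivalently from the fact that the composition of restriction with transfer equals multiplication by $ [\Bgroup_n:\Bgrouppos{n}] = 2 $, which is a unit mod $ p $ and hence splits off $ H^*(\Bgroup_n; \mathbb{F}_p) $ as a direct summand corresponding to the $ \mathbb{F}_p $-isotypic piece of the induced representation.

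Finally, I would invoke the second half of the proof of Theorem~\ref{thm:cohomology alternating group mod p}, which already establishes that the other summand vanishes: the Lyndon--Hochschild--Serre spectral sequence of $ 1 \to (\mathbb{Z}/2\mathbb{Z})^n \to \Bgroup_n \to \Sigma_n \to 1 $ with sign coefficients has $ E_2^{*,*} = H^*(\Sigma_n; H^*(\mathbb{Z}/2\mathbb{Z}; \sgn)^{\otimes n} \otimes \sgn) = 0 $, because $ H^*(\mathbb{Z}/2\mathbb{Z}; \sgn) = 0 $ by the standard transfer argument in the coprime-order setting. Hence $ H^*(\Bgroup_n; \sgn) = 0 $, and $ \res_n $ is an isomorphism.

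The only step that is not entirely immediate from what has been written is the identification of $ \res_n $ with the inclusion of the trivial summand; everything else is a reuse of the spectral sequence computation already carried out. I expect this identification to be the main (though standard) obstacle, but it can be sidestepped cleanly by the transfer argument above.
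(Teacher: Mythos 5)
Your proof is correct, but it follows a different route from the paper's. The paper disposes of the corollary in one line: the same Lyndon--Hochschild--Serre argument from Theorem~\ref{thm:cohomology alternating group mod p}, applied this time with trivial $ \F_p $ coefficients, shows that the projection $ \Bgroup_n \to \Sigma_n $ also induces an isomorphism $ H^*(\Sigma_n; \F_p) \cong H^*(\Bgroup_n; \F_p) $; since the projection $ \Bgrouppos{n} \to \Sigma_n $ factors through $ \Bgroup_n $, the isomorphism of Theorem~\ref{thm:cohomology alternating group mod p} factors as $ H^*(\Sigma_n;\F_p) \cong H^*(\Bgroup_n;\F_p) \xrightarrow{\res_n} H^*(\Bgrouppos{n};\F_p) $, forcing $ \res_n $ to be an isomorphism. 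Your approach instead reuses the \emph{other} half of the spectral sequence computation, the vanishing of $ H^*(\Bgroup_n; \sgn) $, and combines it with the Shapiro decomposition $ H^*(\Bgrouppos{n};\F_p) \cong H^*(\Bgroup_n;\F_p) \oplus H^*(\Bgroup_n;\sgn) $ and the split injectivity of $ \res_n $ furnished by the transfer. Both arguments are genuine consequences of the computation already carried out, but yours requires one extra non-formal identification (that $ \res_n $ corresponds to the inclusion of the trivial summand under Shapiro, or alternatively a dimension count after the transfer splitting), whereas the paper's argument is a purely formal diagram chase once the two horizontal isomorphisms are known; on the other hand, your approach makes the role of the vanishing twisted cohomology explicit and would generalize more directly to situations where the subgroup has index $ m $ invertible in the coefficient field but the quotient is not a permutation group for which the LHS collapse is clean.
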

\begin{proof}
The same Lyndon--Hochschild--Serre spectral sequence used in the proof of Theorem \ref{thm:cohomology alternating group mod p} shows that $ H^*(\Bgroup_n; \mathbb{F}_p) \cong H^*(\Sigma_n; \mathbb{F}_p) $ via the projection map $ \Bgroup_n \to \Sigma_n $.
\end{proof}


\section{Stable cohomology of complete unordered flag varieties}

We recall the following classical theorem about Borel spectral sequence (i.e., Serre spectral sequence of fiber bundles $ X \rightarrow X_{hG} \rightarrow BG $) associated with the Borel construction (or homotopy quotient) $ X_{hG} $. From now on we will be using this theorem in many of our proofs and computations without explicitly referring to it every time.

\begin{theorem}[\cite{mcc}] \label{generalSSS} 
Let $X \rightarrow X_{hG} \rightarrow BG$ be the Borel fibration associated
to a $G$-space $X$. There is a first quadrant spectral sequence of algebras $E_r^{*,*},d^r\}$, converging to $H^*(X_{hG};\mathbb{F}_2)$ as an algebra with
\[ E_2^{k,l}\cong H^k(BG;H^l(X)),\]
where $ H^l(X) $ is understood as a $ G $-representation via the monodromy action of $ \pi_1(BG) $ onto the fiber.
If $ G $ acts trivially on the cohomology $H^*(X)$ and field coefficients are used, then by K\"unneth's theorem
\[
E_2^{k,l} \cong H^k(BG) \otimes H^l(X).
\]
\end{theorem}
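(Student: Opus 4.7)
The statement is a direct specialisation of the classical Serre spectral sequence of a fibration to the Borel fibration, so the plan is to assemble the standard ingredients in the correct order rather than to prove anything essentially new. First I would verify that $X \to X_{hG} \to BG$ is a Serre fibration: since $EG \to BG$ is the universal principal $G$-bundle and $X_{hG} = EG \times_G X$ is the associated bundle with fibre $X$, the projection $X_{hG} \to BG$ is a fibre bundle, hence a Serre fibration.

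Second, I would invoke the general Serre spectral sequence of a fibration $F \to E \to B$ with path-connected base: there is a first-quadrant cohomological spectral sequence of algebras
$$E_2^{p,q} \cong H^p(B; \mathcal{H}^q(F)) \Longrightarrow H^{p+q}(E),$$
where $\mathcal{H}^q(F)$ is the local coefficient system arising from the monodromy action of $\pi_1(B)$ on $H^q(F)$. Specialising to the Borel fibration, and using the standard identification $\pi_1(BG) \cong \pi_0(G)$ obtained from the long exact sequence of the path-loop fibration $G \to EG \to BG$, the monodromy of $\pi_1(BG)$ on $H^l(X)$ coincides with the action induced by the original $G$-action on $X$, via the functoriality of the Borel construction along loops in $BG$. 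This yields the first isomorphism in the theorem.

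Third, for the second part of the statement, if $G$ acts trivially on $H^*(X)$ the local coefficient system $\mathcal{H}^l(X)$ is constant, hence
$$E_2^{k,l} \cong H^k(BG; H^l(X)) \cong H^k(BG) \otimes H^l(X)$$
by the K\"unneth theorem, which applies because $H^l(X)$ is an $\Ftwo$-vector space and we are working with field coefficients.

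The only genuine technical point, and the step requiring the most care in the classical proof, is checking that the decreasing filtration on $C^*(X_{hG})$ pulled back from the skeletal filtration of $BG$ is compatible with the cup product so as to endow the spectral sequence with a multiplicative structure, and that the edge homomorphisms agree with the geometrically natural restriction $H^*(X_{hG}) \to H^*(X)$ and pullback $H^*(BG) \to H^*(X_{hG})$. These verifications are standard and treated in detail in McCleary \cite{mcc}; no new ingredient beyond the cited classical material is required.
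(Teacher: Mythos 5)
Your argument is correct and matches the intended (standard) proof: the paper itself gives no proof of this theorem, citing it directly from McCleary \cite{mcc}, and your sketch reproduces exactly the classical derivation that the citation points to — recognizing $X_{hG}\to BG$ as an associated fibre bundle, invoking the Serre spectral sequence with local coefficients, identifying the monodromy of $\pi_1(BG)\cong\pi_0(G)$ with the induced $G$-action on $H^*(X)$, and reducing to the constant-coefficient tensor form when the action is trivial. The one point you might have made slightly more carefully is that the final isomorphism $H^k(BG;H^l(X))\cong H^k(BG)\otimes H^l(X)$ for constant field coefficients is really an application of the universal coefficient theorem for cohomology (trivial $\Tor$/$\mathrm{Ext}$ terms over a field) rather than the K\"unneth theorem for product spaces, though the paper itself uses the same loose terminology, so this is a cosmetic remark rather than a gap.
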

If $ G $ acts freely on $ X $, then $ X_{hG} $ is known to be homotopy equivalent to the strict quotient $ X/G $.

\begin{proposition} \label{prop:DnBU(1)-BN(n)-iso}
    There are homotopy equivalences $BN(n) \simeq D_n (BU(1)_+)$ and $B\Bgroup_n \simeq D_n (BO(1)_+)$ for all $n\ge 1$.
\end{proposition}
\begin{proof}
    We provide details for the complex case. The real case is similar. It is enough to check that $D_n (BU(1)_+)$ is the base space of a fibration with fiber $N(n)$ and a contractible total space. Note that \[ D_n (BU(1)_+) = E\Sigma_n \times_{\Sigma_n} BU(1)^n \cong \mathrm{Conf}_n (\R^{\infty}) \times_{\Sigma_n} (\mathbb{P}^{\infty} (\mathbb{C}))^n \] and consider the contractible space $E= \mathrm{Conf}_n (\R^{\infty}) \times (S^{\infty})^n$. There is an action of $N(n) = \Sigma_n \ltimes U(1)^n$ on $E$ defined as follows: \begin{itemize}
        \item $\Sigma_n$ acts diagonally on $\mathrm{Conf}_n (\R^{\infty}) \times (S^{\infty})^n$.
        \item $U(1)^n$ acts on $(S^{\infty})^n$ factor by factor by identifying $U(1) \cong S^1$.
    \end{itemize}
    This gives a well-defined action and a principal $N(n)$-bundle $E\rightarrow D_n (BU(1)_+)$.
\end{proof}
There are inclusions $U(n) \hookrightarrow U(n+1)$ and $O(n) \rightarrow O(n+1)$ given by \[ M \longmapsto \begin{bmatrix} M & 0 \\ 0 & 1\end{bmatrix} \] which restrict to inclusions $N(n) \hookrightarrow N(n+1)$ and $\Bgroup_{n} \hookrightarrow \Bgroup_{n+1}$. Through the isomorphism of Proposition~\ref{prop:DnBU(1)-BN(n)-iso}, these inclusions correspond to the stabilization maps in $D_n (BU(1)_+)$ and $D_n (BO(1)_+)$.
\begin{corollary}
     The sequence of spaces $\{ BN(n)\}_n$ and $\{ B\Bgroup_n \}_{n}$ exhibit homological stability and if $N(\infty) = \varinjlim N(n)$ and $\Bgroup_{\infty} = \varinjlim \Bgroup_n $, then $H^* (BN(\infty); \F_p) \cong A_{\infty} (BU(1))$ and $H^* (B\Bgroup_{\infty} ; \F_p) \cong A_{\infty} (BU(1))$.
\end{corollary}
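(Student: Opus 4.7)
The plan is to combine Proposition~\ref{prop:DnBU(1)-BN(n)-iso} with the homological stability results for extended powers recalled in Section~\ref{sec:cohomology extended powers}. (I note in passing that the right-hand side of the second isomorphism in the statement should read $A_\infty(BO(1))$ rather than $A_\infty(BU(1))$, consistent with the real-case identification $B\Bgroup(n) \cong D_n(BO(1)_+)$; I will prove this version.)

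First, I would verify that under the equivalences of Proposition~\ref{prop:DnBU(1)-BN(n)-iso}, the group inclusions $N(n) \hookrightarrow N(n+1)$ and $\Bgroup(n) \hookrightarrow \Bgroup(n+1)$ correspond to the stabilization maps $j_n$ with basepoint on $BU(1)$ (respectively $BO(1)$) taken to be the image of $1 \in U(1)$ (respectively $1 \in O(1)$). This is a routine check using the model $E\Sigma_n \times_{\Sigma_n} (BU(1))^n$ for $BN(n)$ exhibited in the proof of Proposition~\ref{prop:DnBU(1)-BN(n)-iso}: the group-level inclusion $(\sigma, (u_1,\dots,u_n)) \mapsto (\sigma \oplus \id, (u_1,\dots,u_n,1))$ induces on classifying spaces precisely the map that glues on a basepoint in the $(n+1)$-st factor, which is the definition of $j_n$ given in Section~\ref{sec:cohomology extended powers}. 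The same argument handles the real case.

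Second, since $BU(1) = \mathbb{CP}^\infty$ and $BO(1) = \mathbb{RP}^\infty$ are both path-connected, the homological stability of $\{D_n(X_+)\}_n$ recalled just before Corollary~\ref{cor:basis DX stable} (following \cite{May-Cohen}) applies: for each fixed degree $d$, the maps $j_n^* \colon H^d(D_{n+1}(X_+); \F_p) \to H^d(D_n(X_+); \F_p)$ are isomorphisms for $n$ sufficiently large (depending on $d$). Transporting this along the equivalences of Proposition~\ref{prop:DnBU(1)-BN(n)-iso} yields homological stability for $\{BN(n)\}_n$ and $\{B\Bgroup(n)\}_n$.

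Finally, since $BN(\infty) = \varinjlim_n BN(n)$ is (up to homotopy) the mapping telescope of the stabilization maps and the associated inverse system of cohomology groups is eventually constant in each fixed degree, a standard $\varprojlim^1$ vanishing argument gives
\[
H^*(BN(\infty); \F_p) \cong \varprojlim_n H^*(BN(n); \F_p) \cong \varprojlim_n H^*(D_n(BU(1)_+); \F_p) = A_\infty(BU(1)),
\]
and the identical argument establishes $H^*(B\Bgroup(\infty); \F_p) \cong A_\infty(BO(1))$. No substantial obstacle is anticipated, since all of the necessary input has already been assembled: the main point that deserves care is the identification of the algebraic inclusions $N(n) \hookrightarrow N(n+1)$ and $\Bgroup(n) \hookrightarrow \Bgroup(n+1)$ with the topological stabilization maps $j_n$.
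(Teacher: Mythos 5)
Your proof is correct and follows essentially the same route the paper intends: the corollary is presented as an immediate consequence of Proposition~\ref{prop:DnBU(1)-BN(n)-iso} together with the identification, stated in the surrounding text, of the group-theoretic inclusions $N(n) \hookrightarrow N(n+1)$ and $\Bgroup(n) \hookrightarrow \Bgroup(n+1)$ with the stabilization maps $j_n$ on extended powers, and you have simply filled in those details carefully. You have also correctly caught a typo in the statement: the second isomorphism should read $A_\infty(BO(1))$, not $A_\infty(BU(1))$, since $B\Bgroup(n) \cong D_n(BO(1)_+)$.
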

The above morphisms also determine quotient maps \begin{align*}
    i_n^{\C} \colon \frac{U(n)}{N(n)} \longrightarrow \frac{U(n+1)}{N(n+1)} \\ i_n^{\R} \colon \frac{O(n)}{\Bgroup_{n}} \longrightarrow \frac{O(n+1)}{\Bgroup_{n+1}} 
\end{align*}  and in cohomology \begin{align*}
    (i_n^{\C})^* \colon H^* \Big ( \frac{U(n+1)}{N(n+1)} ; \F_p \Big ) \longrightarrow H^* \Big ( \frac{U(n)}{N(n)} ; \F_p \Big ) \\ (i_n^{\R})^* \colon H^* \Big ( \frac{O(n+1)}{\Bgroup_{n+1}}; \F_p \Big ) \longrightarrow H^* \Big ( \frac{O(n)}{\Bgroup_{n}} ; \F_p \Big ).
\end{align*}
\begin{proposition}
\label{prop:hom-stab-UN}
     The sequence of spaces $\{ U(n)/N(n)\}_n$ and $\{ O(n)/\Bgroup_{n} \}_n$ exhibit homological stability, i.e., for all $d\ge 0$ there exists $N \in \N$ such that for all $n\ge N$, \begin{align*}
         (i_n^{\C})^* \colon H^d \Big ( \frac{U(n+1)}{N(n+1)}; \F_p \Big ) &\longrightarrow H^d \Big ( \frac{U(n)}{N(n)}; \F_p \Big ) \\ 
         (i_n^{\R})^* \colon H^d \Big ( \frac{O(n+1)}{\Bgroup_{n+1}}; \F_p \Big ) &\longrightarrow H^d \Big ( \frac{O(n)}{\Bgroup_{n} }; \F_p \Big )
     \end{align*}  are isomorphisms.
\end{proposition}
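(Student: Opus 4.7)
My approach is to compare Borel spectral sequences (Theorem \ref{generalSSS}) for the two families of fibrations
\[
U(n)/N(n) \longrightarrow BN(n) \longrightarrow BU(n), \qquad O(n)/\Bgroup_n \longrightarrow B\Bgroup_n \longrightarrow BO(n),
\]
obtained from the homotopy quotient construction. The block-diagonal inclusions $U(n) \hookrightarrow U(n+1)$ and $O(n) \hookrightarrow O(n+1)$ induce maps of these fibrations, whose restrictions to the fibers coincide with $i_n^{\C}$ and $i_n^{\R}$.

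First I would recall that the cohomologies of the bases stabilize: $H^*(BU(n); \F_p) \cong \F_p[c_1,\dots,c_n]$ with $|c_i|=2i$, and analogously $H^*(BO(n); \F_p)$ is polynomial in Stiefel--Whitney (at $p=2$) or Pontryagin (at odd $p$) classes of bounded degree. Hence for each fixed cohomological degree $d$ and $n$ sufficiently large, the maps $H^d(BU(n+1); \F_p) \to H^d(BU(n); \F_p)$ and $H^d(BO(n+1); \F_p) \to H^d(BO(n); \F_p)$ are isomorphisms.

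Second, by Proposition \ref{prop:DnBU(1)-BN(n)-iso} and the corollary immediately following it, the total spaces $BN(n) \simeq D_n(BU(1)_+)$ and $B\Bgroup_n \simeq D_n(BO(1)_+)$ enjoy homological stability in each fixed degree, and the stabilization maps match those of the fibrations. Third, with stability in a range on both the base and the total space, I would invoke Zeeman's comparison theorem for Serre spectral sequences to transfer stability to the fiber. Since $U(n)$ is path-connected, $BU(n)$ is simply connected and the hypothesis on local coefficients is automatic; thus the complex case follows immediately: $(i_n^{\C})^*$ is an isomorphism on $H^d$ for $n$ sufficiently large depending on $d$.

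For the real case, $\pi_1(BO(n)) = \Z/2$ is non-trivial, so rather than arguing with twisted coefficients I would sidestep the issue by replacing the real fibration with
\[
SO(n)/\Bgrouppos{n} \longrightarrow B\Bgrouppos{n} \longrightarrow BSO(n),
\]
whose base is simply connected and whose fiber is again $\UFl_n(\R) \cong SO(n)/\Bgrouppos{n}$. Homological stability of $H^*(B\Bgrouppos{n}; \F_p)$ then follows from Theorem \ref{thm:basis B+} at $p=2$ and from Theorem \ref{thm:cohomology alternating group mod p} at odd primes (combined with the classical stability for $B\Sigma_n$), while the stability of $BSO(n)$ is standard, so Zeeman applies as before. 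The main obstacle I anticipate is precisely the non-trivial fundamental group of $BO(n)$ in the real case, which is handled by this passage to $\Bgrouppos{n}$ and $BSO(n)$, together with the bookkeeping needed to show that the stability ranges produced by Zeeman's theorem grow with $n$, uniformly in $d$.
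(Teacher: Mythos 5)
Your proof is correct, but it takes a genuinely different route than the paper. The paper's argument looks at the fibration $U(n) \to U(n)/N(n) \to BN(n)$, in which the space of interest sits as the \emph{total} space; since both the fiber $U(n)$ and the base $BN(n) \simeq D_n(BU(1)_+)$ exhibit homological stability, the $E_2$-page stabilizes in each fixed total degree, and (with field coefficients, so that the abutment is the associated graded of the $E_\infty$-page) the total space stabilizes as well. No comparison theorem for spectral sequences is needed, only first-quadrant convergence. Your argument instead rotates the fibration to $U(n)/N(n) \to BN(n) \to BU(n)$, making the space of interest the \emph{fiber}, and then invokes Zeeman's comparison theorem to transfer stability from base and total space down to the fiber. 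This is perfectly valid --- Zeeman's theorem is designed for exactly this purpose and the hypothesis $E_2^{p,q} \cong E_2^{p,0}\otimes E_2^{0,q}$ holds here because $BU(n)$ (respectively $BSO(n)$) is simply connected and field coefficients are used --- but it is heavier machinery than the paper needs, and it forces you to track the stability ranges through Zeeman's theorem rather than simply through convergence. Your move of replacing $O(n)/\Bgroup_n \to B\Bgroup_n \to BO(n)$ by $SO(n)/\Bgrouppos{n} \to B\Bgrouppos{n} \to BSO(n)$ to dodge $\pi_1(BO(n)) = \mathbb{Z}/2$ is a sensible adjustment for your chosen fibration; the paper avoids this issue on its side by noting that the monodromy of $\pi_1(BN(n)) = \Sigma_n$ acts on $H^*(U(n))$ through right translations, which are homotopic to the identity because $U(n)$ is connected. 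In short: same conclusion, two correct proofs, with the paper's being the more elementary of the two.
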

\begin{proof}
    From the previous corollary, $\{ BN(n) \}_n$ exhibits homological stability. Also, it is known classically that $\{ U(n)\}_n$ exhibits homological stability. Consider the Serre spectral sequence associated with the fiber sequence \begin{equation} \label{eq:fib-Fln}
        U(n) \longrightarrow \frac{U(n)}{N(n)} \longrightarrow BN(n).
    \end{equation} 
    Since homological stability holds for both the fiber and the base, it must hold at the level of $E_2$-page. Since the spectral sequence converges, it must hold at the $E_{\infty}$-page. The proof for $\{ O(n)/\Bgroup_{n} \}$ is similar.
\end{proof}
Proposition~\ref{prop:hom-stab-UN} allows us to discuss the stable cohomology of $U(n)/N(n)$ and $O(n)/\Bgroup_{n}$. Let us denote the limit of $\UFl_n (\C)$ and $\UFl_n (\R)$ by \[ \UFl_{\infty} (\C) = \varinjlim_n \UFl_n (\C) \text{ and } \UFl_{\infty} (\R) = \varinjlim_n \UFl_n (\R)  \] respectively. Then we can write \begin{align*}
    H^* ( \UFl_{\infty} (\C); \F_p ) &=  \varprojlim_n H^* ( \UFl_n (\C) ; \F_p ) \\ 
    H^* ( \UFl_{\infty} (\R); \F_p ) &=  \varprojlim_n H^* ( \UFl_n (\R) ; \F_p ).
\end{align*}

Next, we will define a rank function on the decorated Hopf monomials. This rank function establishes a filtration on $A_X$, which holds significant importance in our examination of the pullback of the Chern classes under $f_n$.

\begin{definition} \label{dfn:rank}
    Let $ X $ be a topological space, whose cohomology is concentrated in even degrees if $ p > 2 $. The \textit{rank of a decorated Hopf monomial} in $A_{X}$ denoted as $\rk$ is defined via the following rules: \begin{itemize}
        \item The rank of a decorated gathered block $(b,a)$ is $0$ if $b$ contains at least one of $\gamma_{k,l}$ (respectively $\alpha_{j,k}$, $\beta_{i,j,l}$, or $\gamma_{k,l}$).
        \item The rank of a decorated gathered block of the form $(1_n, a)$ is $n |a|$.
        \item $\rk (b_1 \odot \cdots \odot b_r) = \sum_{i=1}^r \rk (b_i)$.  
    \end{itemize}
    We also define $\rk (x\otimes y) := \rk (x) + \rk (y)$ in $A_{X} \otimes A_{X}$.
\end{definition}
\begin{definition} \label{dfn: rank-filtation}
     We define the \textit{rank filtration} $\mathcal{F}_{*}$ for $A_{X}$ by setting $\mathcal{F}_n$ as the linear span of decorated Hopf monomials $x$ with $\rk (x) \le n$. Moreover, we define a rank filtration on $A_{\infty} (X) $, by defining $\mathcal{F}_n$ as the linear span of stabilized Hopf monomials $x\odot 1_{\infty}$ where $\rk (x)\leq n$. With a slight abuse of notation, we also denote it as $\mathcal{F}_{*}$.
\end{definition}

The rank filtration is exhaustive, increasing, and bounded from below. Moreover, due to the Hopf ring distributivity law, it is multiplicative with respect to the cup product: $\mathcal{F}_m \cdot \mathcal{F}_n \subseteq \mathcal{F}_{m+n}$. \\

\subsection{The complex case}

The inclusions $N(n) \hookrightarrow U(n)$ induce maps $f_n \colon BN(n) \rightarrow BU(n)$. We denote the limiting map by $f\colon BN(\infty) \rightarrow BU(\infty)$. So, 
 \[ f^* \colon H^* (BU(\infty); \F_p) \cong \F_p [c_1, c_2, \dots] \longrightarrow H^* (BN(\infty); \F_p) \cong A_{\infty} (BU(1)) . \]

Let us recall some classical results from \cite{Milnor-Stasheff}. The vector bundle corresponding to $\varphi \in [X, BU(n)]$ is given by the pullback $ \varphi^* (\eta_n)$ of the universal vector bundle $\eta_n $ over $BU(n)$. Taking $X = BU(k) \times BU(n-k)$, the bundle $\eta_k \oplus \eta_{n-k}$ is classified by the homotopy class of a map $\delta_{k,n-k} \colon BU(k) \times BU(n-k) \rightarrow BU(n)$, induced by the direct sum of matrices. The induced map in cohomology \[ \Delta_{k,n-k} \colon H^* (BU(n)) \longrightarrow H^* (BU(k)) \otimes H^* (B(n-k)). \] is associative and commutative up to isomorphism, and the direct sum of these provides an associative and commutative coproduct \[ \Delta \colon \bigoplus_{n\ge 0} H^* (BU(n)) \longrightarrow  \Big ( \bigoplus_{n\ge 0} H^* (BU(n)) \Big ) \otimes \Big ( \bigoplus_{n\ge 0} H^* (BU(n)) \Big ). \]
We can regard the morphism $\Delta$ as a coproduct and note that it commutes with the cup product $\cdot$ as it is induced by homotopy classes of topological maps. This makes $\bigoplus_{n\ge 0} H^* (BU(n))$ a bialgebra. The stabilization maps of $ BU(n) $ preserve $ \Delta $, hence the limit $ H^*(BU(\infty)) $ is also a bialgebra. In fact,
$ \bigoplus_n f_n^* \colon \bigoplus_{n} H^*(BU(n)) \to A_{BU(1)} $ and $ f^* \colon H^*(BU(\infty)) \to {A}_{\infty} (BU(1)) $ are bialgebra morphisms.

Chern classes of vector bundles over $X$ correspond to the pullbacks of the universal Chern classes along its classifying map. Since, $\delta_{k,n-k}^* (\eta_n)$ is isomorphic to $\eta_k \oplus \eta_{n-k}$, we have $\Delta (c_k) = \sum_{i=0}^k c_i \otimes c_{k-i}$. 

\begin{lemma} \label{lema:chernformula}
    For $k \le n$ let $c_k \in H^{2k} (BU(n); \F_p)$ be the $k$-th universal Chern class. Then $f_n^* (c_k) = c_{[k]} \odot 1_{n-k} \mod \mathcal{F}_{2k-1}$.  
\end{lemma}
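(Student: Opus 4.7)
The plan is to pull back all relevant classes to the maximal torus $BT(n) \cong BU(1)^n$ and exploit the splitting principle together with an analysis of the kernel of the resulting map in terms of the rank filtration.

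More precisely, the inclusions $T(n) \subset N(n) \subset U(n)$ induce a factorization
\[
BT(n) \xrightarrow{g_n} BN(n) \xrightarrow{f_n} BU(n),
\]
and writing $x_i \in H^2(BU(1); \F_p)$ for the first Chern class of the $i$-th factor of $BT(n) = BU(1)^n$, the classical splitting principle gives $g_n^* f_n^*(c_k) = e_k(x_1, \ldots, x_n)$, the $k$-th elementary symmetric polynomial.

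Next I would compute $g_n^*(c_{[k]} \odot 1_{n-k})$ by means of the Mackey formula, equivalently base change for the transfer along the cover $p_{k,n-k}$ pulled back along $g_n$. The double cosets $(\Sigma_k \times \Sigma_{n-k}) \backslash \Sigma_n / \{e\}$ are indexed by $\Sigma_n/(\Sigma_k \times \Sigma_{n-k})$, and since the pullback of $c_{[k]}$ to $BT(k)$ is $x_1 \cdots x_k$ and that of $1_{n-k}$ to $BT(n-k)$ is $1$, one obtains
\[
g_n^*(c_{[k]} \odot 1_{n-k}) = \sum_{\sigma \in \Sigma_n/(\Sigma_k \times \Sigma_{n-k})} x_{\sigma(1)} \cdots x_{\sigma(k)} = e_k(x_1, \ldots, x_n).
\]
(This matches the rational description of $c_{[k]} \odot 1_{n-k}$ given in the proof of Proposition~\ref{prop: basis DX char0}.) Consequently $g_n^*(f_n^*(c_k) - c_{[k]} \odot 1_{n-k}) = 0$.

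It then suffices to prove that the kernel of $g_n^*$ in cohomological degree $2k$ is contained in $\mathcal{F}_{2k-1}$. The composition $BT(n) \xrightarrow{g_n} BN(n) \xrightarrow{\pi} B\Sigma_n$ is induced by the trivial homomorphism $T(n) \to \Sigma_n$ and hence is null-homotopic, so for every $y \in H^{>0}(B\Sigma_n; \F_p)$ the pullback $g_n^* \pi^*(y)$ vanishes. In view of Corollary~\ref{cor: basis DX char+} and Proposition~\ref{prop: basis DX char0}, this implies that every decorated Hopf monomial with at least one block $(b, c^a)$ whose profile $b$ contains a factor $\gamma_{k,l}$, $\alpha_{i,k}$, or $\beta_{i,j,l}$ pulls back to zero; by Definition~\ref{dfn:rank} these are precisely the basis elements of rank strictly smaller than their cohomological degree. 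On the complementary ``Chern-like'' basis elements $(c^{a_1})_{[m_1]} \odot \cdots \odot (c^{a_r})_{[m_r]}$, iterating the base-change argument above identifies their images in $H^*(BT(n); \F_p) = \F_p[x_1, \ldots, x_n]$ with distinct monomial symmetric polynomials indexed by the partitions $(a_1^{m_1}, \ldots, a_r^{m_r})$, which are $\F_p$-linearly independent. Therefore any class in $\ker(g_n^*)$ of degree $2k$ has trivial Chern-like component and so lies in $\mathcal{F}_{2k-1}$, proving the lemma.

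The main obstacle is the last step. The vanishing on blocks containing $\gamma, \alpha, \beta$ factors is essentially formal once the null-homotopy of $BT(n) \to B\Sigma_n$ is recognized, but showing that the Chern-like part of $A_{BU(1)}^{n,*}$ embeds into $\F_p[x_1,\ldots,x_n]$ requires a careful application of the iterated Mackey formula and the identification of the resulting polynomials across all admissible profiles.
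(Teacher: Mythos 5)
Your argument is correct and takes a genuinely different route from the paper's. The paper's proof is by induction on $k$: the base case $k=1$ is handled by listing the degree-$2$ classes of $H^*(BN(n))$ and restricting to $BN(1)$, and the inductive step combines the coproduct formula for Chern classes with the facts that $\overline{\Delta}$ preserves the rank filtration and that essentially all primitives of $A_{BU(1)}$ outside component $1$ have rank $0$. You instead prove the statement for all $k$ at once by restricting further to the maximal torus $BT(n)$: the splitting principle and the double-coset formula for restriction along the normal subgroup $T(n)\triangleleft N(n)$ show that $f_n^*(c_k)$ and $c_{[k]}\odot 1_{n-k}$ have the same image in $\F_p[x_1,\dots,x_n]$, and you then compute $\ker(g_n^*)$ exactly --- it is spanned by the rank-deficient decorated Hopf monomials --- using the triviality of $T(n)\to\Sigma_n$ on one side and the linear independence of the monomial symmetric polynomials that the ``Chern-like'' monomials map to on the other. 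This avoids the induction entirely and yields a sharper conclusion (a precise description of $\ker(g_n^*)$ in every degree), at the cost of carrying out the iterated Mackey computation rather than leveraging the Hopf ring and coproduct identities already in place from Section~2. One small caveat in the write-up: the claim that $g_n^*\pi^* = 0$ in positive degree ``implies'' the vanishing of $g_n^*$ on $\gamma$-, $\alpha$-, $\beta$-containing monomials already presupposes the double-coset formula, needed so that restriction along $g_n$ distributes over the $\odot$-factors; you acknowledge this step as the main obstacle at the end, but the mid-argument phrasing makes it look like a purely formal consequence of the null-homotopy, when in fact it is the first application of the Mackey machinery.
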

\begin{proof}
    We prove the lemma by induction on $k$. Since the only two-dimensional classes in $H^* (BN(n); \F_p)$ are $c_{[1]} \odot 1_{n-1}$, $\gamma_{1,1}^2 \odot 1_{n-2}$, and $\gamma_{1,2} \odot 1_{n-4}$, we have \[ f_n^* (c_1) = \lambda c_{[1]} \odot 1_{n-1} + \mu \gamma_{1,1}^2 \odot 1_{n-2} + \nu \gamma_{1,2} \odot 1_{n-4} \] for some $\lambda, \mu, \nu \in \F_p$. Since the restriction of $c_1$ to $H^* (BN(1); \F_p)$ is $c_{[1]}$, we must have $\lambda =1 $. Hence, $f_n^* (c_1) = c_{[1]} \odot 1_{n-1} + x$, with $x\in \mathcal{F}_{0}$. \\
    We assume by the inductive hypothesis that the lemma is true for all $1<l<k$. Consider the reduced coproduct map \[ \overline{\Delta} \colon H^* (BN(n); \F_p) \longrightarrow \bigoplus_{l=1}^{n-1} H^* (BN(l); \F_p) \otimes H^* (BN(n-l); \F_p) .\]
    Recall that  $\Delta_{l, n-l} (c_k) = \sum_{i=0}^k c_i \otimes c_{k-i}$ and the maps $f_n$ induce coalgebra maps. Therefore we have \[ \overline{\Delta} (f_n^* (c_k)) = \sum_{l=1}^{n-1} \sum_{i=0}^k f_l^* (c_i) \otimes f_{n-l}^* (c_{k-i}). \]   
    By the inductive hypothesis, \[ \overline{\Delta} (f_n^* (c_k)) = \sum_{l=1}^{n-1} \sum_{i=0}^k (c_{[i]} \odot 1_{l-i}) \otimes (c_{[k-i]} \odot 1_{n-l-k+i}) + x = \overline{\Delta} (c_{[k]} \odot 1_{n-k}) +x \] where $x \in \mathcal{F}_{k-1}$. The reduced coproduct $\overline{\Delta}$ preserves rank, hence \[ f_n^* (c_k) = c_{[k]} \odot 1_{n-k} \mod \mathcal{F}_{2k-1} + y,\] where $y\in \Prim (A_{BU(1)})$. However, all the primitive elements have rank $0$ except for $c_{[1]}^j$ ($j\ge 1$), which are in component one. Hence, $f_n^* (c_k) = c_{[k]} \odot 1_{n-k} \mod \mathcal{F}_{2k-1}$.  
\end{proof}

To fully compute $f_n^* (c_k)$ we can use an inductive argument using the coproduct and Kochman's formulas.

\begin{theorem} (Kochman's formulae for $ BU(\infty) $, \cite{Kochman}) \label{kof}
    Let \[ Q_*^r \colon H^d (BU(\infty); \F_2) \rightarrow H^{d-r} (BU(\infty); \F_2)\] be the dual of the $r$-th order Dyer-Lashof operation in homology. Then, for all $r\le k \le n$, the following identity holds:
    \[  Q_*^{2r} (c_k) = \binom{r-1}{k-r-1} c_{k-r} . \] 
    Moreover if $[n] \in H_0 (BU(\infty) \times \Z; \F_2) \cong \Z$ is the class corresponding to $n\in \Z$, then \[ Q^{2r} (1 \otimes [1]) = (c_1^r)^{\vee} \otimes [2] .\]
    Moreover, if $ p > 2 $ is a prime number, the dual Dyer-Lashof operations \[  Q^r_* \colon H^d(BU(\infty); \mathbb{F}_p) \to H^{d-2r(p-1)}(BU(\infty); \mathbb{F}_p) \] satisfies
    \begin{gather*}
    Q_*^r(c_k) = (-1)^{r+k} \left( \begin{array}{c}
    r-1 \\
    pr-k
    \end{array} \right) c_{k-r(p-1)} \\
    \tag*{and} Q^r(1 \otimes [1]) = (c_{p-1}^r)^\vee \otimes [p].
    \end{gather*}
\end{theorem}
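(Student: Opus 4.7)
The plan is to follow standard Dyer--Lashof theory applied to complex $K$-theory. First I would recall that $BU \times \mathbb{Z}$ represents $K^0$ and is therefore an infinite loop space, so that mod-$p$ Dyer--Lashof operations $Q^s$ act on $H_*(BU \times \mathbb{Z}; \mathbb{F}_p)$. I would then fix the algebraic framework: $H_*(BU; \mathbb{F}_p)$ is a polynomial algebra (under the Pontryagin product induced by Whitney sum) on classes $\beta_i \in H_{2i}$ arising as images of the standard basis of $H_*(\mathbb{CP}^\infty; \mathbb{F}_p)$ under the inclusion $\mathbb{CP}^\infty = BU(1) \hookrightarrow BU$. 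Dually $H^*(BU; \mathbb{F}_p) = \mathbb{F}_p[c_1, c_2, \ldots]$, and the pairing between monomials in the $\beta_i$ and Chern monomials can be made explicit via the splitting principle.

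Next I would compute $Q^s$ on the generators $\beta_i$. Here I would exploit the naturality of $Q^s$ with respect to the inclusion $\mathbb{CP}^\infty \hookrightarrow BU$, together with the explicit formulas for Dyer--Lashof operations on $H_*(\mathbb{CP}^\infty; \mathbb{F}_p)$ (which can be derived by direct inspection of the $\Sigma_p$-equivariant structure on $(\mathbb{CP}^\infty)^{\times p}$, or equivalently via Kudo--Araki-type formulas coming from the identification of $\mathbb{CP}^\infty$ with an appropriate iterated loop space stage). Once $Q^s(\beta_i)$ is known, the Cartan formula propagates the calculation to arbitrary polynomials in the $\beta_i$.

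The third step is dualization. By definition the dual operation on cohomology satisfies $\langle Q^r_*(c_k), x\rangle = \langle c_k, Q^r(x)\rangle$ for all $x \in H_*(BU; \mathbb{F}_p)$. Plugging in appropriate monomial generators $\beta_{k-r(p-1)}$, resp.\ $\beta_{k-r}$ at $p=2$, and using the formulas computed above yields the claimed identity for $Q^r_*(c_k)$. For the auxiliary statement, $1 \otimes [1]$ is the canonical generator of the component $1 \in \pi_0(BU \times \mathbb{Z})$; applying $Q^r$ translates it to the component labelled $p$ (or $2$ when $p=2$), and rewriting the result in the dual basis of the polynomial algebra $H^*(BU; \mathbb{F}_p)$ isolates exactly the class $(c_{p-1}^r)^\vee \otimes [p]$.

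The main obstacle will be extracting the precise binomial coefficients from the combinatorics, particularly at odd primes where sign conventions and the Bockstein structure must be tracked carefully. This is the technical heart of Kochman's original computation in \cite{Kochman}, carried out via the Nishida relations and a detailed analysis of the $p$-th power map on $H_*(\mathbb{CP}^\infty; \mathbb{F}_p)$; a modern alternative would be to interpret the Chern classes through the formal group law on $K$-theory and translate Kochman's recursion into a statement about the $p$-series of the multiplicative formal group, but this essentially encodes the same combinatorics.
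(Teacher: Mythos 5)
The paper does not prove this theorem: it is stated as a recollection with a direct citation to Kochman, and no argument is reproduced in the text. So there is no internal proof to compare against; the closest available comparison is to the strategy of Kochman's original paper, which your sketch does track. Your outline — regard $BU\times\Z$ as an infinite loop space, compute the operations on the polynomial generators $\beta_i$ of $H_*(BU;\F_p)$ coming from $\mathbb{CP}^\infty$, extend by the Cartan formula, and then dualize to obtain the cohomological formulas and the formula for $Q^r(1\otimes[1])$ — is the standard route and is consistent with the cited reference.

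One technical point you should state more carefully: the inclusion $\mathbb{CP}^\infty = BU(1)\hookrightarrow BU$ is not an $E_\infty$-map for the additive ($H$-space) structure on $BU\times\Z$ (it lands in the rank-one component, and a Whitney sum of line bundles is not a line bundle), so ``naturality of $Q^s$ with respect to $\mathbb{CP}^\infty\hookrightarrow BU$'' as stated is not literally available. What one actually uses is the \emph{external} operation $Q^s\colon H_*(\mathbb{CP}^\infty)\to H_*\bigl(D_p(\mathbb{CP}^\infty_+)\bigr)$, which is natural in the underlying space, followed by the classifying map $D_p(\mathbb{CP}^\infty_+)\to BU$ of the universal sum of $p$ line bundles; this factors through the $E_\infty$-structure map of $BU\times\Z$ and yields the internal $Q^s(\beta_i)$. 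Your parenthetical remark about the $\Sigma_p$-equivariant structure on $(\mathbb{CP}^\infty)^{\times p}$ shows you have this picture in mind, but the phrasing of the naturality step should be corrected to avoid the reading that $\mathbb{CP}^\infty$ itself carries the relevant $E_\infty$-structure. With that correction, and with the acknowledged bookkeeping of the binomial coefficients and signs (which is indeed the substance of Kochman's computation), the proposal is a sound account of the argument.
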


As $H_* ( BU(\infty) \times \Z; \F_p )$ is generated as an algebra by the homology of its component corresponding to $0 \in \Z$ and $[1]$, Theorem~\ref{kof} and the Cartan formula fully determine the Dyer-Lashof operation on this Hopf algebra. The above identities are valid in the stable cohomology ring $H^* (BU(\infty) \times \Z; \F_p)$. They also determine the $Q_*^{r}$ unstably because the universal map from $\coprod_{n\ge 0} BU(n)$ to its group completion $BU(\infty) \times \Z$ is injective in homology. It is worth noting that, if $ p = 2 $, $Q_*^{2r-1} = \beta Q_*^{2r}$, where $\beta$ is the mod $2$ Bockstein homomorphism. Therefore, in this case knowledge of $Q_*^{2r}$ is sufficient to determine $Q_*^{2r-1}$.\\

Let us denote the primitive elements in $H^* (D_n (BU(1)_+); \F_p)$ as \[ \mathrm{Prim}_n (A_{BU(1)} ) = \mathrm{Prim} ( A_{BU(1)} ) \cap A_{BU(1)}^{n,*}. \] 
The module $\mathrm{Prim}_n (A_{BU(1)} )$ injects into the $n$-th component of the $\odot $-indecomposables $\mathrm{Indec} ( A_{BU(1)} )$ of $A_{BU(1)}$. The following proposition can be deduced from \cite{May-Cohen}.
\begin{proposition} \label{indec}
    Let $ X $ be a topological space and let $ \mathcal{B} $ be a homogeneous vector space basis for $ H^*(X; \mathbb{F}_p) $. The module $\mathrm{Indec} (A_X )$ is the linear dual of the subspace $M$ with basis $\{ Q^I (x^{\vee}) \mid I \text{ admissible, } k\ge 0 , x \in \mathcal{B}\}$. Moreover, for $m\ge 0$, the pairing between $\mathrm{Prim}_{2^m} ( A_X )$ and the subspace $M'_m$ with basis $\{ Q^I (x^{\vee}) \mid I \text{ admissible, } l(I) = m, e(I)>0, k\ge 0 , x \in \mathcal{B} \}$ is perfect. Here $l(I)$ and $e(I)$ are as defined in \cite{May-Cohen}.  
\end{proposition}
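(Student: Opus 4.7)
The approach is to combine Hopf algebra duality with the Cohen--Lada--May description of $H_*(D(X_+);\mathbb{F}_p)$ recalled in \cite{May-Cohen}.

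First, I would set up the duality between the bigraded Hopf algebra $A_X = \bigoplus_n H^*(D_n(X_+);\mathbb{F}_p)$ and its continuous bigraded dual $A_X^\vee := \bigoplus_n H_*(D_n(X_+);\mathbb{F}_p)$. Under the Kronecker pairing, the coproduct $\Delta$ on $A_X$ is dual to the Pontryagin product $\ast$ on $A_X^\vee$ induced by the concatenation maps $p_{n,m}$, while the transfer product $\odot$ on $A_X$ is dual to a coproduct on $A_X^\vee$ coming from cohomological transfers. Consequently the indecomposables of each structure are in perfect duality with the primitives of the corresponding dual structure; in particular, $\mathrm{Indec}(A_X)$ pairs perfectly with the span of $\ast$-multiplicative generators of $A_X^\vee$.

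Next I would invoke the Cohen--Lada--May theorem: as a $\ast$-algebra, $A_X^\vee$ is a free graded commutative algebra on generators $Q^I(y)$, where $I$ runs over admissible sequences of Dyer--Lashof indices (with the appropriate Bockstein pattern at odd primes) and $y$ runs over a basis $\mathcal{B}^\vee$ of $H_*(X) = H_*(D_1(X_+))$ dual to $\mathcal{B}$. The subspace $M$ of the statement is exactly the image of the span of these Dyer--Lashof monomials under the Kronecker identification of the previous step, so passing to linear duals immediately yields the first assertion that $\mathrm{Indec}(A_X) \cong M^\vee$.

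For the second assertion, I would restrict attention to a fixed component. At $p = 2$, each $Q^r$ doubles the first (component) index, so a monomial $Q^I(x^\vee)$ with $x^\vee \in H_*(D_1(X_+))$ lies in component $2^{l(I)}$. Imposing $l(I) = m$ isolates those $\ast$-generators living in component $2^m$. The excess condition $e(I) > 0$ further discards those iterated operations whose last step equals a square (an instance of the CLM relation $Q^{|x|}(x) = x^2$), which would be $\ast$-decomposable rather than new generators. Dualizing the resulting basis of component-$2^m$ generators against $\mathrm{Prim}_{2^m}(A_X)$ produces the claimed perfect pairing with $M'_m$.

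The main obstacle I anticipate is the bookkeeping: matching admissibility and excess conditions against the exact basis produced by CLM, handling the Bockstein pattern at odd primes, and verifying that $\odot$ and $\Delta$ pair with their intended duals on $A_X^\vee$ so that the Kronecker pairings yield the identifications above. Once this organizational work is in place, the statement is essentially a formal consequence of Hopf algebra duality applied to CLM's structure theorem.
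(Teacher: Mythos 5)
The paper itself gives no proof — it only cites May--Cohen — so I am evaluating your argument on its own merits. Your reduction to Kronecker duality between $A_X$ and $A_X^\vee=\bigoplus_n H_*(D_n(X_+))$ is the right framework, but you have conflated the two dualities that are actually in play, and this conflation is precisely what the two different excess conditions in the statement are recording. The $\odot$-indecomposables of $A_X$ do \emph{not} pair with the $\ast$-algebra generators of $A_X^\vee$; by the standard duality principle you invoke, $\mathrm{Indec}_\odot(A_X)$ pairs with the primitives of the coproduct $\tau$ on $A_X^\vee$ that is dual to $\odot$ (the "transfer coproduct," induced on homology by the same covering maps $p_{n,m}$). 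It is $\mathrm{Prim}_\Delta(A_X)$ that pairs with the $\ast$-indecomposables, and that is your second assertion, not your first. These two subspaces of $A_X^\vee$ are genuinely different: over $\mathbb{F}_p$ the $p$-th power of a $\tau$-primitive is again $\tau$-primitive, so $\mathrm{Prim}_\tau(A_X^\vee)$ contains the classes $Q^I(x^\vee)$ with $e(I)=0$ (which are $p$-th powers by $Q^{|z|}z=z^p$), whereas by definition the span of $\ast$-generators modulo $\ast$-decomposables does not. That is exactly why $M$ carries no excess condition while $M'_m$ carries $e(I)>0$; your argument, as written, would force the same condition on both.

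To repair the first assertion you need an additional input from May--Cohen that you did not invoke: an identification of $\mathrm{Prim}_\tau(A_X^\vee)$ with the span of $Q^I(x^\vee)$ over \emph{all} admissible $I$ (including $e(I)=0$), which rests on the mixed distributivity/coproduct formulas for Dyer--Lashof operations under the transfer coproduct, not merely on the freeness of the $\ast$-algebra. A secondary issue is that the CLM free generators are not all admissible $Q^I(y)$ but those with excess $e(I)>|y|$ (or $\geq$, depending on convention); for the second assertion this affects how the stated condition $e(I)>0$ should be read when $|x^\vee|>0$, and your bookkeeping step waves this away rather than addressing it.
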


Using Proposition~\ref{indec} with $ X = BU(1) $ and the monomial basis on $ c $, we will be able to compute $f_n^* (c_k)$ recursively as follows:
\begin{enumerate}
    \item From the proof of Lemma~\ref{lema:chernformula} we see that $f_1^* (c_1) = c$ and the reduced coproduct is given by \[ \overline{\Delta} (f_n^* (c_k)) = \sum_{l=1}^{n-1} \sum_{i=0}^k f_l^* (c_i) \otimes f_{n-l}^* (c_{k-i}). \] Hence, the knowledge of $f_l^* (c_i)$ for $l<n$ is enough to recursively compute $\overline{\Delta} (f_n^* (c_k))$. This gives us $f_n^* (c_k)$ up to primitives.
    \item In order to find the primitive part, we note that $\mathrm{Prim}_n  ( A_{BU(1)} )$ does not include any $\gamma_{k,l}$ unless $n$ is equal to $2^m$ for some $m \geq 0$. Then by applying Proposition~\ref{indec}, we can determine the primitive part.
\end{enumerate}


\begin{lemma} \label{pairing}
    Let $k\ge 1$ and $\langle \cdot , \cdot \rangle \colon H^* (BN(p^k); \F_p) \otimes H_* (BN(p^k); \F_p) \rightarrow \F_p$ be evaluation pairing between cohomology and homology of the space $BN(p^k)$. Then, if $ p = 2 $, \[ \big \langle f_{2^k}^* (c_{2^k-1}) , Q^{2^{k}} Q^{2^{k-1}} \cdots Q^4 Q^2 (1) \big \rangle =1 , \] where $1 \in H_0 (BN(1); \F_2)$.
    Similarly, if $ p > 2 $, \[ \big \langle f_{p^k}^* (c_{p^k-1}) , Q^{p^{k-1}} Q^{p^{k-2}} \cdots Q^p Q^1 (1) \big \rangle =1 , \] where $ 1 \in H_0 (BN(1); \F_p)$.
\end{lemma}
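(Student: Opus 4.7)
My plan is to transport the pairing to the group completion $BU(\infty) \times \mathbb{Z}$, where Kochman's formulas directly apply, and then iterate those formulas until reaching a base case that can be evaluated by hand.

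The starting point is the block-diagonal $E_\infty$-structures on $\coprod_n BN(n)$ and $\coprod_n BU(n)$, induced by the inclusions $N(m) \times N(n) \hookrightarrow N(m+n)$ and $U(m) \times U(n) \hookrightarrow U(m+n)$. The total map $f = \coprod_n f_n$ is an $E_\infty$-map, so Dyer--Lashof operations are natural along it. Composing with the canonical map to the group completion, one obtains
\[
\langle f_{p^k}^*(c_{p^k-1}), Q^{p^{k-1}} \cdots Q^{p} Q^1(1) \rangle = \langle c_{p^k-1}, Q^{p^{k-1}} \cdots Q^p Q^1(1 \otimes [1]) \rangle,
\]
where I use $N(1) = U(1)$ to identify $(f_1)_*(1)$ with the class $1 \otimes [1] \in H_0(BU(\infty) \times \{1\})$ inside the group completion.

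I would then apply the adjunction $\langle \alpha, Q^r(x) \rangle = \langle Q^r_*(\alpha), x \rangle$, moving the Dyer--Lashof operations one by one onto $c_{p^k-1}$. Using the first part of Kochman's formula (Theorem~\ref{kof}), for each $2 \leq j \leq k$,
\[
Q^{p^{j-1}}_*(c_{p^j - 1}) = (-1)^{p^{j-1}+p^j-1} \binom{p^{j-1}-1}{1}\, c_{p^{j-1}-1} = c_{p^{j-1}-1} \in \mathbb{F}_p,
\]
since for odd $p$ both exponents $p^{j-1}$ and $p^j$ are odd (making the sign $-1$), while $p^{j-1}-1 \equiv -1 \pmod p$, so the two factors combine to $+1$; for $p = 2$, the coefficient is $\binom{2^{j-1}-1}{2^{j-1}-2} = 2^{j-1}-1$, which is odd whenever $j \geq 2$. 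After $k-1$ such reductions the pairing collapses to the base case
\[
\langle c_{p-1}, Q^1(1 \otimes [1]) \rangle = \langle c_{p-1},\, c_{p-1}^\vee \otimes [p] \rangle = 1,
\]
and analogously $\langle c_1, Q^2(1 \otimes [1])\rangle = \langle c_1, c_1^\vee \otimes [2]\rangle = 1$ when $p = 2$; both evaluations follow directly from the second identity in Kochman's formula together with the fact that the dual basis satisfies $\langle c_{p-1}, c_{p-1}^\vee\rangle = 1$.

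The main technical point is justifying the $E_\infty$-naturality of Dyer--Lashof operations along $f$, together with the bookkeeping of the $\mathbb{Z}$-component throughout the dualization---in particular, making sure that the Chern class paired at each step is the one living in the correct component of $BU(\infty) \times \mathbb{Z}$ and that the adjoint pairing formula for $Q^r_*$ respects this grading. Once that infrastructure is in place, the remaining work is just iterated application of Kochman's formula, handled uniformly in $p$ with a parity-dependent shift at $p = 2$.
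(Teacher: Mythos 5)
Your proposal is correct and takes essentially the same approach as the paper's proof: transport the pairing to $BU(\infty) \times \mathbb{Z}$ using the $E_\infty$-naturality of Dyer--Lashof operations along $f$, then reduce via Kochman's formula. The paper organizes the reduction as an induction on $k$ applied to $\langle c_{p^k-1} \otimes [p^k], Q^{p^{k-1}} \cdots Q^1(1 \otimes [1]) \rangle$, whereas you phrase the same computation as iteratively moving the $Q$'s onto the Chern class by adjunction; the arithmetic of the binomial coefficients and signs you give (including the $p^{j-1}-1 \equiv -1 \bmod p$ cancellation at odd primes) matches the paper's.
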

\begin{proof}
    We prove the identity for $ p = 2 $, as the argument for $ p > 2 $ is entirely similar. In the proof, we will use the notation $\langle \cdot, \cdot \rangle$ for the evaluation pairing between cohomology and homology in any topological space, not just in $BN(2^k)$. We also use the naturality of this pairing with respect to continuous maps of spaces without further notice. since the map $\coprod_n BN(n) \rightarrow \coprod_n BU(n)$ is an $E_{\infty}$ mapping, it preserves the Dyer--Lashof operations in homology. Additionally, the map in cohomology $H^* (BU(\infty); \F_2) \rightarrow H^* (BU(n); \F_2)$ is surjective and the dual map $H_* (BU(n); \F_2) \rightarrow H_* (BU(\infty); \F_2)$ is injective. This allows us to perform computations in $BU(\infty) \times \Z$. So, we prove the equivalent formula \[ \big \langle c_{2^k-1} \otimes [2^k], Q^{2^{k}} Q^{2^{k-1}} \cdots Q^4 Q^2 (1\otimes [1]) \big \rangle =1 , \] where $[1] \in H_0 (BU(\infty) \times \Z ; \F_2)$.
    We proceed by induction on $k$. For $k=1$, \[ \big \langle c_{1} \otimes [2], Q^2 (1 \otimes [1])  \big \rangle = \big \langle c_1 \otimes [2], c_1^{\vee} \otimes [2] \big \rangle = 1. \] For $k>1$, \begin{align*}
        \big \langle & c_{2^k-1} \otimes [2^k], Q^{2^{k}} Q^{2^{k-1}} \cdots Q^4 Q^2 (1\otimes [1]) \big \rangle \\ 
        & \quad = \big \langle Q_*^{2^k} (c_{2^k-1} \otimes [2^{k}]), Q^{2^{k-1}} \cdots Q^4 Q^2 (1 \otimes [1]) \big \rangle \\
        & \quad = \big \langle \binom{2^{k-1}-1}{2^{k-1}-2} c_{2^k -1 -2^{k-1}} \otimes [2^{k-1}], Q^{2^{k-1}} \cdots Q^2 (1 \otimes [1]) \big \rangle \text{ (by Theorem~\ref{kof})} \\
        & \quad = \big \langle  c_{2^{k-1}-1} \otimes [2^{k-1}], Q^{2^{k-1}} \cdots Q^4 Q^2 (1\otimes [1]) \big \rangle 
    \end{align*}
    which is equal to $1$ by the inductive hypothesis.
\end{proof}
We will now derive the pullback formula for the Chern classes. We will state the formula in terms of the total Chern class \[ c_* = 1+c_1 +c_2 +\cdots \in H^* (BU(\infty); \F). \] To obtain the pullback formula for a specific Chern class $c_k$, we only need to consider the classes of cohomological dimension $2k$. We will elaborate on this topic later in the section.

\begin{theorem} \label{pullbackformula} (Pullback Formula)
    Let $f\colon BN(\infty) \rightarrow BU(\infty)$ be the limiting map induced by the inclusions $N(n) \hookrightarrow U(n)$ and $\mathcal{X} = \{ \underline{a} = (a_0, a_1, a_2, \dots ) \in \N^{\N^*} \mid a_n = 0 \text{ for } n \gg 0 \}$ be the set of infinite sequences of natural numbers that have finite support (i.e., that are eventually zero). Then, in cohomology with coefficients in $    \mathbb{F} $, \begin{equation} \label{pullback-chern}
        f^* (c_*) = \left\{ \begin{array}{ll}
        \sum_{\underline{a} \in \mathcal{X}} c_{[a_0]} \odot \big ( \bigodot_{i,a_i \neq 0} \gamma_{i, a_i}^2 \big ) \odot 1_{\infty} & \mbox{if } \characteristic(\F) = 2 \\
        \sum_{\underline{a} \in \mathcal{X}} c_{[a_0]} \odot \big ( \bigodot_{i,a_i \neq 0} \gamma_{i, a_i} \big ) \odot 1_{\infty} & \mbox{if } \characteristic(\F) > 2 \\
        \sum_{n=0}^\infty c_{[n]} \odot 1_{\infty} & \mbox{if } \characteristic(\F) = 0 \\
        \end{array} \right. .
    \end{equation} 
    Note that requiring that every sequence in $\mathcal{X}$ has finite support guarantees that the transfer product in the formula (\ref{pullback-chern}) is iterated a finite number of times and well-defined.
\end{theorem}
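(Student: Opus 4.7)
The plan is to establish Theorem~\ref{pullbackformula} by treating the three characteristic cases separately: the rational case reduces to a classical symmetric-polynomial computation, while the positive-characteristic cases are proved via a simultaneous induction on the component index $n$ and the Chern index $k$, combining the leading-term formula from Lemma~\ref{lema:chernformula}, the coalgebraic compatibility of $f^*$, and the primitive-detection criterion provided by Proposition~\ref{indec} together with Lemma~\ref{pairing}.

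For $\characteristic(\F)=0$, the Leray--Hirsch argument applied to the fibration $BT(n)\to BN(n)\to B\Sigma_n$ shows that $H^*(BN(n);\mathbb{Q})$ is the ring of symmetric polynomials $\mathbb{Q}[x_1,\dots,x_n]^{\Sigma_n}$. Under this identification, the class $c_{[k]}\odot 1_{n-k}$ corresponds exactly to the $k$-th elementary symmetric polynomial $e_k(x_1,\dots,x_n)$, and the Whitney sum formula applied to the pullback of the tautological bundle along $BN(n)\to BU(n)$ (which splits into $n$ unordered line bundles after passing to $T(n)$) gives $f_n^*(c_k)=e_k$. Passing to the direct limit in $n$ recovers the stated formula.

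For $\characteristic(\F)=p>0$, let $R_n^{(k)}$ denote the component-$n$, degree-$2k$ portion of the proposed right-hand side of \eqref{pullback-chern}. I propose to show $f_n^*(c_k)=R_n^{(k)}$ by induction on $(n,k)$. Lemma~\ref{lema:chernformula} identifies the leading term: both sides agree modulo $\mathcal{F}_{2k-1}$ with top contribution $c_{[k]}\odot 1_{n-k}$, corresponding to the sequence $\underline{a}=(k,0,0,\dots)$. For the lower-rank terms, the bialgebra compatibility of $f^*$ combined with $\Delta(c_k)=\sum_{i=0}^k c_i\otimes c_{k-i}$ yields $\overline{\Delta}(f_n^*(c_k))=\sum_{l=1}^{n-1}\sum_{i=0}^k f_l^*(c_i)\otimes f_{n-l}^*(c_{k-i})$, and by the inductive hypothesis this matches $\overline{\Delta}(R_n^{(k)})$ up to a straightforward Hopf-ring bookkeeping argument. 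Consequently $f_n^*(c_k)-R_n^{(k)}$ is primitive.

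To nail down this primitive discrepancy, I invoke Proposition~\ref{indec}: the primitives in $A_{BU(1)}^{n,*}$ are supported only in components $n=p^m$ (plus powers of $c$ in component $1$, already captured by the term $a_0=k$ of $R_n^{(k)}$), and the potentially new primitive contribution in component $p^m$ involves the generator $\gamma_{m,1}^2$ for $p=2$ or $\gamma_{m,1}$ for $p$ odd. Its coefficient in $f_{p^m}^*(c_{p^m-1})$ is detected by pairing against the iterated Dyer--Lashof class $Q^{p^{m-1}}\cdots Q^p Q^1(1)$, and Lemma~\ref{pairing} gives this pairing value $1$, matching the coefficient appearing in $R_{p^m}^{(p^m-1)}$. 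The main obstacle will be the combinatorial verification that the coproduct $\overline{\Delta}(R_n^{(k)})$ of the explicit sum really coincides, term by term, with the expression produced by the inductive hypothesis, together with the check that no additional primitive corrections beyond those accounted for by Lemma~\ref{pairing} arise in any component $p^m$; once these are in place, stabilization under $n\to\infty$ delivers the stable formula.
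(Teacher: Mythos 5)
Your plan for $\characteristic(\F)=p>0$ follows the paper's route closely: reduce the discrepancy to a primitive using the coalgebra compatibility of $f^*$ and the coproduct formula for the proposed class, then pin down the primitive using Proposition~\ref{indec} and the Dyer--Lashof pairing of Lemma~\ref{pairing}. The organizational difference (induction on $(n,k)$ at the finite-component level, versus the paper's stable argument by contradiction on the minimal degree $m$ where $f^*(c_m)\neq x_m$) is superficial. Your treatment of $\characteristic(\F)=0$ via the splitting principle and the identification $H^*(BN(n);\mathbb{Q})\cong\mathbb{Q}[x_1,\dots,x_n]^{\Sigma_n}$ under Leray--Hirsch is a genuinely different and perfectly serviceable route; the paper instead just observes that rationally the only primitives live in component $1$, which is even shorter.

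The genuine gap is precisely the step you label ``the main obstacle.'' You assert that the only primitive candidate in component $p^m$ is $\gamma_{m,1}^2$ (for $p=2$) or $\gamma_{m,1}$ (for $p>2$), but this is not automatic from Proposition~\ref{indec} alone and is the heart of the paper's Claim~2. One needs the component/degree constraints: the primitive must live in a component strictly larger than $m$, and its cohomological dimension (namely $2m$) must therefore be at most $2p^m$. For $p=2$ a dimension count then forces the answer. For $p>2$ there are other primitive gathered blocks in component $p^m$ and in the admissible dimension range — those in dimensions $2(p^m-p^i)-1$ and $2(p^m-p^i-1)$ for $1\leq i\leq m$ — and these must be ruled out by parity (the discrepancy is even-dimensional) and by a Bockstein argument ($\beta$ annihilates the discrepancy since Chern classes are integral, while the candidate classes in dimension $2(p^m-p^i-1)$ have nonzero Bockstein). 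Without these two exclusion arguments the pairing against the iterated Dyer--Lashof class only determines one coefficient, and other primitive contributions could survive. Once you supply the dimension count and the parity/Bockstein exclusions, the argument closes exactly as you outline.
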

\begin{proof}
    We first assume that $ \characteristic(\F) = 2 $. Since $ \F $ is a free module over $ \F_2 $, we can assume without loss of generality that $ \F = \F_2 $.
    Let us denote by $x_* $ the class \[ x_* = \sum_{\underline{a} \in \mathcal{X}} c_{[a_0]} \odot \big ( \bigodot_{i,a_i \neq 0} \gamma_{i, a_i}^2 \big ) \odot 1_{\infty} . \]
    We can write $x_* = \sum_{n\ge 0} x_n$, where $|x_n| = 2n$, as $\gamma_{i,a_i}^2$ and $c_{[a_0]}$ are always even dimensional cohomology classes. We have that $\Delta x_* = x_* \otimes x_*$.\\
    Extrapolating the $2n$-dimensional part, the coproduct of $x_n $ must be \[ \Delta x_n = \sum_{j=0}^n x_j \otimes x_{n-j} , \] similar to those satisfied by the pullbacks of the Chern classes \[ \Delta f^* (c_n) = \sum_{j=0}^n f^* (c_j) \otimes f^* (c_{n-j}) . \]
    When $ m, n $ are finite positive numbers, the restriction of $c_m$ to $H^* (BU(n); \mathbb{F}_2)$ is $0$ for all $m>n$. Therefore, the restriction of $f^* (c_m)$ to $H^* (BN(n); \mathbb{F}_2)$ is also $0$ for all $m>n$.\\
    \textbf{Claim 1.} The restriction of $x_m$ to $H^* (BN(n); \F_2)$ is $0$ if $m > n$. \\
    \textit{Proof of Claim 1.} Recall that $|\gamma_{k,l}| = l(2^k -1)$ and $|c_{[k]}| = 2k$. Therefore the cohomological dimension of  \[ x_{\underline{a}} := c_{[a_0]} \odot \big ( \bigodot_{i,a_i \neq 0} \gamma_{i, a_i}^2 \big ) \odot 1_{\infty} \] corresponding to a sequence $\underline{a} = (a_0, a_1, a_2, \dots ) \in \mathcal{X}$ is $2a_0 + \sum_{i=1}^{\infty} 2a_i (2^i -1)$. The class $x_m$ is of the form $\sum_{\underline{a}} x_{\underline{a}}$ for some sequences $\underline{a} \in \mathcal{X}$. If the cohomological dimension of $x_{m}$ is bigger that $2n$, i.e., $2m = 2 a_0 + \sum_{i=1}^{\infty} 2a_i (2^i -1)  > 2n$, then $a_0 + \sum_{i=1}^{\infty} a_i 2^i > n$. Also, note that $x_{\underline{a}}$ is in the $(a_0 + \sum_{i=1}^{\infty} a_i 2^i)$-th component as the width of the skyline diagram corresponding to $x_{\underline{a}}$ is $a_0 + \sum_{i=1}^{\infty} a_i 2^i$, which is bigger than $n$. Therefore the restriction of $x_{\underline{a}}$ to $H^* (BN(n); \F_2$ is $0$. \hfill $\blacksquare$ \\
    Assume that, $f^* (c_*) \neq x_*$. Then there exists a minimal $m$ such that $f^* (c_m) \neq x_m$. By minimality and the coproduct formulas, we have that $x_m -f^* (c_m)$ is primitive of dimension $m$. The primitives in $A_{\infty} (BU(1))$ are necessarily a linear combination of elements of the form $b\odot 1_{\infty}$, where $b$ is a primitive gathered block in a component equal to $2^k$ for some $k$. We will now determine the condition on the gathered block $b$ that can appear in this linear combination.\\
    \textbf{Claim 2.} For $x_m - f^* (c_m)$ to be non-zero, we must have $m=2^k -1$ for some $k\in \N$ and \[ x_{2^k - 1} - f^* (c_{2^k -1}) = \lambda_k (\gamma_{k,1}^2 \odot 1_{\infty}) . \]
    \textit{Proof of Claim 2.} By Claim 1, $x_m - f^* (c_m)$ is in the kernel of the restriction map $ H^* (BN(\infty); \F_2) \xrightarrow{res} H^* (BN(n); \F_2)$ for $n < m$. Note that $\mathrm{ker} \big ( H^* (BN(\infty); \F_2) \xrightarrow{res} H^* (BN(m); \F_2) \big )$ is generated by $y\odot 1_{\infty}$ for $y$ a full width Hopf monomial with width strictly bigger than $m$. This rules out all primitive gathered blocks in a component equal to $2^k$ for some $k < \lceil \log_2 (m) \rceil$. It follows from the calculation in \cite{Sinha:12} that only primitive gathered block in component $2^k$ whose cohomological dimension is a multiple of $2$ and does not exceed $2^{k+1}$ is $\gamma_{k,1}^2$ of dimension $ 2(2^k-1)$. Therefore $m$ must be equal to $2^{k}-1$ for some $k\in \N$ and \begin{equation} \label{pulpf}
        x_{2^k - 1} - f^* (c_{2^k - 1}) = \lambda_k (\gamma_{k,1}^2 \odot 1_{\infty}),
    \end{equation} for some $\lambda_k \in \F_2$.   \hfill $\blacksquare$ \\
    \textbf{Claim 3.} For all $k$, we have that $\lambda_k = 0$.\\
    \textit{Proof of Claim 3.} We prove this by restricting (\ref{pulpf}) to $H^* (BN(2^k); \F_2)$ and evaluate the restriction on the homology class $Q^{2^k} Q^{2^{k-1}} \cdots Q^{4} Q^{2} (1)$, where $1 \in H_0 (BN(1); \F_2)$. Also note that $\gamma_{k,1}^2 \odot 1_{\infty}$ restricts to $\gamma_{k,1}^2$ in $H^* (BN(2^k); \F_2)$. Using a similar argument used to prove Claim 1, we can show that the only addends in $x_*$ whose width is at most $2^k$ and cohomological dimension is $2(2^k-1)$ are $c_{[2^k-2^l]} \odot \gamma_{l,1}^2 \odot 1_{\infty}$, with $ 1 \leq l \leq k $, and $c_{[2^k -1]} \odot 1_{\infty}$. Hence, the restriction of $x_m$ to $H^* (BN(2^k); \F_2)$ is $\sum_{l=1}^k c_{[2^k-2^l]} \odot \gamma_{l,1}^2 + c_{[2^k -1]} \odot 1_1$ and \[ f_{2^k}^* (c_{2^k -1}) = (1-\lambda_k) \gamma_{k,1}^2 + \sum_{l=1}^{k-1} c_{[2^k-2^l]} \odot \gamma_{l,1}^2 + c_{[2^k -1]} \odot 1_1 .  \] 
    From the definition of $\gamma_{k,1}$, we have $\langle \gamma_{k,1}^2 , Q^{2^{k}} Q^{2^{k-1}} \cdots Q^4 Q^2 (1) \rangle = 1$ and from Lemma~\ref{pairing}, we have $\langle f_{2^k}^* (c_{2^k-1}) , Q^{2^{k}} Q^{2^{k-1}} \cdots Q^4 Q^2 (1) \rangle = 1$. All the other addends pair trivially with $ Q^{2^{k}} Q^{2^{k-1}} \cdots Q^4 Q^2 (1) $. This implies $\lambda_k = 0$.              \hfill $\blacksquare$ \\
    Claim 3 implies that, $x_m - f^* (c_m) = 0$ for any $m \in \N$. This contradicts our previous assumption that $x_* \neq f^* (c_*)$. Therefore, we conclude that $f^* (c_*)$ must be equal to $x_*$ and the theorem is proved.

    If $ \characteristic(\F) = p > 2 $, then the argument is essentially the same with $ x_* = \sum_{\underline{a} \in \mathcal{X}} c_{[a_0]} \odot \big ( \bigodot_{i,a_i \neq 0} \gamma_{i, a_i} \big ) \odot 1_{\infty} $, but the analog of Claim 2 is slightly more subtle. In order to prove that if $ m $ is the minimal index such that $ x_m - f^*(c_m) \not= 0 $ then $ m = p^k-1 $ and $ x_{p^k-1} - f^*(c_{p^k-1}) = \lambda_k (\gamma_{k,1} \odot 1_\infty) $, one is lead to consider primitive gathered blocks in component $ p^k $ whose cohomological dimension does not excees $ 2p^k $. There are classes satisfying this contraint, in dimension $ 2(p^k-p^i-1) $, $ 2(p^k-p^i)-1 $ for $ 1 \leq i \leq k $ and $ 2(p^k-1) $. The classes of dimension $ 2(p^k-p^i)-1 $ are ruled out because $ |x_m - f^*(c_m)| $ is even, while $ 2(p^k-p^i-1) $ are ruled out because their Bockstein is non-zero, while $ \beta(x_{p^k-1} - f^*(c_{p^k-1})) = 0 $. The only remaining primitive is $ \gamma_{k,1} $, of dimension $ 2(p^k-1) $.

    If $ \characteristic(\F) = 0 $, then Claims 2 and 3 are unnecessary, because the only non-zero primitives lie in component $ 1 $.
\end{proof}

Let us now recall the definition of a regular sequence. The following definition and the subsequent lemma is a classical result \cite{Eisenbud99}. 

\begin{definition}
    Let $A$ be a commutative algebra over a field $\F$. A sequence $\{ a_1, \dots , a_n \}$ in $A$ is said to be a \textit{regular sequence} if $a_1$ is not a zero divisor in $A$ and for all $2\le i \le n$, $a_i$ is not a zero divisor in $A/(a_1, \dots, a_{i-1})$.
    
    If $\{ F_i \}_{i\in I}$ is a filtration of $A$, with $I$ being a totally ordered set, then we define the \textit{associated graded algebra} of $A$ as \[ \gr (A):= \bigoplus_{i\in I} F_i / F_{i-1} . \]
\end{definition}

\begin{lemma}[\cite{Eisenbud99}] \label{lema:graded} 
    Let $A$ be an algebra over a field $\F$. Let $\mathcal{F}$ be a multiplicative filtration of $A$ bounded below. For $i=1,2,\dots , n $ let $x_i \in \mathcal{F}_{j_i}/\mathcal{F}_{j_i -1} \subseteq \gr_{\mathcal{F}} (A)$ be graded elements in the associated graded algebra lifting to $\widetilde{x}_i \in \mathcal{F}_{j_i} \subseteq A$. If $\{ x_1 , \dots x_n \} $ is a regular sequence in $\gr_{\mathcal{F}} (A)$, then $\{ \widetilde{x}_1 , \dots \widetilde{x}_n \}$ is a regular sequence in $A$.   
\end{lemma}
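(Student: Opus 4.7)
The plan is to argue by induction on $n$, using the classical leading-form technique. For any nonzero $a \in A$, denote by $\deg(a) := \min\{k : a \in \mathcal{F}_k\}$ its filtration degree, which is well-defined because $\mathcal{F}$ is bounded below, and by $\mathrm{in}(a) \in \mathcal{F}_{\deg(a)}/\mathcal{F}_{\deg(a)-1} \subseteq \gr_{\mathcal{F}}(A)$ its leading form. Multiplicativity $\mathcal{F}_p \cdot \mathcal{F}_q \subseteq \mathcal{F}_{p+q}$ implies that $x_i \cdot \mathrm{in}(a) = \mathrm{in}(\widetilde{x}_i a)$ whenever $\widetilde{x}_i a$ has degree exactly $j_i + \deg(a)$, and vanishes in $\mathcal{F}_{j_i + \deg(a)}/\mathcal{F}_{j_i + \deg(a)-1}$ otherwise.

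In the base case $n = 1$, any equality $\widetilde{x}_1 a = 0$ with $a \neq 0$ would pass to leading forms and yield $x_1 \cdot \mathrm{in}(a) = 0$ in $\gr_{\mathcal{F}}(A)$, contradicting the hypothesis that $x_1$ is a non-zero-divisor there. The inductive step is structurally parallel but requires more careful bookkeeping.

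Assuming the lemma for $n-1$, the sequence $\widetilde{x}_1, \dots, \widetilde{x}_{n-1}$ is already regular in $A$; set $I := (\widetilde{x}_1, \dots, \widetilde{x}_{n-1})$ and $\bar I := (x_1, \dots, x_{n-1}) \subseteq \gr_{\mathcal{F}}(A)$. To show $\widetilde{x}_n$ is not a zero-divisor modulo $I$, I would suppose by contradiction that $\widetilde{x}_n a \in I$ with $a \notin I$, and pick a representative of $a + I$ of minimal filtration degree $k := \deg(a)$. Minimality ensures $\mathrm{in}(a) \notin \bar I$, because any identity $\mathrm{in}(a) = \sum x_i \bar c_i$ in $\gr_{\mathcal{F}}(A)$ could be lifted to $A$ and subtracted from $a$ to produce a representative of strictly smaller degree. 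Writing $\widetilde{x}_n a = \sum_i b_i \widetilde{x}_i$ with the $b_i$ chosen so as to minimise $D := \max_i(\deg(b_i) + j_i)$, a comparison of leading forms in $\mathcal{F}_D/\mathcal{F}_{D-1}$ will, in the main case $D \leq j_n + k$, give $x_n \cdot \mathrm{in}(a) \in \bar I$; then regularity of $x_n$ modulo $\bar I$ forces $\mathrm{in}(a) \in \bar I$, the desired contradiction.

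The genuinely delicate case is $D > j_n + k$. Here the leading form of $\widetilde{x}_n a$ in $\mathcal{F}_D/\mathcal{F}_{D-1}$ vanishes, and the leading-form comparison yields only a syzygy $\sum_{i \in J} x_i \cdot \mathrm{in}(b_i) = 0$ in $\gr_{\mathcal{F}}(A)$, with $J = \{i : \deg(b_i) + j_i = D\}$. To rule this out, I would invoke Koszul exactness: since $\{x_1, \dots, x_{n-1}\}$ is regular in $\gr_{\mathcal{F}}(A)$, every syzygy among the $x_i$ is generated by the trivial Koszul relations $x_j e_i - x_i e_j$. Lifting such a presentation of $(\mathrm{in}(b_i))_{i \in J}$ to $A$ allows one to replace $(b_i)_i$ by a new decomposition $(b_i')_i$ of $\widetilde{x}_n a$ with $\max_i(\deg(b_i') + j_i) < D$, contradicting the minimality of $D$. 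Excluding this last case closes the induction. This Koszul-syzygy manipulation is where I expect the only real technical difficulty to lie.
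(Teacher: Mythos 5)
The paper does not prove this lemma; it is cited directly from Eisenbud's textbook as a classical result, so there is no in-paper argument to compare with. Your argument is nevertheless a correct and standard proof by the leading-form technique. The logic is sound: in the case $D \le j_n + k$, passing to $\mathcal{F}_{j_n+k}/\mathcal{F}_{j_n+k-1}$ produces $x_n\cdot\mathrm{in}(a)\in\bar{I}$ and hence $\mathrm{in}(a)\in\bar{I}$ by regularity modulo $\bar{I}$, contradicting the minimality of $k$; in the case $D > j_n + k$, one obtains a homogeneous syzygy $\sum_{i\in J}\mathrm{in}(b_i)\,x_i=0$ in $\gr_{\mathcal{F}}(A)$, and acyclicity of the Koszul complex for the regular sequence $(x_1,\dots,x_{n-1})$ (which requires no Noetherian hypothesis) expresses it as a $\gr$-combination of the trivial relations $x_pe_q-x_qe_p$; lifting those homogeneous coefficients to the corresponding filtration pieces of $A$ and subtracting from the $b_i$ kills the top degree and contradicts the minimality of $D$, exactly as you indicate. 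Three bookkeeping points would be worth spelling out in a full write-up: (i) the filtration must in fact be exhaustive, not just bounded below, for $\deg$ to be defined on all nonzero elements --- the lemma statement omits this, but the rank filtration used in the paper satisfies it; (ii) when deducing $\mathrm{in}(a)\notin\bar{I}$ from minimality of $k$, one should first replace each $\bar{c}_i$ in $\mathrm{in}(a)=\sum x_i\bar{c}_i$ by its homogeneous component of degree $k-j_i$, since only homogeneous elements of $\gr_{\mathcal{F}}(A)$ lift canonically to elements of $\mathcal{F}_{k-j_i}\subseteq A$; (iii) the telescoping cancellation $\sum_i\beta_i\widetilde{x}_i=0$ that keeps the modified $(b_i')_i$ a presentation of $\widetilde{x}_n a$ uses commutativity of $A$ (or at least centrality of the $\widetilde{x}_i$), which holds in the paper's application because the pullbacks of Chern, Stiefel--Whitney and Pontrjagin classes are even-dimensional. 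Your identification of the Koszul-syzygy step as the only genuinely technical point is accurate.
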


\begin{lemma} \label{lema:associated-graded}
    Let $A = H^* (BN(\infty); \F_p) $, and $\mathcal{F}_{*}$ be the rank filtration. The associated graded algebra is isomorphic to the polynomial algebra \begin{equation}
        \gr_{\mathcal{F}} (A) = \mathcal{F}_0 [c_{[1]} \odot 1_{\infty} , c_{[2]} \odot 1_{\infty}, \dots , c_{[k]} \odot 1_{\infty}, \dots  ]
    \end{equation}
\end{lemma}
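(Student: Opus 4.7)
The plan is to identify $A \cong A_\infty(BU(1);\F_p)$ via Proposition~\ref{prop:DnBU(1)-BN(n)-iso} and to analyze both sides of the claimed isomorphism using the Hopf monomial basis from Corollary~\ref{cor:basis DX stable}. Every such basis element factors uniquely as $z = y \odot x_\lambda \odot 1_\infty$, where $y$ collects the $\gamma$-containing decorated gathered blocks of $z$ (so $y \in \mathcal{F}_0$) and $x_\lambda = \bigodot_k (c^k)_{[a_k(\lambda)]}$ collects the trivial-profile blocks, encoded by a partition $\lambda$ with $a_k(\lambda)$ parts of size $k$. By Definition~\ref{dfn:rank}, $\rk(z)=2|\lambda|$, so $\mathcal{F}_m/\mathcal{F}_{m-1}$ has Hopf monomial basis indexed by pairs $(y,\lambda)$ with $|\lambda|=m/2$. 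Assigning bidegree $(2k,2k)$ to $X_k$, this count matches $\mathcal{F}_0[X_1,X_2,\ldots]$ bidegree by bidegree.

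Next I would introduce the candidate ring map $\phi\colon \mathcal{F}_0[X_1,X_2,\ldots] \to \gr_\mathcal{F}(A)$ extending the inclusion of $\mathcal{F}_0$ by $X_k \mapsto \overline{c_{[k]}\odot 1_\infty}$. It is a well-defined graded-commutative ring map because all generators live in even cohomological degrees, and by the dimension count it suffices to show $\phi$ is surjective.

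The key computation is the congruence
\[
\bar{y}\cdot\overline{x_\lambda\odot 1_\infty} \;=\; \overline{y\odot x_\lambda\odot 1_\infty} \quad\text{in }\mathcal{F}_{2|\lambda|}/\mathcal{F}_{2|\lambda|-1},
\]
for every $y\in\mathcal{F}_0$ and every partition $\lambda$. Writing $y=y_0\odot 1_\infty$, I would expand $(y_0\odot 1_{N-n(y_0)})\cdot(x_\lambda\odot 1_{N-n(x_\lambda)})$ via Hopf ring distributivity, using $\Delta(x_\lambda\odot 1_{N-n(x_\lambda)})=\Delta(x_\lambda)\odot\Delta(1_{N-n(x_\lambda)})$. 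The split with $(x_\lambda)_{(1)}=1_0$ and $(1_{N-n(x_\lambda)})_{(1)}=1_{n(y_0)}$ produces exactly $y_0\odot x_\lambda\odot 1_{N-n(y_0)-n(x_\lambda)}$, of rank $2|\lambda|$. For any other split, parameterized by a non-zero choice of multiplicities $(i_k)$ with $i_k\le a_k(\lambda)$, the $c$-content $\bigodot_k(c^k)_{[i_k]}$ moves into the first $\odot$-factor; a second distributivity application absorbs it into the $\gamma$-blocks of $y_0$ to produce a rank-$0$ Hopf monomial, while the remaining second $\odot$-factor has rank $2(|\lambda|-\sum_k k i_k)<2|\lambda|$. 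The whole summand therefore lies in $\mathcal{F}_{2|\lambda|-1}$.

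Given the congruence, I would finish by invoking classical symmetric function theory: a direct Hopf-ring computation shows that $\prod_j (c_{[j]}\odot 1_\infty)^{a_j}$ in $A_\infty$ expands in the Hopf monomial basis $\{x_\mu\odot 1_\infty\}_{|\mu|=\sum ja_j}$ with exactly the integer coefficients appearing in the expansion of $\prod_j\sigma_j^{a_j}$ in the monomial symmetric function basis (inverse Kostka numbers), so that the change of basis matrix is $\Z$-unitriangular in dominance order and hence invertible over $\F_p$. Inverting it expresses each $\overline{x_\lambda\odot 1_\infty}$ as an $\F_p$-linear combination of the $\prod_j\phi(X_j)^{a_j(\mu)}$; multiplying by $\bar{y}$ and invoking the congruence shows every basis element $\overline{y\odot x_\lambda\odot 1_\infty}$ of $\gr_\mathcal{F}(A)$ lies in the image of $\phi$, yielding surjectivity and hence the lemma. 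The principal obstacle is the Hopf-ring distributivity computation establishing the congruence: one must carefully track how $c$-decorations flow through iterated coproducts and verify, via an auxiliary distributivity step, that the only top-rank contribution arises from the ``$c$-free'' split on the $y$-side.
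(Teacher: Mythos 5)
Your proposal is correct and reaches the same conclusion, but the route differs enough from the paper's to merit a comparison. The paper's proof introduces the $\gamma$-free subspace $A'$ explicitly, defines a total order on pure $\gamma$-free Hopf monomials via the lexicographic order on the sequences $\underline{s}(x)$, and proves by induction a ``leading term'' claim: the leading term of $x\cdot(c_{[k]}\odot 1_\infty)$ is $x_{\underline{s}(x)+\underline{s}(c_{[k]}\odot 1_\infty)}$. This directly establishes $A'\cong\F_p[c_{[k]}\odot 1_\infty]$ inside $A$; combined with the rank-drop congruence, it gives the isomorphism by construction rather than by counting. You instead prove the same ``polynomial generation'' fact by appealing to the classical elementary-to-monomial transition in symmetric function theory and its unitriangularity with respect to the dominance order, and then package the argument as a dimension count plus surjectivity. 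Both paths hinge on the same two facts (a triangular change-of-basis on the $\gamma$-free part, and the rank-drop congruence), so the content is equivalent; your appeal to the known combinatorics of $e_\lambda\to m_\mu$ is a shortcut, while the paper's inductive lex-order argument is self-contained. A small slip: the coefficients you invoke are the numbers of $\{0,1\}$-matrices with prescribed row and column sums (the transition matrix $e_\lambda=\sum_\mu M_{\lambda\mu}m_\mu$), not inverse Kostka numbers; the unitriangularity statement you use is nevertheless correct. Your ``second distributivity'' step establishing that $(\bigodot_k(c^k)_{[i_k]}\odot 1_{a})\cdot y_0$ has rank zero is stated somewhat tersely but is sound: because $y_0$ consists entirely of $\gamma$-containing columns and no unit factors, every piece of the $\gamma$-free factor must stack onto a $\gamma$-column, and the coproduct of a $\gamma$-containing decorated block only produces $1_0$'s or $\gamma$-containing blocks, so the whole product has rank zero; this is precisely the content of the paper's skyline-diagram justification for the same congruence.
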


\begin{proof}
    Every stabilized Hopf monomial $x = c_{[k_1]}^{m_1} \odot \cdots \odot c_{[k_l]}^{m_l} \odot 1_{\infty}$ is uniquely determined by an eventually zero sequence of non-negative integers of the form \[ \underline{s} (x) = (\underbrace{m_1,\dots,m_1}_{k_1 \text{ times}},\dots, \underbrace{m_l, \dots, m_l}_{k_l \text{ times}}, 0,0,\dots ).\] Let $A'$ be the subspace of $A$ generated by the linear span of the Hopf monomials $c_{[k_1]}^{m_1} \odot \cdots \odot c_{[k_l]}^{m_l} \odot 1_{\infty}$ with $m_1 > m_2 > \cdots >m_l$ and $k_1,\dots,k_l \ge 1$. By Hopf ring distributivity, the product of any such two Hopf monomials is again of this form and hence an element of $A'$. This makes $A'$ a subalgebra of $A$. We define a total order on the set of these Hopf monomials by letting $x<y$ if and only if $\underline{s} (x) < \underline{s} (y)$ in the lexicographic order. The set of these monomials forms an ordered basis for $A'$ as a $\F_p$-vector space. Given an eventually zero sequence of non-negative integers $s$, we denote the unique basis element $x_s$ such that $\underline{s} (x_s) = s$. \\
    \textbf{Claim.} If $x= c_{[k_1]}^{m_1} \odot \cdots \odot c_{[k_l]}^{m_l} \odot 1_{\infty}$, then for $m\ge 1$ the leading term of $x\cdot (c_{[k]} \odot 1_{\infty})$ in the above ordered basis is $x_{\underline{s} (x)+ \underline{s} (c_{[k]}\odot 1_{\infty})}$.\\
    Recall from Theorem~\ref{thm:cohomology DX mod 2} that $\Delta c_{[k]} = \sum_{j=0}^k c_{[j]} \otimes c_{[k-j]}$. To prove this claim, we proceed by induction. If $l=1$, the Hopf ring distributivity implies \[ x\cdot (c_{[k]} \odot 1_{\infty}) = \sum_{j=0}^{\mathrm{min} \{ k_1, k\}} c_{[j]}^{m_1 +1} \odot c_{[k_1 -j]}^{m_1} \odot c_{[k-j]} \odot 1_{\infty} . \]
    The leading term is the one for which $j$ is maximal and this proves the claim in the case when $l=1$.\\
    To prove the general case, let $x' = c_{[k_2]}^{m_2} \odot \cdots \odot c_{[k_l]}^{m_l} \odot 1_{\infty}$. Again using the Hopf ring distributivity, we have \[ x\cdot (c_{[k]} \odot 1_{\infty}) = \sum_{j=0}^{\mathrm{min} \{ k_1, k \}}  c_{[j]}^{m_1 +1} \odot c_{[k_1 -j]}^{m_1} \odot \big ( x' \cdot ( c_{[k-j]} \odot 1_{\infty} ) \big ) .\] The leading term is the one for which $j$ is maximal and this proves the induction step. \\
    As a consequence of the Claim $A' = \F_p [c_{[1]}\odot 1_{\infty}, \dots ,c_{[k]} \odot 1_{\infty}, \dots]$ as an algebra. Therefore, to prove the lemma it is enough to check that for all $x = \overline{x} \odot 1_{\infty} \in \mathcal{F}_0$ and for all $y=\overline{y} \odot 1_{\infty} \in A'$, the cup product $x\cdot y$ is equal to $\overline{x} \odot \overline{y} \odot 1_{\infty} $ plus elements of lower rank. Unpacking the Hopf ring distributivity $x \cdot y$ is obtained combinatorially by splitting the columns of the skyline diagram of $\overline{y}$, and stacking each column either onto a column of $\overline{x}$ with the same width, or onto the $1_{\infty}$ part. Stacking a column of $\overline{y}$ onto a column of $\overline{x}$ lowers rank. Therefore the term of maximal rank is obtained by stacking $\overline{y}$ onto the $1_{\infty}$ part, which yields $\overline{x} \odot \overline{y} \odot 1_{\infty}$.
\end{proof}

\begin{theorem} \label{theo:regular}
 The sequence $\{ f^* (c_1), f^* (c_2), \dots  \}$ in $A_{\infty} (BU(1))$ is a regular sequence.
\end{theorem}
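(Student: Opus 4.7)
The plan is to push the regularity question through the rank filtration $\mathcal{F}_*$ and reduce it to the trivial fact that the indeterminates of a polynomial ring form a regular sequence. The three input ingredients are already available: the leading-term identity for $f^*(c_k)$ (Lemma~\ref{lema:chernformula}), the identification of the associated graded as a polynomial algebra (Lemma~\ref{lema:associated-graded}), and the lifting principle from $\gr_\mathcal{F}(A)$ to $A$ (Lemma~\ref{lema:graded}). The theorem should follow by assembling these.

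First, I would combine Lemma~\ref{lema:chernformula} with stability to conclude that in $A_\infty(BU(1))$,
\[
f^*(c_k) = c_{[k]} \odot 1_\infty + r_k, \qquad r_k \in \mathcal{F}_{2k-1}.
\]
Since the decorated gathered block underlying $c_{[k]}$ is $(1_k, c)$, Definition~\ref{dfn:rank} gives $\rk(c_{[k]} \odot 1_\infty) = k \cdot |c| = 2k$. Hence the image of $f^*(c_k)$ in $\mathcal{F}_{2k}/\mathcal{F}_{2k-1} \subseteq \gr_\mathcal{F}(A)$ is precisely $\overline{c_{[k]} \odot 1_\infty}$, which is the $k$-th polynomial generator of $\gr_\mathcal{F}(A)$ according to Lemma~\ref{lema:associated-graded}.

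Second, I would invoke the elementary algebraic fact that in any polynomial ring $R[x_1, x_2, \ldots]$ over a commutative ring $R$, the indeterminates form a regular sequence: for each $i \geq 1$, the quotient $R[x_1, x_2, \ldots]/(x_1, \ldots, x_{i-1}) \cong R[x_i, x_{i+1}, \ldots]$ is again a polynomial ring in which multiplication by $x_i$ is injective. Applied to $R = \mathcal{F}_0$, this shows that $\{\overline{c_{[k]} \odot 1_\infty}\}_{k \geq 1}$ is a regular sequence in $\gr_\mathcal{F}(A)$. Finally, Lemma~\ref{lema:graded} lifts this regularity from $\gr_\mathcal{F}(A)$ back to $A$ for every finite initial segment, so $\{f^*(c_1), \ldots, f^*(c_n)\}$ is a regular sequence in $A$ for all $n$, which is the content of the theorem.

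I do not anticipate a genuine obstacle: the substantive technical work has already been carried out in the three lemmas cited above, and the argument is essentially a bookkeeping of leading terms. The only point requiring a moment of care is verifying that $\rk(c_{[k]} \odot 1_\infty) = 2k$ exactly (not smaller), so that its image in $\gr_\mathcal{F}(A)$ is the correct polynomial generator in the correct filtration degree; this is immediate from the second bullet of Definition~\ref{dfn:rank} applied to the gathered block $(1_k, c)$.
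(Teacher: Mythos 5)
Your proof is correct and follows essentially the same route as the paper's: both arguments feed Lemma~\ref{lema:chernformula} (leading term of $f^*(c_k)$ in the rank filtration), Lemma~\ref{lema:associated-graded} (polynomial structure of $\gr_{\mathcal{F}}(A)$), and Lemma~\ref{lema:graded} (lifting regularity from the associated graded) into the same three-step reduction. The only difference is cosmetic: you spell out the verification that $\rk(c_{[k]}\odot 1_\infty)=2k$ via Definition~\ref{dfn:rank}, a detail the paper leaves implicit.
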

\begin{proof}
    From Lemma~\ref{lema:associated-graded}, we have that the associated graded algebra $\gr_{\mathcal{F}} (A)$ corresponding to the rank filtration $\mathcal{F}_*$ of $A = H^* (BN (\infty); \F_p)$ is isomorphic to $\mathcal{F}_0 [c_{[1]} \odot 1_{\infty} ,   c_{[2]} \odot 1_{\infty}, \dots  ]$. Also, note that the sequence $\{ c_{[1]} \odot 1_{\infty}, c_{[2]} \odot 1_{\infty}, \dots  \}$ is a regular sequence in $\gr_{\mathcal{F}} (A)$. From Lemma~\ref{lema:chernformula}, we have $f_n^* (c_k) = c_{[k]} \odot 1_{n-k}$ modulo terms of lower rank and therefore $c_{[k]} \odot 1_{\infty} \in \gr_{\mathcal{F}} (A)$ lifts to $f^* (c_k) \in A$. Hence by Lemma~\ref{lema:graded}, $\{ f^* (c_1), f^* (c_2), \dots \}$ is a regular sequence in $A_{\infty} (BU(1))$.
\end{proof}

As a direct consequence of Theorem~\ref{theo:regular}, we have the following.
\begin{corollary} \label{cor:stable-cohomology}
    For all prime $p$, there are isomorphisms of algebras \[ H^* \Big ( \frac{U(\infty)}{N(\infty)} ; \F_p \Big ) \cong \frac{A_{\infty} (BU(1))}{(f^* (c_1), f^* (c_2), \dots )} .\]
\end{corollary}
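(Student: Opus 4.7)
The plan is to derive the statement from Theorem~\ref{theo:regular} via the Leray--Hirsch theorem applied to the fibration
\[
U(\infty)/N(\infty) \xrightarrow{\iota} BN(\infty) \xrightarrow{f} BU(\infty),
\]
obtained by passing to the colimit in the classical fibrations $U(n)/N(n) \to BN(n) \to BU(n)$ (these exist because $BN(n)$ can be modelled as $EU(n)/N(n)$, whence the map $EU(n)/N(n) \to EU(n)/U(n) = BU(n)$ has fiber $U(n)/N(n)$). The map induced on cohomology by $f$ is precisely the $f^*$ of the statement.

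First I would observe that regularity of $\{f^*(c_k)\}_{k \geq 1}$ in $A_\infty(BU(1))$ forces algebraic independence, so $f^*$ realizes $H^*(BU(\infty); \F_p) = \F_p[c_1, c_2, \dots]$ as the polynomial subring $R := \F_p[f^*(c_1), f^*(c_2), \dots] \subseteq A_\infty(BU(1))$. The acyclicity in positive degrees of the Koszul complex on a regular sequence then yields $\Tor^R_{>0}(\F_p, A_\infty(BU(1))) = 0$, so $A_\infty(BU(1))$ is flat over $R$; in our connected-graded setting, with finite-dimensional components and $R$ local at the irrelevant ideal, flatness is equivalent to freeness by graded Nakayama. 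Pick a homogeneous $R$-module basis $\{b_\alpha\}_\alpha$ of $A_\infty(BU(1))$.

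Next I would invoke Leray--Hirsch: since the $b_\alpha$ freely generate $H^*(BN(\infty))$ over $f^*\bigl(H^*(BU(\infty))\bigr)$, the Serre spectral sequence of the fibration, whose $E_2$-page is $H^*(BU(\infty)) \otimes H^*(U(\infty)/N(\infty))$, collapses at $E_2$, and the restrictions $\{\iota^*(b_\alpha)\}$ form an $\F_p$-basis of $H^*(U(\infty)/N(\infty))$. Since $f \circ \iota$ is null-homotopic, $\iota^*$ vanishes on $(f^*(c_1), f^*(c_2), \dots)$ and descends to a map of algebras
\[
\bar{\iota}^* \colon A_\infty(BU(1))/(f^*(c_1), f^*(c_2), \dots) \longrightarrow H^*(U(\infty)/N(\infty))
\]
which sends the images $\bar b_\alpha$ of the free basis to the basis $\iota^*(b_\alpha)$, hence is an algebra isomorphism.

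The only nontrivial step is the passage from regularity to freeness over $R$; granted this, the collapse of the Serre spectral sequence and the ring-theoretic identification are formal consequences of Leray--Hirsch and the observation that $f \circ \iota$ is null-homotopic. Alternatively, one may run the Eilenberg--Moore spectral sequence $E_2 = \Tor^{H^*(BU(\infty))}(\F_p, H^*(BN(\infty))) \Rightarrow H^*(U(\infty)/N(\infty))$ and observe that the regular sequence hypothesis collapses it to $E_2^{0,*} = A_\infty(BU(1)) \otimes_R \F_p$, yielding the same conclusion with a shorter argument but requiring more background machinery.
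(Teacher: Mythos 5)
Your argument is correct in substance but takes a genuinely different route from the paper, so let me compare the two and flag one point of imprecision.

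The paper works with the fibration $U(\infty) \to U(\infty)/N(\infty) \to BN(\infty)$, whose fiber has known (exterior) cohomology, and analyzes its Serre spectral sequence directly: by comparison with the universal fibration $U(\infty) \to EU(\infty) \to BU(\infty)$ the exterior generators $z_{2i-1}$ are transgressive with $d_{2i}(z_{2i-1}) = f^*(c_i)$, and Theorem~\ref{theo:regular} (via algebraic independence of a regular sequence) forces each $d_{2i}$ to be injective on the ideal it acts on, so $E_\infty$ is the quotient ring directly. You instead turn the triangle around and work with $U(\infty)/N(\infty) \to BN(\infty) \to BU(\infty)$, where now the fiber is the unknown object and the total space is known. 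That requires the extra input that $A_\infty(BU(1))$ is \emph{free} over $R = f^*(H^*(BU(\infty)))$, which you correctly extract from regularity via the Koszul complex, flatness, and graded Nakayama; this step is sound (working degree by degree handles the infinitude of the sequence) but is a nontrivial piece of commutative algebra that the paper's approach avoids entirely. The trade-off is that your argument is more conceptual -- once freeness is in hand, the identification $H^*(F) \cong H^*(E) \otimes_{H^*(B)} \F_p$ is structural -- while the paper's is more elementary, needing only the multiplicative Serre spectral sequence and the transgression comparison.

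One imprecision worth noting: what you invoke under the name ``Leray--Hirsch'' is really a \emph{converse} to Leray--Hirsch. The theorem as usually stated takes as hypothesis that certain classes in $H^*(E)$ restrict to a basis of $H^*(F)$ and concludes freeness of $H^*(E)$ over $H^*(B)$; you are asserting the reverse implication (freeness of $H^*(E)$ over $H^*(B)$ $\Rightarrow$ collapse of the Serre spectral sequence and $\iota^*(b_\alpha)$ a basis of $H^*(F)$), which is true in this setting but does not follow from the textbook statement and needs its own justification -- precisely the Eilenberg--Moore argument you sketch as an ``alternative'': since $H^*(BN(\infty))$ is free over $H^*(BU(\infty))$, all higher $\Tor$ vanish and the Eilenberg--Moore spectral sequence collapses onto $H^*(BN(\infty)) \otimes_{H^*(BU(\infty))} \F_p$, giving exactly the desired quotient. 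So the EMSS route should be promoted from an ``alternative'' to the actual proof of that step; with it in place, your proposal is a complete and valid proof, though distinct in strategy from the paper's.
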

\begin{proof}
    We consider the Serre spectral sequence associated with the fiber sequence \begin{equation} \label{eq3}
    U(\infty) \longrightarrow \frac{U(\infty)}{N(\infty)} \longrightarrow BN(\infty) \end{equation} and compare it with the Serre spectral sequence associated with the universal fibration \begin{equation} \label{eq4} U(\infty) \longrightarrow EU(\infty) \longrightarrow BU(\infty).\end{equation} Recall that $H^* (U(\infty); \F_p)$ is the exterior algebra generated by  $z_{2i-1}$ for $i=1,2, \dots$ and $z_{2i-1}$ transgresses to $c_i$ in the Serre spectral sequence associated with (\ref{eq4}). Comparing the two spectral sequences, we see that $z_{2i-1}$ transgresses to $f^* (c_i)$ in the Serre spectral sequence associated with (\ref{eq3}). Hence, the differentials are given as follows: for all $i \ge 1$, $d_{2i}\colon z_{2i-1} \mapsto f^* (c_i)$ and $d_{2i-1} \equiv 0$. By Theorem~\ref{theo:regular}, the sequence $\{ f^* (c_1), f^* (c_2), \dots \}$ is a regular sequence in $A_{\infty} (BU(1))$ and hence by \cite[Theorem~11.22]{atiyahMac} is algebraically independent over $\F_2$. Therefore, for all $i \ge 1$, $d_{2i}$ is injective on the ideal generated by $z_{2i-1}$, otherwise, there will be algebraic relations between the $f^* (c_i)$'s. The $E_{\infty}$-page is thus given by
    \[ E_{\infty}^{*,*} = \frac{A_{\infty} (BU(1))}{(f^* (c_1), f^* (c_2), \dots )} \] and the result follows.
\end{proof}

\begin{corollary}
    $H^* (\UFl_{\infty} (\C); \F_p)$ is the free graded commutative algebra generated by the stabilization of decorated gathered blocks $b\odot 1_{\infty}$ such that $\rk (b) = 0$ and satisfies the conditions of Corollary~\ref{cor:polynomial generators}.
\end{corollary}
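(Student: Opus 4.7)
The plan is to combine Corollary~\ref{cor:stable-cohomology}, which identifies
\[
H^* (\UFl_{\infty} (\C); \Fp) \cong A/I, \qquad A := A_\infty(BU(1)), \quad I := (f^*(c_k) : k \geq 1),
\]
with the polynomial presentation of $A$ supplied by Corollary~\ref{cor:polynomial generators}. I partition the polynomial generators of $A$ into rank-zero ones $G_0$ (those whose underlying gathered block involves at least one $\gamma$-, $\alpha$- or $\beta$-class) and positive-rank ones $G_+$ (those of the form $(1_{p^j}, c^m) \odot 1_\infty$ admitted by the corollary). Let $B \subseteq A$ be the free graded-commutative subalgebra generated by $G_0$. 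The goal is to show that the composite $\varphi \colon B \hookrightarrow A \twoheadrightarrow A/I$ is an isomorphism of graded $\Fp$-algebras, at which point the statement follows since $B$ is by construction the free graded commutative algebra on the rank-zero generators.

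\emph{Step 1: $B = \mathcal{F}_0$.} By multiplicativity of the rank filtration, $B \subseteq \mathcal{F}_0$. The polynomial presentation gives $\mathrm{Hilb}(A) = \mathrm{Hilb}(\Fp[G_0]) \cdot \mathrm{Hilb}(\Fp[G_+])$, while Lemma~\ref{lema:associated-graded} supplies $\gr_\mathcal{F} A = \mathcal{F}_0[v_1, v_2, \ldots]$ with $v_k := c_{[k]} \odot 1_\infty$, whence $\mathrm{Hilb}(A) = \mathrm{Hilb}(\mathcal{F}_0) \cdot \prod_{k \geq 1}(1 - t^{2k})^{-1}$. A direct combinatorial count matches $\mathrm{Hilb}(\Fp[G_+])$ with $\prod_{k \geq 1}(1-t^{2k})^{-1}$ via the bijection $(j,m) \leftrightarrow p^j m$ between the indexing set of $G_+$ and $\{k : k \geq 1\}$. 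Dividing the two expressions for $\mathrm{Hilb}(A)$ yields $\mathrm{Hilb}(B) = \mathrm{Hilb}(\mathcal{F}_0)$, and an inclusion of bigraded vector spaces of matching Hilbert series is forced to be an equality.

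\emph{Step 2: Surjectivity of $\varphi$.} I would proceed by induction on the rank filtration. For $x \in \mathcal{F}_n$ with $n \geq 1$, the image $\bar{x} \in \mathcal{F}_n/\mathcal{F}_{n-1}$ lies inside the ideal $(v_1, v_2, \ldots)$ of $\gr_\mathcal{F} A$, so write $\bar{x} = \sum_k v_k\, p_k$ and lift each $p_k$ to $\tilde{p}_k \in A$ of the appropriate rank. Lemma~\ref{lema:chernformula} gives $f^*(c_k) \equiv v_k \pmod{\mathcal{F}_{2k-1}}$, whence $x - \sum_k f^*(c_k)\,\tilde{p}_k \in \mathcal{F}_{n-1}$ and differs from $x$ by an element of $I$. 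Iterating reduces every class modulo $I$ to a rank-zero representative, which by Step 1 lies in $B$.

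\emph{Step 3: Injectivity of $\varphi$.} By Theorem~\ref{theo:regular}, $\{f^*(c_k)\}_{k \geq 1}$ is a regular sequence in $A$ with $|f^*(c_k)| = 2k$, while $\{v_k\}_{k \geq 1}$ is tautologically regular in $\gr_\mathcal{F} A$. The Koszul Hilbert-series calculation then gives
\[
\mathrm{Hilb}(A/I) = \mathrm{Hilb}(A) \cdot \prod_{k \geq 1}(1-t^{2k}) = \mathrm{Hilb}(\gr_\mathcal{F} A) \cdot \prod_{k \geq 1}(1-t^{2k}) = \mathrm{Hilb}(\mathcal{F}_0) = \mathrm{Hilb}(B),
\]
so the surjection of Step 2 is forced to be an isomorphism. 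The main technical hurdle is the compatibility between the two descriptions of $\gr_\mathcal{F} A$ --- the polynomial one induced by Corollary~\ref{cor:polynomial generators} and the $\mathcal{F}_0[v_k]$ one of Lemma~\ref{lema:associated-graded} --- which underlies both the Hilbert-series match in Step 1 and the Koszul count in Step 3; once that compatibility is verified, the regularity of Theorem~\ref{theo:regular} together with the leading-term identification of Lemma~\ref{lema:chernformula} closes the argument.
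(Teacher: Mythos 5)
Your proof is correct and takes essentially the same approach as the paper: combining the quotient description of Corollary~\ref{cor:stable-cohomology} with the associated-graded computation of Lemma~\ref{lema:associated-graded}, together with Lemma~\ref{lema:chernformula} and the regularity of Theorem~\ref{theo:regular}, to identify $A/I$ with $\mathcal{F}_0$. The paper's proof is a one-line invocation of those cited results; your version spells out the filtration descent for surjectivity and the Hilbert-series count for injectivity, and — a detail the paper leaves implicit — verifies explicitly that $\mathcal{F}_0$ coincides with the free graded-commutative algebra on the rank-zero generators of Corollary~\ref{cor:polynomial generators}, via the degree-preserving bijection $(j,m) \leftrightarrow p^j m$ between $G_+$ and $\{k \geq 1\}$.
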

\begin{proof}
    By Lemma~\ref{lema:associated-graded} and Corollary~\ref{cor:stable-cohomology}, the stable cohomology of the unordered flag manifold is isomorphic to $\mathcal{F}_0$ the rank zero subalgebra of $A_{\infty} (X)$. 
\end{proof}

With rational coefficients, $ A_{\infty} (BU(1)) $ is the polynomial algebra generated by $ c_{[n]} \odot 1_\infty $ for $ n \geq 1 $. Therefore, our analysis recovers the following well-known result.
\begin{corollary}
    $ H^*(\UFl_{\infty} (\C); \mathbb{Q}) \cong \mathbb{Q} $.
\end{corollary}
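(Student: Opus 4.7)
The plan is to combine the three main results already established in this subsection. By Corollary~\ref{cor:stable-cohomology}, which holds for all fields since its proof uses only the Serre spectral sequence of $U(\infty) \to U(\infty)/N(\infty) \to BN(\infty)$ together with the regularity of the sequence $\{f^*(c_i)\}$, we have
\[
H^*(\UFl_{\infty}(\C); \mathbb{Q}) \cong \frac{A_{\infty}(BU(1))}{(f^*(c_1), f^*(c_2), \ldots)}.
\]
So the question reduces to identifying the generators of this ideal and describing the quotient.

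Next I would invoke the rational case of the Pullback Formula (Theorem~\ref{pullbackformula}), which asserts that in characteristic $0$,
\[
f^*(c_*) = \sum_{n \geq 0} c_{[n]} \odot 1_{\infty}.
\]
Extracting the degree $2n$ component gives $f^*(c_n) = c_{[n]} \odot 1_{\infty}$ for every $n \geq 1$. The simplification relative to the positive-characteristic case reflects the fact that $\Prim(A_{\infty}(BU(1)))$ has rank one in each positive component when $\F = \mathbb{Q}$, so no correction terms involving the classes $\gamma_{k,l}$ appear.

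Finally I would combine this with the rational computation of $A_{\infty}(BU(1))$. By Proposition~\ref{prop: basis DX char0} and Corollary~\ref{cor:polynomial generators}, applied to $X = BU(1)$, whose cohomology $\mathbb{Q}[c]$ is a polynomial algebra with a single even-dimensional generator, the ring $A_{\infty}(BU(1))$ is the polynomial $\mathbb{Q}$-algebra freely generated by the classes $c_{[n]} \odot 1_{\infty}$ for $n \geq 1$. Consequently the ideal $(f^*(c_1), f^*(c_2), \ldots) = (c_{[1]} \odot 1_{\infty}, c_{[2]} \odot 1_{\infty}, \ldots)$ is the augmentation ideal of this polynomial algebra, and the quotient is $\mathbb{Q}$.

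There is no real obstacle here beyond bookkeeping: the work has already been done in Theorem~\ref{pullbackformula} and Corollary~\ref{cor:stable-cohomology}. The only subtlety worth flagging explicitly would be that the rank-filtration/regular-sequence argument of Theorem~\ref{theo:regular} is unnecessary in this setting, since the pullback formula produces the polynomial generators on the nose rather than merely modulo lower rank; but I would still cite Corollary~\ref{cor:stable-cohomology} as a black box, since it is stated uniformly for all primes and includes the characteristic $0$ case implicitly through the same Serre spectral sequence argument.
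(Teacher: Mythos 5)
Your proof is correct and takes essentially the same route the paper intends: reduce via Corollary~\ref{cor:stable-cohomology} to the quotient $A_\infty(BU(1))/(f^*(c_1),f^*(c_2),\dots)$, apply the characteristic-zero branch of Theorem~\ref{pullbackformula} to identify $f^*(c_n) = c_{[n]}\odot 1_\infty$, and note via Corollary~\ref{cor:polynomial generators} that these are exactly the polynomial generators of $A_\infty(BU(1))$. The paper leaves this as a one-line remark, whereas you spell out the logic, including the (valid) observation that Corollary~\ref{cor:stable-cohomology} extends verbatim to $\mathbb{Q}$ since the regularity of $\{f^*(c_i)\}$ is immediate here without the rank-filtration machinery.
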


\begin{remark}
Fix a dimension $ d \geq 0 $. The count of Shubert classes implies that the family $ \{ H^d(\Fl_n(\mathbb{C}) \}_n $ is a finitely generated FI-module. Consequently, it exhibits representation stability. This property was first proved by Church and Farb in \cite[Theorem 7.1]{Church-Farb}.

For cohomology over fields of characteristic $ 0 $, by passing to symmetric invariants, this implies the homological stability of the cohomology of unordered flag manifolds. However, it fails to do so over fields of positive characteristics, as illustrated more generally in \cite{Nagpal}.
\end{remark}

\subsection{The real case} \label{section:4.2}

For the real case, we deal with the mod $2$, mod $p$ for $p>2$, and rational cohomologies separately.

\subsubsection{Mod $2$ cohomology}

To compute the cohomology of $ \Fl_n(\mathbb{R}) $, we chose to use the alternating subgroup $ \Bgrouppos{n} $ instead of $ \Bgroup_n $. Therefore, to compute the cohomology of the limit $ \UFl_{\infty} (\mathbb{R}) $, we need to determine the stable cohomology of $ \Bgrouppos{n} $.

\begin{corollary} \label{cor:homological stability alternating subgroups}
The sequence of spaces $ \{ B\Bgrouppos{n} \}_{n \in \mathbb{N}} $ exhibits homological stability with stabilization maps $ B\Bgrouppos{n} \to B\Bgrouppos{n+1} $ induced by the standard group inclusions $ \Bgrouppos{n} \hookrightarrow \Bgrouppos{n+1} $. Moreover, for all primes $ p $, the stable cohomology ring is
\[
H^*(B\Bgrouppos{\infty}; \mathbb{F}_p) \cong \varprojlim_n H^*(B\Bgrouppos{n}; \mathbb{F}_p) \cong \left\{ \begin{array}{ll}
\frac{ A_{\infty}(\mathbb{P}^\infty(\mathbb{R})) }{(e_\infty)} & \mbox{if } p = 2 \\
A_{\infty}(\mathbb{P}^\infty(\mathbb{R})) & \mbox{if } p > 2
\end{array} \right.,
\]
where $ e_\infty = w \odot 1_\infty + \gamma_{1,1} \odot 1_\infty $.
\end{corollary}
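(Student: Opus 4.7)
I would handle the two characteristic ranges separately, in both cases reducing to the known stable cohomology $A_\infty(\mathbb{P}^\infty(\mathbb{R}))$ of $\{B\Bgroup_n\}$, which follows from Proposition \ref{prop:DnBU(1)-BN(n)-iso} identifying $B\Bgroup_n \simeq D_n(\mathbb{P}^\infty(\mathbb{R})_+)$ together with the extended-power homological stability recalled just before Corollary \ref{cor:basis DX stable}.

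For $p > 2$ the statement is essentially formal: Corollary \ref{cor:res mod p} provides a natural isomorphism $H^*(B\Bgroup_n; \mathbb{F}_p) \cong H^*(B\Bgrouppos{n}; \mathbb{F}_p)$ at every finite $n$, and naturality with respect to the standard group inclusions transports both homological stability and the stable formula $A_\infty(\mathbb{P}^\infty(\mathbb{R}))$ from $\{B\Bgroup_n\}$ to $\{B\Bgrouppos{n}\}$.

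For $p = 2$ the key tool is the Gysin (sign) sequence of the double cover $B\Bgrouppos{n} \to B\Bgroup_n$, classified by the mod-$2$ sign class $e_n \in H^1(B\Bgroup_n; \mathbb{F}_2)$:
\[
\cdots \to H^{i-1}(B\Bgroup_n; \mathbb{F}_2) \xrightarrow{\cdot e_n} H^i(B\Bgroup_n; \mathbb{F}_2) \to H^i(B\Bgrouppos{n}; \mathbb{F}_2) \to H^i(B\Bgroup_n; \mathbb{F}_2) \to \cdots,
\]
natural in $n$. I would first identify $e_n = w \odot 1_{n-1} + \gamma_{1,1} \odot 1_{n-2}$: under $\Bgroup_n = (\mathbb{Z}/2)^n \rtimes \Sigma_n$, the two summands are the pullbacks of the generator of $H^1(BC_2; \mathbb{F}_2)$ along, respectively, the coordinate-product map $\Bgroup_n \to C_2$ and the signature composition $\Bgroup_n \to \Sigma_n \to C_2$, and their sum is the character sending every reflection to $1$, i.e.\ precisely $\sgn_{\Bgroup_n}$, whose kernel is $\Bgrouppos{n}$. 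Next I would verify that $e_\infty = w \odot 1_\infty + \gamma_{1,1} \odot 1_\infty$ is a non-zero-divisor in $A_\infty(\mathbb{P}^\infty(\mathbb{R}))$: by Corollary \ref{cor:polynomial generators} this algebra is polynomial over $\mathbb{F}_2$, and $w \odot 1_\infty$ and $\gamma_{1,1} \odot 1_\infty$ are distinct polynomial generators (widths $1$ and $2$, both powers of $2$, neither a second power), so their sum is a nonzero, hence non-zero-divisor, element.

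A 5-lemma comparison of the Gysin sequences for consecutive $n$ then yields homological stability of $\{B\Bgrouppos{n}\}$: the stabilization of $\{B\Bgroup_n\}$ handles two of the relevant terms, while injectivity of $\cdot e_n$ in each fixed cohomological degree for $n$ large (a consequence of the stable non-zero-divisor property of $e_\infty$) handles the third. Passing to the colimit in the Gysin sequence and invoking injectivity of $\cdot e_\infty$ collapses it to a short exact sequence $0 \to A_\infty(\mathbb{P}^\infty(\mathbb{R})) \xrightarrow{\cdot e_\infty} A_\infty(\mathbb{P}^\infty(\mathbb{R})) \to H^*(B\Bgrouppos{\infty}; \mathbb{F}_2) \to 0$ (with the appropriate degree shift), yielding the claimed isomorphism $H^*(B\Bgrouppos{\infty}; \mathbb{F}_2) \cong A_\infty(\mathbb{P}^\infty(\mathbb{R}))/(e_\infty)$. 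I expect the main technical obstacle to be the explicit identification of the sign class $e_n$ in the Hopf-ring language; once that is done, the non-zero-divisor verification and the 5-lemma assembly are routine.
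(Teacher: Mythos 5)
Your proposal is correct, and for $p>2$ it is essentially identical to the paper's (both rely on the mod-$p$ isomorphism $H^*(B\Bgroup_n) \cong H^*(B\Bgrouppos{n})$ of Theorem \ref{thm:cohomology alternating group mod p}/Corollary \ref{cor:res mod p}). For $p=2$ both you and the paper run the Gysin sequence of the double cover $B\Bgrouppos{n} \to B\Bgroup_n$, but the key step — showing that $\cup e_n\colon H^{d-1}(B\Bgroup_n;\F_2) \to H^d(B\Bgroup_n;\F_2)$ is injective for $n$ large, so that the long exact sequence breaks into short exact sequences — is established by genuinely different means. The paper quotes the explicit additive basis $\mathcal{M}_0\sqcup\mathcal{M}_+\sqcup\mathcal{M}_-$ of $H^*(B\Bgrouppos{n};\F_2)$ from \cite{Guerra-Santanil} (Theorem \ref{thm:basis B+}) and observes that the classes not in the image of restriction, indexed by $\mathcal{G}_{ann}$, all have degree at least $3\lfloor n/4\rfloor$, which forces surjectivity of restriction (equivalently, injectivity of $\cup e_n$) in any fixed degree for $n$ large. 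You instead identify $e_n = w\odot 1_{n-1} + \gamma_{1,1}\odot 1_{n-2}$ directly as the sum of the two characters $\Bgroup_n\to C_2$, and then invoke Corollary \ref{cor:polynomial generators}: $A_\infty(\mathbb{P}^\infty(\mathbb{R}))$ is a polynomial $\F_2$-algebra in which $w\odot 1_\infty$ and $\gamma_{1,1}\odot 1_\infty$ are distinct generators, so $e_\infty$ is nonzero in an integral domain and hence a non-zero-divisor; homological stability for $B\Bgroup_n$ then transfers this injectivity to $\cup e_n$ for $n$ large. Your route has the advantage of not presupposing the structure theory of $H^*(B\Bgrouppos{n};\F_2)$ at all — it only uses $H^*(B\Bgroup_n;\F_2)\cong A_{\mathbb{P}^\infty(\mathbb{R})}$ and its polynomial stabilization — and it makes the identification of $e_n$ (which the paper leaves implicit, though it is needed to interpret the statement) an explicit part of the argument; the paper's route, on the other hand, is shorter once Theorem \ref{thm:basis B+} is in hand, since the degree bound on $\mathcal{G}_{ann}$ is immediate from the definitions. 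One small remark: the five-lemma comparison you mention is slightly heavier than necessary — once one has the compatible short exact sequences of stable objects, stability of the middle terms follows directly as in the paper — but it does no harm.
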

\begin{proof}
If $ p > 2 $, the result follows immediately from Theorem \ref{thm:cohomology alternating group mod p} and the homological stability of $ B(\Bgroup_n) $.

For $ p = 2 $, we observe that, with the notation of Definition \ref{def:basis B+}, every element of $ \mathcal{G}_{ann} \cap H^*(\Bgroup_n; \mathbb{F}_2) $ has degree at least $ 3 \lfloor n/4 \rfloor $, which goes to $ \infty $ as $ n \to \infty $.

We deduce that, for all $ d \geq 0 $ and for $ n $ large enough, in the mod $ 2 $ cohomological Gysin sequence of $ B\Bgrouppos{n} \to B\Bgroup_n $ the connecting homomorphisms is injective in degree $ d-1 $.
Therefore there is a short exact sequence
\[
0 \longrightarrow H^{d-1}(\Bgroup_n; \mathbb{F}_2) \stackrel{e_n}{\longrightarrow} H^d(\Bgroup_n; \mathbb{F}_2) \longrightarrow H^d (\Bgrouppos{n}; \mathbb{F}_2) \longrightarrow 0.
\]
The stabilization maps induce homomorphisms of short exact sequences, hence the homological stability of $ B\Bgroup_n $ implies that of $ B\Bgrouppos{n} $. Moreover, the stable cohomology of $ B\Bgrouppos{n} $ is the quotient of the stable cohomology of $ B\Bgroup_n $ by the limit of the classes $ e_n $, which is $ e_\infty $.
\end{proof}

\begin{definition}
Let $ p $ be a prime number. We define $ {A}_{\infty}' (\Bgrouppos{}) $ as the ring $ H^*(\Bgrouppos{\infty}; \mathbb{F}_p) $ computed in the previous corollary.
\end{definition}

The inclusions $ \Bgrouppos{n} \hookrightarrow SO(n) $ induce maps $ g_n \colon B\Bgrouppos{n} \rightarrow BSO(n) $. We call the limiting map $ g \colon B\Bgrouppos{\infty} \rightarrow BSO(\infty) $.
Similarly, we have maps $ h_n \colon B\Bgroup_n \rightarrow BO(n) $ and $ h \colon B \Bgroup_{\infty} \to BO(\infty) $

Note that, in analogy with the complex case, the homomorphism $ \delta_{n,m} \colon SO(n) \times SO(m) \to SO(n+m) $ given by the direct sum of matrices induces a coassociative and cocommutative coproduct $ \Delta $ on $ \bigoplus_{n \geq 0} H^*(BSO(n); \mathbb{F}_p) $. Since it is compatible with cup product, this object is a bialgebra. The stabilization maps of $ BSO(n) $ preserve $ \Delta $ and hence the limit $ H^*(BSO(\infty)) $ is also a bialgebra. The morphisms 
$ \bigoplus_n g_n^* \colon \bigoplus_n H^*(BSO(n); \mathbb{F}_p) \to \AB $ and $ g^* \colon H^*(BSO(\infty)) \to {A}_{\infty}' (\Bgrouppos{}) $ are bialgebra morphisms.

Similarly to the complex case, there is a universal oriented real vector bundle $ \eta_n $ on $ BSO(n) $. Pullbacks of $ \eta_n $ along (homotopy classes of) maps $ X \to BSO(n) $ classify the isomorphism classes of oriented real vector bundles over $ X $. Stiefel-Whitney classes of such a bundle correspond to pullbacks of the universal Stiefel-Whitney classes along its classifying map. Since $ \delta_{n,m}^*(\eta_{n+m}) $ is isomorphic to $ \eta_n \oplus \eta_m $, the formula for characteristic classes of direct sums imply that
\[
\forall k \in \mathbb{N}: \quad \Delta(w_k) = \sum_{n+m=k} w_n \otimes w_m.
\]

Similarly, $ \bigoplus_n H^*(BO(n); \mathbb{F}_p) $ and $ H^*(BO(\infty); \mathbb{F}_p) $ are bialgebras, and 
\begin{align*}
    \bigoplus_n h_n^* \colon \bigoplus_n H^*(BO(n); \mathbb{F}_p) \longrightarrow A_{\mathbb{P}^\infty(\mathbb{R})}  \\ 
\tag*{and} h^* \colon H^*(BO(\infty); \mathbb{F}_p) \longrightarrow A_\infty(\mathbb{P}^\infty(\mathbb{R})) 
\end{align*}  
are bialgebra homomorphisms. The same formula for the coproduct of universal characteristic classes holds in $ BO(n) $.

Let us define a charged version of the rank filtration when $ p = 2 $.
\begin{definition} \label{dfn:rank charged}
    Assume that $ p = 2 $. The \textit{rank of a charged Hopf monomial} in $\AB$ denoted as $\rk$ is defined as the rank of the corresponding non-charged decorated Hopf monomial in $ A_{\mathbb{P}^\infty(\mathbb{R})} $.
    We also define $\rk (x\otimes y) := \rk (x) + \rk (y)$ in $\AB \otimes \AB$.
\end{definition}
\begin{definition} \label{dfn: rank-filtation charged}
     Assume that $ p = 2 $ We define the \textit{rank filtration} $\mathcal{F}_{*}$ for $\AB$ by setting $\mathcal{F}_n$ as the linear span of charged Hopf monomials $x$ with $\rk (x) \le n$. Moreover, we define a rank filtration on ${A}_{\infty}' (\Bgrouppos{})$, by defining $\mathcal{F}_n$ as the linear span of stabilized Hopf monomials $x\odot 1_{\infty}$ where $\rk (x)\leq n$. With a slight abuse of notation, we also denote it as $\mathcal{F}_{*}$.
\end{definition}
The rank filtration is exhaustive, increasing, and bounded from below. Moreover, due to the almost-Hopf ring distributivity law, it is multiplicative with respect to the cup product: $\mathcal{F}_m \cdot \mathcal{F}_n \subseteq \mathcal{F}_{m+n}$. 

We can now compute the pullbacks $ g^*(w_k) $ of Stiefel-Whitney classes with essentially the same argument used for Chern classes in the previous subsection.

\begin{lemma} \label{lema:swformula}
\begin{enumerate}
    \item For $ 1 \leq k \leq n$ let $w_k \in H^{k} (BO(n); \F_2)$ be the $k$-th universal Stiefel-Whitney class. Then $h_n^* (w_k) = w_{[k]} \odot 1_{n-k} \mod \mathcal{F}_{k-1}$. 
    \item For $ 2 \leq k \leq n $, let $ w_k \in H^k(BSO(n); \F_2)$ be the $ k $-th universal Stiefel-Whitney class. Then $ g_n^*(w_k) = (w_{[k]} \odot 1_{n-k})^0 \mod \mathcal{F}_{k-1} $.
\end{enumerate}
\end{lemma}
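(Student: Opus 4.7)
The plan is to mirror the proof of Lemma~\ref{lema:chernformula}, replacing Chern classes with Stiefel--Whitney classes for part (1), and to deduce the $SO$-case from the $O$-case by restricting along the inclusion $\Bgrouppos{n} \hookrightarrow \Bgroup_n$ for part (2).

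For part (1), I would proceed by induction on $k$. The base case $k=1$ follows by enumerating the degree-$1$ classes in $A_{\mathbb{P}^{\infty}(\mathbb{R})}^{n,*}$: these are precisely $w_{[1]} \odot 1_{n-1}$ (rank $1$) and $\gamma_{1,1} \odot 1_{n-2}$ (rank $0$). Writing $h_n^*(w_1)$ as a linear combination and restricting to $n=1$ (where $h_1^*(w_1) = w_{[1]}$) pins the coefficient of $w_{[1]} \odot 1_{n-1}$ to $1$, and the remaining term lies in $\mathcal{F}_0$. For the inductive step, since $h_n$ is a bialgebra morphism and $\Delta(w_k) = \sum_{i=0}^{k} w_i \otimes w_{k-i}$, one obtains
\[
\overline{\Delta}(h_n^*(w_k)) = \sum_{l=1}^{n-1} \sum_{i=0}^{k} h_l^*(w_i) \otimes h_{n-l}^*(w_{k-i}).
\]
The middle terms $1 \leq i \leq k-1$ fall under the outer inductive hypothesis on $k$; the boundary terms $i \in \{0,k\}$ are handled by a nested induction on $n$, whose base $n=k$ is immediate because $h_l^*(w_k) = 0$ for $l < k$ (as $w_k$ vanishes in $H^*(BO(l))$ when $l < k$). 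Comparing with $\overline{\Delta}(w_{[k]} \odot 1_{n-k})$ and using that $\overline{\Delta}$ preserves rank, one deduces $h_n^*(w_k) - w_{[k]} \odot 1_{n-k} \in \mathcal{F}_{k-1} + \Prim(A_{\mathbb{P}^{\infty}(\mathbb{R})})$. As in the complex case, the primitive ambiguity is absorbed: by Proposition~\ref{indec} all primitives in component $n \geq 2$ are linear combinations of $\gamma$-type classes, which have rank $0$, while the rank-positive primitives are exactly the $w_{[1]}^{j}$ in component $1$, irrelevant when $k \geq 2$ and $n \geq 2$.

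For part (2), I would deduce the statement from (1) by restriction. The inclusions $\Bgrouppos{n} \subset \Bgroup_n \subset O(n)$ and $\Bgrouppos{n} \subset SO(n) \subset O(n)$ yield a commutative square of classifying spaces, giving $\res_n \circ h_n^* = g_n^* \circ j^*$ with $j \colon BSO(n) \hookrightarrow BO(n)$. Since $j^*(w_k) = w_k$ for $k \geq 2$ (only $w_1$ is killed on oriented bundles), $g_n^*(w_k) = \res_n(h_n^*(w_k))$. Applying $\res_n$ to the formula from part (1) and using Theorem~\ref{thm:basis B+}: the leading term $w_{[k]} \odot 1_{n-k}$ lies in $\mathcal{G}_{\mathrm{quot}} \setminus \mathcal{G}_{\mathrm{ann}}$ for $k \geq 2$ (its witness $(1_k,w)$-block has width $k \geq 2$), whence $\res_n(w_{[k]} \odot 1_{n-k}) = (w_{[k]} \odot 1_{n-k})^0$. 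A generator-by-generator inspection (using Theorem~\ref{thm:basis B+}) confirms that $\res_n$ sends rank $r$ basis elements to rank $\leq r$ charged classes, so $\res_n(\mathcal{F}_{k-1}) \subseteq \mathcal{F}_{k-1}$ in $\AB$, yielding the claim. The main obstacle is the primitive analysis in part (1), parallel to the unproved assertion in Lemma~\ref{lema:chernformula}: one must verify, via the pairing of Proposition~\ref{indec}, that rank-positive primitives of $A_{\mathbb{P}^{\infty}(\mathbb{R})}$ are confined to component $1$, which requires handling all components, including the non-power-of-$2$ ones where the primitive subspace vanishes.
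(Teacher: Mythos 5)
Your proposal takes essentially the same route as the paper's proof. Part (1) is handled exactly as the paper intends — the paper literally says it ``follows from the same argument used for the proof of Lemma~\ref{lema:chernformula}, by replacing Chern classes with Stiefel--Whitney classes'' — and your version simply makes explicit the double induction (on $k$ and on $n$) and the primitive-rank bookkeeping that the paper's terse phrasing leaves implicit. Part (2) is also the paper's argument: the same commutative square relating $g_n$, $h_n$, $\res_n$ and the map $BSO(n)\to BO(n)$, the observation that $w_k$ pulls back to $w_k$ for $k\geq 2$, and the fact that $\res_n$ preserves the rank filtration; your extra justification via $\mathcal{G}_{\mathrm{quot}}\setminus\mathcal{G}_{\mathrm{ann}}$ and Theorem~\ref{thm:basis B+} correctly verifies the latter claim for the leading term.
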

\begin{proof}
The proof of (1) follows from the same argument used for the proof of Lemma \ref{lema:chernformula}, by replacing Chern classes with Stiefel-Whitney classes.

There is a commutative diagram
\begin{center}
\begin{tikzcd}
B\Bgrouppos{n} \arrow{r}{g_n} \arrow{d} & BSO(n) \arrow{d} \\
B\Bgroup_n \arrow{r}{h_n} & BO(n).
\end{tikzcd}
\end{center}
The left vertical map induces $ \res_n $ in cohomology, while pullback along the right vertical map sends $ w_k $ to $ w_k $ if $ k \geq 2 $ and $ w_1 $ to $ 0 $.
(2) follows from the induced commutative diagram in cohomology, by observing that $ \res_n $ preserves the rank filtration.
\end{proof}

There is a version of Kochman's formulae for $ BO(\infty) $.
\begin{theorem}[Kochman's formulae for $ BO $, \cite{Kochman}] \label{kof real}
Let $ Q_*^r \colon H^d(BO(\infty); \mathbb{F}_2) \to H^{d-r}(BO(\infty); \mathbb{F}_2) $ be the dual of the $ r $-th order Dyer-Lashof operation in homology.
Then, for all $ r \leq k $, the following equality holds:
\[
Q_*^r(w_k) = \left( \begin{array}{c} r-1 \\ k-r-1
\end{array} \right) w_{k-r}.
\]
Moreover, if $ [n] \in H^0(BO(\infty) \times \mathbb{Z}; \mathbb{F}_2) \cong \mathbb{Z} $ corresponds to $ n \in \mathbb{Z} $, then 
\[
Q^r([1]) = (w_1^r)^\vee * [2].
\]
\end{theorem}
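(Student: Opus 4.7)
The plan is to derive this mod-$2$ formula in exact analogy with Theorem~\ref{kof} for $BU$, following Kochman's original argument in \cite{Kochman}. The essential input is that $\coprod_n BO(n) \simeq BO \times \mathbb{N}$ is an $E_\infty$-algebra via Whitney sum, so its group completion $BO \times \mathbb{Z}$ carries Dyer--Lashof operations compatible with the Pontryagin product $*$; dually, these operations act on $H^*(BO \times \mathbb{Z}; \mathbb{F}_2)$ compatibly with the coproduct induced by direct sum of bundles, namely $\Delta(w_k) = \sum_{i+j=k} w_i \otimes w_j$.

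For the identity $Q^r([1]) = (w_1^r)^\vee * [2]$, I would restrict along the tautological line-bundle inclusion $BO(1) \hookrightarrow BO \times \mathbb{Z}$ landing in the component $[1]$. On $H_*(\mathbb{RP}^\infty; \mathbb{F}_2)$ the Dyer--Lashof algebra acts so that $Q^r$ sends the generator $e_0$ in degree $0$ to the generator $e_r$ in degree $r$; pushing forward to $BO \times \mathbb{Z}$ shifts the component label from $[1]$ to $[2]$ (since $Q^r$ doubles the underlying monoid degree), and identifying the image of $e_r$ as the unique class dual to $w_1^r$ under the monomial pairing yields the first formula.

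For $Q_*^r(w_k) = \binom{r-1}{k-r-1} w_{k-r}$, the method is to combine the Nishida relations
\[
\Sq_*^a Q^b = \sum_i \binom{b-a}{a-2i} Q^{b-a+i} \Sq_*^i
\]
with Wu's formula for the action of $\Sq^i$ on universal Stiefel--Whitney classes,
\[
\Sq^i(w_k) = \sum_{j=0}^{i} \binom{k-i+j-1}{j} w_{i-j}\, w_{k+j}.
\]
Dualizing and inductively extracting coefficients in the monomial basis of $H^*(BO(\infty); \mathbb{F}_2) = \mathbb{F}_2[w_1,w_2,\dots]$, most cross-terms cancel modulo~$2$ and the stated binomial coefficient remains on the single linear term $w_{k-r}$. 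The main obstacle is the combinatorial bookkeeping of these mod-$2$ binomial identities, which is handled cleanly in \cite{Kochman}; once $Q_*^r$ is known on the generators, the Cartan formula extends the operation to the whole ring and the standard comparison between $H^*(BO \times \mathbb{Z})$ and its finite stages transports the result to $BO(\infty)$ as stated.
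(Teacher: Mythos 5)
The paper does not prove Theorem~\ref{kof real}; it is recalled verbatim from Kochman \cite{Kochman}. So there is no internal proof to compare against, and the question is only whether your sketch would stand on its own. As written, it does not.

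Your argument for $Q^r([1]) = (w_1^r)^\vee * [2]$ hinges on the assertion that \emph{on} $H_*(\mathbb{RP}^\infty;\mathbb{F}_2)$ ``the Dyer--Lashof algebra acts so that $Q^r$ sends $e_0$ to $e_r$,'' but this conflates two different $E_\infty$-structures. If you mean the $E_\infty$-structure on $\mathbb{RP}^\infty = B\mathbb{Z}/2$ coming from the group $\mathbb{Z}/2$, then $e_0$ is the unit for the Pontryagin product, and $Q^r(\text{unit}) = 0$ for $r > 0$, so the claim is false. If you mean the Whitney-sum $E_\infty$-structure on $\coprod_n BO(n)$, then $Q^r$ does not act on $H_*(BO(1))$ at all — it takes $H_*(BO(1))$ to $H_*(BO(2))$. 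What one actually has to compute is the image of the class $e_r \otimes ([1]\otimes[1])$ under the structure map $E\Sigma_2\times_{\Sigma_2}BO(1)^2 \to BO(2)$, i.e.\ identify the composite $B\Sigma_2\to BO(1)\times BO(1)\to BO(2)$ (classifying the permutation representation, which splits as trivial plus sign) and chase $e_r$ through it. That computation is the content of the formula, and your sketch simply asserts the answer instead of deriving it.

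The Nishida-plus-Wu route for $Q_*^r(w_k)$ is also not a proof but a plan, and it has a structural problem: the classes $w_k$ are a \emph{minimal} generating set for $H^*(BO(\infty);\mathbb{F}_2)$, so they are not in the image of the positive-degree Steenrod operations, and the dualized Nishida relation $Q_*^b\Sq^a = \sum_i\binom{b-a}{a-2i}\Sq^i Q_*^{b-a+i}$ does not obviously close an induction on $k$ to produce $Q_*^r(w_k)$ on the nose. (Wu's formula expresses $\Sq^i w_j$ as a polynomial in the $w$'s, not the other way around.) What Kochman (and Priddy) actually do is compute the Dyer--Lashof operations in \emph{homology} on the polynomial generators $a_n \in H_n(BO(1))\subset H_*(\coprod BO(n))$ via the quadratic construction $E\Sigma_2\times_{\Sigma_2}BO(1)^2\to BO(2)$ and the splitting principle for $H_*(BO(2);\mathbb{F}_2)$, extend by the Cartan formula, and then dualize to obtain the stated coefficient $\binom{r-1}{k-r-1}$ on $w_{k-r}$. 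That is the missing core of the argument; the ``most cross-terms cancel mod~$2$'' step is precisely where the proof has to live and is not supplied.
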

    
Using this theorem, Lemma \ref{lema:swformula} and Proposition \ref{indec} with $ X = BO(1) $ and the monomial basis on $ w $, we can prove the pullback formula for $ h^*(w_k) $. The proof is very similar to the complex case: we use Lemma~\ref{lema:chernformula} to reduce to primitives, and compute the primitive part of the pullback by applying Proposition~\ref{indec} and Kochman's formulae via Lemma \ref{pairing real}. For this reason, we only provide the statements and we omit proofs.

\begin{lemma} \label{pairing real}
    Let $k\ge 1$ and $\langle \cdot , \cdot \rangle \colon H^* (B\Bgroup_{2^k}; \F_2) \otimes H_* (B\Bgroup_{2^k}; \F_2) \rightarrow \F_2$ be evaluation pairing between cohomology and homology of the space $B\Bgroup_{2^k}$. Then, \[ \big \langle h_{2^k}^* (w_{2^k-1}) , Q^{2^{k}} Q^{2^{k-1}} \cdots Q^4 Q^2 (1) \big \rangle =1 , \] where $1 \in H_0 (B\Bgroup_1; \F_2)$.
\end{lemma}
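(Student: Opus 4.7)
The plan is to follow the proof of Lemma~\ref{pairing} for the complex case essentially verbatim, with real data substituted throughout.

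First I would transport the identity from $B\Bgroup_{2^k}$ to the group-completed $E_\infty$-space $BO(\infty) \times \mathbb{Z}$. The map $\coprod_n B\Bgroup_n \to \coprod_n BO(n)$ induced by the inclusion $\Bgroup_n = \Sigma_n \wr (\mathbb{Z}/2\mathbb{Z}) \hookrightarrow O(n)$ is compatible with the block-sum $E_\infty$-structure, so in mod $2$ homology it commutes with Dyer--Lashof operations. Moreover, $H^*(BO(\infty); \mathbb{F}_2) \to H^*(BO(n); \mathbb{F}_2)$ is classically surjective, hence its dual in homology is injective. By naturality of the evaluation pairing and of the Dyer--Lashof operations, the stated identity is equivalent to
\[
\langle w_{2^k-1} \otimes [2^k],\, Q^{2^k} Q^{2^{k-1}} \cdots Q^4 Q^2(1 \otimes [1]) \rangle = 1
\]
in $H_*(BO(\infty) \times \mathbb{Z}; \mathbb{F}_2)$, where Kochman's formulas (Theorem~\ref{kof real}) apply directly.

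Next I would argue by induction on $k$. The base case $k=1$ follows from the explicit formula $Q^r(1 \otimes [1]) = (w_1^r)^\vee * [2]$ of Theorem~\ref{kof real}, which identifies the relevant homology class as the linear dual of a monomial in $w_1$ that pairs to $1$ against the required Stiefel--Whitney class in the matching degree and component. For the inductive step, I would use the adjointness
\[
\langle f,\, Q^r(x) \rangle = \langle Q^r_*(f),\, x \rangle
\]
to transfer the outermost $Q$-operation onto the cohomology factor, then invoke Kochman's formula $Q^r_*(w_k) = \binom{r-1}{k-r-1}w_{k-r}$ together with its analogue on the $\mathbb{Z}$-component (which halves the multiplicity). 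After checking that the relevant binomial coefficient is odd, the right-hand side becomes a pairing of a lower-degree Stiefel--Whitney class with an iterated Dyer--Lashof expression one step shorter, to which the inductive hypothesis applies.

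The main obstacle is bookkeeping: one has to track simultaneously the cohomological degree, the $E_\infty$-component, and the parity of the binomial coefficients produced by Kochman's formula at every stage of the induction. The binomial computation itself is a direct application of Lucas' theorem to powers of two; matching the components between $B\Bgroup_{2^k}$ and $BO(\infty) \times \mathbb{Z}$ relies only on naturality of the $E_\infty$ structure. Beyond these verifications, the argument is a mechanical transcription of the complex case of Lemma~\ref{pairing}, with $c_k$ replaced by $w_k$, $BN(n)$ by $B\Bgroup_n$, $BU(n)$ by $BO(n)$, and the complex Kochman formula by its real counterpart.
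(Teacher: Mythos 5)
Your strategy is the right one and matches what the authors intend: they explicitly omit the proof of this lemma, stating it is parallel to the complex Lemma~\ref{pairing}, which transports the computation to $BO(\infty)\times\mathbb{Z}$, inducts on $k$, and at each step moves the outermost $Q$-operation onto cohomology via adjointness and applies Kochman's formula. However, your proposal inherits a degree mismatch that already makes the \emph{base case} fail, and you assert rather than verify that the degrees match. Since $Q^r$ raises homological degree by $r$ (cf.\ Theorem~\ref{kof real}), the class $Q^{2^k}Q^{2^{k-1}}\cdots Q^4 Q^2(1\otimes[1])$ lies in degree $\sum_{i=1}^k 2^i = 2^{k+1}-2 = 2(2^k-1)$, whereas $h_{2^k}^*(w_{2^k-1})\otimes[2^k]$ lies in degree $2^k-1$. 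These differ, so the evaluation pairing with the operations as written is \emph{zero}, not $1$. Concretely, for $k=1$ one has $Q^2(1\otimes[1]) = (w_1^2)^\vee\otimes[2]$ in degree $2$, while $w_1\otimes[2]$ is in degree $1$; their pairing vanishes, contradicting what you claim in your base case.

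The fix is to use the real-case analogue of the indexing that appears in the odd-prime part of Lemma~\ref{pairing}: the correct iterated operation is $Q^{2^{k-1}}Q^{2^{k-2}}\cdots Q^2 Q^1(1)$, whose total degree shift is $\sum_{i=0}^{k-1}2^i = 2^k-1$, matching $|w_{2^k-1}|$, and whose component is $2^k$. (The complex mod-$2$ case used $Q^{2^k}\cdots Q^2$ precisely because $|c_{2^k-1}|=2(2^k-1)$; the Stiefel--Whitney classes sit in half that degree, so every Dyer--Lashof index must be halved.) With this correction your base case becomes $\langle w_1\otimes[2], Q^1(1\otimes[1])\rangle = \langle w_1\otimes[2], w_1^\vee\otimes[2]\rangle = 1$, and the inductive step reads
\begin{align*}
\bigl\langle w_{2^k-1}\otimes[2^k],\ Q^{2^{k-1}}\cdots Q^1(1\otimes[1])\bigr\rangle
&= \bigl\langle Q_*^{2^{k-1}}\bigl(w_{2^k-1}\otimes[2^k]\bigr),\ Q^{2^{k-2}}\cdots Q^1(1\otimes[1])\bigr\rangle \\
&= \binom{2^{k-1}-1}{2^{k-1}-2}\bigl\langle w_{2^{k-1}-1}\otimes[2^{k-1}],\ Q^{2^{k-2}}\cdots Q^1(1\otimes[1])\bigr\rangle
\end{align*}
by Theorem~\ref{kof real}, with $\binom{2^{k-1}-1}{1} = 2^{k-1}-1 \equiv 1 \pmod 2$, which closes the induction. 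Beyond this reindexing the rest of your outline (naturality of the pairing, $E_\infty$-compatibility of $\coprod_n B\Bgroup_n\to\coprod_n BO(n)$, surjectivity of $H^*(BO(\infty);\F_2)\to H^*(BO(n);\F_2)$) is correct and is exactly the argument the paper gestures at.
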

We will again state the pullback formula in terms of the total Stiefel-Whitney class \[ w_* = 1+w_1 +w_2 +\cdots \in H^* (BO(\infty); \F). \] To obtain the pullback formula for a specific Stiefel class $w_k$, we only need to consider the classes of cohomological dimension $k$.

\begin{theorem} \label{pullbackformula real} (Pullback Formula)
    Let $\mathcal{X} = \{ \underline{a} = (a_0, a_1, a_2, \dots ) \in \N^{\N^*} \mid a_n = 0 \text{ for } n \gg 0 \}$ be the set of infinite sequences of natural numbers that have finite support (i.e., that are eventually zero). Then, \begin{equation} \label{pullback-real}
        h^* (w_*) = \sum_{\underline{a} \in \mathcal{X}} w_{[a_0]} \odot \big ( \bigodot_{i,a_i \neq 0} \gamma_{i, a_i} \big ) \odot 1_{\infty} .
    \end{equation} 
    Note that requiring that every sequence in $\mathcal{X}$ has finite support guarantees that the transfer product in the formula (\ref{pullback-real}) is iterated a finite number of times and well-defined.
\end{theorem}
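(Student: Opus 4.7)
The plan is to mirror the proof of Theorem~\ref{pullbackformula} from the complex case, with the real-case simplifications that come from the absence of a Bockstein-style parity obstruction and from Stiefel-Whitney classes having degree equal to their index (rather than twice it, as for Chern classes). Define
\[
x_* = \sum_{\underline{a} \in \mathcal{X}} w_{[a_0]} \odot \Bigl( \bigodot_{i,\, a_i \neq 0} \gamma_{i,a_i} \Bigr) \odot 1_{\infty} \in A_\infty(\mathbb{P}^\infty(\mathbb{R}))
\]
and decompose $x_* = \sum_{n \geq 0} x_n$ with $|x_n| = n$. Using the Hopf ring relations of Theorem~\ref{thm:cohomology DX mod 2}, one checks that $\Delta(x_*) = x_* \otimes x_*$, equivalently $\Delta x_n = \sum_{j=0}^n x_j \otimes x_{n-j}$. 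Since $\bigoplus_n h_n^*$ is a bialgebra morphism and $\Delta(w_n) = \sum_{j=0}^n w_j \otimes w_{n-j}$ by the direct sum formula for Stiefel-Whitney classes, the pullback $h^*(w_*)$ satisfies the identical coproduct identities.

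\textbf{Reduction to a primitive correction.} Mimicking Claim~1 of the complex proof, the summand $x_{\underline{a}} = w_{[a_0]} \odot \bigodot_i \gamma_{i,a_i} \odot 1_\infty$ sits in component $a_0 + \sum_i a_i 2^i \geq |x_{\underline a}|$, and hence restricts to zero in $H^*(B\Bgroup_n;\F_2)$ whenever $|x_{\underline a}| > n$. Since also $h_n^*(w_m) = 0$ for $m > n$, assuming $m$ is the minimal index where $x_m \neq h^*(w_m)$, the difference $x_m - h^*(w_m)$ is primitive in $A_\infty(\mathbb{P}^\infty(\mathbb{R}))$ and vanishes upon restriction to $H^*(B\Bgroup_n;\F_2)$ for every $n < m$.

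\textbf{Identifying the primitive.} By Proposition~\ref{indec} applied with $X = BO(1)$ and the monomial basis on $w$, primitives in $A_\infty(\mathbb{P}^\infty(\mathbb{R}))$ arise as $b \odot 1_\infty$ for a primitive gathered block $b$ whose width is a power of $2$. The restriction-vanishing condition on $H^*(B\Bgroup_n;\F_2)$ for $n < m$ forces the width of $b$ to be $2^k \geq m$, and the Giusti--Salvatore--Sinha classification in \cite{Sinha:12} shows that the unique primitive block in component $2^k$ of dimension at most $2^k$ is $\gamma_{k,1}$, of dimension $2^k - 1$. Hence $m = 2^k - 1$ and
\[
x_{2^k - 1} - h^*(w_{2^k - 1}) = \lambda_k \bigl(\gamma_{k,1} \odot 1_\infty\bigr)
\]
for some $\lambda_k \in \F_2$. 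Note that the real case completely avoids the parity-of-dimension and Bockstein arguments that appeared in the analog of Claim~2 in the complex proof.

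\textbf{Main obstacle: vanishing of $\lambda_k$.} The crucial step is to show $\lambda_k = 0$. Restrict the preceding identity to $H^*(B\Bgroup_{2^k};\F_2)$ and pair against the iterated Dyer--Lashof class $Q^{2^k} Q^{2^{k-1}} \cdots Q^4 Q^2(1)$. A width-and-degree bookkeeping identical in spirit to Claim~3 of the complex proof shows that the only summands of $x_{2^k - 1}|_{B\Bgroup_{2^k}}$ in width at most $2^k$ and dimension $2^k - 1$ are the classes $w_{[2^k - 2^l]} \odot \gamma_{l,1}$ for $1 \leq l \leq k$ together with $w_{[2^k - 1]} \odot 1_1$, yielding
\[
h_{2^k}^*(w_{2^k - 1}) = (1 - \lambda_k)\gamma_{k,1} + \sum_{l=1}^{k-1} w_{[2^k - 2^l]} \odot \gamma_{l,1} + w_{[2^k - 1]} \odot 1_1.
\]
The class $\gamma_{k,1}$ pairs to $1$ with the Dyer--Lashof class by the very definition of $\gamma_{k,1}$ in \cite{Sinha:12}, and Lemma~\ref{pairing real} asserts that $\langle h_{2^k}^*(w_{2^k - 1}), Q^{2^k} \cdots Q^2(1)\rangle = 1$, while all remaining summands pair trivially because their supports are too small. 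Comparing both sides forces $\lambda_k = 0$, contradicting the minimality of $m$, and hence $h^*(w_*) = x_*$. The principal technical obstacle is the careful combinatorial bookkeeping of widths, degrees, and Dyer--Lashof weights that underlies this final pairing computation; once it is set up, the real case is actually a bit cleaner than the complex case, since no parity or Bockstein conditions need to be handled.
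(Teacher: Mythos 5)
Your overall scheme — defining $x_*$, showing it is group-like, and reducing to a primitive correction pinned down by a Dyer--Lashof pairing — is the argument the authors have in mind for this omitted proof. But there is a concrete degree mismatch in the pairing step that makes it vacuous as written. You pair $h_{2^k}^*(w_{2^k-1})$ (degree $2^k-1$) and $\gamma_{k,1}$ (also degree $2^k-1$) against $Q^{2^k}Q^{2^{k-1}}\cdots Q^4Q^2(1)$, which has degree $2+4+\cdots+2^k = 2^{k+1}-2$. Both pairings are therefore zero for degree reasons, and the comparison $1=1$ you invoke degenerates to $0=0$, giving no information on $\lambda_k$. The operation you actually need in the real case is the iterate $Q^{2^{k-1}}Q^{2^{k-2}}\cdots Q^2Q^1(1)$, of degree $2^{k-1}+\cdots+2+1 = 2^k-1$: Kochman's formula for $BO(\infty)$ (Theorem~\ref{kof real}) reads $Q^r([1]) = (w_1^r)^\vee * [2]$, so the chain begins at $Q^1$, whereas the $BU$ analogue begins at $Q^2$. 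The paper's own Lemma~\ref{pairing real} appears to carry the same exponent typo, copied verbatim from the complex Lemma~\ref{pairing}; nonetheless, the degree check that detects it is exactly the sanity test one should perform before invoking a pairing, and as submitted your crucial step does not go through.

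A second, smaller gap (which is in fact also present in the paper's complex Claim~2): the assertion that $\gamma_{k,1}$ is \emph{the unique} primitive gathered block in component $2^k$ of dimension at most $2^k$ fails for $k=1$, since $\gamma_{1,1}^2$ (dimension $2 = 2^1$) is also primitive, as $\Delta(\gamma_{1,1}^2) = (\gamma_{1,1}\otimes 1_0 + 1_0\otimes\gamma_{1,1})^2 = \gamma_{1,1}^2 \otimes 1_0 + 1_0 \otimes \gamma_{1,1}^2$ mod~$2$. Consequently the possibility $m=2$ is not eliminated by your reduction and would need its own pairing computation. Relatedly, your closing remark that the real case ``completely avoids the parity-of-dimension and Bockstein arguments'' and is therefore cleaner is backwards: dropping the even-dimension restriction of the complex mod~$2$ case \emph{enlarges} the set of candidate primitives one must rule out, so this step is, if anything, more delicate here, not less.
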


Modulo $ 2 $, the real counterpart of Lemma \ref{lema:associated-graded} is proved with essentially the same argument.
\begin{lemma} \label{lema:associated-graded real}
le $ A = H^*(B\Bgroup_\infty; \mathbb{F}_2) $ and $ \mathcal{F}_* $ be the rank filtration. The associated graded algebra is isomorphic to the polynomial algebra
\[
\gr_{\mathcal{F}}(A) = \mathcal{F}_0[w_{[1]} \odot 1_\infty, w_{[2]} \odot 1_\infty, \dots, w_{[k]} \odot 1_\infty, \dots].
\]
\end{lemma}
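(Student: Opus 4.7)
The plan is to follow the proof of Lemma \ref{lema:associated-graded} essentially verbatim, replacing the Chern decorations $c_{[k]}$ by the Stiefel-Whitney decorations $w_{[k]}$. Recall that by the real analog of Proposition \ref{prop:DnBU(1)-BN(n)-iso} and homological stability, $A = H^*(B\Bgroup_\infty; \mathbb{F}_2) \cong A_\infty(\mathbb{P}^\infty(\mathbb{R}))$, so every structural piece needed (coproduct of $w_{[k]}$, Hopf ring distributivity, rank filtration from Definition \ref{dfn:rank}) is available from Theorem \ref{thm:cohomology DX mod 2}. Since $w$ is one-dimensional, the degree and the rank of the generator $w_{[k]} \odot 1_\infty$ both equal $k$, but this change from the complex case (where they were $2k$) does not affect the combinatorics of the argument.

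First I would associate to each stabilized Hopf monomial $x = w_{[k_1]}^{m_1} \odot \cdots \odot w_{[k_l]}^{m_l} \odot 1_\infty$ with $m_1 > m_2 > \cdots > m_l$ and $k_i \geq 1$ the eventually-zero sequence
\[
\underline{s}(x) = (\underbrace{m_1,\dots,m_1}_{k_1},\dots,\underbrace{m_l,\dots,m_l}_{k_l},0,0,\dots),
\]
and let $A' \subseteq A$ be the span of these elements. Hopf ring distributivity keeps products of such monomials in $A'$, so $A'$ is a subalgebra; order its basis lexicographically via $\underline{s}$. Then I would prove by induction on $l$ that the leading term of $x \cdot (w_{[k]} \odot 1_\infty)$ is the basis element indexed by $\underline{s}(x) + \underline{s}(w_{[k]} \odot 1_\infty)$. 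The base case $l=1$ expands via Hopf ring distributivity and the coproduct $\Delta w_{[k]} = \sum_{j=0}^{k} w_{[j]} \otimes w_{[k-j]}$, and the maximal leading term corresponds to $j = \min\{k_1,k\}$; the inductive step is identical to the complex case. This shows $A' \cong \mathbb{F}_2[w_{[k]} \odot 1_\infty : k \geq 1]$ as a polynomial algebra.

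Finally, I would verify that for $\overline{x} \odot 1_\infty \in \mathcal{F}_0$ and $\overline{y} \odot 1_\infty \in A'$, the cup product equals $\overline{x} \odot \overline{y} \odot 1_\infty$ modulo $\mathcal{F}_{\rk(\overline{x}) + \rk(\overline{y}) - 1}$. The skyline-diagram interpretation of Hopf ring distributivity shows that $(\overline{x} \odot 1_\infty) \cdot (\overline{y} \odot 1_\infty)$ is obtained by splitting the columns of $\overline{y}$ and stacking each either onto a column of $\overline{x}$ of equal width or onto the $1_\infty$ part; stacking onto $\overline{x}$ produces a column containing a $\gamma_{k,l}$ factor, hence of rank $0$, which strictly reduces the total rank, while stacking entirely onto $1_\infty$ yields $\overline{x} \odot \overline{y} \odot 1_\infty$ as the unique top-rank term. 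Combined with the previous step, this identifies $\gr_\mathcal{F}(A)$ with $\mathcal{F}_0[w_{[k]} \odot 1_\infty : k \geq 1]$. I do not foresee a substantial obstacle: the only checks beyond the complex case are that the rank of $w_{[k]} \odot 1_\infty$ as defined in Definition \ref{dfn:rank} is $k$ (which is the degree, matching the generators stated) and that the Hopf ring relations for $w_{[k]}$ in $A_{\mathbb{P}^\infty(\mathbb{R})}$ have exactly the same form as those for $c_{[k]}$ in $A_{BU(1)}$, both of which are immediate.
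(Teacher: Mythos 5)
Your proposal is correct and follows essentially the same approach as the paper, which in fact only remarks that the real case is ``proved with essentially the same argument'' as Lemma~\ref{lema:associated-graded}. You correctly identify the only points that need checking when transporting the complex proof to the real one: $w$ is odd-dimensional, but over $\Ftwo$ the classes $w_{[n]}$ are still defined (Theorem~\ref{thm:cohomology DX mod 2}), the coproduct $\Delta w_{[k]} = \sum_j w_{[j]}\otimes w_{[k-j]}$ and the rules $w_{[n]}^a w_{[n]}^b = (w^{a+b})_{[n]}$ and $w_{[m]}\odot w_{[n]} = \binom{m+n}{n} w_{[m+n]}$ have the identical form, and the rank of the decorated gathered block $(1_k,w^m)$ is $km$ (rather than $2km$ as for $(1_k,c^m)$). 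Since the lexicographic bookkeeping via $\underline{s}$ and the skyline-stacking argument for the top-rank term depend only on component data, not on cohomological degree, the combinatorics is unchanged, and your three-step proof (build $A'$, establish leading terms by induction on the number of $\odot$-factors, show stacking $\overline{y}$ entirely onto $1_\infty$ produces the unique top-rank term) reproduces the paper's intended argument faithfully.
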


Consequently, the description of the mod $ 2 $ stable cohomology of unordered real flag manifold is similar to the complex one.
\begin{theorem} \label{theo:regular real}
Let $ h^* \colon H^*(BO(\infty); \mathbb{F}_2) \to H^*(B\Bgroup_\infty; \mathbb{F}_2) $ be the map induced by $ h $ in cohomology. Then the sequence $ \{h^*(w_1),\dots, h^*(w_k),\dots \} $ is a regular sequence in $ H^*(B\Bgroup_\infty; \F_2) $.
\end{theorem}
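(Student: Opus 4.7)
The plan is to mirror the proof of Theorem \ref{theo:regular} in the complex case, with Stiefel-Whitney classes and $ h $ playing the roles of Chern classes and $ f $. Everything has been set up so that the same strategy works.

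First, I would invoke Lemma \ref{lema:associated-graded real} to identify the associated graded algebra of $ A = H^*(B\Bgroup_\infty; \F_2) $ with respect to the rank filtration $ \mathcal{F}_* $ with the polynomial algebra $ \mathcal{F}_0[w_{[1]} \odot 1_\infty, w_{[2]} \odot 1_\infty, \dots] $. In this polynomial algebra, the sequence $ \{w_{[1]} \odot 1_\infty, w_{[2]} \odot 1_\infty, \dots\} $ of polynomial generators is obviously a regular sequence, since each $ w_{[k]} \odot 1_\infty $ is a polynomial indeterminate that is algebraically independent from all the others.

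Next, by Lemma \ref{lema:swformula}(1), we have $ h_n^*(w_k) = w_{[k]} \odot 1_{n-k} $ modulo $ \mathcal{F}_{k-1} $ for every $ n \geq k $. Passing to the inverse limit, $ h^*(w_k) \in \mathcal{F}_k \subseteq A $ and its image in $ \mathcal{F}_k/\mathcal{F}_{k-1} \subseteq \gr_{\mathcal{F}}(A) $ is exactly $ w_{[k]} \odot 1_\infty $. Therefore the classes $ h^*(w_k) $ are graded lifts of the polynomial generators of the associated graded algebra.

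Finally, I would apply Lemma \ref{lema:graded} to the multiplicative, bounded below filtration $ \mathcal{F}_* $: since the sequence $ \{w_{[1]} \odot 1_\infty, w_{[2]} \odot 1_\infty, \dots\} $ is regular in $ \gr_{\mathcal{F}}(A) $ and the classes $ h^*(w_k) $ lift these generators, the sequence $ \{h^*(w_1), h^*(w_2), \dots\} $ is regular in $ A = H^*(B\Bgroup_\infty; \F_2) $. There is essentially no obstacle here: all the substantive work has been done in Lemmas \ref{lema:swformula} and \ref{lema:associated-graded real}, and the conclusion follows by the same formal machinery used in Theorem \ref{theo:regular}.
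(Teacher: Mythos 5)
Your proposal is correct and follows exactly the argument the paper intends: the paper's proof of Theorem \ref{theo:regular real} simply says it is analogous to Theorem \ref{theo:regular}, and you have correctly transcribed that argument to the real setting, using Lemma \ref{lema:associated-graded real} in place of Lemma \ref{lema:associated-graded}, Lemma \ref{lema:swformula}(1) in place of Lemma \ref{lema:chernformula}, and then applying Lemma \ref{lema:graded} to lift regularity from the associated graded.
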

\begin{proof}
The proof is analogous to that of Theorem \ref{theo:regular}.
\end{proof}

\begin{corollary}
There is an isomorphism of algebras
\[
H^* (\overline{\Fl}_\infty(\mathbb{R}); \mathbb{F}_2) \cong \frac{A_\infty(BO(1))}{(h^*(w_1),h^*(w_2),\dots)}.
\]
\end{corollary}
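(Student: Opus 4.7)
The plan is to mimic the proof of Corollary~\ref{cor:stable-cohomology} for the complex case, replacing unitary groups and Chern classes with orthogonal groups and mod-$2$ Stiefel--Whitney classes. I would consider the Serre spectral sequence of the fibration
\[
O(\infty) \longrightarrow \overline{\Fl}_\infty(\R) \simeq O(\infty)/\Bgroup_\infty \longrightarrow B\Bgroup_\infty
\]
and compare it, via $h \colon B\Bgroup_\infty \to BO(\infty)$, with the Serre spectral sequence of the universal fibration $O(\infty) \to EO(\infty) \to BO(\infty)$.

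In the universal spectral sequence, the total space is contractible and $H^*(BO(\infty); \F_2) = \F_2[w_1, w_2, \ldots]$ is polynomial, so each $w_i$ is the transgression of a class $\tilde{w}_{i-1} \in H^{i-1}(O(\infty); \F_2)$. By naturality, the same $\tilde{w}_{i-1}$ transgresses to $h^*(w_i)$ in the first spectral sequence. By Theorem~\ref{theo:regular real}, $\{h^*(w_1), h^*(w_2), \ldots\}$ is a regular sequence in $H^*(B\Bgroup_\infty; \F_2) \cong A_\infty(BO(1))$, hence algebraically independent over $\F_2$. Exactly as in the complex case, algebraic independence forces the transgression differentials to be injective on the ideals generated by the corresponding $\tilde{w}_{i-1}$ --- otherwise one would produce a nontrivial algebraic relation among the $h^*(w_j)$'s --- and one deduces
\[
E_\infty^{*,*} \cong \frac{A_\infty(BO(1))}{(h^*(w_1), h^*(w_2), \ldots)},
\]
which gives the desired algebra isomorphism.

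The main obstacle compared to the complex case is that $H^*(O(\infty); \F_2)$ is not an exterior algebra on the transgressive generators $\tilde{w}_i$: it contains nontrivial squares and, more generally, classes of the form $\Sq^I(\tilde{w}_i)$. I would handle these by invoking Kudo's transgression theorem, which guarantees that $\Sq^I(\tilde{w}_i)$ transgresses to $\Sq^I(w_{i+1})$ --- a class which, after pullback by $h^*$, still lies in the ideal generated by the $h^*(w_j)$'s, since this ideal is stable under the Steenrod action inherited from $BO(\infty)$. Thus all higher transgression differentials stay inside the desired ideal, the regular-sequence argument absorbs all indecomposable cancellations on the fiber side, and no extension problems arise in passing from $E_\infty$ to $H^*(\overline{\Fl}_\infty(\R); \F_2)$.
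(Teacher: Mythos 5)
The main gap is your choice of fibration: you work with $O(\infty) \to \overline{\Fl}_\infty(\R) \to B\Bgroup_\infty$ and the universal fibration $O(\infty) \to EO(\infty) \to BO(\infty)$, but $O(\infty)$ is disconnected and both of these Serre spectral sequences have nontrivial monodromy. In the universal one, $\pi_1(BO(\infty)) \cong \Z/2$ swaps the two components of the fiber; in the Borel fibration for $\Bgroup_\infty$, any element of determinant $-1$ does the same. So $E_2^{p,q}$ is cohomology of the base with coefficients in the nontrivial local system $\mathcal{H}^q(O(\infty)) \cong H^q(SO(\infty);\F_2)^{\oplus 2}$, not the tensor product $H^p(B\Bgroup_\infty;\F_2) \otimes H^q(O(\infty);\F_2)$ that your write-up implicitly assumes; if you compute with constant coefficients you get the wrong $E_2$ page (twice too large). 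A symptom of this already shows in your transgression claim: $w_1 \in H^1(BO(\infty);\F_2)$ is not the transgression of a positive-degree class on the fiber --- you would need it to come from a class $\tilde{w}_0 \in H^0(O(\infty);\F_2)$, which is exactly the degree where the swap action lives, and the usual transgression picture does not apply there. The paper sidesteps all of this by working with the connected group $SO(\infty)$ and the fibration $SO(\infty) \to SO(\infty)/\Bgrouppos{\infty} \to B\Bgrouppos{\infty}$, where the fiber is connected and the monodromy is trivial; the statement in terms of $A_\infty(BO(1))$ and $h^*(w_k)$ is then recovered from the identification $H^*(B\Bgrouppos{\infty};\F_2) \cong A_\infty(BO(1))/(h^*(w_1))$ of Corollary \ref{cor:homological stability alternating subgroups} together with $\res \circ h^* = g^*$.

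Your secondary worry --- that $H^*(O(\infty);\F_2)$ is not a free exterior algebra on the transgressive classes --- is a fair observation that also applies to $H^*(SO(\infty);\F_2)$, and invoking Kudo's transgression theorem is a sensible instinct. But the Borel transgression machinery is usually stated for a simple system of transgressive generators, which is what one actually has here, and the regular-sequence / algebraic-independence argument goes through without chasing Steenrod operations. The essential repair before any of that matters is the local-coefficient issue above: either pass to the connected cover $SO(\infty)/\Bgrouppos{\infty}$ as the paper does, or make the local coefficients explicit and apply Shapiro's lemma to rewrite $H^*(B\Bgroup_\infty; \Ind_{\Bgrouppos{\infty}}^{\Bgroup_\infty} H^*(SO(\infty);\F_2))$ as $H^*(B\Bgrouppos{\infty}; H^*(SO(\infty);\F_2))$, which again lands you on the paper's spectral sequence.
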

\begin{proof}
We compare the Serre spectral sequences associated with the fiber sequences
\begin{gather*}
SO(\infty) \longrightarrow \frac{SO(\infty)}{\Bgrouppos{\infty}} \longrightarrow B\Bgrouppos{\infty} \\
\tag*{and} SO(\infty) \longrightarrow ESO(\infty) \longrightarrow BSO(\infty).
\end{gather*}
Recall that $ H^*(SO(\infty); \mathbb{F}_2) $ is the exterior algebra generated by classes $ a_{i-1} $ for $ i = 2,3,\dots $ and $ a_{i-1} $ transgresses to $ w_i $ in the spectral sequence associated to the bottom fibration. Therefore, $ d_i(a_{i-1}) = g^*(w_i) $ in the Serre spectral sequence associated to the top fibration. By Corollary \ref{cor:homological stability alternating subgroups}, the mod $ 2 $ cohomology of $ \Bgrouppos{\infty} $ is isomorphic to $ A_\infty(BO(1))/(h^*(w_1)) $. As $ \res_\infty \circ h = g $, under this isomorphism $ g^*(w_k) $ is identified with the image of $ h^*(w_k) $ in the quotient.
By Theorem \ref{theo:regular real} $ \{h^*(w_1), h^*(w_2), \dots \} $ is a regular sequence, and hence we must have for all $ i \ge 1 $, $ d_i $ injective on the ideal generated by $a_{i-1}$.
\end{proof}

\begin{corollary}
$ H^*(\overline{\Fl}_\infty(\mathbb{R}); \mathbb{F}_2) $ is the free commutative algebra generated by the stabilization of decorated gathered blocks $ b \odot 1_\infty $ in $ A_\infty(BO(1)) $ such that $ \rk(b) = 0 $.
\end{corollary}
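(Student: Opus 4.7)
The plan is to mimic the argument used in the complex case. By the previous corollary, $H^*(\overline{\Fl}_\infty(\mathbb{R}); \mathbb{F}_2)$ is isomorphic to $A_\infty(BO(1))/(h^*(w_1),h^*(w_2),\dots)$. I would combine this presentation with the filtration analysis already carried out in Lemma~\ref{lema:associated-graded real} together with the leading-term identification of Lemma~\ref{lema:swformula}(1).

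First, Lemma~\ref{lema:swformula}(1) shows that $h^*(w_k) = w_{[k]} \odot 1_\infty$ modulo $\mathcal{F}_{k-1}$, so the image of $h^*(w_k)$ in the associated graded ring is precisely the polynomial generator $w_{[k]} \odot 1_\infty$ appearing in Lemma~\ref{lema:associated-graded real}. Quotienting the polynomial algebra $\mathcal{F}_0[w_{[k]} \odot 1_\infty : k \geq 1]$ by all these polynomial generators leaves exactly $\mathcal{F}_0$, concentrated in filtration degree zero. I would then lift this observation to the filtered algebra: since the rank filtration is exhaustive, bounded below and multiplicative, and since the associated graded of the quotient by the ideal generated by the $h^*(w_k)$'s is concentrated in filtration degree $0$, the canonical composition
\[
\mathcal{F}_0 \hookrightarrow A_\infty(BO(1)) \twoheadrightarrow \frac{A_\infty(BO(1))}{(h^*(w_1),h^*(w_2),\dots)}
\]
is both surjective and injective, hence an isomorphism. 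This identifies $H^*(\overline{\Fl}_\infty(\mathbb{R}); \mathbb{F}_2)$ with the rank-zero subalgebra $\mathcal{F}_0$.

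Finally, I would identify $\mathcal{F}_0$ itself as a free graded commutative algebra. By Corollary~\ref{cor:polynomial generators}, $A_\infty(BO(1))$ is free graded commutative on a specified set $S$ of stabilized decorated gathered blocks $b \odot 1_\infty$. The rank function, being additive under the transfer product, is also additive on products of such generators in the free algebra; hence a monomial in $S$ has rank zero if and only if every generator appearing in it has rank zero. Consequently $\mathcal{F}_0$ coincides with the free commutative subalgebra generated by those $b \odot 1_\infty$ in $S$ with $\rk(b) = 0$, which are precisely the decorated gathered blocks containing at least one factor $\gamma_{k,l}$.

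The main obstacle is the bookkeeping in this last step, namely checking carefully that the partition of the polynomial generators of $A_\infty(BO(1))$ according to rank yields a free commutative subalgebra on the rank-zero part. This is purely combinatorial and follows directly from the additivity of rank under $\odot$ together with the explicit rules of Definition~\ref{dfn:rank}; the analytical content of the theorem is entirely contained in the earlier pullback formula and the regular-sequence argument of Theorem~\ref{theo:regular real}, both of which we freely invoke.
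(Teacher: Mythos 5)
Your overall strategy matches the paper's: the paper proves the analogous complex-case corollary with a one-line reference to Lemma~\ref{lema:associated-graded} and Corollary~\ref{cor:stable-cohomology} (``the stable cohomology is isomorphic to $\mathcal{F}_0$, the rank-zero subalgebra''), and leaves the real case unproved as an evident analogue. Your expanded filtration argument in the first two paragraphs is exactly what the paper is implicitly invoking, and the reference to Lemma~\ref{lema:swformula} for the symbols of $h^*(w_k)$ and to Theorem~\ref{theo:regular real} for the regularity needed to pass between $\gr(A/I)$ and $\gr(A)/\gr(I)$ is the right justification.

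One imprecision worth flagging in your last paragraph: the free graded commutative algebra structure on $A_\infty(BO(1))$ from Corollary~\ref{cor:polynomial generators} is with respect to the cup product, not the transfer product. Rank is genuinely additive under $\odot$, but under the cup product the filtration is only multiplicative in the weak sense $\mathcal{F}_m \cdot \mathcal{F}_n \subseteq \mathcal{F}_{m+n}$, so the phrase ``the rank function \dots is also additive on products of such generators in the free algebra'' does not literally follow from additivity under $\odot$. Your conclusion $\mathcal{F}_0 = \mathbb{F}_2[S_0]$ is nevertheless correct; the cleanest way to see it is to compare the two descriptions of $\gr_{\mathcal{F}}(A)$, namely $\mathcal{F}_0[w_{[k]}\odot 1_\infty : k\geq 1]$ from Lemma~\ref{lema:associated-graded real} and the polynomial presentation on the symbols of the generators $S$, and then take the filtration-degree-zero piece. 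Since every positive-rank polynomial generator has positive filtration degree in $\gr(A)$, the degree-zero subring of $\gr(A)$ is precisely the subalgebra on $S_0$, which equals $\mathcal{F}_0$. If you rephrase along those lines the argument is airtight.
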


\subsubsection{Cohomology at odd primes}

With coefficients in $ \F_p $, $ p > 2 $, the cohomology of $ \mathbb{P}^\infty(\mathbb{R}) $ is trivial. Thus, every Hopf monomial in $ A_{\mathbb{P}^\infty(\mathbb{R})} $ has rank $ 0 $ and the rank filtration becomes useless.
Consequently, we must fully exploit the mod $ p $ pullback formula to perform our stable computations.

We recall that there are Pontrjagin classes $ \mathcal{p}_k \in H^{4k}(BSO(\infty); \mathbb{Z}) $ that are pullbacks of even Chern classes via the complexification map $ cpx \colon BO(\infty) \to BU(\infty) $.
We state our pullback formula for these by means of the total Pontrjagin class
\[
\mathcal{p}_* = \sum_{n=0}^\infty \mathcal{p}_n = 1 + \mathcal{p}_1 + \mathcal{p}_2 + \dots \in H^*(BO(\infty)),
\]
where we let, by convention, $ \mathcal{p}_0 = 1 $.

\begin{proposition} \label{pullback pontrjagin}
Let $\mathcal{Y} = \{ \underline{a} = (a_1, a_2, \dots ) \in \N^{\N^*} \mid a_n = 0 \text{ for } n \gg 0 \}$ be the set of infinite sequences of natural numbers that have finite support (i.e., that are eventually zero). Then, \begin{equation} \label{pullback-pon}
    g^* (\mathcal{p}_*) = \sum_{\underline{a} \in \mathcal{Y}} \big ( \bigodot_{i,a_i \neq 0} \gamma_{i, a_i} \big ) \odot 1_{\infty} .
\end{equation} 
Note that requiring that every sequence in $\mathcal{Y}$ has finite support guarantees that the transfer product in the formula (\ref{pullback-pon}) is iterated a finite number of times and well-defined.
\end{proposition}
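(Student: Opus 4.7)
The plan is to deduce this real pullback formula from the odd-prime case of the complex formula of Theorem~\ref{pullbackformula} via complexification. Consider the commutative diagram
\[
\begin{tikzcd}
B\Bgrouppos{\infty} \arrow[r, "\iota"] \arrow[d, "g"'] & BN(\infty) \arrow[d, "f"] \\
BSO(\infty) \arrow[r, "cpx"'] & BU(\infty)
\end{tikzcd}
\]
where $\iota$ is induced by the subgroup inclusions $\Bgrouppos{n} \hookrightarrow \Bgroup_n \hookrightarrow N(n)$; the square commutes because both compositions classify the standard inclusion $\Bgrouppos{n} \hookrightarrow U(n)$. Since odd Chern classes of a complexified real vector bundle are $2$-torsion, $cpx^*(c_{2k+1}) = 0$ in $H^*(BSO(\infty); \mathbb{F}_p)$ for every odd prime $p$, so that $cpx^*(c_*) = \mathcal{p}_*$ in mod $p$ cohomology. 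It therefore suffices to compute $\iota^*(f^*(c_*))$.

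Next, I plug in the odd-prime case of Theorem~\ref{pullbackformula},
\[
f^*(c_*) = \sum_{\underline{a} \in \mathcal{X}} c_{[a_0]} \odot \Bigl( \bigodot_{i,\, a_i \neq 0} \gamma_{i, a_i} \Bigr) \odot 1_\infty,
\]
and analyze $\iota^*$ on decorated Hopf monomials. By Corollary~\ref{cor:res mod p} and Theorem~\ref{thm:cohomology alternating group mod p} the mod $p$ cohomology is unchanged under $\Bgrouppos{n} \hookrightarrow \Bgroup_n$, and under the identifications $BN(n) \cong D_n(BU(1)_+)$ and $B\Bgroup_n \cong D_n(BO(1)_+)$ of Proposition~\ref{prop:DnBU(1)-BN(n)-iso}, $\iota$ coincides with the map of extended powers induced by the natural inclusion $BO(1) \hookrightarrow BU(1)$. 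Because $H^*(\mathbb{P}^\infty(\mathbb{R}); \mathbb{F}_p) = \mathbb{F}_p$ is concentrated in degree $0$ at odd $p$, the decoration generator $c \in H^2(BU(1); \mathbb{F}_p)$ pulls back to $0$. Consequently $\iota^*$ kills every decorated gathered block whose decoration involves a positive power of $c$ --- in particular each $c_{[a_0]} = (1_{a_0}, c)$ with $a_0 \geq 1$ --- while the undecorated classes $\gamma_{i,\ell}$ are preserved and identified with their namesakes inside $H^*(B\Bgrouppos{\infty}; \mathbb{F}_p)$ via the $\pi^*$ embedding from $A_{\{*\}}$ discussed in Section~\ref{sec:cohomology extended powers}.

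Combining these observations, only the terms with $a_0 = 0$ survive under $\iota^*$, so after reindexing $\underline{b} = (a_1, a_2, \dots) \in \mathcal{Y}$ one obtains
\[
g^*(\mathcal{p}_*) = g^*(cpx^*(c_*)) = \iota^*(f^*(c_*)) = \sum_{\underline{b} \in \mathcal{Y}} \Bigl( \bigodot_{i,\, b_i \neq 0} \gamma_{i, b_i} \Bigr) \odot 1_\infty,
\]
which is the claimed identity. The only mildly delicate point is the explicit action of $\iota^*$ on decorated Hopf monomials, namely the assertion that it acts as ``set $c$ to zero in every decoration''; once this is justified by naturality of the extended power construction together with the triviality of $H^{>0}(\mathbb{P}^\infty(\mathbb{R}); \mathbb{F}_p)$ at odd $p$, the rest of the argument is a formal consequence of the complex pullback formula already at our disposal.
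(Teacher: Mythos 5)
Your proof is correct and uses the same core idea as the paper's: push the odd-prime pullback formula for $f^*(c_*)$ from Theorem~\ref{pullbackformula} through a commutative square involving complexification, and use that the decoration class $c$ dies in $H^*(\mathbb{P}^\infty(\mathbb{R});\F_p)$ at odd $p$. The paper's version of the argument uses $D_\infty(\{*\}_+) = B\Sigma_\infty$ as the common source of the square, mapping via $j_2$ to $BN(\infty)$ and via $j_1$ to $B\Bgroup_\infty$; it first derives the formula for $h^*(\mathcal{p}_*)$ using that $j_1^*$ is an isomorphism at odd $p$ and that $j_2^*$ is the Hopf-ring morphism $A_{BU(1)}\to A_{\{*\}}$ killing $c_{[l]}$, and then restricts to $B\Bgrouppos{\infty}$ via Corollary~\ref{cor:res mod p}. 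You take $B\Bgrouppos{\infty}$ directly as the common source, which collapses that last restriction step into the diagram itself; to make your claim that ``$\iota^*$ sets $c$ to zero in decorations and fixes the $\gamma$'s'' precise, it is cleanest to factor $\iota$ through $B\Bgroup_\infty\cong D_\infty(BO(1)_+)$, so that the first factor is manifestly the Hopf-ring morphism $A_{BU(1)}\to A_{\mathbb{P}^\infty(\mathbb{R})}$ (annihilating $c_{[l]}$ and fixing $\gamma_{k,l}$), and the second factor is the restriction isomorphism of Corollary~\ref{cor:res mod p} — the exact counterpart of the paper's appeal to $j_1^*$ being an isomorphism.
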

\begin{proof}
We first prove that the desired identity holds for $ h^*(\mathcal{p}_*) \in H^*(B\Bgroup_\infty; \mathbb{F}_p) $.
There is a commutative diagram
\begin{center}
\begin{tikzcd}
D_\infty(\{*\}_+) \arrow{r}{j_1} \arrow{d}{j_2} & D_\infty(BO(1)_+) \arrow{r}{h} & BO(\infty) \arrow{d}{cpx} \\
BN(\infty) \arrow{rr}{f} & & BU(\infty)
\end{tikzcd}
\end{center}
where $j_1$ and $j_2$ are the obvious inclusions.
By passing to cohomology we deduce that
\[
j_1^* h^*(\mathcal{p}_k) = j_1^*h^* cpx^*(c_{2k}) = j_2^*f^*(c_k).
\]
By functoriality, $ j_2^* $ is the stabilization of the Hopf ring morphism $ A_{BU(1)} \to A_{\{*\}} $ induced by the inclusion $ \{*\} \to BU(1) $. In particular, $ j_2^*(\gamma_{k,l}) = \gamma_{k,l} $ and $ j_2^*(c_{[l]}) = 0 $ for all $ k,l \geq 1 $.
As $ j_1^* $ is an isomorphism in mod $ p $ cohomology, the result follows from Theorem \ref{pullbackformula}.

To pass from $ h^*(\mathcal{p}_*) $ to $ g^*(\mathcal{p}_*) $, it is enough to recall that the restriction map $ H^*(\Bgroup_\infty; \mathbb{F}_p) \to H^*(\Bgrouppos{\infty}; \mathbb{F}_p) $ is an isomoprhism by Corollary \ref{cor:res mod p}.
\end{proof}

\begin{corollary} \label{cor:regular real odd}
Let $ p > 2 $ be an odd prime. Then, in mod $ p $ cohomology, the following statements hold:
\begin{enumerate}
\item $ g^*(\mathcal{p}_k) = 0 $ if $ k $ is not a multiple of $ (p-1)/2 $
\item $ \{g^*(\mathcal{p}_{\frac{p-1}{2}}),g^*(\mathcal{p}_{2\frac{p-1}{2}}), g^*(\mathcal{p}_{3\frac{p-1}{2}}),\dots\} $ is a regular sequence in $ H^*(B\Bgrouppos{\infty}; \mathbb{F}_p) $.
\end{enumerate}
\end{corollary}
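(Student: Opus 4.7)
My plan for part (1) is a direct degree count. By Proposition~\ref{pullback pontrjagin}, every summand $\bigodot_i \gamma_{i,a_i} \odot 1_\infty$ in $g^*(\mathcal{p}_*)$ has cohomological dimension $\sum_i 2a_i(p^i-1)$, which is divisible by $2(p-1)$ since $(p-1)$ divides every $p^i-1$. Consequently, the degree-$4k$ component of $g^*(\mathcal{p}_*)$ vanishes unless $2(p-1)\mid 4k$, equivalently $(p-1)/2\mid k$.

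For part (2), I would adapt the argument of Theorem~\ref{theo:regular}, substituting for the rank filtration---which is degenerate at odd primes since $H^*(\mathbb{P}^\infty(\mathbb{R});\mathbb{F}_p)=\mathbb{F}_p$---a filtration of $A_\infty(\{*\})\cong H^*(B\Bgrouppos{\infty};\mathbb{F}_p)$ indexed by the maximal $k$ such that some factor $\gamma_{k,j}$ appears in a stabilized Hopf monomial. Reading off Proposition~\ref{pullback pontrjagin}, the unique summand of $g^*(\mathcal{p}_{l(p-1)/2})$ whose maximal $\gamma$-index equals $1$ corresponds to $a_1=l$, giving $g^*(\mathcal{p}_{l(p-1)/2})\equiv \gamma_{1,l}\odot 1_\infty$ modulo monomials of strictly larger maximal $\gamma$-index. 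Verifying that this filtration is multiplicative under cup product---by a combinatorial argument analogous to the one used in Lemma~\ref{lema:associated-graded}---and that $\{\gamma_{1,l}\odot 1_\infty\}_{l\geq 1}$ forms a regular sequence in the associated graded, would then let Lemma~\ref{lema:graded} transport the regularity back to $A_\infty(\{*\})$.

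The principal obstacle I anticipate is that the leading classes $\gamma_{1,l}\odot 1_\infty$ are not all themselves polynomial generators: by Corollary~\ref{cor:polynomial generators} the generators have width equal to a power of $p$, whereas $\gamma_{1,l}$ has width $lp$, a power of $p$ only when $l$ is. For other $l$, the Hopf ring relation $\gamma_{1,l}\odot\gamma_{1,m}=\binom{l+m}{l}\gamma_{1,l+m}$ forces $\gamma_{1,l}\odot 1_\infty$ to be a specific polynomial expression in the true generators, potentially creating algebraic dependencies within the candidate regular sequence that need to be resolved by a careful combinatorial analysis. An alternative route sidesteps the filtration entirely: one uses the commutative square $B\Bgrouppos{\infty}\to BN(\infty)\to BU(\infty)$, whose composite is realized by complexification, to obtain $g^*(\mathcal{p}_k)=(-1)^k i^*(f^*(c_{2k}))$ for the natural inclusion $i\colon B\Bgrouppos{\infty}\hookrightarrow BN(\infty)$, and then combines Theorem~\ref{theo:regular} with an analysis of the quotient $A_\infty(BU(1))/(c_{[l]}\odot 1_\infty : l\geq 1)$, where the even-indexed pullbacks $f^*(c_{2k})$ surviving the quotient are exactly the $\pm g^*(\mathcal{p}_{l(p-1)/2})$.
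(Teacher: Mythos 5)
Your proof of part (1) is a direct degree count that matches the paper's argument exactly.

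Your main proposal for part (2) diverges from the paper, and I think the divergence introduces a genuine problem. You propose filtering $A_\infty(\{*\})$ by the \emph{maximal $\gamma$-index} appearing in a monomial, so that $\gamma_{1,l}\odot 1_\infty$ lies in the lowest nontrivial stratum and the other summands of $g^*(\mathcal{p}_{l(p-1)/2})$ lie in higher strata. But Lemma~\ref{lema:graded} requires the opposite configuration: the lift $\widetilde{x}_i\in\mathcal{F}_{j_i}$ must equal its leading term modulo $\mathcal{F}_{j_i-1}$, i.e.\ modulo \emph{lower} filtration, so the distinguished summand must carry the \emph{highest} filtration degree. With your filtration, $g^*(\mathcal{p}_{l(p-1)/2}) = \gamma_{1,l}\odot 1_\infty + (\text{strictly higher filtration})$, which is not a lift of a graded element in the sense of the lemma. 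You would need either a decreasing filtration together with a freshly proved variant of Lemma~\ref{lema:graded}, or a filtration running in the other direction. The paper resolves this by a different rank function: $\rk(\gamma_{1,n}^m) = nm$ and $\rk(b)=0$ for every other gathered block. Under this assignment $\gamma_{1,l}\odot 1_\infty$ has rank $l$, while every other summand $\bigodot_i \gamma_{i,a_i}\odot 1_\infty$ of $g^*(\mathcal{p}_{l(p-1)/2})$ satisfies $a_1 < l$ when some $a_i$ with $i\geq 2$ is nonzero, hence has strictly smaller rank. This both makes Lemma~\ref{lema:graded} applicable as stated and forces $\gr_{\mathcal{F}}(A_\infty(\{*\}))\cong\mathcal{F}_0[\gamma_{1,1}\odot 1_\infty,\dots]$ to be a polynomial ring, which disposes of the ``polynomial generators'' obstacle you flag: regularity is established in the associated graded, where the $\gamma_{1,l}\odot 1_\infty$ \emph{are} polynomial generators by construction, regardless of their failure to be Hopf-ring generators in $A_\infty(\{*\})$ itself. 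Your alternative route via complexification and the quotient $A_\infty(BU(1))/(c_{[l]}\odot 1_\infty)$ is a reasonable idea, but as sketched it silently assumes that a regular sequence remains regular after passing to a quotient and discarding the odd-indexed terms; neither step is automatic and each would need its own argument.
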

\begin{proof}
Statement $ 1 $ is immediate from Lemma \ref{pullback pontrjagin}, because the right-hand side of the formula for $ g^*(\mathcal{p}_*) $ only have terms in degrees that are multiple of $ 2(p-1) $.

To prove the second claim, we first define an alternative rank function on stabilized Hopf monomials in $ H^*(B\Bgrouppos{\infty}; \mathbb{F}_p) $, which is identified with $ H^*(B(\Sigma_\infty); \mathbb{F}_p) = A_\infty(\{*\}) $ by Theorem \ref{thm:cohomology alternating group mod p}.
Given a gathered block $ b $, we define
\[
\rk(b) = \left\{ \begin{array}{ll}
nm & \mbox{if } b = \gamma_{1,n}^m \\
0 & \mbox{if } b \not= \gamma_{1,n}^m \forall n,m \in \mathbb{N}
\end{array} \right. .
\]
Given a Hopf monomial $ x = b_1 \odot \dots \odot b_r $, we define $ \rk(x) = \sum_{i=1}^r \rk(b_i) $. Finally, for all stabilized Hopf monomials $ x \odot 1_\infty $, with $ x $ pure, we let $ \rk(x \odot 1_\infty) = \rk(x) $.
This produces an increasing filtration $ \mathcal{F} = \{ \mathcal{F}_n \}_{n \geq 0} $ such that $ \mathcal{F}_n $ is the subspace linearly spanned by basis element of rank at most $ n $.

The filtration $ \mathcal{F}_n $ is multiplicative, and the same arguments used in our proof of Lemma \ref{lema:associated-graded} shows that there is an isomorphism
\[
\gr_{\mathcal{F}}(A_\infty(\{*\})) \cong \mathcal{F}_0 [\gamma_{1,1} \odot 1_\infty, \dots, \gamma_{1,n} \odot 1_\infty,\dots].
\]
As an immediate consequence of Proposition \ref{pullback pontrjagin}, $ g^*(\mathcal{p}_{k \frac{p-1}{2}}) = \gamma_{1,k} \odot 1_\infty $ modulo $ \mathcal{F}_{k-1} $
and that the sequence $ (\gamma_{1,1} \odot 1_\infty, \dots, \gamma_{1,n} \odot 1_\infty,\dots) $ is a regular sequence in $ \gr_{\mathcal{F}}(A_\infty(\{*\})) $ by the same argument used in the proof of Theorem \ref{theo:regular}.
\end{proof}

\begin{corollary}
If $ p > 2 $ is an odd prime, the quotient algebra with mod $ p $ coefficients
\[
\frac{A_\infty(\{*\})}{(g^*(\mathcal{p}_1),g^*(\mathcal{p}_2),\dots)}
\]
is the free graded commutative algebra generated by stabilized gathered blocks $ b \odot 1_\infty $ satisfying the conditions of Corollary \ref{cor:polynomial generators} such that $ \rk(b) = 0 $.
\end{corollary}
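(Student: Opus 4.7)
The plan is to combine the regular sequence property from Corollary~\ref{cor:regular real odd} with the associated graded description developed in its proof. First, I observe that by part~(1) of that corollary the ideal $I = (g^*(\mathcal{p}_1), g^*(\mathcal{p}_2), \ldots)$ reduces to $(g^*(\mathcal{p}_{k(p-1)/2}) : k \geq 1)$, which by part~(2) is a regular sequence in $A_\infty(\{*\})$. The proof of Corollary~\ref{cor:regular real odd} further shows that, with respect to the alternative rank filtration $\mathcal{F}_*$ used there, one has $g^*(\mathcal{p}_{k(p-1)/2}) \equiv \gamma_{1,k} \odot 1_\infty \pmod{\mathcal{F}_{k-1}}$, together with the polynomial identification $\gr_{\mathcal{F}}(A_\infty(\{*\})) \cong \mathcal{F}_0[\gamma_{1,k} \odot 1_\infty : k \geq 1]$.

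The next step is to pass from the associated graded to the ungraded quotient. Equipping $A_\infty(\{*\})/I$ with the induced rank filtration and appealing to Lemma~\ref{lema:graded} in tandem with the regular sequence property, the associated graded of the quotient is the quotient of the associated graded, so $\gr_{\mathcal{F}}(A_\infty(\{*\})/I) \cong \mathcal{F}_0[\gamma_{1,k} \odot 1_\infty]/(\gamma_{1,k} \odot 1_\infty : k \geq 1) \cong \mathcal{F}_0$. Since this is concentrated in rank~$0$, the induced filtration on $A_\infty(\{*\})/I$ collapses to $\mathcal{F}_0$; equivalently, the composite $\mathcal{F}_0 \hookrightarrow A_\infty(\{*\}) \twoheadrightarrow A_\infty(\{*\})/I$ is an isomorphism of graded algebras.

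Finally, I would identify $\mathcal{F}_0$ with the claimed free graded commutative algebra. By Theorem~\ref{thm:cohomology alternating group mod p} and Corollary~\ref{cor:polynomial generators}, $A_\infty(\{*\}) \cong H^*(\Sigma_\infty; \F_p)$ is the free graded commutative algebra on the set $G$ of stabilized gathered blocks $b \odot 1_\infty$ meeting the conditions of Corollary~\ref{cor:polynomial generators}. Splitting $G = G_0 \sqcup G_+$ according to whether $\rk(b) = 0$ or $\rk(b) > 0$, we get $\F_p[G_0] \subseteq \mathcal{F}_0$ by additivity of rank under cup products. Conversely, any monomial in $G$ has rank equal to the sum of ranks of its factors, and since the generators in $G$ are linearly independent modulo $\mathcal{F}_{<n}$ at each level, a class in $\mathcal{F}_0$ cannot involve generators from $G_+$; hence $\mathcal{F}_0 = \F_p[G_0]$. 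Combined with the previous paragraph, this yields $A_\infty(\{*\})/I \cong \F_p[G_0]$, which is exactly the free graded commutative algebra on the stabilized rank-$0$ generators.

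The main obstacle I anticipate is the rigorous passage from the associated graded description $\gr(A_\infty(\{*\})/I) \cong \mathcal{F}_0$ back to the ungraded isomorphism $A_\infty(\{*\})/I \cong \mathcal{F}_0$: although this is standard when the regular sequence is finite, the infinite-generation here requires verifying that exhaustiveness and boundedness from below of the rank filtration transfer correctly to the quotient. This is where the regular-sequence hypothesis is essential, either through a Koszul-complex comparison or through an inductive application of Lemma~\ref{lema:graded} one generator at a time.
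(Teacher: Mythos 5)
Your proof is correct and follows the same route the paper (implicitly) intends: the paper states this corollary without proof, but its complex analogue is proved in one line by identifying the quotient with the rank-zero subalgebra $\mathcal{F}_0$ via the associated-graded isomorphism from the regular-sequence argument, and your argument fleshes out exactly that reasoning using the alternative rank filtration from the proof of Corollary~\ref{cor:regular real odd}. The one point worth making explicit in your last step is that, by Corollary~\ref{cor:polynomial generators}, the stabilized Hopf monomial basis of $A_\infty(\{*\})$ coincides with the monomial basis of the free graded commutative algebra $\F_p[G]$, and since rank is additive on monomials, $\mathcal{F}_0$ (the span of rank-$0$ basis monomials) is precisely $\F_p[G_0]$; this is cleaner than the linear-independence argument you sketch.
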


Finally, we are ready to complete the computation of the stable mod $ p $ cohomology of $ O(\infty)/\Bgroup_{\infty} = SO(\infty)/\Bgrouppos{\infty}$.
\begin{theorem} \label{thm:stable cohomology real mod p}
Let $ p > 2 $ be an odd prime. Then there is an isomorphism of $ \F_p $-algebras
\[
H^* \left( \frac{O(\infty)}{\Bgroup_{\infty}}; \F_p \right) \cong \Lambda(\{a_{4n-1}: \frac{p-1}{2} \not| n\}) \otimes \frac{A_\infty(\{*\})}{(g^*(\mathcal{p}_1),g^*(\mathcal{p}_2),\dots)},
\]
where $ \Lambda $ is the exterior algebra functor, and the classes $ a_k $ are indexed by their degree.
\end{theorem}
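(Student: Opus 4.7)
The plan is to run the Serre spectral sequence of the fibration
\[
SO(\infty) \longrightarrow SO(\infty)/\Bgrouppos{\infty} \longrightarrow B\Bgrouppos{\infty}
\]
and compare it with the Serre spectral sequence of the universal $SO(\infty)$-bundle, exactly as in the proof of Corollary~\ref{cor:stable-cohomology}. Recall that at an odd prime $p$, $H^*(SO(\infty); \F_p)$ is an exterior algebra on classes $a_{4k-1}$ in degree $4k-1$ for $k \ge 1$, and that $a_{4k-1}$ transgresses to the $k$-th Pontrjagin class $\mathcal{p}_k$ in the universal spectral sequence. Naturality of the transgression therefore gives $d_{4k}(a_{4k-1}) = g^*(\mathcal{p}_k)$ in the spectral sequence for the top fibration, and odd differentials vanish for degree reasons since $H^*(B\Bgrouppos{\infty}; \F_p) \cong A_\infty(\{*\})$ is concentrated in even degrees.

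Next I would invoke Corollary~\ref{cor:regular real odd}. Part $(1)$ says that $g^*(\mathcal{p}_k)=0$ whenever $\frac{p-1}{2}\nmid k$, so each corresponding exterior generator $a_{4k-1}$ is a permanent cycle, i.e.\ it survives to $E_\infty$ and contributes a tensor factor $\Lambda(\{a_{4k-1}: \frac{p-1}{2}\nmid k\})$ to the final answer. Part $(2)$ says that the remaining pullbacks $\{g^*(\mathcal{p}_{n(p-1)/2})\}_{n\ge 1}$ form a regular sequence, hence are algebraically independent over $\F_p$ (by \cite[Theorem~11.22]{atiyahMac}). This algebraic independence is the key input: it guarantees that, for each $n \geq 1$, the differential $d_{4n(p-1)/2}$ is injective on the ideal of $E_*^{*,*}$ generated by the exterior generator $a_{2n(p-1)-1}$ — otherwise one would obtain a nontrivial polynomial relation among the $g^*(\mathcal{p}_{n(p-1)/2})$ in the base.

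Consequently, the spectral sequence collapses at the stage after all these transgressions, and the $E_\infty$-page is identified with the tensor product
\[
\Lambda\bigl(\{a_{4n-1}: \tfrac{p-1}{2}\nmid n\}\bigr) \otimes \frac{A_\infty(\{*\})}{\bigl(g^*(\mathcal{p}_{n(p-1)/2}): n\ge 1\bigr)}.
\]
By part $(1)$ of Corollary~\ref{cor:regular real odd}, the ideal in the denominator coincides with $(g^*(\mathcal{p}_1), g^*(\mathcal{p}_2),\dots)$, yielding the claimed quotient. Since the exterior generators sit in odd degrees while the quotient algebra is concentrated in even degrees, there are no possible multiplicative extensions at the odd prime $p$: any such extension would have to be $p$-torsion in an $\F_p$-algebra, which is automatic, but the bidegrees of the surviving exterior classes force their squares to vanish from the $E_2$ structure already. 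This determines the algebra structure and completes the proof.

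The main obstacle I anticipate is rigorously justifying that the differentials $d_{4k}$ are injective on the ideals generated by the $a_{4k-1}$'s with $\frac{p-1}{2}\mid k$; this is exactly where the regular-sequence input from Corollary~\ref{cor:regular real odd} plus the algebraic-independence corollary of \cite[Theorem~11.22]{atiyahMac} is needed, mirroring the argument in Corollary~\ref{cor:stable-cohomology} and its real mod $2$ analog.
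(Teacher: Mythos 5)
Your proposal follows essentially the same route as the paper: Serre spectral sequence of $SO(\infty) \to SO(\infty)/\Bgrouppos{\infty} \to B\Bgrouppos{\infty}$, comparison with the universal fibration to identify the transgressions $d_{4k}(a_{4k-1}) = g^*(\mathcal{p}_k)$, and then Corollary~\ref{cor:regular real odd} for the regularity/vanishing dichotomy.

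One small inaccuracy worth flagging: you justify the vanishing of odd differentials by asserting that $H^*(B\Bgrouppos{\infty}; \F_p) \cong A_\infty(\{*\})$ is concentrated in even degrees. This is false for odd $p$ — the stable mod $p$ cohomology of $\Sigma_\infty$ has odd-degree generators (for instance $\alpha_{1,1}$, of degree $2p-3$). The reason the spectral sequence is controlled is instead that each exterior generator $a_{4k-1}$ of the fiber is transgressive, dying first on page $4k$, and multiplicativity propagates this to determine all differentials; this is the argument the paper actually runs, and it does not require the base to be evenly graded. With that correction made, your argument is sound, and your handling of the multiplicative extension problem (odd-dimensional generators over $\F_p$ with $p$ odd automatically square to zero by graded commutativity, so they span an exterior algebra in $H^*$) matches the paper's.
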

\begin{proof}
We consider the cohomological Serre spectral sequence associated with the fiber sequence $ SO(\infty) \to \overline{\Fl}_\infty(\mathbb{R}) \to B\Bgrouppos{\infty} $. Recall that $ H^*(SO(\infty); \F_p) $ is the exterior algebra generated by classes $ a_3,a_7,\dots, a_{4k-1},\dots $. Note that the action of the fundamental group of the base on the fiber is homotopically trivial, so the $ E_2^{*,*} $ page involves cohomology with constant coefficients.
By comparing it with the fibration $ SO(\infty) \to ESO(\infty) \to BSO(\infty) $ we deduce, as in the complex case, that $ a_{4k-1} $ is transgressive and transgresses to $ g^*(\mathcal{p}_{k}) $.
This completely determines the differentials on each page.

In the cohomological Serre spectral sequence of the bottom fibration, the class $ a_{4n-1} \in H^*(SO(\infty)) $ is transgressive and transgresses to the Pontryagin class $ \mathcal{p}_n $.
By comparing the Serre spectral sequences of the two fibrations above, we see that, for the top fibration, $ a_{4n-1} $ is also transgressive and transgresses to $ g^*(\mathcal{p}_n) $. This spectral sequence is multiplicative with respect to the cup product, hence the remark above completely determines all its differentials.

By Corollary \ref{cor:regular real odd}, the pullbacks of Pontryagin classes $ g^*(\mathcal{p}_n) $ for $ \frac{p-1}{2} | n $ form a regular sequence in the cohomology of the base and that the differentials of the remaining classes are $ 0 $.
Consequently, one can check inductively that the classes $ a_{4n-1} $ with $ \frac{p-1}{2} \not| n $ survive to the limit page and the differential $ d_{4n} $ with $ \frac{p-1}{2} | n $ is injective on the ideal generated by $ a_{4n-1} $.
Therefore
\[
E_\infty^{*,*} \cong \Lambda(\{a_{4n-1}: \frac{p-1}{2} \not| n\}) \otimes \frac{A_\infty(\{*\})}{(g^*(p_1),g^*(p_2),\dots)}.
\]

Since we are using field coefficients, $ H^*(\overline{\Fl}_\infty(\mathbb{R}); \F_p) \cong E_\infty^{*,*} $ as a graded vector space. A priori, this isomorphic might not preserve the product. In this particular case, however, the generators $ a_{4k-1} $, being odd-dimensional, must actually span an exterior algebra inside $ H^*(\overline{\Fl}_\infty(\mathbb{R}); \mathbb{F}_p) $, and thus this is an isomorphism of algebras.
\end{proof}

\subsubsection{Rational cohomology}

The rational cohomology of unordered flag manifolds is known. However, we stress that we can retrieve it as a simple consequence of our machinery.

\begin{proposition}
\[
H^*(\overline{\Fl}_\infty(\mathbb{R}); \mathbb{Q}) \cong H^*(SO(\infty); \mathbb{Q}) \cong \Lambda(\{a_{4k-1}:k\in \mathbb{N}\}).
\]
\end{proposition}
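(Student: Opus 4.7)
The plan is to mimic the Serre spectral sequence argument used for Theorem~\ref{thm:stable cohomology real mod p}, with the simplifications afforded by rational coefficients. Concretely, I would run the Serre spectral sequence of the fibration $SO(\infty) \to \overline{\Fl}_\infty(\mathbb{R}) \to B\Bgrouppos{\infty}$, whose $E_2$-page is $E_2^{p,q} = H^p(B\Bgrouppos{\infty}; H^q(SO(\infty); \mathbb{Q}))$.

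The key input is that $H^*(B\Bgrouppos{\infty}; \mathbb{Q}) \cong \mathbb{Q}$ concentrated in degree zero. This follows because $\Bgrouppos{\infty} = \varinjlim_n \Bgrouppos{n}$ is a filtered colimit of finite groups, combined with the classical transfer argument: for any finite group $G$, $|G|$ is a unit in $\mathbb{Q}$ and hence $H^p(BG; M) = 0$ for $p > 0$ and for any rational coefficient system $M$. Taking colimits over $n$ disposes of the local coefficients issue as well. Moreover, the monodromy action of $\pi_1(B\Bgrouppos{\infty}) = \Bgrouppos{\infty}$ on $H^*(SO(\infty); \mathbb{Q})$ is trivial, because right translation by any element of the path-connected topological group $SO(\infty)$ is homotopic to the identity.

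Consequently, the $E_2$-page is concentrated in the column $p = 0$ and reduces to $H^*(SO(\infty); \mathbb{Q}) = \Lambda(\{a_{4k-1} : k \geq 1\})$. All differentials vanish for degree reasons, so the spectral sequence collapses at $E_2$. Since $E_\infty$ is concentrated in a single column, there is no multiplicative extension problem, and we read off the desired isomorphism of $\mathbb{Q}$-algebras. Alternatively, in the style of Theorem~\ref{thm:stable cohomology real mod p}, one may compare with the Serre spectral sequence of $SO(\infty) \to ESO(\infty) \to BSO(\infty)$: each $a_{4k-1}$ transgresses to $g^*(\mathcal{p}_k)$, which vanishes rationally since the target is $\mathbb{Q}$.

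There is no real obstacle here; the only point worth highlighting is the vanishing of $H^{>0}(B\Bgrouppos{\infty}; \mathbb{Q})$, which is immediate from transfer, so the argument is essentially a specialization of the mod $p$ machinery to a case where the base becomes rationally contractible.
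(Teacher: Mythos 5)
Your argument is correct and coincides with the paper's: both run the rational Serre spectral sequence of $SO(\infty)\to\overline{\Fl}_\infty(\mathbb{R})\to B\Bgrouppos{\infty}$, invoke Serre's theorem (transfer) to kill $H^{>0}(B\Bgrouppos{\infty};\mathbb{Q})$, note the trivial monodromy, and conclude the spectral sequence collapses to the fiber. Your remark that $\Bgrouppos{\infty}$ is a filtered colimit of finite groups is a small improvement in precision over the paper's phrasing.
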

\begin{proof}
The rational cohomology of the finite group $ \Bgrouppos{\infty} $ is trivial by Serre's theorem and the action of $ \Bgrouppos{\infty} $ on $ SO(\infty) $ is homotopicallly trivial.
Hence, the rational Serre spectral sequence of the fiber sequence $ SO(\infty) \to \overline{\Fl}_\infty(\mathbb{R}) \to B\Bgrouppos{\infty} $ collapses at the second page, which reduces to the cohomology of the fiber.
\end{proof}

\subsection{Poincar\'e series of stable cohomology}

We now determine the Poincar\'e series formula for stable cohomology, specifically $H^* (\UFl_{\infty} (\C) ; \F_2)$. For this, we need to obtain the Poincar\'e series of the stabilization $D_{\infty} (BU(1)_+) \cong BN(\infty)$.

\subsubsection{Poincar\'e series of $H^* (BN(\infty); \F_2)$} 
To obtain $H^* (BN(\infty); \F_2)$, we take the stabilization of full-width Hopf monomials in $\bigoplus_{n\ge 0} H^* (D_n (BU(1)_+); \F_2 )$. This means we exclude Hopf monomials that have a $\odot$-factor of $1_m$ in their expression. Let us denote the bigraded Poincar\'e series of $\bigoplus_{n\ge 0} H^* (D_n (BU(1)_+); \F_2 )$ by $\Pi (t,s)$. To obtain the Poincar\'e series $\Pi_{stab} (t)$ of $H^* (BN(\infty); \F_2)$, we need to \begin{itemize}
    \item forget the second degree (component), 
    \item keep only terms corresponding to full-width Hopf monomials.
\end{itemize} 
Therefore $\Pi_{stab} (t) = [(1-s) \cdot \Pi (t,s)] \mid_{s=1}$.

\subsubsection{Poincar\'e series of $H^* (\UFl_{\infty} (\C) ; \F_2)$} 
We know from Theorem~\ref{theo:regular} that the sequence $\big (f^* (c_1), f^* (c_2), \dots \big )$ is regular in $H^* ( BN (\infty); \F_2 ) $ and from Corollary~\ref{cor:stable-cohomology} \[ H^* ( \UFl_{\infty} (\C) ; \F_2 ) \cong \frac{H^* (BN (\infty); \F_2)}{\big (f^* (c_1), f^* (c_2), \dots \big )} . \] From the additivity of Poincar\'e series and the short exact sequence of the graded vector spaces \begin{align*}
    0 \longrightarrow \frac{H^* (BN (\infty); \F_2)}{\big (f^* (c_1),\dots, f^* (c_n) \big )} [2(n+1)] &\longrightarrow \frac{H^* (BN (\infty); \F_2)}{\big ( f^* (c_1), \dots, f^* (c_n) \big )} \\ &\longrightarrow \frac{H^* (BN (\infty); \F_2)}{\big 
( f^* (c_1),\dots , f^* (c_{n+1}) \big )} \longrightarrow 0 
\end{align*}  one can inductively compute the Poincar\'e series of $\frac{H^* (BN (\infty); \F_2)}{\big ( f^* (c_1), \dots, f^* (c_n) \big )}$ and pass to the limit to get the Poincar\'e series of $H^* (\UFl_{\infty} (\C); \F_2)$: \[  \Pi_{\frac{U(\infty)}{N(\infty)}} (t) = \Pi_{stab} (t) \cdot \big ( 1 - \sum_{n=1}^{\infty} t^{2n} \big ) . \] 
It still remains to compute $\Pi (t,s)$. This can be done by first determining the bigraded Poincar\'e series of the primitive component and subsequently, by using the Hopf algebra structure, extending the result to obtain the Poincar\'e series $\Pi(t, s)$.

\begin{remark}
The homological stability of unordered complete flag manifolds is linked to the representation stability of their ordered version. 
\end{remark}


\section{The spectral sequence computing \texorpdfstring{$ H^*(\overline{\Fl}_n(\mathbb{C});\F) $}{UFl-C} and \texorpdfstring{$ H^*(\overline{\Fl}_n(\mathbb{R});\F) $}{UFl-R}}

\subsection{The complex case}

In this section, we prepare the ground for the general spectral sequence argument that we will use to compute $ H^*(\UFl_n (\C); \F_p)$. For $ G = N(n) $ and $ X = U(n) $ in Theorem~\ref{generalSSS}, we obtain the fiber sequence \begin{equation} \label{spseq}
    U(n) \longrightarrow \UFl_n (\C) \longrightarrow BN(n) 
\end{equation} The $E_2$-page of the spectral sequence associated with (\ref{spseq}) is given by \[ H^* (BN(n); H^* (U(n); \F_p)) \cong H^* (D_n (BU(1)_+); \F_p) \otimes H^* (U(n); \F_p). \]

Recall from \cite[Proposition~3D.4]{Hatcher} that \begin{equation} \label{eq:unitary}
    H^* (U(n); \F_p) \cong \Lambda_{\F_p} [ z_{2k-1} \mid k=1,2,\dots , n],
\end{equation}  where $z_{2k-1} \in H^{2k-1} (U(n); \F_p)$. We will express the differentials in the spectral sequence associated with (\ref{spseq}) using the pullbacks of the Chern classes $c_k $ under the map $f_n \colon D_n (BU(1)_+) \rightarrow BU(n) $.
\begin{lemma} \label{differential}
    In the spectral sequence associated with the fiber sequence (\ref{spseq}) the $z_{2k-1}$ transgresses to $f_n^* (c_k)$.
\end{lemma}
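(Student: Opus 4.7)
The strategy is to compare the Serre spectral sequence of the fiber sequence (\ref{spseq}) with the Serre spectral sequence of the universal $U(n)$-bundle $U(n) \to EU(n) \to BU(n)$, exploiting the naturality of both the spectral sequence and the transgression homomorphism.

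First I would set up the comparison. The inclusion $N(n) \hookrightarrow U(n)$ induces the classifying map $f_n \colon BN(n) \to BU(n)$, and the associated pullback square fits into a map of fibrations
\begin{equation*}
\begin{CD}
U(n) @= U(n) \\
@VVV @VVV \\
\UFl_n(\C) @>>> EU(n) \\
@VVV @VVV \\
BN(n) @>{f_n}>> BU(n)
\end{CD}
\end{equation*}
where the identification $\UFl_n(\C) \simeq EN(n) \times_{N(n)} U(n)$ comes from the fact that $U(n)/N(n)$ is a Borel construction (since $N(n)$ acts freely on $U(n)$ by right translation). Naturality of the Serre spectral sequence then yields a morphism from the spectral sequence on the right to the one on the left.

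Next I would invoke the classical computation of $H^*(BU(n); \F_p)$. In the right-hand spectral sequence, the total space $EU(n)$ is contractible, so all positive-dimensional classes of the fiber must be killed by differentials. The structure of $H^*(U(n))$ as an exterior algebra on odd-degree generators, together with the fact that $H^*(BU(n); \F_p) \cong \F_p[c_1,\dots,c_n]$ is polynomial on generators in degrees $2,4,\dots,2n$, forces each $z_{2k-1}$ to be transgressive with transgression equal to $c_k$ (this is exactly the standard Borel-theoretic derivation of the cohomology of $BU(n)$).

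Finally I would conclude by naturality of transgression: a class that is transgressive in a fibration remains transgressive under pullback along any map of fibrations, and its transgression is the pullback of the original transgression. Applying this to $z_{2k-1} \in H^{2k-1}(U(n); \F_p)$, which maps to itself under the identity on the fiber, we obtain that $z_{2k-1}$ is transgressive in the spectral sequence of (\ref{spseq}) and that its transgression equals $f_n^*(c_k)$, as desired. There is no serious obstacle; the only point requiring care is the correct identification of $\UFl_n(\C)$ with the Borel construction so that the comparison square is genuinely a map of fibrations.
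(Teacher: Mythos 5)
Your proposal is correct and takes essentially the same approach as the paper: it compares the Serre spectral sequence of $U(n) \to \UFl_n(\C) \to BN(n)$ with that of the universal fibration $U(n) \to EU(n) \to BU(n)$ via the map induced by $f_n$, and deduces the result from naturality. The additional detail you supply (the Borel-construction identification and the explicit map of fibrations) is a correct elaboration of the same argument.
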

\begin{proof}
    Recall that a map between the fiber sequences induces a map between the corresponding spectral sequences which commute with the differentials. The lemma follows from this by comparing the Serre spectral sequence associated with the fiber sequence $U(n) \rightarrow \UFl_n (\C) \rightarrow BN(n)$ and the universal fibration $U(n) \rightarrow EU(n) \rightarrow BU(n)$.
\end{proof}

The pullback formulas in Theorem~\ref{pullbackformula} have a limitation in that they are expressed using the total Chern class $c_*$ and hence do not immediately give us $f_n^* (c_k)$ for specific $n$ and $k$, which is needed for the unstable calculations. However, this issue can be addressed by imposing certain constraints on the sequences that contribute to the sum for $f_n^*(c_k)$ in the original formula. The following proposition outlines the conditions that need to be met.
\begin{proposition} \label{prop:finite-pullback}
    The following statements are true:
    \begin{enumerate}
        \item Let $\mathcal{X}_n$ denote the set of sequences $\underline{a} = (a_0, a_1, a_2 , \dots) \in \mathcal{X}$ such that $\sum_{i \ge 0} a_i 2^i \le n$ and $\mathcal{X}_{n,k}$ denote the set of sequences $\underline{a} \in \mathcal{X}_n$ such that $a_0 + \sum_{i\ge 1} a_i (2^i -1) = k$. Then, in mod $ 2 $ cohomology, \[ f_n^* (c_k) = \sum_{\underline{a} \in \mathcal{X}_{n,k}} c_{[a_0]} \odot \big ( \bigodot_{i,a_i \neq 0} \gamma_{i, a_i}^2 \big ) \odot 1_{m} , \] where $m \in \N$ is such that the addend is in component $n$. 
        \item Let $ p $ be an odd prime. Let $ \mathcal{Y}_n $ denote the set of sequences $\underline{a} = (a_0, a_1 , a_2 , \dots) \in \mathcal{Y}$ such that $\sum_{i \ge 0} a_i p^i \le n$ and $\mathcal{Y}_{n,k}$ denote the set of sequences $\underline{a} \in \mathcal{Y}_n$ such that $a_0 + \sum_{i\ge 1} a_i (p^i -1) = k$.
        Then, in mod $ p $ cohomology, \[
        f_n^*(c_k) = \sum_{\underline{a} \in \mathcal{Y}_{n,k}} c_{[a_0]} \odot \big ( \bigodot_{i, a_i \neq 0} \gamma_{i,a_i} \big ) \odot 1_{m},
        \]
        where $ m \in \N $ is such that the addend is in component $ n $.
    \end{enumerate}
\end{proposition}
\begin{proof}
    The proof is straightforward. Recall that the component to which $\gamma_{i, a_i}^2$ belongs is $a_i 2^i$ and the component to which $c_{[a_0]}$ belongs is $a_0$. Therefore, the component to which $c_{[a_0]} \odot \big ( \bigodot_{i,a_i \neq 0} \gamma_{i, a_i}^2 \big )$ belongs is $a_0+ \sum_{i\ge 1} a_i 2^i = \sum_{i \ge 0} a_i 2^i$. For the addend to be in $\mathrm{Im} (f_n^* ) \subset H^* (BN(n); \F_2)$, this value must be less than or equal to $n$. Moreover the cohomological dimension of $\gamma_{i, a_i}^2$ is $2a_i (2^i -1) $ and the cohomological dimension of $c_{[a_0]}$ is $2a_0$. Note that the pullback $f_n^*$ preserves the cohomological dimension. So, a sequence $\underline{a} \in \mathcal{X}_n$ is an addend in $f_n^* (c_k)$ if and only if $2a_0 + \sum_{i\ge 1} 2a_i (2^i -1) = 2k$.  
\end{proof}

The following example demonstrates the use of this pullback formula for $n=5$ in mod $2$ cohomology.

\begin{example} \label{n5}
    First, we compute $\mathcal{X}_{5,k}$ for $1 \le k \le 5$. \begin{align*}
        \mathcal{X}_{5,1} &= \{ (1,0,0,\dots) , (0,1,0,\dots ) \} , \\
        \mathcal{X}_{5,2} &= \{ (2,0,0,\dots), (1,1,0,\dots) , (0,2,0,\dots) \} , \\
        \mathcal{X}_{5,3} &= \{ (3,0,0,\dots), (2,1,0,\dots), (1,2,0,\dots), (0,0,1,0,\dots) \} , \\
        \mathcal{X}_{5,4} &= \{ (4,0,0,\dots), (3,1,0,\dots), (1,0,1,0,\dots) \} , \\
        \mathcal{X}_{5,5} &= \{ (5,0,0,\dots) \}.
    \end{align*}
    Hence by Proposition~\ref{prop:finite-pullback} we have \begin{align*}
        f_5^* (c_1) &= c \odot 1_4 + \gamma_{1,1}^2 \odot 1_3, \\
        f_5^* (c_2) &= c_{[2]} \odot 1_3 + c \odot \gamma_{1,1}^2 \odot 1_2 + \gamma_{1,2}^2 \odot 1_1 ,\\
        f_5^* (c_3) &= c_{[3]} \odot 1_2 + c_{[2]} \odot \gamma_{1,1}^2 \odot 1_1 + c\odot \gamma_{1,2}^2 + \gamma_{2,1}^2 \odot 1_1, \\
        f_5^* (c_4) &= c_{[4]} \odot 1_1 + c_{[3]} \odot \gamma_{1,1}^2 + c \odot \gamma_{2,1}^2 , \\
        f_5^* (c_5) &= c_{[5]}.  
    \end{align*}
\end{example}

As $\UFl_1 \cong \{*\}$ is just a point, $H^* (\UFl_1 ; \F_2) \cong \F_2$ is trivial. For $n=2$, we have an isomorphism $\UFl_2 (\C) \cong \mathbb{P}^{2} (\mathbb{R})$. For $n \ge 3$, we will be using the Serre spectral sequence $E$ associated with the fiber sequence $U(n) \rightarrow \UFl_n (\C) \rightarrow BN(n)$ to determine the cohomology of $\UFl_n (\C)$. The $E_2$-page is given by $E_2^{k,l} = H^k (BN(n); H^l (U(n); \F_2)) $. Recall the transgressive differentials $d_{2k}$ described in Lemma~\ref{differential} as follows \[ d_{2k} : z_{2k-1} \longmapsto f_n^* (c_k) .\] 
The differential $d_{2k}$ can be fully described from the multiplicative structure of the spectral sequence. Also, note that odd differentials are zero. The pullbacks $f_n^* (c_k)$ are computed using the formula from Proposition~\ref{prop:finite-pullback}. 

On the $E_2$-page of the spectral sequence, the $l=0$ row is given by $H^* (BN(n); \F_2)$ and the higher rows are $H^* (BN(n); \F_2)$ multiplied by the product of some finite combination of the $z_{2k-1}$'s. A major step in the computation is checking whether the differentials $d_{2k}$ are injective or not, i.e. whether $d_{2k} (b\cdot z_{2k-1})$ is zero or not for $b$ a generator of $H^* (BN(n); \F_2)$. 

\begin{lemma} \label{cdiff}
    In the spectral sequence $E$, $d_{2k} ((c_{[l]} \odot 1_{n-l}) \cdot z_{2k-1}) $ is non zero for all $1 \le k,l \le n$.
\end{lemma}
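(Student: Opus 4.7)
The plan is to reduce the claim, via the multiplicativity of the cohomological Serre spectral sequence and Lemma~\ref{differential}, to showing that the cup product $(c_{[l]} \odot 1_{n-l}) \cdot f_n^*(c_k)$ is non-zero in $H^{2(k+l)}(BN(n); \mathbb{F}_2)$. Lemma~\ref{lema:chernformula} gives $f_n^*(c_k) \equiv c_{[k]} \odot 1_{n-k} \pmod{\mathcal{F}_{2k-1}}$, and since $c_{[l]} \odot 1_{n-l}$ has rank $2l$, the multiplicativity of the rank filtration from Definition~\ref{dfn: rank-filtation} yields
\[
(c_{[l]} \odot 1_{n-l}) \cdot f_n^*(c_k) \equiv (c_{[l]} \odot 1_{n-l}) \cdot (c_{[k]} \odot 1_{n-k}) \pmod{\mathcal{F}_{2(k+l)-1}}.
\]
Hence it suffices to exhibit a basis element of rank exactly $2(k+l)$ appearing with coefficient $1$ in the right-hand product.

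For the explicit computation I would apply Hopf ring distributivity twice, using $\Delta(c_{[m]}) = \sum_a c_{[a]} \otimes c_{[m-a]}$, $\Delta(1_m) = \sum_a 1_a \otimes 1_{m-a}$, and the relation $c_{[i]}^2 = (c^2)_{[i]}$ from Theorem~\ref{thm:cohomology DX mod 2}. A short calculation produces
\[
(c_{[l]} \odot 1_{n-l}) \cdot (c_{[k]} \odot 1_{n-k}) = \sum_{\substack{i+j=k \\ 0 \le i \le l,\, 0 \le j \le n-l}} (c^2)_{[i]} \odot c_{[j]} \odot c_{[l-i]} \odot 1_{n-l-j},
\]
and each summand has rank $4i + 2j + 2(l-i) = 2(k+l)$, confirming that the whole product lies in $\mathcal{F}_{2(k+l)}$.

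To detect a surviving class, I would isolate the summand with $i = \min(l,k)$ and $j = k - \min(l,k)$, which contributes the decorated Hopf monomial
\[
(c^2)_{[\min(l,k)]} \odot c_{[|k-l|]} \odot 1_{n-\max(l,k)},
\]
after absorbing any $c_{[0]} = 1_0$ factors that collapse in the boundary cases $k=l$ or $\max(l,k) = n$. Its constituent gathered blocks have pairwise distinct decorations $c^2$, $c$, $1$, so it is a genuine element of the basis of Corollary~\ref{cor: basis DX char+}. Every other index $i$ in the sum produces a monomial whose $(c^2)$-block has profile $(1_i) \neq (1_{\min(l,k)})$, hence a different basis element, so no cancellation can occur and this basis element appears with coefficient $1$. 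The main bookkeeping obstacle is the case $k=l$, where the blocks $c_{[j]}$ and $c_{[l-i]}$ share the profile $(1_{l-i})$ and decoration $c$ and must be combined via $c_{[l-i]} \odot c_{[l-i]} = \binom{2(l-i)}{l-i} c_{[2(l-i)]} \equiv 0 \pmod 2$ for $i < l$; this is in fact a feature rather than a bug, as it kills every summand except the one with $i = l$, leaving precisely $(c^2)_{[l]} \odot 1_{n-l}$. Non-vanishing in $\mathcal{F}_{2(k+l)}/\mathcal{F}_{2(k+l)-1}$ gives non-vanishing in $H^{2(k+l)}(BN(n); \mathbb{F}_2)$, which completes the proof.
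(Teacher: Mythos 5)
Your proposal is correct and follows essentially the same route as the paper's proof: both reduce via multiplicativity and Lemma~\ref{differential} to showing that $(c_{[l]} \odot 1_{n-l}) \cdot f_n^*(c_k) \neq 0$, both identify $c_{[k]} \odot 1_{n-k}$ as the leading term of $f_n^*(c_k)$ in the rank filtration, and both use Hopf ring distributivity to single out a surviving decorated Hopf monomial $(c^2)_{[\min(k,l)]} \odot c_{[|k-l|]} \odot 1_{n-\max(k,l)}$ of top rank $2(k+l)$. You are somewhat more explicit than the paper, which just writes "$= c_{[l]}^2 \odot c_{[k-l]} \odot 1_{n-k} + \cdots$ which is always non-zero" after a WLOG $k \geq l$ reduction and illustrates the product with a skyline diagram; your spelled-out sum and the treatment of the $k=l$ edge case (where $c_{[l-i]} \odot c_{[l-i]} = \binom{2(l-i)}{l-i}c_{[2(l-i)]}$ vanishes mod $2$ for $i < l$) is a nice clarification. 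One shared informality worth noting: since $d_{2k}$ lives on the $E_{2k}$-page, the cup product should really be read in the bottom row of that page, which has passed to a quotient of $H^*(BN(n);\F_2)$ by the images of the earlier differentials; the paper only remarks that the leading-term identity for $f_n^*(c_k)$ persists in $H^*(BN(n);\F_2)/(f_n^*(c_1),\dots,f_n^*(c_{k-1}))$, and you do not mention the quotient at all. Neither argument fully closes this gap, so it is not a deficiency relative to the source, but it is something to keep in mind if you want a fully rigorous version.
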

\begin{proof}
    From the multiplicative structure of the spectral sequence $E$, we have \[  d_{2k} ((c_{[l]} \odot 1_{n-l}) \cdot z_{2k-1}) = (c_{[l]} \odot 1_{n-l}) \cdot d_{2k} (z_{2k-1}) = (c_{[l]} \odot 1_{n-l}) \cdot f_n^* (c_k).  \] 
    Recall from Proposition~\ref{prop:finite-pullback} that $f_n^* (c_k) = c_{[k]} \odot 1_{n-k} + \cdots$ in $H^* (BN(n); \F_2)$. Note that this also holds in \[ \frac{H^* (BN(n); \F_2)}{ (f_n^* (c_1), \dots, f_n^* (c_{k-1}))} .\] Without loss of any generality we assume $k \ge l$. The other case is similar. Using Hopf ring distributivity,  \[ (c_{[l]} \odot 1_{n-l}) \cdot (c_{[k]} \odot 1_{n-k})  = c_{[l]}^2 \odot c_{[k-l]} \odot 1_{n-k} + \cdots \] which is always non-zero. We illustrate this using a skyline diagram with $k=4$, $l=3$, and $n=6$ in Figure~\ref{fig:cdot}. \begin{figure}
        \centering
        \begin{tikzpicture}
        \draw[]{(-4,0.8) rectangle (-2.8,0.0)};
        \draw[]{(-2.8,0.0) -- (-1.6,0.0)};
        \draw[dashed]{(-3.6,0.8) -- (-3.6,0.0)}; 
        \draw[dashed]{(-3.2,0.8) -- (-3.2,0.0)};
        \node[color=black] (A) at (-1.3,0.6) {$\cdot$};
        \draw[]{(-1.0, 0.8) rectangle (0.6, 0.0)};
        \draw[]{(0.6,0) -- (1.4,0)};
        \draw[dashed]{(-0.6,0.8) -- (-0.6,0.0)};
        \draw[dashed]{(-0.2,0.8) -- (-0.2,0.0)}; 
        \draw[dashed]{(0.2,0.8) -- (0.2,0.0)}; 
        \node[color=black] (B) at (1.8, 0.6) {$=$};
        \draw[]{(2.2,1.6) rectangle (3.4,0.0)};
        \draw[]{(3.4,0.8) rectangle (3.8,0.0)};
        \draw[]{(3.8,0) -- (4.6,0)};
        \draw[]{(2.2,0.8) -- (3.4,0.8)};
        \draw[dashed]{(2.6,1.6) -- (2.6,0.0)}; 
        \draw[dashed]{(3.0,1.6) -- (3.0,0.0)};
        \node[color=black] (C) at (5.2, 0.6) {$+$};
        \node[color=black] (G) at (5.9,0.6) {$\cdots$};
        \node[color=black] (D) at (-3.0, -0.6) {$(c_{[3]} \odot 1_3)$};
        \node[color=black] (E) at (0.0, -0.6) {$(c_{[4]} \odot 1_2)$};
        \node[color=black] (F) at (3.4, -0.6) {$(c_{[3]}^2 \odot c \odot 1_2)$};
    \end{tikzpicture}
        \caption{Skyline diagram for the dot product of $c_{[3]} \odot 1_3$ and $c_{[4]} \odot 1_2$}
        \label{fig:cdot}
    \end{figure}
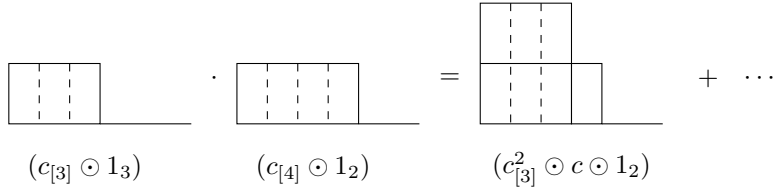
Therefore $(c_{[l]} \odot 1_{n-l}) \cdot f_n^* (c_k) \neq 0$ for all $1 \le k,l \le n$.
\end{proof}

Due to the previous lemma, the only classes that can be mapped to zero by the differentials $d_{2k}$ are the rank zero decorated gathered blocks. Here the rank function is as defined in Definition~\ref{dfn:rank}. Hence, we only need to consider these classes when we are checking the injectivity of the differentials. This simplifies our computations in \S \ref{section:6}.

\subsection{The real case}

A similar machinery can be used to compute $ H^*(\overline{\Fl}_n; \F_p)$ for all $ n \in \mathbb{N} $ and for all primes $ p $.

Recall from \cite[Corollary~4D.3]{Hatcher} that
\[
H^*(SO(n); \mathbb{F}_2) \cong \left\{ \begin{array}{ll} \Lambda(\{a_1,\dots,a_{n-1}\}) & \mbox{if } p = 2 \\
\Lambda(\{a_3,\dots,a_{4\lfloor \frac{n}{2} \rfloor -1}\}) & \mbox{if } p > 2, n \mbox{ odd} \\
\Lambda(\{a_3,\dots,a_{4\lfloor \frac{n}{2} \rfloor -5}, a'_{n-1}\}) & \mbox{if } p > 2, n \mbox{ even}
\end{array} \right. ,
\]
where generators are indexed by their degree.

The generators of the mod $ p $ cohomology of $ SO(n) $ are transgressive in the Serre spectral sequence associated with the fiber sequence $ SO(n) \to ESO(n) \to BSO(n) $. Moreover, $ d_i(a_{i-1}) = w_i $ if $ p = 2 $, $ d_i(a_{i-1}) = \mathcal{p}_i $ if $ p > 2 $, and $ d_n(a'_{n-1}) $ is the universal Euler class $ X_n \in H^n(BSO(n); \mathbb{F}_p) $ if $ p > 2 $ and $ n $ is even.

By comparison with the Serre spectral sequence of the fibration $ SO(n) \to \overline{\Fl}_n(\mathbb{R}) \to BSO(n) $ we deduce that the generators of the cohomology of $ SO(n) $ are transgressive and transgresses to the pullback of the corresponding characteristic class via $ g_n $.

The pullbacks of the Stiefel-Whitney and Pontrjagin classes can be computed from Theorem \ref{pullbackformula real} and Proposition \ref{pullback pontrjagin} by restriction from the stable cohomology as done for Chern classes in Proposition \ref{prop:finite-pullback}.
\begin{proposition} \label{prop:finite-pullback real}
The following statements are true:
\begin{enumerate}
\item Let $\mathcal{X}_n$ denote the set of sequences $\underline{a} = (a_0, a_1, a_2 , \dots) \in \mathcal{X}$ such that $\sum_{i \ge 0} a_i 2^i \le n$ and $\mathcal{X}_{n,k}$ denote the set of sequences $\underline{a} \in \mathcal{X}_n$ such that $a_0 + \sum_{i\ge 1} a_i (2^i -1) = k$. Then, in mod $ 2 $ cohomology, \[ g_n^* (w_k) = \sum_{\underline{a} \in \mathcal{X}_{n,k}} \res_n \left( w_{[a_0]} \odot \big ( \bigodot_{i,a_i \neq 0} \gamma_{i, a_i} \big ) \odot 1_{m}\right) , \] where $m \in \N$ is such that the addend is in component $n$. 
\item Let $ p $ be an odd prime. Let $ \mathcal{Y}_n $ denote the set of sequences $\underline{a} = (a_1, a_2 , \dots) \in \mathcal{Y}$ such that $\sum_{i \ge 1} a_i p^i \le n$ and $\mathcal{Y}_{n,k}$ denote the set of sequences $\underline{a} \in \mathcal{Y}_n$ such that $\sum_{i\ge 1} a_i (p^i -1) = 2k$.
Then, in mod $ p $ cohomology, \[
g_n^*(\mathcal{p}_k) = \sum_{\underline{a} \in \mathcal{Y}_{n,k}} \big ( \bigodot_{i, a_i \not= 0} \gamma_{i,a_i} \big ) \odot 1_{m},
\]
where $ m \in \N $ is such that the addend is in component $ n $.
\end{enumerate}
\end{proposition}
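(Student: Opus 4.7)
The plan is to deduce this statement from the stable pullback formulas (Theorem~\ref{pullbackformula real} and Proposition~\ref{pullback pontrjagin}) by restricting to component $n$ and isolating the contribution in cohomological degree $k$, paralleling the argument that derives Proposition~\ref{prop:finite-pullback} from Theorem~\ref{pullbackformula} in the complex case.

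For part (1), I would first invoke the naturality of $h^*$ with respect to the stabilization maps $B\Bgroup_n \hookrightarrow B\Bgroup_\infty$ and $BO(n) \hookrightarrow BO(\infty)$: under the identification of $\bigoplus_n H^*(B\Bgroup_n; \mathbb{F}_2)$ with $A_{\mathbb{P}^\infty(\mathbb{R})}$, the class $h_n^*(w_k)$ is the component-$n$ part of the stable pullback $h^*(w_k)$. In the formula of Theorem~\ref{pullbackformula real}, the monomial $w_{[a_0]} \odot \left( \bigodot_{i,\, a_i \neq 0} \gamma_{i,a_i} \right) \odot 1_\infty$ sits in component $\sum_{i \geq 0} a_i 2^i$ (since $\gamma_{i,a_i}$ is in component $a_i 2^i$) and in cohomological degree $a_0 + \sum_{i \geq 1} a_i(2^i - 1)$. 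Extracting the component-$n$ summand, with trailing factor $1_m$ where $m = n - \sum_i a_i 2^i$, and then the degree-$k$ part yields a sum indexed precisely by $\mathcal{X}_{n,k}$. To pass from $h_n^*(w_k)$ to $g_n^*(w_k)$ one uses the commutative diagram
\begin{center}
\begin{tikzcd}
B\Bgrouppos{n} \arrow{r}{g_n} \arrow{d} & BSO(n) \arrow{d} \\
B\Bgroup_n \arrow{r}{h_n} & BO(n)
\end{tikzcd}
\end{center}
together with the fact that $w_k \in H^*(BO(n))$ pulls back to $w_k \in H^*(BSO(n))$ when $k \geq 2$, so that $g_n^*(w_k) = \res_n(h_n^*(w_k))$.

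For part (2), the argument is the direct analog using Proposition~\ref{pullback pontrjagin}. The component of $\gamma_{i,a_i}$ at an odd prime $p$ is $a_i p^i$ and its cohomological degree is $2a_i(p^i - 1)$, so the component-$n$, degree-$4k$ part of the sum is indexed exactly by $\mathcal{Y}_{n,k}$, with the trailing factor $1_m$ absorbing $m = n - \sum_i a_i p^i$. Since $\mathcal{p}_k$ is defined on $BSO(n)$ and pulls back from a class on $BO(n)$ via complexification, one can work directly with $g_n^*$ through the analogous naturality square.

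I expect no serious technical obstacle: the statement is essentially a combinatorial restriction of the stable pullback formulas, and both the component and degree counts are straightforward once the degrees of the generating classes $\gamma_{i,l}$ are recorded. The only point that requires some care is verifying that restricting from $h_n^*$ to $g_n^*$ via $\res_n$ commutes with the combinatorial indexing; this reduces to applying functoriality of $\res$ term-by-term in the finite sum, which is harmless because for each fixed $n$ and $k$ only finitely many sequences $\underline{a}$ contribute.
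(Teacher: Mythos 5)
Your proposal is correct and takes essentially the same approach the paper intends: the paper states no explicit proof for this proposition but asserts (just before stating it) that the pullbacks "can be computed from Theorem \ref{pullbackformula real} and Proposition \ref{pullback pontrjagin} by restriction from the stable cohomology as done for Chern classes in Proposition \ref{prop:finite-pullback}," which is exactly the component-and-degree bookkeeping argument you carry out. One small remark on part (2): since Proposition \ref{pullback pontrjagin} already gives $g^*(\mathcal{p}_*)$ directly on $H^*(B\Bgrouppos{\infty};\F_p)$, you do not need to route through $h_n^*$ and an analogous naturality square at all; restricting $g^*(\mathcal{p}_*)$ to component $n$ (under the identification $H^*(B\Bgrouppos{n};\F_p)\cong H^*(B\Sigma_n;\F_p)$ of Theorem \ref{thm:cohomology alternating group mod p}) immediately yields the stated formula, which is why, unlike part (1), no $\res_n$ appears there.
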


On the contrary, the calculation of $ g_n^*(X_n) $ when $ n $ is even requires an additional analysis.

\begin{proposition} \label{prop:finite-pullback Euler}
Let $ p > 2 $ be a prime. Then $ g_n^*(X_n) = 0 $ for all $ n > 0 $ even.
\end{proposition}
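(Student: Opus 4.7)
The plan is to restrict $g_n^*(X_n)$ along a carefully chosen group embedding $j \colon A_{\Sigma_n} \hookrightarrow \Bgrouppos{n}$, where $A_{\Sigma_n} \leq \Sigma_n$ denotes the alternating subgroup, defined by $\sigma \mapsto (\mathbf{1},\sigma)$ with $\mathbf{1} = (1,\dots,1) \in (\mathbb{Z}/2\mathbb{Z})^n$. This embedding is well-defined since $\sgn_{\Bgroup_n}(\mathbf{1},\sigma) = \sgn(\sigma) = +1$ whenever $\sigma$ is even, and the composite $A_{\Sigma_n} \hookrightarrow \Bgrouppos{n} \hookrightarrow SO(n)$ is the standard permutation representation of $A_{\Sigma_n}$ on $\mathbb{R}^n$, which indeed lands in $SO(n)$ because even permutations have determinant $+1$.

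I will first argue that $j^* \colon H^*(B\Bgrouppos{n};\mathbb{F}_p) \to H^*(BA_{\Sigma_n};\mathbb{F}_p)$ is injective. Letting $\pi \colon \Bgrouppos{n} \to \Sigma_n$ denote the natural projection, the composition $\pi \circ j$ is just the standard inclusion, so $j^* \circ \pi^*$ equals the restriction from $H^*(\Sigma_n;\mathbb{F}_p)$ to $H^*(A_{\Sigma_n};\mathbb{F}_p)$. Theorem~\ref{thm:cohomology alternating group mod p} guarantees that $\pi^*$ is an isomorphism for $p$ odd, while a standard transfer argument (using that $[\Sigma_n : A_{\Sigma_n}] = 2$ is invertible in $\mathbb{F}_p$) gives the injectivity of this restriction. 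Combining these yields the injectivity of $j^*$.

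Next, I will compute $(g_n \circ Bj)^*(X_n)$ geometrically. The pullback of the universal oriented $n$-plane bundle to $BA_{\Sigma_n}$ along $g_n \circ Bj$ is the vector bundle associated with the standard permutation representation of $A_{\Sigma_n}$ on $\mathbb{R}^n$. This representation splits $A_{\Sigma_n}$-equivariantly as $\mathbb{R}\mathbf{1} \oplus V$, where $V$ is the sum-zero hyperplane. After choosing the orientation on $\mathbb{R}\mathbf{1}$ given by the invariant vector $\mathbf{1}$, both summands inherit $A_{\Sigma_n}$-invariant orientations (since $A_{\Sigma_n}$ fixes $\mathbf{1}$ and preserves the orientation of $\mathbb{R}^n$). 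The Whitney product formula for Euler classes then gives
\[
(g_n \circ Bj)^*(X_n) \;=\; e(\mathbb{R}\mathbf{1}) \cdot e(V) \;=\; 0,
\]
because the Euler class of a trivial line bundle vanishes. Combined with the injectivity of $j^*$, this forces $g_n^*(X_n) = 0$.

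The conceptual heart of the argument is the choice of the subgroup $A_{\Sigma_n}$: it is large enough to detect $H^*(\Bgrouppos{n};\mathbb{F}_p)$ via Theorem~\ref{thm:cohomology alternating group mod p} and the index-$2$ transfer, yet small enough that the restricted tautological bundle manifestly contains a trivial line summand. The main subtlety I anticipate is confirming that the geometric splitting $\mathbb{R}^n = \mathbb{R}\mathbf{1} \oplus V$ is orientation-compatible, but this follows because $A_{\Sigma_n}$ fixes $\mathbf{1}$ and acts by orientation-preserving transformations on the ambient space.
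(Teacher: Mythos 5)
Your proof is correct, and it takes a genuinely different route from the paper. The paper argues purely algebraically: after identifying $H^*(B\Bgrouppos{n};\mathbb{F}_p)$ with $H^*(B\Sigma_n;\mathbb{F}_p)$ via Theorem~\ref{thm:cohomology alternating group mod p}, it inducts on $n$ using the Whitney-sum coproduct formula $\Delta(X_n) = \sum_{m} X_{2m}\otimes X_{n-2m}$ to deduce that $g_n^*(X_n)$ is primitive in the Hopf ring $A_{\{*\}}$, then observes that for $p$ odd primitives are supported only in components that are powers of $p$, which cannot be even. Your argument is instead geometric: you restrict along the embedding $A_{\Sigma_n}\hookrightarrow\Bgrouppos{n}$ given by $\sigma\mapsto(\mathbf{1},\sigma)$ (where $\mathbf{1}$ denotes the identity of $(\mathbb{Z}/2\mathbb{Z})^n$ in multiplicative notation -- worth saying explicitly, since under additive notation $(1,\dots,1)$ would not give a homomorphism), show this restriction is injective on mod $p$ cohomology via the index-$2$ transfer composed with the isomorphism $\pi^*$ of Theorem~\ref{thm:cohomology alternating group mod p}, and then compute the Euler class directly: the pulled-back permutation bundle contains the trivial line $\mathbb{R}(1,\dots,1)$ as an equivariant oriented summand, so its Euler class vanishes by the Whitney product formula. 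Your version avoids the induction and the Hopf ring formalism entirely and makes the vanishing geometrically transparent; the paper's version is shorter given the Hopf ring machinery already in place. Both are valid, and the orientation-compatibility point you flag is indeed the only genuine subtlety, which you handle correctly by observing that $A_{\Sigma_n}$ fixes $(1,\dots,1)$ and lands in $SO(n)$.
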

\begin{proof}
We preliminarly recall that $ H^*(B\Bgrouppos{n}; \mathbb{F}_p) \cong H^* (B\Sigma_n; \mathbb{F}_p) $ by Theorem \ref{thm:cohomology alternating group mod p}. 
We prove this identity by induction on $ n $. If $ n = 2 $ (base of the induction), then $ X_2 = 0 $ because $ H^*(B\Sigma_2; \mathbb{F}_p) $ is trivial.
We now assume that $ n > 2 $. Then, by the formula for the Euler class of direct sums (see for instance \cite{Brown}), $ \Delta(X_n) = \sum_{m=0}^{\frac{n}{2}} X_{2m} \otimes X_{n-2m} $. Hence, by induction hypothesis, $ X_n $ is primitive. As $ H^*(B\Sigma_n; \mathbb{F}_p) $ does not contain any non-zero primitive, this implies $ X_n = 0 $.
\end{proof}

\begin{example} \label{n5 real}
$ \mathcal{X}_{4,3} = \{(3,0,0,\dots), (2,1,0,\dots), (0,0,1,0,\dots)\} $.
Therefore, by Proposition \ref{prop:finite-pullback real} we have
\[
g_4^*(w_3) = (w_{[3]} \odot 1_2)^0 + (w_{[2]} \odot \gamma_{1,1})^0 + \gamma_{2,1}^+ + \gamma_{2,1}^-.
\]

Similarly, from Proposition \ref{prop:finite-pullback real} and the calculations of $ \mathcal{X}_{5,k} $ of Example \ref{n5} we have
\begin{align*}
g_5^*(w_2) &= (w_{[2]} \odot 1_3)^0 + \res(w \odot \gamma_{1,1} \odot 1_2) = (w_{[2]} \odot 1_3)^0 + (\gamma_{1,1}^2 \odot 1_3)^0, \\
g_5^*(w_3) &= (w_{[3]} \odot 1_2)^0 + (w_{[2]} \odot \gamma_{1,1} \odot 1_1)^0 + \res_n(w \odot \gamma_{1,2}) + (\gamma_{2,1} \odot 1_1)^0 \\
&= (w_{[3]} \odot 1_2)^0 + (w_{[2]} \odot \gamma_{1,1} \odot 1_1)^0 + (\gamma_{1,1}^2 \odot \gamma_{1,1} \odot 1_1)^0 + (\gamma_{2,1} \odot 1_1)^0, \\
g_5^*(w_4) &= (w_{[4]} \odot 1_1)^0 + (w_{[3]} \odot \gamma_{1,1})^0 + \res_n(w\odot \gamma_{2,1}) = (w_{[4]} \odot 1_1)^0 + (w_{[3]} \odot \gamma_{1,1})^0, \\
g_5^*(w_5) &= (w_{[5]})^0.
\end{align*}
\end{example}

Propositions \ref{prop:finite-pullback real} and \ref{prop:finite-pullback Euler} and multiplicativity completely determine the differentials on the Serre spectral sequence associated to $ SO(n) \to \overline{\Fl}_n(\mathbb{R}) \to B\Bgrouppos{n} $.
With our method, we can compute them systematically, and thus retrieving in a simpler way the results about $ H^*(\overline{\Fl}_n(\mathbb{R}); \mathbb{F}_2) $ for $ n \leq 5 $ proved in \cite{G-J-M}.
In that article, the authors used a complicated geometric argument to fully determine the spectral sequences, that we can completely avoid with our pullback formulas.


\section{Unstable cohomology of complete unordered flag varieties} \label{section:6}

\subsection{Mod \texorpdfstring{$2$}{2} Cohomology of \texorpdfstring{$\UFl_3 (\C)$}{UFl3}}

Recall from (\ref{eq:unitary}) that \[ H^* (U(3); \F_2) \cong \Lambda_{\F_2} [z_1, z_3, z_5].\]
From Lemma~\ref{differential} and the pullback formula \begin{align*}
    d_2 (z_1) &= c \odot 1_2 + \gamma_{1,1}^2 \odot 1_1 , \\
    d_4 (z_3) &= c_{[2]} \odot 1_1 + c \odot \gamma_{1,1}^2  , \\
    d_6 (z_5) &= c_{[3]}.
\end{align*} 
Also, from the Hopf ring structure of $A_{BU(1)}$ in Theorem~\ref{thm:cohomology DX mod 2}, \[ H^* (BN(3); \F_2) \cong \frac{\F_2 [c\odot 1_2, c_{[2]} \odot 1_1 , c_{[3]} , \gamma_{1,1} \odot 1_1 ]}{\big ( (c\odot 1_2)\cdot (c_{[2]} \odot 1_1)\cdot (\gamma_{1,1} \odot 1_1) +  c_{[3]} \cdot (\gamma_{1,1} \odot 1_1) \big )}  . \]
On the $E_2$-page the differential $d_2$ is non-zero by Lemma~\ref{cdiff} and the following: \begin{align*}
    d_2 ((\gamma_{1,1} \odot 1_1) \cdot z_1 ) &= (\gamma_{1,1} \odot 1_1) \cdot (c \odot 1_2 + \gamma_{1,1}^2 \odot 1_1) = c\odot \gamma_{1,1}^2 + \gamma_{1,1}^3 \odot 1_1 .
\end{align*}
This shows that $d_2$ is injective on the ideal generated by $z_1$. As before $E_3$-page is the same as the $E_4$-page as $d_3 \equiv 0$. Hence, the $E_4$-page is given as follows: \begin{align*}
    E_4^{*,*} & \cong \frac{H^* (BN(3); \F_2)}{\big ( f_3^* (c_1) \big )} \otimes \Lambda_{\F_2} [z_3, z_5] \\
    & \cong \frac{\F_2 [c_{[2]} \odot 1_1 , c_{[3]} , \gamma_{1,1} \odot 1_1 ]}{\big ( (c_{[2]} \odot 1_1)\cdot (\gamma_{1,1} \odot 1_1)^3 +  c_{[3]} \cdot (\gamma_{1,1} \odot 1_1) \big )} \otimes \Lambda_{\F_2} [z_3, z_5] .
\end{align*}  
On the $E_4$-page the the differential $d_4$ is also injective as \begin{align*}
    d_4 ((\gamma_{1,1} \odot 1_3) \cdot z_3 ) &= (\gamma_{1,1} \odot 1_1) \cdot (c_{[2]} \odot 1_1 + c \odot \gamma_{1,1}^2) =  c_{[2]} \gamma_{1,1} \odot 1_1 + c \odot \gamma_{1,1}^3 .
\end{align*}
The $E_6$-page is therefore given by \begin{align*}
    E_6^{*,*} & \cong \frac{H^* (BN(3); \F_2)}{\big ( f_3^* (c_1), f_3^* (c_2) \big )} \otimes \Lambda_{\F_2} [z_5] \\
    & \cong \frac{\F_2 [c_{[3]} , \gamma_{1,1} \odot 1_1 ]}{\big ( (\gamma_{1,1} \odot 1_1)^7 +  c_{[3]} \cdot (\gamma_{1,1} \odot 1_1) \big )} \otimes \Lambda_{\F_2} [z_5] .
\end{align*}
Note that in $H^* (BN(3); \F_2)/ (f_3^* (c_1))$, we have the relation $c\odot 1_2 = \gamma_{1,1}^2 \odot 1_1$. Using this identification we have \[ c\odot \gamma_{1,1}^2 = (c\odot 1_2) \cdot (\gamma_{1,1}^2 \odot 1_1) = (\gamma_{1,1} \odot 1_1)^3 \] and hence $c_{[2]} \odot 1_1 = c \odot \gamma_{1,1}^2 = (\gamma_{1,1} \odot 1_1)^3$ in $H^* (BN(3); \F_2)/ (f_3^* (c_1), f_3^* (c_2))$. On the $E_6$-page the differential $d_6$ is again injective as \[  d_6 ((\gamma_{1,1} \odot 1_1) \cdot z_5) = c_{[2]}  \gamma_{1,1} \odot c . \]
Again all higher differentials are zero, and $E_7^{*,*} \cong E_{\infty}^{*,*}$, which gives us the following: 
\begin{theorem}
    The mod $2$ cohomology ring of $\UFl_3 (\C)$ is given by 
    \[ H^* (\UFl_3 (\C); \F_2) \cong \frac{H^* (BN(3); \F_2)}{\big ( f_3^* (c_1), f_3^* (c_2), f_3^* (c_3) \big )} \cong \frac{\F_2 [\gamma_{1,1} \odot 1_1 ]}{((\gamma_{1,1} \odot 1_1)^7 )} ,  \]
    where $|\gamma_{1,1} \odot 1_1 | = 1$.
\end{theorem}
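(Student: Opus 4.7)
The strategy is to run the Serre spectral sequence $E_r^{*,*}$ of the fiber sequence $U(3) \to \overline{\Fl}_3(\mathbb{C}) \to BN(3)$, exploiting the transgression description of Lemma~\ref{differential}, the Hopf ring presentation of $H^*(BN(3); \F_2)$ coming from Theorem~\ref{thm:cohomology DX mod 2}, and the explicit formulas for $f_3^*(c_k)$ already obtained from Proposition~\ref{prop:finite-pullback}. The plan is to show that each of the three transgressive differentials $d_2, d_4, d_6$ is injective on the ideal generated by the corresponding exterior generator $z_{2k-1}$, so that on each even page the spectral sequence collapses to a quotient of $H^*(BN(3); \F_2)$ tensored with the remaining exterior generators. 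Odd differentials vanish for bidegree reasons, so $E_{\infty} \cong E_7$.

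The first step is to write down $H^*(BN(3); \F_2)$ explicitly as the Hopf ring quotient generated by $c\odot 1_2$, $c_{[2]} \odot 1_1$, $c_{[3]}$ and $\gamma_{1,1} \odot 1_1$ modulo the single Hopf ring distributivity relation, and to list the three transgressions $d_2(z_1) = c\odot 1_2 + \gamma_{1,1}^2 \odot 1_1$, $d_4(z_3) = c_{[2]} \odot 1_1 + c\odot \gamma_{1,1}^2$, and $d_6(z_5) = c_{[3]}$. The second step is the injectivity check at each stage: for $d_2$ the only non-trivial case beyond Lemma~\ref{cdiff} is a class of the form $(\gamma_{1,1} \odot 1_1) \cdot z_1$, and Hopf ring distributivity yields a non-zero image; after quotienting by $f_3^*(c_1)$ one uses the identification $c \odot 1_2 = \gamma_{1,1}^2 \odot 1_1$ to rewrite the $E_4$-page as a polynomial algebra in two generators modulo a single relation. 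The analogous check for $d_4$ on $(\gamma_{1,1} \odot 1_1) \cdot z_3$ produces a non-zero element, so that passage to $E_6$ yields a further quotient whose only surviving rank-zero generator is $\gamma_{1,1} \odot 1_1$. Finally, $d_6((\gamma_{1,1} \odot 1_1) \cdot z_5) = c_{[2]} \gamma_{1,1} \odot c$ is non-zero, so $d_6$ is injective on the remaining $z_5$-ideal.

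The last step is simplification. After quotienting by $f_3^*(c_1), f_3^*(c_2), f_3^*(c_3)$, successive applications of the Hopf ring distributivity law reduce every generator $c \odot 1_2$, $c_{[2]} \odot 1_1$, $c_{[3]}$ to a power of $w := \gamma_{1,1} \odot 1_1$. Namely, $f_3^*(c_1) = 0$ gives $c\odot 1_2 = w^2$; $f_3^*(c_2) = 0$ then forces $c_{[2]} \odot 1_1 = w^3$; and $f_3^*(c_3) = 0$ together with the Hopf ring relation produces the single surviving relation $w^7 = 0$. Since there are no relations in degrees below $7$ beyond those already used, the resulting algebra is $\F_2[w]/(w^7)$.

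The main obstacle I anticipate is verifying injectivity of the differentials in a systematic way: because the differentials land in an increasingly quotiented ring, one must track the interaction between the Chern class pullbacks and the Hopf ring distributivity relations carefully to avoid false cancellations. In practice, this amounts to showing that the leading terms $c_{[k]} \odot 1_{3-k}$ of $f_3^*(c_k)$ multiplied by $\gamma_{1,1} \odot 1_1$ remain non-zero modulo the previously killed ideal, a computation which is under control thanks to the multiplicative filtration machinery (rank filtration) already developed in the stable section and the concrete form of $H^*(BN(3); \F_2)$.
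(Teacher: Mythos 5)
Your proposal follows exactly the same strategy as the paper's proof: the Serre spectral sequence of $U(3) \to \overline{\Fl}_3(\mathbb{C}) \to BN(3)$ with the transgressive differentials $d_{2k}(z_{2k-1}) = f_3^*(c_k)$ from Lemma~\ref{differential}, the same injectivity checks (in particular the verification that $d_2((\gamma_{1,1} \odot 1_1)\cdot z_1)$, $d_4((\gamma_{1,1}\odot 1_1)\cdot z_3)$, and $d_6((\gamma_{1,1}\odot 1_1)\cdot z_5)=c_{[2]}\gamma_{1,1}\odot c$ are non-zero), and the same successive rewriting $c\odot 1_2 = w^2$, $c_{[2]}\odot 1_1 = w^3$, and finally $w^7=0$ via the Hopf ring relation. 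No substantive difference from the paper's argument.
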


\begin{corollary}
    The Poincar\'e series of the mod $2$ cohomology ring of $\UFl_3 (\C)$ is \[ \Pi_{\UFl_3 (\C)} (t) = 1+t +t^2 + t^3 +t^4 + t^5 +t^6 . \]
\end{corollary}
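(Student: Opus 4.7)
The plan is to read off the Poincar\'e series directly from the ring presentation established in the preceding theorem. Writing $y = \gamma_{1,1} \odot 1_1$ for brevity, we have the isomorphism of graded $\F_2$-algebras $H^*(\UFl_3(\C); \F_2) \cong \F_2[y]/(y^7)$. The key piece of bookkeeping is to confirm that $|y|=1$: this follows from the general dimension formula $|\gamma_{k,l}| = l(2^k-1)$ of \cite{Sinha:12}, which gives $|\gamma_{1,1}|=1$, combined with the fact that the transfer product $\odot$ with a unit class $1_m$ preserves cohomological dimension (it only shifts the component index).

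Once $|y|=1$ is established, the corollary is a one-line computation. As a graded $\F_2$-vector space, the truncated polynomial ring $\F_2[y]/(y^7)$ admits the basis $\{1, y, y^2, y^3, y^4, y^5, y^6\}$, with $y^k$ placed in cohomological degree $k$ for $0 \le k \le 6$. Summing $t^{|y^k|} = t^k$ over this basis yields
\[
\Pi_{\UFl_3(\C)}(t) = \sum_{k=0}^{6} t^k = 1 + t + t^2 + t^3 + t^4 + t^5 + t^6,
\]
as claimed.

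There is no genuine obstacle in this corollary; the content lies entirely in the preceding theorem. The only point meriting attention is the degree computation $|y|=1$, which one might alternatively verify by checking that on the $E_\infty$-page of the Serre spectral sequence used in the proof of the theorem, the surviving generator $\gamma_{1,1} \odot 1_1$ comes from the row $l=0$ of the $E_2$-page and from $H^1(BN(3); \F_2)$, consistent with $|\gamma_{1,1}| = 1$.
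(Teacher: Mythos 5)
Your proof is correct and matches the paper's (implicit) argument: the corollary is a direct reading of the truncated-polynomial presentation $\F_2[\gamma_{1,1}\odot 1_1]/((\gamma_{1,1}\odot 1_1)^7)$ together with $|\gamma_{1,1}\odot 1_1|=1$, which the paper already records in the statement of the preceding theorem. Your degree check via $|\gamma_{k,l}|=l(2^k-1)$ and the degree-$0$ nature of $1_m$ is exactly the right sanity check.
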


Note that this agrees with the previous results obtained in Theorem~5.3 and Corollary~5.4 of \cite{G-J-M} using a different spectral sequence.

\subsection{Mod \texorpdfstring{$2$}{2} Cohomology of \texorpdfstring{$\UFl_4 (\C) $}{UFl4}}

From (\ref{eq:unitary}), $H^* (U(4); \F_2) \cong \Lambda_{\F_2} [z_1, z_3, z_5, z_7]$. Also, from Lemma~\ref{differential} and the pullback formula the differentials in the spectral sequence $E$ associated with $U(4) \rightarrow \UFl_4 (\C) \rightarrow BN(4)$ are given by \begin{align*}
    d_2 (z_1) &= c\odot 1_3 + \gamma_{1,1}^2 \odot 1_2 , \\
    d_4 (z_3) &= c_{[2]} \odot 1_2 + c\odot \gamma_{1,1}^2 \odot 1_1 + \gamma_{1,2}^2 , \\
    d_6 (z_5) &= c_{[3]} \odot 1_1 + c_{[2]} \odot \gamma_{1,1}^2 + \gamma_{2,1}^2, \\
    d_8 (z_7) &= c_{[4]}.
\end{align*} 
From the Hopf ring structure of $A_{BU(1)}$, we have that $H^* (BN(4); \F_2)$ is generated by  \begin{equation*} \label{bn4}
       \{ c\odot 1_3, c_{[2]} \odot 1_3 , c_{[3]} \odot 1_1 , c_{[4]}, c_{[2]} \odot \gamma_{1,1} , \gamma_{1,1} \odot 1_2, \gamma_{1,2}, \gamma_{2,1} \} , \end{equation*}
with all the relations in Theorem~\ref{thm:cohomology DX mod 2} and Hopf ring distributivity. By Lemma~\ref{cdiff}, the differential $d_2$ non-zero on $\{ c\odot 1_3, c_{[2]}\odot 1_2 , c_{[3]}\odot 1_1, c_{[4]} \} $. On the rank zero decorated gathered blocks $d_2$ is given by \begin{align*}
    d_2 ((\gamma_{1,1} \odot 1_2) \cdot z_1 ) &= (\gamma_{1,1} \odot 1_2) \cdot (c \odot 1_3 + \gamma_{1,1}^2 \odot 1_2) = c\odot \gamma_{1,1} \odot 1_1 + \gamma_{1,1}^3 \odot 1_2 , \\
    d_2 ((c_{[2]} \odot \gamma_{1,1} ) \cdot z_1 ) &= (c_{[2]} \odot \gamma_{1,1} ) \cdot (c \odot 1_3 + \gamma_{1,1}^2 \odot 1_2) =  c^2 \odot c \odot \gamma_{1,1} +  \cdots , \\
    d_2 (\gamma_{1,2} \cdot z_1 ) &= \gamma_{1,2} \cdot (c \odot 1_3 + \gamma_{1,1}^2 \odot 1_2) = \gamma_{1,1}^3 \odot \gamma_{1,1} , \\ 
    d_2 (\gamma_{2,1} \cdot z_1 ) &= \gamma_{2,1} \cdot (c \odot 1_3 + \gamma_{1,1}^2 \odot 1_2) = 0. 
\end{align*}
As $d_2 (\gamma_{2,1} \cdot z_1) = 0$, we have that $d_2$ is not injective on the ideal generated by $z_1$ and $\mathrm{ker} (d_2)$ is the ideal generated by $(\gamma_{2,1} \cdot z_1)$. So, the $E_3 \equiv E_4$-page is given by \begin{align*}
    & E_4^{*,*} \cong \Big ( \frac{H^* (BN(4); \F_2)}{\big ( f_4^* (c_1) \big ) } \oplus ( \langle \gamma_{2,1} \cdot z_1 \rangle ) \Big ) \otimes \Lambda_{\F_2} [z_3, z_5, z_7] ,
\end{align*}
where $\langle \gamma_{2,1} \cdot z_1 \rangle$ denotes the ideal generated by $\gamma_{2,1} \cdot z_1$. As $c\odot 1_3 = \gamma_{1,1}^2 \odot 1_2 $ on $E_4^{*,*}$, we can rewrite the differential $d_4$ as $d_4 (z_3) = c_{[2]} \odot 1_2 + \gamma_{1,1}^4 \odot 1_1 + \gamma_{1,2}^2$. The injectivity of $d_4$, $d_6$, and $d_8$ on the ideals generated by $z_3$, $z_5$, and $z_7$ respectively, can be verified by directly computing these differentials on the rank zero gathered blocks multiplied by $z_3$, $z_5$, and $z_7$ respectively.

On the first row of the spectral sequence, $\gamma_{1,2}^2 \gamma_{2,1} \cdot z_1$ is killed by $d_4 (\gamma_{2,1} \cdot z_1 z_3)$, $\gamma_{2,1}^3 \cdot z_1$ is killed by $d_6 (\gamma_{2,1} \cdot z_1 z_5)$, and $c_{[4]} \gamma_{2,1} \cdot z_1$ is killed by $d_8 (\gamma_{2,1} \cdot z_1 z_7)$ since 
\begin{align*}
    d_4 (\gamma_{2,1} \cdot z_3) &= \gamma_{2,1} \cdot (c_{[2]} \odot 1_2 + \gamma_{1,1}^4 \odot 1_1 + \gamma_{1,2}^2) =  \gamma_{1,2}^2 \gamma_{2,1}, \\
    d_6 (\gamma_{2,1} \cdot z_5) &= \gamma_{2,1} \cdot (c_{[3]} \odot 1_1 + c_{[2]} \odot \gamma_{1,1}^2 + \gamma_{2,1}^2) =  \gamma_{2,1}^3, \\
    d_8 (\gamma_{2,1} \cdot z_7) &= \gamma_{2,1} \cdot c_{[4]} =  c_{[4]} \gamma_{2,1}.
\end{align*} All other higher differentials are zero and we have
\begin{align*}
    E_{\infty}^{*,*} & \cong \frac{H^* (BN(4); \F_2)}{\big ( f_4^* (c_1) , f_4^* (c_2), f_4^* (c_3), f_4^* (c_4) \big )} \oplus \F_2 \{ \gamma_{2,1} \cdot z_1, \gamma_{1,2} \gamma_{2,1} \cdot z_1 ,\gamma_{2,1}^2 \cdot z_1,  \gamma_{1,2} \gamma_{2,1}^2 \cdot z_1 \}.
\end{align*}
We record the result of our above computation as the following Theorem.
\begin{theorem} \label{unst4}
    Let $u_4$ be the cohomology class $\gamma_{2,1} \cdot z_1$. Then, we have an isomorphism \[ H^* (\UFl_4 (\C) ; \F_2) \cong \frac{H^* (BN(4); \F_2)}{\big ( f_4^* (c_1) , f_4^* (c_2), f_4^* (c_3), f_4^* (c_4) \big )} \oplus \F_2 \{ u_4, u_4 \gamma_{1,2}, u_4 \gamma_{2,1}, u_4 \gamma_{1,2} \gamma_{2,1} \} . \]
\end{theorem}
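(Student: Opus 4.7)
The plan is to run the Serre spectral sequence of the fibration
\[
U(4) \longrightarrow \UFl_4(\C) \longrightarrow BN(4)
\]
to the $E_\infty$-page, as initiated in the preceding text. The $E_2$-page is $H^*(BN(4); \F_2) \otimes \Lambda_{\F_2}[z_1, z_3, z_5, z_7]$ with transgressions $d_{2k}(z_{2k-1}) = f_4^*(c_k)$ given explicitly by Proposition~\ref{prop:finite-pullback}, and all further differentials determined by multiplicativity.

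The pivotal observation is that $d_2(\gamma_{2,1} \cdot z_1) = 0$. Indeed, $\gamma_{2,1}$ is primitive with $\Delta(\gamma_{2,1}) = \gamma_{2,1} \otimes 1_0 + 1_0 \otimes \gamma_{2,1}$, which has no splitting in bidegrees $(1,3)$ or $(2,2)$, while the two summands of $f_4^*(c_1) = c \odot 1_3 + \gamma_{1,1}^2 \odot 1_2$ are transfer products with column widths $(1,3)$ and $(2,2)$ respectively. Hopf ring distributivity therefore forces the cup product to vanish, and the same reasoning applies to every multiple $\gamma_{2,1} \cdot y \cdot z_1$ with $y \in H^*(BN(4); \F_2)$. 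To show that $\ker(d_2|_{\text{row }1})$ is exactly the ideal generated by $\gamma_{2,1} z_1$, I would check generator by generator: Lemma~\ref{cdiff} handles the rank-positive blocks, and a short Hopf ring computation (e.g.\ $\gamma_{1,2} \cdot (\gamma_{1,1}^2 \odot 1_2) = \gamma_{1,1}^3 \odot \gamma_{1,1} \ne 0$) handles the rank-zero blocks outside the $\gamma_{2,1}$-ideal.

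For the higher differentials $d_4, d_6, d_8$, injectivity on the relevant $z_{2k-1}$-ideals again follows from Lemma~\ref{cdiff} together with direct Hopf-ring checks on rank-zero generators. On the row-$1$ survivors, the primitivity of $\gamma_{2,1}$ yields $\gamma_{2,1} \cdot f_4^*(c_2) = \gamma_{1,2}^2 \gamma_{2,1}$, $\gamma_{2,1} \cdot f_4^*(c_3) = \gamma_{2,1}^3$, and $\gamma_{2,1} \cdot f_4^*(c_4) = c_{[4]} \gamma_{2,1}$, so the classes $\gamma_{1,2}^2 \gamma_{2,1} z_1$, $\gamma_{2,1}^3 z_1$, and $c_{[4]} \gamma_{2,1} z_1$ are hit by $d_4, d_6, d_8$ applied to $\gamma_{2,1} z_1 z_3$, $\gamma_{2,1} z_1 z_5$, and $\gamma_{2,1} z_1 z_7$ respectively. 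Combined with the row-$0$ computation, which recovers the quotient of $H^*(BN(4); \F_2)$ by the pullbacks of the Chern classes, this leaves exactly the four row-$1$ survivors listed in the statement. The decomposition is one of algebras because $u_4^2 = 0$ (from $z_1^2 = 0$), so products of elements in the second summand with one another vanish, and the multiplicativity of the Serre spectral sequence filtration rules out any cross-summand extension.

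The main obstacle is exhaustive bookkeeping rather than any single subtle idea: one must ensure that no unlisted row-$1$ class survives $d_2$, and that no higher-row class (involving two or more $z$-factors) produces an unexpected source or target. Since differentials on rows $\geq 3$ factor multiplicatively through the row-$1$ analysis, and the bidegree constraints in each total degree are finite, this check is mechanical once the primitivity of $\gamma_{2,1}$ has been exploited.
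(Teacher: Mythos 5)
Your proposal follows essentially the same spectral-sequence argument as the paper: identify $\gamma_{2,1}z_1$ as the generator of the kernel of $d_2$ on the $z_1$-row and track the ideal $\langle \gamma_{2,1}z_1 \rangle$ through $d_4$, $d_6$, $d_8$ to isolate the four row-one survivors, with the row-zero computation recovering the stable quotient. Your closing claim that the decomposition is one of algebras goes beyond what the theorem asserts (a graded vector-space isomorphism) and is not fully justified -- multiplicativity of the Serre filtration does not by itself rule out multiplicative extensions between filtration levels -- but this extra claim is not needed for the statement.
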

On the $E_{\infty}$ page of the spectral sequence, the classes $u_4, u_4 \gamma_{1,2}, u_4 \gamma_{2,1}$, and $u_4 \gamma_{1,2} \gamma_{2,1}$ survive due to the non-injectivity of the differential $d_2$. However, in the corresponding spectral sequence for stable cohomology, all differentials are injective on the corresponding ideals generated by the generators $z_{2k-1}$, and therefore these four classes are not restrictions of any stable class. We refer to such classes as ``unstable classes''.

\begin{corollary}
    The Poincar\'e series of the mod $ 2 $ cohomology ring of $\UFl_4 (\C)$ is \[ \Pi_{\UFl_4 (\C)} (t) = 1+t+2t^2 +3t^3 +4t^4 +4t^5 + 5t^6 + 4t^7 + 4t^8 + 3t^9 + 2t^{10} + t^{11} + t^{12} . \]
\end{corollary}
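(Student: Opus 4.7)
The plan is to apply Theorem \ref{unst4}, which provides a direct-sum decomposition $H^*(\UFl_4(\C); \F_2) \cong Q \oplus W$, where $Q = H^*(BN(4); \F_2)/(f_4^*(c_1), f_4^*(c_2), f_4^*(c_3), f_4^*(c_4))$ and $W = \F_2\{u_4, u_4\gamma_{1,2}, u_4\gamma_{2,1}, u_4\gamma_{1,2}\gamma_{2,1}\}$. The Poincar\'e series we want is the sum $\Pi_Q(t) + \Pi_W(t)$.

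The contribution of $W$ is immediate. Using the formula $|\gamma_{k,\ell}| = \ell(2^k-1)$ together with $|z_1| = 1$, the four unstable classes have degrees $4$, $6$, $7$, $9$, so $\Pi_W(t) = t^4 + t^6 + t^7 + t^9$.

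For $\Pi_Q(t)$, I would first enumerate an additive basis for $R := H^*(BN(4); \F_2) = A_{BU(1)}^{4,*}$ via Corollary \ref{cor: basis DX char+}. The gathered blocks relevant to component $4$ involve only $1_n$ for $n \in \{1,2,4\}$, the class $\gamma_{1,1}$ in component $2$, and $\gamma_{1,2}, \gamma_{2,1}$ in component $4$, each optionally decorated by a power of the generator $c \in H^2(BU(1); \F_2)$. Listing all admissible decorated Hopf monomials of component $4$ and tallying their cohomological degrees yields $\Pi_R(t)$. I would then use the explicit pullback formulas for $f_4^*(c_k)$ furnished by Proposition \ref{prop:finite-pullback} (the sets $\mathcal{X}_{4,k}$ being small enough to enumerate by hand, in complete analogy with $\mathcal{X}_{5,k}$ in Example \ref{n5}) to describe the ideal $I$ and compute the dimension of $Q = R/I$ in each degree via Gaussian elimination.

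The main obstacle is the combinatorial bookkeeping of the basis of $R$ together with the ideal $I$, which relies on a careful use of Hopf ring distributivity and the relations of Theorem \ref{thm:cohomology DX mod 2}. However, since $\UFl_4(\C)$ has real dimension $12$, all computations are confined to cohomological degrees $0 \leq d \leq 12$ and the task is finite. Summing $\Pi_Q(t)$ and $\Pi_W(t)$ degree by degree produces the stated Poincar\'e series.
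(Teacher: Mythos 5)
Your proposal is correct in spirit and would yield the stated Poincar\'e series, but it takes a more brute-force route than the paper. Both you and the paper begin from the decomposition of Theorem~\ref{unst4} into a quotient piece $Q$ and the unstable piece $W$, and your computation of $\Pi_W(t) = t^4 + t^6 + t^7 + t^9$ matches: indeed $|u_4| = |\gamma_{2,1}| + |z_1| = 3 + 1 = 4$, and adding $|\gamma_{1,2}|=2$ and $|\gamma_{2,1}|=3$ gives the remaining degrees. The paper, however, does not carry out the linear algebra in all degrees up to $12$ as you propose. It explicitly lists basis elements of $H^k(\UFl_4(\C);\F_2)$ only for $0 \le k \le 6$, and then invokes mod $2$ Poincar\'e duality on the $12$-dimensional closed manifold $\UFl_4(\C)$ (which holds without any orientability hypothesis) to obtain the Betti numbers in degrees $7$ through $12$ by the palindrome property. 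This cuts the bookkeeping roughly in half, and is worth adopting: your plan of enumerating the whole ideal $I = (f_4^*(c_1),\dots,f_4^*(c_4))$ through degree $12$ is feasible but considerably heavier. One small slip in your outline: you list the relevant undecorated units as $1_n$ for $n \in \{1,2,4\}$, but $n = 3$ also contributes; for instance the decorated block $(1_3, c)$ combines with $(1_1, 1)$ to produce the generator $c_{[3]} \odot 1_1$, and more generally any Hopf monomial in component $4$ may include a factor $(1_3, c^a)$ transfer-multiplied with a component-$1$ block. You would certainly catch this once enumerating, but as stated your list of blocks is incomplete.
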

\begin{proof}
    We can check this directly. Recall that the cohomological dimensions of the classes that generate $H^* (BN(4); \F_2)$ are as follows: \begin{align*}
        |\gamma_{1,1} \odot 1_2| &= 1, \quad |\gamma_{1,2}| = 2, \quad |\gamma_{2,1}| = 3, \\ |c_{[2]} \odot \gamma_{1,1}| &= 5, \quad \text{and} \quad 
         |c_{[k]} \odot 1_{4-k}| = 2k \quad (1 \le k \le 4).
    \end{align*}
    We also have the following relations between the generators:\begin{align*}
    \big ( (\gamma_{1,1} \odot 1_2)\cdot (c_{[2]}+1_2) + (c_{[2]} \odot \gamma_{1,1}) \big )\cdot (c\odot 1_3) + (\gamma_{1,2}\odot 1_2)\cdot (c_{[3]} \odot 1_1) &= 0, \\
    \text{for all } x\in \{ \gamma_{1,1}\odot 1_2, c_{[2]} \odot \gamma_{1,1}, c \odot 1_{3}, c_{[2]} \odot 1_{2}, c_{[3]} \odot 1_{1}  \}, \quad  x \cdot \gamma_{2,1} &= 0 , \\
    (\gamma_{1,1}\odot 1_2) \cdot (c_{[3]} \odot 1_1) = 0, \quad  (c_{[2]} \odot \gamma_{1,1}) \cdot (c_{[3]} \odot 1_1) &= 0, \\
    \gamma_{1,2} \cdot (c \odot 1_3) = 0, \quad \gamma_{1,2} \cdot (c_{[3]} \odot 1_1) &= 0.
    \end{align*}
    Note that these relations between the generators are not exhaustive. From the description of $H^* (\UFl_4; \F_2)$ in Theorem~\ref{unst4} and the above relations, we get the following relations in $H^* (\UFl_4; \F_2)$ between the Hopf ring generators. By identifying $(\gamma_{1,1} \odot 1_2)^2 = c\odot 1_3$, we have \begin{align*}
        (\gamma_{1,1} \odot 1_2)^2 \cdot \gamma_{1,2} = 0 , \quad (\gamma_{1,1} \odot 1_2)^2 \cdot \gamma_{2,1} &= 0,\\
         (\gamma_{1,1} \odot 1_2)^3 \cdot (c_{[2]}+1_2) + (c_{[2]} \odot \gamma_{1,1})\cdot (\gamma_{1,1} \odot 1_2)^2 &+ (\gamma_{1,2}\odot 1_2)\cdot (c_{[3]} \odot 1_1) = 0.
    \end{align*}
    By identifying $c_{[2]} \odot 1_2 = c\odot \gamma_{1,1}^2 \odot 1_1 + \gamma_{1,2}^2 = (\gamma_{1,1}\odot 1_2)^4 + \gamma_{1,2}^2$ and $c_{[3]} \odot 1_1 = \gamma_{1,2}^2 + \gamma_{2,1}^2 + (c_{[2]}\odot \gamma_{1,1})\cdot (\gamma_{1,1} \odot 1_2)$ we have\begin{align*}
        \gamma_{1,2}^2 \cdot \gamma_{2,1}  = 0 ,\quad (\gamma_{1,1} \odot 1_2)^7 + (\gamma_{1,1} \odot 1_2) \cdot \gamma_{1,2}^3 = 0.
    \end{align*}
    With the information we have so far, we can explicitly write down all generators of the cohomology groups $H^k (\UFl_4; \F_2)$ for $0\le k \le 6$.
    
    \begin{align*}
        H^0 (\UFl_4; \F_2) &\cong \F_2 , \\
        H^1 (\UFl_4; \F_2) &\cong \F_2 \{ (\gamma_{1,1} \odot 1_2) \}, \\
        H^2 (\UFl_4; \F_2) &\cong \F_2 \{ (\gamma_{1,1} \odot 1_2)^2, \gamma_{1,2} \}, \\
        H^3 (\UFl_4; \F_2) &\cong \F_2 \{ (\gamma_{1,1} \odot 1_2)^3, (\gamma_{1,1} \odot 1_2)\gamma_{1,2}, \gamma_{2,1} \}, \\
        H^4 (\UFl_4; \F_2) &\cong \F_2 \{ (\gamma_{1,1} \odot 1_2)^4, (\gamma_{1,1} \odot 1_2)\gamma_{2,1}, \gamma_{1,2}^2, u_4 \}, \\
        H^5 (\UFl_4; \F_2) &\cong \F_2 \{ (\gamma_{1,1} \odot 1_2)^5, (\gamma_{1,1} \odot 1_2)\gamma_{1,2}^2, \gamma_{1,2} \gamma_{2,1}, c_{[2]} \odot \gamma_{1,1} \}, \\
        H^6 (\UFl_4; \F_2) &\cong \F_2 \{ (\gamma_{1,1} \odot 1_2)^6, (\gamma_{1,1} \odot 1_2)(c_{[2]} \odot \gamma_{1,1}), \gamma_{1,2}^3, \gamma_{2,1}^2, u_4 \gamma_{1,2} \}.
    \end{align*}
    Although we haven't identified the specific generators of $H^k(\UFl_4; \F_2)$ for $k>6$, we can still determine its Poincar{'e} series. Note that $\UFl_4$ is a $12$-dimensional manifold and that with mod $2$ coefficients, Poincar{'e} duality holds without needing the orientability assumption. Hence, the corollary follows.
\end{proof}

\subsection{Mod \texorpdfstring{$2$}{2} Cohomology of \texorpdfstring{$\UFl_5 (\C) $}{UFl5}}

From (\ref{eq:unitary}), $H^* (U(5); \F_2) \cong \Lambda_{\F_2} [z_1, z_3, z_5, z_7, z_9]$. Also, from Lemma~\ref{differential} and Example~\ref{n5} the differentials in the spectral sequence $E$ associated with $U(5) \rightarrow \UFl_5 (\C) \rightarrow BN(5)$ are given by \begin{align*}
    d_2 (z_1) &= c \odot 1_4 + \gamma_{1,1}^2 \odot 1_3, \\
    d_4 (z_3) &= c_{[2]} \odot 1_3 + c \odot \gamma_{1,1}^2 \odot 1_2 + \gamma_{1,2}^2 \odot 1_1 ,\\
    d_6 (z_5) &= c_{[3]} \odot 1_2 + c_{[2]} \odot \gamma_{1,1}^2 \odot 1_1 + c\odot \gamma_{1,2}^2 + \gamma_{2,1}^2 \odot 1_1, \\
    d_8 (z_7) &= c_{[4]} \odot 1_1 + c_{[3]} \odot \gamma_{1,1}^2 + c \odot \gamma_{2,1}^2 , \\
    d_{10} (z_9) &= c_{[5]}.
\end{align*}
From the Hopf ring structure of $A_{BU(1)}$, we deduce that $H^* (BN(5); \F_2)$ is generated by  \[ \{ c\odot 1_4 , c_{[2]} \odot 1_3 , c_{[3]} \odot 1_2 , c_{[4]} \odot 1_1 , c_{[5]} , \gamma_{1,1} \odot 1_3 , c_{[2]} \odot \gamma_{1,1} \odot 1_1 ,\gamma_{1,2} \odot 1_1 , \gamma_{2,1} \odot 1_1 \} \] with the relations determined by Theorem~\ref{thm:cohomology DX mod 2} along with Hopf ring distributivity. Again, we check the injectivity of the differentials on the rank zero Hopf monomials multiplied by the corresponding $z_{2k-1}$'s. On the $E_2$-page we have \begin{align*}
    d_2 ((\gamma_{1,1} \odot 1_3) \cdot z_1 ) &= (\gamma_{1,1} \odot 1_3) \cdot d_2 (z_1) = c\odot \gamma_{1,1} \odot 1_2 + \gamma_{1,1}^3 \odot 1_3 + \cdots , \\
    d_2 ((c_{[2]} \odot \gamma_{1,1} \odot 1_1) \cdot z_1 ) &= (c_{[2]} \odot \gamma_{1,1} \odot 1_1 ) \cdot d_2 (z_1) =  c_{[3]} \odot \gamma_{1,1} +  \cdots , \\
    d_2 ((\gamma_{1,2} \odot 1_1) \cdot z_1 ) &= (\gamma_{1,2} \odot 1_1) \cdot d_2 (z_1) = c \odot \gamma_{1,2} + \gamma_{1,1}^3 \odot \gamma_{1,1} \odot 1_1 , \\ 
    d_2 ((\gamma_{2,1} \odot 1_1) \cdot z_1 ) &= (\gamma_{2,1} \odot 1_1) \cdot (d_2 (z_1) = c\odot \gamma_{2,1} . 
\end{align*}
The above formulas along with Lemma~\ref{cdiff} show that $d_2$ is injective on the ideal generated by $z_1$. So, $E_4 \equiv E_3$ is given by \[ E_4^{*,*} \cong \frac{H^* (BN(5); \F_2)}{\big ( f_5^* (c_1) \big )}  . \] 
Similarly as before it can be checked that the differentials $d_4$, $d_6$, and $d_8$ are all injective on the ideals generated by $z_3$, $z_5$, and $z_7$ respectively. Thus the $E_{10}$-page is given by 
\[ E_{10}^{*,*} \cong \frac{H^* (BN(5); \F_2)}{\big ( f_5^* (c_1), f_5^* (c_2), f_5^* (c_3) , f_5^* (c_4) \big )}  \]
Finally, the differential $d_{10}$ on the rank zero Hopf monomial on the $E_{10}$-page is given by \begin{align*}
    d_{10} ((\gamma_{1,1} \odot 1_3) \cdot z_9 ) &= (\gamma_{1,1} \odot 1_3) \cdot c_{[5]} = c_{[2]}\gamma_{1,1} \odot c_{[3]} , \\
    d_{10} ((c_{[2]} \odot \gamma_{1,1} \odot 1_1) \cdot z_9 ) &= (c_{[2]} \odot \gamma_{1,1} \odot 1_1 ) \cdot c_{[5]} =  c_{[2]}^2 \odot c_{[2]}\gamma_{1,1} \odot c +  \cdots , \\
    d_{10} ((\gamma_{1,2} \odot 1_1) \cdot z_9 ) &= (\gamma_{1,2} \odot 1_1) \cdot c_{[5]} = c_{[4]}\gamma_{1,2} \odot c , \\ 
    d_{10} ((\gamma_{2,1} \odot 1_1) \cdot z_9 ) &= (\gamma_{2,1} \odot 1_1) \cdot c_{[5]} = c_{[4]} \gamma_{2,1} \odot c  \\ 
    &= (c_{[4]} \gamma_{2,1} \odot 1_1) \cdot (c \cdot 1_4 + \gamma_{1,1}^2 \odot 1_3) = 0 .
\end{align*}
We see that $d_{10}$ is not injective with $\mathrm{ker}(d_{10}) = \langle (\gamma_{2,1} \odot 1_1) \cdot z_9  \rangle$. Note that \begin{align*}
      d_{4} ((\gamma_{2,1} \odot 1_1) \cdot z_3 z_9) &= (\gamma_{1,2}^2 \gamma_{2,1} \odot 1_1) \cdot z_9 , \\ d_{6} ((\gamma_{2,1} \odot 1_1) \cdot z_5 z_9) &= (\gamma_{2,1}^3 \odot 1_1) \cdot z_9 , \\ d_{8} ((\gamma_{2,1} \odot 1_1) \cdot z_7 z_9) &= (c_{[4]} \gamma_{2,1} \odot 1_1) \cdot z_9 .
\end{align*}
All higher differentials are zero and therefore the spectral sequence collapses at the $E_{11}$-page. The $E_{\infty} \equiv E_{11}$-page is given by \[ E_{\infty}^{*,*} \cong \frac{H^* (BN(5); \F_2)}{\big ( f_5^* (c_1), f_5^* (c_2), f_5^* (c_3) , f_5^* (c_4), f_5^* (c_5) \big )} \oplus \mathcal{U}  \] where $\mathcal{U} = \F_2 \{ (\gamma_{2,1} \odot 1_1) \cdot z_9 , (\gamma_{1,2} \gamma_{2,1} \odot 1_1) \cdot z_9 , (\gamma_{2,1}^2 \odot 1_1) \cdot z_9 , (\gamma_{1,2} \gamma_{2,1}^2 \odot 1_1) \cdot z_9  \}$. We summarize the result of our computations in this subsection as the following theorem.
\begin{theorem} \label{unst5}
    Let us denote the cohomology class $(\gamma_{2,1} \odot 1_1) \cdot z_9$ by $u_{12}$. Then, we have an isomorphism \[ H^* (\UFl_5 (\C) ; \F_2) \cong \frac{H^* (BN(5); \F_2)}{\big ( f_5^* (c_1) ,\dots , f_5^* (c_5) \big )} \oplus \F_2 \{ u_{12}, u_{12} \gamma_{1,2}, u_{12} \gamma_{2,1}, u_{12} \gamma_{1,2} \gamma_{2,1} \} . \]
\end{theorem}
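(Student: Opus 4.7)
The plan is to run, for $ n = 5 $, the same Serre spectral sequence machinery used in the $ n = 3, 4 $ cases. Specifically, I would consider the Borel fibration
\[
U(5) \longrightarrow \UFl_5(\C) \longrightarrow BN(5),
\]
whose $ E_2 $-page by Theorem~\ref{generalSSS} is $ H^*(BN(5); \F_2) \otimes \Lambda_{\F_2}[z_1, z_3, z_5, z_7, z_9] $ with $ H^*(BN(5); \F_2) $ described by Theorem~\ref{thm:cohomology DX mod 2} and Corollary~\ref{cor: basis DX char+} applied to $ X = BU(1) $. The transgressions $ d_{2k}(z_{2k-1}) = f_5^*(c_k) $ are prescribed by Lemma~\ref{differential}, and the explicit formulas for the pullbacks $ f_5^*(c_k) $ were already computed in Example~\ref{n5} via Proposition~\ref{prop:finite-pullback}.

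Next, I would analyze injectivity of each differential $ d_{2k} $ on the ideal generated by $ z_{2k-1} $ for $ k = 1, 2, 3, 4 $. By Lemma~\ref{cdiff}, the only decorated Hopf monomials on which $ d_{2k} $ could fail to be injective are those of rank zero in the sense of Definition~\ref{dfn:rank}, so it suffices to test the finite list of rank-zero generators of $ H^*(BN(5); \F_2) $, namely $ \gamma_{1,1}\odot 1_3 $, $ c_{[2]}\odot \gamma_{1,1}\odot 1_1 $, $ \gamma_{1,2}\odot 1_1 $ and $ \gamma_{2,1}\odot 1_1 $. A direct application of Hopf ring distributivity to each product $ x \cdot f_5^*(c_k) $, possibly after reducing modulo the previously killed classes $ f_5^*(c_1), \dots, f_5^*(c_{k-1}) $, shows that $ d_2, d_4, d_6, d_8 $ are injective on the respective ideals. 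After these pages, the spectral sequence reduces to
\[
E_{10}^{*,*} \cong \frac{H^*(BN(5); \F_2)}{(f_5^*(c_1), f_5^*(c_2), f_5^*(c_3), f_5^*(c_4))} \otimes \Lambda_{\F_2}[z_9].
\]

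The decisive step is the analysis of $ d_{10}(x \cdot z_9) = x \cdot c_{[5]} $ on the four rank-zero generators. Three of these cup products give nonzero classes, but the key calculation
\[
(\gamma_{2,1}\odot 1_1)\cdot c_{[5]} = (c_{[4]}\gamma_{2,1})\odot c = 0 \quad \text{in } E_{10}^{*,*}
\]
holds because, using Hopf ring distributivity in the opposite direction, this element factors as $ (c_{[4]}\gamma_{2,1}\odot 1_1)\cdot(c\odot 1_4 + \gamma_{1,1}^2\odot 1_3) $, and the second factor is $ f_5^*(c_1) = 0 $ on $ E_{10}^{*,*} $. This identifies $ \ker(d_{10}) $ with the ideal generated by $ u_{12} = (\gamma_{2,1}\odot 1_1)\cdot z_9 $. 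Multiplying $ u_{12} $ by the four rank-zero decorated monomials containing $ \gamma_{2,1} $ as a factor (namely $ 1 $, $ \gamma_{1,2} $, $ \gamma_{2,1} $, and $ \gamma_{1,2}\gamma_{2,1} $, modulo the fact that multiplication by any other class in $ E_{10}^{*,*} $ sends $ u_{12} $ into the image of some earlier $ d_{2k} $) yields the four surviving unstable classes in the statement.

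I expect the main obstacle to be the bookkeeping in the final step: verifying that exactly these four combinations of $ \gamma_{1,2} $ and $ \gamma_{2,1} $ with $ u_{12} $ survive to $ E_\infty $, and that every other multiple of $ u_{12} $ either lies in the image of $ d_4, d_6 $ or $ d_8 $ applied to classes of the form $ (\gamma_{2,1}\odot 1_1)\cdot z_{2k-1}z_9 $, or vanishes because of a Hopf ring relation forced by the annihilator of $ \gamma_{2,1} $. Once these cancellations are checked systematically against the basis furnished by Corollary~\ref{cor: basis DX char+}, no nonzero differentials remain beyond $ d_{10} $, the spectral sequence collapses at $ E_{11} $, and the stated additive decomposition follows.
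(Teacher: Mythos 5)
Your proposal is correct and follows the same path as the paper: the same Serre spectral sequence for $U(5)\to\UFl_5(\C)\to BN(5)$, the same transgressions from Example~\ref{n5}, the same reduction of the injectivity check to rank-zero monomials via Lemma~\ref{cdiff}, and the same factorization argument showing $(\gamma_{2,1}\odot 1_1)\cdot c_{[5]}$ vanishes modulo $f_5^*(c_1)$ on the $E_{10}$-page. The final bookkeeping you defer is exactly what the paper carries out, namely computing the images of $d_4,d_6,d_8$ on $(\gamma_{2,1}\odot 1_1)\cdot z_{2k-1}z_9$ to see that multiples of $u_{12}$ by $\gamma_{1,2}^2$, $\gamma_{2,1}^2$, and $c_{[4]}\gamma_{2,1}$ are hit, leaving precisely $u_{12},\ u_{12}\gamma_{1,2},\ u_{12}\gamma_{2,1},\ u_{12}\gamma_{1,2}\gamma_{2,1}$.
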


\subsection{Mod \texorpdfstring{$p$}{p} Cohomology of \texorpdfstring{$\UFl_p (\C)$}{UFl-C} for \texorpdfstring{$p>2$}{p>2}} \label{section:6.4}

From Theorem~\ref{thm:cohomology DX mod p}, we have that $H^* (BN(p); \F_p)$ is generated by $\gamma_{1,1}$, $\alpha_{1,1}$ and $c_{[k]} \odot 1_{p-k}$ for $k=1,2,\dots ,p$. Our strategy will be to study the Serre spectral sequence associated with (\ref{spseq}) with $n=p$.

We introduce some notations that will be helpful later in describing the cohomology of $\UFl_p (\C)$. Recall from (\ref{eq:unitary}) that, 
\begin{equation} \label{eq5}
    H^* (U(p); \F_p) = \Lambda_{\F_p} [z_1, z_3,\dots , z_{2p-1}]
\end{equation}  
\begin{definition}\label{dfn:unitary-gen}
    Let $S=\{s_1, \dots, s_k\}$ be a subset of $\{ 1,2,\dots, p\}$ such that $s_1 < \cdots <s_k$. We define $z_S$ to be the cohomology class \[ z_S := \begin{cases} z_{2s_1 -1} \cdots z_{2s_k -1} & \text{if } S\neq \emptyset \\ 1_{U(p)} & \text{if } S = \emptyset \end{cases} \]
\end{definition}

The following are immediate from Theorem~\ref{pullbackformula} and Proposition~\ref{prop:finite-pullback}.
\begin{corollary}\label{cor:diff-modp}
    Let $f_p : BN(p) \rightarrow BU(p)$ as before. Then $f_p^* (c_k) = c_{[k]} \odot 1_{p-k}$ for $k=1,\dots, p-2$, $f_p^* (c_{p-1}) = c_{[p-1]} \odot 1_1 + \gamma_{1,1}$, and $f_p^* (c_p) = c_{[p]}$.
\end{corollary}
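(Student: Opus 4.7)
The plan is to derive this as a direct bookkeeping consequence of Proposition \ref{prop:finite-pullback}(2), by enumerating the very short list of sequences in $\mathcal{Y}_{p,k}$ that survive the component constraint when $n = p$.

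First I would unpack the two defining conditions for a contributing sequence $(a_0, a_1, a_2, \dots)$: the component bound $a_0 + \sum_{i \geq 1} a_i p^i \leq p$ and the degree equation $a_0 + \sum_{i \geq 1} a_i(p^i - 1) = k$. Because $p^i > p$ for every $i \geq 2$, the component bound immediately forces $a_i = 0$ for all $i \geq 2$, and it restricts $a_1 \in \{0, 1\}$. Hence the sum in Proposition \ref{prop:finite-pullback}(2) collapses to at most two cases, and the proof reduces to inspecting each.

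In the case $a_1 = 0$, the degree equation yields $a_0 = k$, which is admissible whenever $1 \leq k \leq p$; this case contributes the Hopf monomial $c_{[k]} \odot 1_{p-k}$. In the case $a_1 = 1$, the component bound tightens to $a_0 + p \leq p$, forcing $a_0 = 0$ and then $k = p - 1$ by the degree equation; this case contributes the class $\gamma_{1,1}$ (lying in component exactly $p$, so with no trailing $1_m$ factor) and is nonempty only for that single value of $k$. Assembling the two cases gives the three formulas claimed: a single term $c_{[k]} \odot 1_{p-k}$ for $1 \leq k \leq p-2$, two terms $c_{[p-1]} \odot 1_1 + \gamma_{1,1}$ for $k = p-1$, and the single term $c_{[p]}$ for $k = p$.

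I do not anticipate any real obstacle: the corollary is essentially a finite combinatorial check once the general pullback formula is in hand. The only point requiring a moment's attention is verifying that the sequence $(0, 1, 0, 0, \dots)$ is genuinely in component $p$ (so that $\gamma_{1,1}$ appears without a unit factor) and that no $a_i$ with $i \geq 2$ can contribute — both of which fall out immediately from the size comparison $p^i > p$ for $i \geq 2$.
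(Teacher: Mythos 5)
Your proof is correct and carries out precisely the combinatorial check that the paper declares ``immediate'' from Theorem~\ref{pullbackformula} and Proposition~\ref{prop:finite-pullback}. Note that you have (rightly) read the odd-prime case of Proposition~\ref{prop:finite-pullback}(2) with the index $a_0$ included and with the degree constraint $a_0 + \sum_{i\ge 1} a_i(p^i-1)=k$, which is what is intended and what makes the degrees match $|c_{[a_0]}|=2a_0$ and $|\gamma_{i,a_i}|=2a_i(p^i-1)$; the printed statement, which starts the sequences at $a_1$ and sets the sum equal to $2k$, contains typos that would not produce the corollary.
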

Note that $\alpha_{1,1}$ never shows up in the pullback formulas for $f_p^*$ as it has an odd cohomological dimension whereas $f_p^* (c_k)$ are even-dimensional for all $1 \le k \le p$.

\begin{theorem}
    Let $\gamma_S := z_S \cdot \gamma_{1,1}$ and $\alpha_S := z_S \cdot \alpha_{1,1}$, where $z_S$ is as in Definition~\ref{dfn:unitary-gen}. Then,
\[ H^* (\UFl_p (\C) ; \F_p ) = \frac{\F_p [\gamma_S, \alpha_S| S \subset \{ 1,2,\dots, p-2\} ]}{(\gamma_S^2, \alpha_S^2, \gamma_S \cdot \alpha_S)} \]
\end{theorem}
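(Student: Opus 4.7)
The proof will use the Serre spectral sequence of the fibration $U(p) \to \overline{\mathrm{Fl}}_p(\mathbb{C}) \to BN(p)$ set up in equation (\ref{spseq}). Its $E_2$ page is $H^*(BN(p); \F_p) \otimes \Lambda_{\F_p}[z_1, z_3, \ldots, z_{2p-1}]$, with transgressions $d_{2k}(z_{2k-1}) = f_p^*(c_k)$ given by Corollary \ref{cor:diff-modp}: namely, $f_p^*(c_k) = c_{[k]} \odot 1_{p-k}$ for $k \leq p-2$, $f_p^*(c_{p-1}) = c_{[p-1]} \odot 1_1 + \gamma_{1,1}$, and $f_p^*(c_p) = c_{[p]}$.

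First, I would isolate the decisive structural input. By Theorem \ref{thm:cohomology DX mod p}, in component $p$ (prime), the only non-decoration gathered blocks are $\gamma_{1,1}$ and $\alpha_{1,1}$, since $\beta_{i,j,1}$ requires $i<j\le 1$, which is empty. Using Hopf-ring distributivity and the primitivity of $\gamma_{1,1}$ and $\alpha_{1,1}$, that is $\Delta(\gamma_{1,1}) = \gamma_{1,1} \otimes 1_0 + 1_0 \otimes \gamma_{1,1}$ and similarly for $\alpha_{1,1}$, one verifies the \emph{component mismatch annihilation}
\[
\gamma_{1,1} \cdot (c_{[k]} \odot 1_{p-k}) = 0 = \alpha_{1,1} \cdot (c_{[k]} \odot 1_{p-k}) \quad \text{for all } 1 \leq k \leq p-1,
\]
because every summand appearing in distributivity forces a cup product of classes lying in mismatched components. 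Conversely, the argument of Lemma \ref{cdiff} adapts verbatim to show that multiplication by $c_{[k]} \odot 1_{p-k}$ is injective on the subring of pure-decoration Hopf monomials.

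Next, I would run the differentials $d_2, d_4, \ldots, d_{2(p-2)}$ page by page, using multiplicativity. On the pure-decoration part these differentials are injective on each $z_{2k-1}$-ideal and simply quotient the base by $(c_{[k]} \odot 1_{p-k})$; on the part involving $\gamma_{1,1}$ or $\alpha_{1,1}$ they vanish identically. Consequently, the classes $\gamma_{1,1} \cdot z_S$ and $\alpha_{1,1} \cdot z_S$ with $S \subset \{1, \ldots, p-2\}$ survive as cycles through $E_{2(p-2)+1}$, and, since $\gamma_{1,1}$ and $\alpha_{1,1}$ are basis elements not in the image of multiplication by any $c_{[k]} \odot 1_{p-k}$, they are not boundaries either. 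Finally I would examine $d_{2(p-1)}$ and $d_{2p}$: since $\gamma_{1,1} \cdot f_p^*(c_{p-1}) = \gamma_{1,1}^2$ and $\alpha_{1,1} \cdot f_p^*(c_{p-1}) = \gamma_{1,1}\alpha_{1,1}$, the Leibniz rule yields
\[
d_{2(p-1)}(z_{2p-3} \cdot \gamma_{1,1}) = \gamma_{1,1}^2, \qquad d_{2(p-1)}(z_{2p-3} \cdot \alpha_{1,1}) = \gamma_{1,1}\alpha_{1,1},
\]
so $\gamma_{1,1}^2$ and $\gamma_{1,1}\alpha_{1,1}$ become boundaries on $E_{2p-1}$; multiplying by any $z_S$ with $S\subset\{1,\ldots,p-2\}$ yields $\gamma_S^2 = \alpha_S^2 = \gamma_S\alpha_S = 0$ on $E_\infty$, with $\alpha_S^2=0$ also automatic from graded commutativity in odd degree. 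The final transgression $d_{2p}(z_{2p-1}) = c_{[p]}$, together with Leibniz analogs such as $d_{2p}(z_{2p-1}\cdot \gamma_{1,1}) = \gamma_{1,1}\cdot c_{[p]}$, cleans up the remaining pure-decoration classes, so the base quotient $H^*(BN(p);\F_p)/(f_p^*(c_1),\ldots,f_p^*(c_p))$ reduces to $\F_p\{1, \gamma_{1,1}, \alpha_{1,1}\}$, yielding the claimed presentation.

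The main obstacle is the careful bookkeeping inside the Hopf ring: one must confirm that the ideal generated by the $f_p^*(c_k)$ genuinely absorbs every pure-decoration Hopf monomial of the shape $(c^{a_1})_{[n_1]}\odot\dots\odot(c^{a_r})_{[n_r]}$ with $\sum n_i = p$, and that no stray unstable classes escape detection in the bidegrees just below $d_{2(p-1)}$. This is the $n=p$ analog of the regular-sequence argument of Theorem \ref{theo:regular}; it can be carried out using the rank filtration of Definition \ref{dfn:rank} together with the fact that $p$ is prime, so that no gathered blocks of intermediate width $1<w<p$ can carry a nontrivial $\gamma_{k,l}, \alpha_{i,k}$, or $\beta_{i,j,k}$ factor.
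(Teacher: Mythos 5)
Your proposal is correct and follows essentially the same route as the paper: both set up the Serre spectral sequence of $U(p)\to\overline{\Fl}_p(\mathbb{C})\to BN(p)$, invoke Corollary~\ref{cor:diff-modp} for the transgressions $d_{2k}(z_{2k-1})=f_p^*(c_k)$, exploit the component-mismatch vanishing $\gamma_{1,1}\cdot(c_{[k]}\odot 1_{p-k})=\alpha_{1,1}\cdot(c_{[k]}\odot 1_{p-k})=0$ for $k\le p-1$, and read off $E_\infty$ from the resulting page-by-page kills. The only material differences are expository — you spell out why only $\gamma_{1,1}$ and $\alpha_{1,1}$ can appear in component $p$ and you flag explicitly the bookkeeping needed to see that the pure-decoration part is fully absorbed — but the argument and conclusion coincide with the paper's.
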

\begin{proof}
    From the Hopf ring structure of $A_{BU(1)}$ in Theorem~\ref{thm:cohomology DX mod 2}, we deduce that from  \[ H^* (BN(p) ; \F_p) \cong \frac{\F_p [\gamma_{1,1}, \alpha_{1,1}, c_{[k]} \odot 1_{p-k} \mid k=1,\dots , p]}{\big ( \alpha_{1,1}^2 , \gamma_{1,1} \cdot (c_{[k]} \odot 1_{p-k}) , \alpha_{1,1} \cdot (c_{[k]} \odot 1_{p-k}) \mid k=1,\dots ,p-1 \big )} \]   
    From Lemma~\ref{differential}, the differential $d_{2k}$ is given by \[ d_{2k} : z_{2k-1} \longmapsto f_p^* (c_k) . \]
    Using the product structure on the $E_{2i}$-page of the spectral sequence, we deduce the following: \begin{align*}
    \gamma_{1,1} \cdot (c_{[k]}\odot 1_{[p-k]}) = 0 \quad &\text{and} \quad \alpha_{1,1} \cdot (c_{[k]}\odot 1_{[p-k]}) = 0 \quad \text {for all } k=1,\dots,p-1 \\  \gamma_{1,1} \cdot c_{[p]} \neq 0 \quad &\text{and} \quad \alpha_{1,1} \cdot c_{[p]} \neq 0.
    \end{align*}
    Hence, $d_{2k} (z_{2k-1}\cdot \gamma_{1,1}) = 0$ and $d_{2k} (z_{2k-1} \cdot \alpha_{1,1}) = 0$ for all $k=1,2,\dots, p-2$. Also, using the multiplicative structure of the spectral sequence and the formulas from Corollary~\ref{cor:diff-modp}, we have \begin{align*}
    d_{2(p-1)} (z_{2p-3} \cdot \gamma_{1,1}) = \gamma_{1,1}^2 &, \quad  d_{2(p-1)} (z_{2p-3} \cdot \alpha_{1,1}) = \gamma_{1,1} \cdot \alpha_{1,1} \\ d_{2p} (z_{2p-1} \cdot \gamma_{1,1}) = \gamma_{1,1} \cdot c_{[p]} &, \quad d_{2p} (z_{2p-1} \cdot  \alpha_{1,1}) = \alpha_{1,1} \cdot c_{[p]}.
    \end{align*}
    As all higher differentials $d_k$ for $k > 2p$ are zero, the spectral sequence collapses at the $E_{2p}$-page and the $E_{\infty}$-page is given by \[ E_{\infty}^{*,*} \cong \frac{H^* (BN(p) ; \F_p)}{\big ( f_p^* (c_1) ,\dots ,f_p^* (c_p) \big )} \oplus \big ( \F_p \{ \gamma_{1,1} , \alpha_{1,1} \} \otimes \Lambda_{\F_p} [z_1, \dots , z_{2p-5}] \big ) . \]  
    This proves the theorem.
\end{proof}

\begin{corollary}
    The mod $p$ Poincar{\'e} series of $\UFl_p (\C)$ is given by \[ \Pi_{\UFl_p}^p (t) = 1 + (t^{2p-3} + t^{2p-2}) \prod_{k=1}^{p-2} (1+t^{2k-1}) . \]
\end{corollary}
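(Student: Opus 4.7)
The plan is to read the Poincar\'e series directly off the additive description of $H^*(\UFl_p(\C);\Fp)$ supplied by the preceding theorem. That theorem exhibits an $\Fp$-basis consisting of the unit $1$ together with the classes $\gamma_S$ and $\alpha_S$, one of each for every subset $S \subseteq \{1,2,\dots,p-2\}$; the relations $\gamma_S^2 = \alpha_S^2 = \gamma_S\cdot\alpha_S = 0$ guarantee that no additional basis elements arise from products. So the problem reduces to identifying the cohomological degrees of each of these $1 + 2^{p-1} + 2^{p-1}$ basis elements and summing the corresponding monomials in $t$.

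By Definition~\ref{dfn:unitary-gen}, if $S=\{s_1,\dots,s_k\}$ then $|z_S| = \sum_{s\in S}(2s-1)$. From the transgressive differential $d_{2(p-1)}(z_{2p-3}) = f_p^*(c_{p-1}) = c_{[p-1]}\odot 1_1 + \gamma_{1,1}$ used in the proof of the theorem, both addends of $f_p^*(c_{p-1})$ have degree $2(p-1)$, so $|\gamma_{1,1}| = 2p-2$. Similarly, the dimensions recorded in Guerra's presentation (recalled in Theorem~\ref{thm:cohomology DX mod p}) give $|\alpha_{1,1}| = 2p-3$. Hence
\[
|\gamma_S| = (2p-2) + \sum_{s\in S}(2s-1), \qquad |\alpha_S| = (2p-3) + \sum_{s\in S}(2s-1).
\]

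Finally I would sum the contributions of all basis elements. Grouping the $\gamma_S$'s and $\alpha_S$'s according to their common constant shift yields
\[
\Pi_{\UFl_p}^p(t) = 1 + \bigl(t^{2p-3} + t^{2p-2}\bigr) \sum_{S\subseteq\{1,\dots,p-2\}} t^{\sum_{s\in S}(2s-1)},
\]
and the remaining sum over subsets is the standard expansion of the generating function $\prod_{k=1}^{p-2}(1+t^{2k-1})$, producing the claimed formula. There is no real obstacle in this argument; the only point requiring a bit of care is confirming $|\alpha_{1,1}| = 2p-3$ against Guerra's indexing conventions so that the exponent $2p-3$ in the final formula is correct.
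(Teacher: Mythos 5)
Your argument is correct and is exactly what the corollary implicitly relies on: read the additive basis $\{1\} \cup \{\gamma_S\}_{S\subseteq\{1,\dots,p-2\}} \cup \{\alpha_S\}_{S\subseteq\{1,\dots,p-2\}}$ off the $E_\infty$-page of the theorem, record degrees $|\gamma_S| = (2p-2)+\sum_{s\in S}(2s-1)$ and $|\alpha_S| = (2p-3)+\sum_{s\in S}(2s-1)$, and sum using $\sum_{S\subseteq\{1,\dots,p-2\}} t^{\sum_{s\in S}(2s-1)} = \prod_{k=1}^{p-2}(1+t^{2k-1})$. One small slip: the basis has $1 + 2^{p-2} + 2^{p-2}$ elements, not $1 + 2^{p-1} + 2^{p-1}$ (there are $2^{p-2}$ subsets of a $(p-2)$-element set); your final formula is nonetheless correct, since $\Pi_{\UFl_p}^p(1) = 1 + 2\cdot 2^{p-2} = 1 + 2^{p-1}$, so the slip was purely local. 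Your doubt at the end about $|\alpha_{1,1}|=2p-3$ is unfounded: this is the degree of the exterior generator of $H^*(\Sigma_p;\mathbb{F}_p)$, and it is also confirmed by the statement of the theorem you are invoking, whose Poincar\'e series visibly begins in degree $2p-3$.
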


\subsection{Mod \texorpdfstring{$p$}{p} cohomology of \texorpdfstring{$\overline{\Fl}_p(\mathbb{R})$}{UFl-R} for \texorpdfstring{$p>2$}{p>2}}

We conclude this section by computing the mod $ p $ cohomology of $ \overline{\Fl}_p(\mathbb{R}) $ for all odd prime $ p $. From Theorem \ref{thm:cohomology alternating group mod p} and Corollary \ref{cor:res mod p}, there is an isomorphism
\[
H^*(B\Bgrouppos{p}; \mathbb{F}_p) \cong \frac{\F_p[\gamma_{1,1},\alpha_{1,1}]}{(\alpha_{1,1}^2)},
\]
where the degrees of $ \gamma_{1,1} $ and $ \alpha_{1,1} $ are $ 2p-2 $ and $ 2p-3 $, respectively.
Similarly
\[
H^*(SO(p); \mathbb{F}_p) \cong \Lambda(\{a_1,\dots,a_{2p-3}).
\]

The differential $ d_{4r} $ of the $ 4r $-th page of the associated spectral sequence maps $ a_{4r-1} $ to the pullback of the Pontrjagin class $ \mathcal{p}_r $, and all the other differentials are $ 0 $. By Proposition \ref{prop:finite-pullback real}, $ d_{2p-2}(a_{2p-3}) = \gamma_{1,1} $, and $ d_r = 0 $ if $ r \not= 2p-2 $.
We conclude that $ E_2^{*,*} \cong E_{2p-2}^{*,*} $ and that
\[
E_{\infty}^{*,*} \cong E_{2p-1}^{*,*} \cong \Lambda(\{a_3,\dots,2p-7\}) \otimes \frac{\F_p[\alpha_{1,1}]}{(\alpha_{1,1}^2)}.
\]
We deduce the following results.
\begin{theorem}
Let $ p > 2 $ be an odd prime. Then there is an isomorphism of graded algebras
\[
H^*(\overline{\Fl}_p(\mathbb{R}); \F_p) \cong \Lambda(\{a_3,\dots,a_{2p-7}\}) \otimes \frac{\mathbb{F}_p[\alpha_{1,1}]}{(\alpha_{1,1}^2)}.
\]
\end{theorem}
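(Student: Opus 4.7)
The plan is to run the cohomological Serre spectral sequence of the fiber sequence $ SO(p) \to \overline{\Fl}_p(\mathbb{R}) \to B\Bgrouppos{p} $, using the descriptions of fiber and base cohomology recalled immediately above the theorem. Since $ \F_p $ coefficients are used and (as was observed in the proof of the analogous Theorem \ref{thm:stable cohomology real mod p}) the monodromy action is homotopically trivial, the $ E_2 $-page splits as
\[
E_2^{*,*} \cong \frac{\F_p[\gamma_{1,1},\alpha_{1,1}]}{(\alpha_{1,1}^2)} \otimes \Lambda(\{a_3,a_7,\dots,a_{2p-3}\}),
\]
with $ |\gamma_{1,1}| = 2p-2 $, $ |\alpha_{1,1}| = 2p-3 $, and $ |a_{4r-1}| = 4r-1 $.

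The first step is to identify the differentials. By comparison with the Serre spectral sequence of the universal fibration $ SO(p) \to ESO(p) \to BSO(p) $, each $ a_{4r-1} $ is transgressive and transgresses to $ g_p^*(\mathcal{p}_r) $ for $ 1 \leq r \leq (p-1)/2 $. I then apply Proposition \ref{prop:finite-pullback real}(2) with $ n = p $: the constraint $ \sum_{i \geq 1} a_i p^i \leq p $ forces any contributing sequence to be supported at $ i = 1 $ with $ a_1 \in \{0,1\} $, and the dimension equation $ \sum_i a_i(p^i-1) = 2r $ picks out the unique nonzero case $ a_1 = 1 $, $ r = (p-1)/2 $. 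Consequently $ g_p^*(\mathcal{p}_{(p-1)/2}) = \gamma_{1,1} $, all other $ g_p^*(\mathcal{p}_r) $ vanish, and the only nontrivial transgression is $ d_{2p-2}(a_{2p-3}) = \gamma_{1,1} $.

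Next, using multiplicativity, I propagate this differential to obtain
\[
d_{2p-2}(a_{2p-3} \cdot \gamma_{1,1}^{k-1}) = \gamma_{1,1}^k, \qquad d_{2p-2}(a_{2p-3} \cdot \alpha_{1,1}\gamma_{1,1}^{k-1}) = \alpha_{1,1}\gamma_{1,1}^k
\]
for all $ k \geq 1 $. Hence every positive power of $ \gamma_{1,1} $ and each class $ \alpha_{1,1}\gamma_{1,1}^k $ with $ k \geq 1 $ is killed, while the remaining exterior generators $ a_3,a_7,\dots,a_{2p-7} $ and the class $ \alpha_{1,1} $ survive: their own transgressions are zero, and none of them lies in the image of $ d_{2p-2} $. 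All higher differentials vanish because there are no more potential nonzero transgressions, so the sequence collapses at the $ (2p-1) $-st page to
\[
E_\infty^{*,*} \cong \Lambda(\{a_3,\dots,a_{2p-7}\}) \otimes \frac{\F_p[\alpha_{1,1}]}{(\alpha_{1,1}^2)}.
\]

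Finally, since we work with field coefficients, this gives $ H^*(\overline{\Fl}_p(\mathbb{R}); \F_p) $ additively, and I must rule out multiplicative extensions. All the surviving generators $ a_3,\dots,a_{2p-7},\alpha_{1,1} $ are odd-dimensional, so graded commutativity forces each of their squares to vanish in $ H^*(\overline{\Fl}_p(\mathbb{R}); \F_p) $, matching the relations already present in $ E_\infty $. This upgrades the additive identification to an isomorphism of graded algebras, yielding the claimed presentation. The only nontrivial step is the combinatorial check via Proposition \ref{prop:finite-pullback real} that all Pontryagin pullbacks vanish except $ g_p^*(\mathcal{p}_{(p-1)/2}) $; everything else is routine spectral sequence bookkeeping.
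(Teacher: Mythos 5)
Your proof is correct and follows essentially the same route as the paper: the Serre spectral sequence of $SO(p) \to \overline{\Fl}_p(\mathbb{R}) \to B\Bgrouppos{p}$, identifying transgressions with pullbacks of Pontrjagin classes via Proposition~\ref{prop:finite-pullback real}, observing that the only surviving contribution is $g_p^*(\mathcal{p}_{(p-1)/2})=\gamma_{1,1}$, and collapsing at $E_{2p-1}$. Your explicit note that all surviving generators are odd-dimensional, ruling out multiplicative extensions, mirrors the remark the paper makes in the parallel stable computation (Theorem~\ref{thm:stable cohomology real mod p}).
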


\begin{corollary}
The mod $ p $ Poincar\`e series of $ \overline{\Fl}_p(\mathbb{R}) $ is given by
\[
\Pi^p_{\overline{\Fl}_p(\mathbb{R})}(t) = (1+t^3)(1+t^7)\cdots (1+t^{2p-7})(1+t^{2p-3}).
\]
\end{corollary}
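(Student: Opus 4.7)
The plan is to derive the Poincar\'e series directly from the algebra isomorphism established in the preceding theorem, using only the standard behavior of Poincar\'e series under tensor products of graded vector spaces.

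Since Poincar\'e series are additive on direct sums and multiplicative on tensor products of bigraded vector spaces (and we are working with field coefficients, so no Tor terms arise), I would first compute the Poincar\'e series of each tensor factor separately. The exterior algebra $\Lambda(\{a_3,a_7,\dots,a_{2p-7}\})$ has classes in degrees $4k-1$ for $k=1,2,\dots,\frac{p-3}{2}$, and as an $\F_p$-vector space it is the tensor product of the subalgebras $\Lambda(\{a_{4k-1}\}) \cong \F_p \oplus \F_p\{a_{4k-1}\}$, each with Poincar\'e series $1+t^{4k-1}$. Hence the first factor contributes
\[
\prod_{k=1}^{(p-3)/2}(1+t^{4k-1}) = (1+t^3)(1+t^7)\cdots(1+t^{2p-7}).
\]
The truncated polynomial algebra $\F_p[\alpha_{1,1}]/(\alpha_{1,1}^2)$ is, as a graded vector space, $\F_p \oplus \F_p\{\alpha_{1,1}\}$, concentrated in degrees $0$ and $|\alpha_{1,1}| = 2p-3$, so its Poincar\'e series is $1+t^{2p-3}$.

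Multiplying the two Poincar\'e series, and invoking the algebra isomorphism of the previous theorem (which certainly implies an isomorphism of graded $\F_p$-vector spaces, which is all that is needed for Poincar\'e series), yields the claimed formula. There is essentially no obstacle here: the only small point to check is that the generators $a_{4k-1}$ do indeed run over $k=1,\dots,\frac{p-3}{2}$, which follows from the fact that in the proof of the preceding theorem the top generator $a_{2p-3}$ of $H^*(SO(p);\F_p)$ is killed by the differential $d_{2p-2}$ sending it to $\gamma_{1,1}$, while all other $a_{4k-1}$ with $4k-1\leq 2p-7$ survive to $E_\infty$.
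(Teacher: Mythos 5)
Your proof is correct and matches the intent of the paper, which states this corollary without proof as an immediate consequence of the preceding theorem. You correctly observe that the Poincar\'e series is multiplicative over the tensor factors, compute $\prod_{k=1}^{(p-3)/2}(1+t^{4k-1}) = (1+t^3)(1+t^7)\cdots(1+t^{2p-7})$ for the exterior algebra and $1+t^{2p-3}$ for the truncated polynomial algebra on $\alpha_{1,1}$, and correctly track that the top generator $a_{2p-3}$ of $H^*(SO(p);\F_p)$ is consumed by the transgression $d_{2p-2}(a_{2p-3})=\gamma_{1,1}$, leaving exactly the generators $a_3,\dots,a_{2p-7}$.
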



\bibliographystyle{plain}
\bibliography{bibliografia}
	
\end{document}